\DeclareFontFamily{OT1}{pzc}{}
\DeclareFontShape{OT1}{pzc}{m}{it}%
             {<-> s * [1.195] pzcmi7t}{}
\DeclareMathAlphabet{\mathscr}{OT1}{pzc}%
                                 {m}{it}
\newcommand{\tensor}{\otimes}
\newcommand{\colim}{\operatorname{colim}}
\newcommand{\Spec}{\operatorname{Spec}}
\newcommand{\Aut}{{\mathbf{Aut}}}
\newcommand{\isomto}{{\stackrel{\sim}{\;\longrightarrow\;}}}
\newcommand{\isomt}{{\stackrel{{\scriptscriptstyle{\sim}}}{\;\rightarrow\;}}}
\renewcommand{\O}{{\mathcal O}}
\renewcommand{\hom}{\operatorname{Hom}}
\newcommand{\cplx}{{\mathbb C}}
\newcommand{\Z}{{\mathbb Z}}
\newcommand{\aone}{{\mathbb A}^1}
\newcommand{\pone}{{\mathbb P}^1}
\newcommand{\gm}{{{\mathbf G}_{m}}}
\renewcommand{\L}{{\mathcal L}}
\newcommand{\MW}{\mathrm{MW}}
\newcommand{\et}{\text{\'et}}
\newcommand{\ho}[1]{\mathscr{H}({#1})}
\newcommand{\hop}[1]{\mathscr{H}_{\bullet}({#1})}
\newcommand{\bpi}{\boldsymbol{\pi}}
\newcommand{\Nis}{\operatorname{Nis}}
\newcommand{\Sm}{\mathscr{Sm}}
\newcommand{\Spc}{\mathscr{Spc}}
\newcommand{\Ab}{\mathscr{Ab}}
\newcommand{\Gr}{\mathscr{Gr}}
\newcommand{\K}{{{\mathbf K}}}
\newcommand{\Pic}{\operatorname{Pic}}
\newcommand{\Faone}{{\mathbf{F}}_{\aone}}
\newcommand{\hsnis}{\mathscr{H}_s^{\Nis}(k)}
\newcommand{\hspnis}{\mathscr{H}_{s,\bullet}^{\Nis}(k)}
\newcommand{\F}{{\mathcal F}}
\newcounter{intro}
\theoremstyle{plain}
\newtheorem{thm}{Theorem}[section]
\newtheorem{lem}[thm]{Lemma}
\newtheorem{cor}[thm]{Corollary}
\newtheorem{prop}[thm]{Proposition}
\newtheorem*{thm*}{Theorem}
\newtheorem*{problem*}{Problem}
\newtheorem{thmintro}{Theorem}
\theoremstyle{definition}
\newtheorem{defn}[thm]{Definition}
\newtheorem{notation}[thm]{Notation}
\theoremstyle{remark}
\newtheorem{rem}[thm]{Remark}
\newtheorem{remintro}[thmintro]{Remark}
\newtheorem{ex}[thm]{Example}
\numberwithin{equation}{section}
\begin{document}
\pagestyle{fancy}
\renewcommand{\sectionmark}[1]{\markright{\thesection\ #1}}
\fancyhead{}
\fancyhead[LO,R]{\bfseries\footnotesize\thepage}
\fancyhead[LE]{\bfseries\footnotesize\rightmark}
\fancyhead[RO]{\bfseries\footnotesize\rightmark}
\chead[]{}
\cfoot[]{}
\setlength{\headheight}{1cm}

\author{\begin{small}Aravind Asok\thanks{Aravind Asok was partially supported by National Science Foundation Awards DMS-0900813 and DMS-0966589.}\end{small} \\ \begin{footnotesize}Department of Mathematics\end{footnotesize} \\ \begin{footnotesize}University of Southern California\end{footnotesize} \\ \begin{footnotesize}Los Angeles, CA 90089-2532 \end{footnotesize} \\ \begin{footnotesize}\url{asok@usc.edu}\end{footnotesize}}

\title{{\bf Splitting vector bundles \\ and $\aone$-fundamental groups of \\ higher dimensional varieties}}
\date{}
\maketitle

\begin{abstract}
We study aspects of the ${\mathbb A}^1$-homotopy classification problem in dimensions $\geq 3$ and, to this end, we investigate the problem of computing ${\mathbb A}^1$-homotopy groups of some ${\mathbb A}^1$-connected smooth varieties of dimension $\geq 3$.  Using these computations, we construct pairs of ${\mathbb A}^1$-connected smooth proper varieties all of whose ${\mathbb A}^1$-homotopy groups are abstractly isomorphic, yet which are not ${\mathbb A}^1$-weakly equivalent.  The examples come from pairs of Zariski locally trivial projective space bundles over projective spaces and are of the smallest possible dimension.

Projectivizations of vector bundles give rise to ${\mathbb A}^1$-fiber sequences, and when the base of the fibration is an ${\mathbb A}^1$-connected smooth variety, the associated long exact sequence of ${\mathbb A}^1$-homotopy groups can be analyzed in detail.  In the case of the projectivization of a rank $2$ vector bundle, the structure of the ${\mathbb A}^1$-fundamental group depends on the splitting behavior of the vector bundle via a certain obstruction class.  For projective bundles of vector bundles of rank $\geq 3$, the ${\mathbb A}^1$-fundamental group is insensitive to the splitting behavior of the vector bundle, but the structure of higher ${\mathbb A}^1$-homotopy groups is influenced by an appropriately defined higher obstruction class.
\end{abstract}

\begin{footnotesize}
\tableofcontents
\end{footnotesize}
\newpage
\section{Introduction}
The purpose of this paper is to study some aspects of the problem of classifying $\aone$-connected smooth proper varieties of a fixed dimension up to $\aone$-homotopy equivalence; this problem was introduced and discussed in \cite{AM}.  We focus here on varieties having dimension $\geq 3$ and, in particular, projective bundles over a smooth $\aone$-connected base.  For $\aone$-connected smooth varieties of dimension $\leq 2$ over an algebraically closed field having characteristic $0$, in \cite[\S 5]{AM} we showed the homotopy classification is largely governed by the $\aone$-fundamental sheaf of groups (see Section \ref{s:preliminaries} for a definition); henceforth we abbreviate $\aone$-fundamental sheaf of groups to $\aone$-fundamental group.

In dimensions $\leq 2$, it is possible to describe all $\aone$-connected smooth proper varieties over an algebraically closed field, and then to compute all the corresponding $\aone$-fundamental groups (again, see \cite[\S 5]{AM}).  In contrast, in dimension $\geq 3$, there seems to be no reasonable description of the collection of all $\aone$-connected smooth proper $3$-folds: by \cite[Corollary 2.3.7]{AM}, stably rational smooth proper complex $3$-folds are $\aone$-connected, and stably rational varieties are already quite mysterious (see also \cite[Example 2.3.4]{AM}).

This paper is motivated in part by the fact that it is still, in a sense, possible to provide a homotopy classification of all closed $3$-manifolds, even though we cannot answer the question of which finitely generated groups appear as fundamental groups of such manifolds \cite{Thomas, Swarup, Hendriks}; see Remark \ref{rem:3dclassification} for more discussion of this point.  We consider the question: what invariants are necessary for $\aone$-homotopy classification in dimensions greater than two?

Lens spaces provide examples of closed $3$-manifolds that are homotopy inequivalent yet whose homotopy groups are abstractly isomorphic (see Remark \ref{rem:lensspaces} for some quick recollections on lens spaces).  Therefore, if one believes that the analysis of $\aone$-homotopy theory of smooth proper $\aone$-connected $3$-folds is in any way similar to the analysis of homotopy theory of closed $3$-manifolds, then it seems reasonable to expect that knowledge of $\aone$-homotopy sheaves of groups (henceforth, $\aone$-homotopy groups) alone is not sufficient to distinguish all $\aone$-homotopy types in dimension $\geq 3$.  In support of this expectation, we establish the following result.

\begin{thmintro}[See Theorems \ref{thm:main}, \ref{thm:main2} and Remark \ref{rem:higherdimensionalexamplesI}]
\label{thmintro:homotopyinequivalentisomorphichomotopy}
Suppose $d$ is an integer $\geq 3$.  There exist pointed $\aone$-connected smooth proper $d$-folds $(X,x)$ and $(X',x')$ that fail to be $\aone$-weakly equivalent, yet for which $\bpi_i^{\aone}(X,x)$ is (abstractly) isomorphic to $\bpi_i^{\aone}(X',x')$ for all $i > 0$.
\end{thmintro}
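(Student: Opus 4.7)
The plan is to realize $X$ and $X'$ as projectivizations of distinct vector bundles $\mathcal{E}, \mathcal{E}'$ over an $\aone$-connected smooth base $B$ --- specifically, projective space bundles over projective space, as the abstract promises. Recall that $\mathbb{P}(\mathcal{E}) \to B$ for a rank-$r$ bundle fits into an $\aone$-fiber sequence with fiber $\mathbb{P}^{r-1}$, producing a long exact sequence of $\aone$-homotopy sheaves of groups. For the smallest dimension $d=3$ the natural candidates are $\mathbb{P}^2$-bundles over $\pone$ (every vector bundle on $\pone$ splits, so the candidates are explicit) or $\mathbb{P}^1$-bundles over $\mathbb{P}^2$; higher-dimensional cases are obtained by analogous constructions over larger projective spaces, yielding one example in each dimension $d \geq 3$.

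Abstract isomorphism of homotopy groups will be established through the long exact sequence. In the rank $\geq 3$ regime the structural result advertised in the abstract forces $\bpi_1^{\aone}(\mathbb{P}(\mathcal{E})) \cong \bpi_1^{\aone}(B)$ regardless of splitting type (the fiber $\mathbb{P}^{r-1}$ is $\aone$-simply connected for $r \geq 3$, so the relevant connecting map automatically vanishes), and the $\aone$-fundamental groups of $X$ and $X'$ agree. For higher $i$ the long exact sequence breaks into extensions
\[
\bpi_i^{\aone}(\mathbb{P}^{r-1}) \longrightarrow \bpi_i^{\aone}(\mathbb{P}(\mathcal{E})) \longrightarrow \bpi_i^{\aone}(B),
\]
classified by a higher obstruction class depending on the Chern data of $\mathcal{E}$. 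The essential point is that even when the obstruction classes for $\mathcal{E}$ and $\mathcal{E}'$ differ, the resulting middle groups can be \emph{abstractly} isomorphic as sheaves of groups --- this is the standard phenomenon that inequivalent extensions may share the same middle term. I would therefore pick $\mathcal{E}, \mathcal{E}'$ so that their obstruction classes lie in the same orbit under the natural action of $\Aut(\bpi_i^{\aone}(B)) \times \Aut(\bpi_i^{\aone}(\mathbb{P}^{r-1}))$ on the set of extension classes, which is enough to force the middle groups to agree as abstract sheaves.

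To obstruct $\aone$-weak equivalence I would use a finer invariant that is sensitive to the extension structure rather than to the abstract isomorphism class of the homotopy sheaves, such as the cup product on motivic cohomology, or the Chow ring with its Chern-class relation. The projective bundle formula expresses $\mathrm{CH}^*(\mathbb{P}(\mathcal{E}))$ as a free $\mathrm{CH}^*(B)$-module on $1, \xi, \ldots, \xi^{r-1}$ with relation $\sum_i c_i(\mathcal{E})\xi^{r-i} = 0$, and for suitable $\mathcal{E}, \mathcal{E}'$ this graded ring distinguishes the two projective bundles; an $\aone$-weak equivalence $X \simeq X'$ would induce a ring isomorphism on motivic cohomology and give a contradiction.

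The principal obstacle is the homotopy-group computation: verifying the abstract isomorphism at every level requires a detailed understanding of $\bpi_i^{\aone}(\mathbb{P}^{r-1})$ and of the higher obstruction class, both of which involve Milnor--Witt $K$-theoretic sheaves and behave more subtly than their topological analogues. This is where the structural results of the paper --- in particular the definition and computation of the "appropriately defined higher obstruction class" advertised in the abstract --- are indispensable. Once those tools are in hand, one has only to verify for explicit $\mathcal{E}, \mathcal{E}'$ that the obstruction classes fall into the same automorphism orbit and that the Chern-class data differ enough to force distinct motivic cohomology rings; the former ensures the forward assertion about homotopy groups, the latter rules out $\aone$-weak equivalence.
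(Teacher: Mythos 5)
Your high-level plan (projectivizations of different bundles over projective space, compare via the fiber-sequence LES, separate by motivic/Chow cohomology) is the right one, and you even note that every vector bundle on $\pone$ splits, which is the key enabling observation. But there is one outright error and one strategic gap that together leave the argument incomplete.

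The error: you claim $\mathbb{P}^{r-1}$ is $\aone$-simply connected for $r \geq 3$, concluding $\bpi_1^{\aone}(\mathbb{P}(\mathcal{E})) \cong \bpi_1^{\aone}(B)$. This is false. By Lemma \ref{lem:homotopygroupsofprojectivespace} one has $\bpi_1^{\aone}(\mathbb{P}^{n}) \cong \gm$ for all $n \geq 2$, and more generally Remark \ref{rem:pi1nontrivial} shows that a positive-dimensional smooth proper $\aone$-connected scheme \emph{never} has trivial $\aone$-fundamental group. The correct statement, established in Theorem \ref{thm:rankn}, is $\bpi_1^{\aone}(\mathbb{P}(\mathcal{E})) \cong \gm \times \bpi_1^{\aone}(B)$ for rank $\geq 3$. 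For $\pone$-bundles the $\aone$-fundamental group is $\Faone(1)$, which is non-abelian, so the group structure on $\bpi_1^{\aone}(\mathbb{P}(\mathcal{E}))$ genuinely depends on $\det\mathcal{E}$ modulo $2$ (Theorem \ref{thm:projectivebundletrivialeulerclass}); this is precisely why Theorem \ref{thm:main2} restricts to $a \equiv a' \bmod 2$. Your proposal does not engage with this constraint on the $\aone$-fundamental group at all.

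The strategic gap: to get abstract isomorphism of higher $\aone$-homotopy groups, you propose to pick $\mathcal{E}, \mathcal{E}'$ so that their extension classes lie in the same orbit under $\Aut(\bpi_i^{\aone}(B)) \times \Aut(\bpi_i^{\aone}(\mathbb{P}^{r-1}))$. This is harder than it needs to be and you give no mechanism for actually verifying it; given that $\bpi_i^{\aone}$ of punctured affine spaces and projective spaces involve Milnor--Witt K-theoretic sheaves, computing such orbits is a serious undertaking. The paper sidesteps all of this by working exclusively with \emph{split} bundles (direct sums of line bundles), for which Lemma \ref{lem:splitcasetrivialeulerclass} shows the Euler class vanishes and, crucially, Proposition \ref{prop:splitvectorbundles} shows the long exact sequence degenerates into split short exact sequences, with $\bpi_i^{\aone}(\mathbb{P}(\mathcal{E}))$ a \emph{direct sum} of $\bpi_i^{\aone}(B)$ and $\bpi_i^{\aone}(\mathbb{P}^{r-1})$ for $i \geq 2$. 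This makes the higher homotopy groups identical (not merely abstractly isomorphic) as the line bundle twists vary, and reduces the whole problem to controlling $\bpi_1^{\aone}$ via Theorem \ref{thm:scrolls}. Your Chow-ring/motivic-cohomology plan for distinguishing the spaces matches the paper's, though you should note that showing inequivalence requires an actual arithmetic argument with the cubic form on $\Pic$ (respectively with the discriminant), not just the projective-bundle formula in the abstract.
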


The examples arise from arguably the next simplest class of varieties beyond projective spaces: projective bundles of direct sums of line bundles over projective spaces.  Voevodsky showed that integral motivic cohomology rings (in particular, Chow cohomology rings) are unstable $\aone$-homotopy invariants, and we distinguish the $\aone$-homotopy types of the examples in the previous theorem by direct computation of motivic cohomology rings.  The bulk of the work is devoted to studying $\aone$-homotopy groups of projectivizations of vector bundles over $\aone$-connected base schemes to establish the existence of the abstract isomorphism of the theorem statement.

By careful choice of the projective bundles, $\aone$-homotopy groups of degree $\geq 2$ can be controlled in a rather straightforward fashion.  On the other hand, the $\aone$-fundamental group of a positive dimensional smooth proper $\aone$-connected variety is always non-trivial \cite[Proposition 5.1.4]{AM}.  Very few computations of $\aone$-fundamental groups exist, and much of the paper is devoted to providing such computations for projective bundles over an $\aone$-connected smooth base.  Our computations can be viewed as providing a ``relative" version of Morel's fundamental computations \cite[\S 7.3]{MField} of ``low-degree" $\aone$-homotopy groups of ${\mathbb P}^n$; just as in topology, higher $\aone$-homotopy sheaves of ${\mathbb P}^n$ are isomorphic to $\aone$-homotopy sheaves of spheres and are therefore extremely complicated in general.

Before describing the computation, we mention another motivation for this investigation. An old question of Schwarzenberger asks about the existence of non-split small rank vector bundles on projective spaces (see, e.g., \cite[p. 227]{GIT} or \cite[\S 4.4]{OSS} for discussion of this question).  A precise form of this question is the conjecture of Hartshorne asserting that a rank $2$ vector bundle on ${\mathbb P}^n$ splits so long as $n \geq 7$; see \cite[Conjecture 6.3]{Hartshorne}, though no counterexamples are known even for $n = 6$.  Of course, if this conjecture is true, then the $\aone$-homotopy type of the projectivization of any rank $2$ vector bundle on projective space of large dimension is indistinguishable from the $\aone$-homotopy type of the projectivization of a split bundle.  At the very least, this circle of ideas suggests that the splitting behavior of a vector bundle influences the $\aone$-homotopy type, and this belief is borne out in our computations.

Just as in topology, we can study the $\aone$-homotopy groups of projective space bundles using the theory of $\aone$-fiber sequences, via techniques pioneered by F. Morel and extended by M. Wendt \cite{MField,Wendt}.  Indeed, given such a bundle, there is an associated long exact sequence relating the $\aone$-homotopy groups of the base, fiber and total space; see Corollary \ref{cor:projectivebundlelongexactsequence}.  Our main goals are thus twofold.  First, we want to provide explicit descriptions of the connecting homomorphisms; this is accomplished by study of the $\aone$-Postnikov tower (see Definition \ref{defn:aonepostnikov}).  Second, we want to solve the resulting extension problem to determine the group structure on the $\aone$-fundamental group. Proposition \ref{prop:projectivebundlesonaoneconnectedvars} shows that projective bundles on $\aone$-connected smooth varieties over algebraically closed fields of characteristic $0$ are automatically projectivizations of vector bundles, so discussion can be simplified by restricting to this case.

Proposition \ref{prop:splitvectorbundles} and Theorem \ref{thm:projectivebundletrivialeulerclass} demonstrate that if a vector bundle splits, then the $\aone$-homotopy groups of the associated projective bundle are very restricted.  In particular, the $\aone$-fundamental group of the projectivization of a split vector bundle is a split extension of the $\aone$-fundamental group of the base by the $\aone$-fundamental group of the fiber (i.e., projective space), and we completely describe the group structure on this extension.  Furthermore, the higher $\aone$-homotopy groups of a split bundle are simply a product of the $\aone$-homotopy groups of the base and fiber.

In contrast to the split case, a certain obstruction class, which we refer to as the Euler class (see Definition \ref{defn:eulerclass}), intercedes in the structure of the $\aone$-fundamental group of a potentially non-split bundle: the $\aone$-fundamental group of the total space is an extension of the $\aone$-fundamental group of the base by a quotient of the $\aone$-fundamental group of the fiber.  Moreover, this extension of sheaves of groups is no longer obviously split.  We show that, if our Euler class is trivial (which always happens, e.g., if the $\aone$-connected base variety is ${\mathbb P}^n$, $n \geq 2$) then i) the $\aone$-fundamental group of the total space is again a split extension of the $\aone$-fundamental group of the base by the $\aone$-fundamental group of the fiber, and ii) we can write down the group structure on the extension.

While $\bpi_1^{\aone}({\mathbb P}^n)$ is quite simple for $n \geq 2$, Morel showed \cite[\S 7.3]{MField}, the $\aone$-fundamental sheaf of groups of $\pone$ is a highly non-trivial (non-abelian!) sheaf of groups.  As a consequence, the study of $\aone$-fundamental groups of projectivizations of rank $2$ bundles is much more complicated than the case of higher rank bundles.  We recall Morel's description of $\bpi_1^{\aone}(\pone)$ in Theorem \ref{thm:centralextension}.  

The outlines of our approach are already visible in the study of homotopy groups of ${\mathbb{RP}}^n$-bundles over closed connected manifolds; we review this study as motivation at the beginning of \S \ref{s:splittingobstructions}.  The following statement summarizes our calculations in the situation where the $\aone$-connected base variety is a projective space (though more general results are established in the text).

\begin{thmintro}[See Theorems \ref{thm:rankn} and \ref{thm:projectivebundletrivialeulerclass}]
\label{thmintro:projectivebundles}
Suppose $\mathcal{E}$ is a rank $m$ vector bundle on ${\mathbb P}^n$, $n \geq 2$ and $m \geq 2$, and fix an identification $\Pic({\mathbb P}^n) \cong \Z$.  If $m \geq 3$, then there are isomorphisms
\[
\bpi_1^{\aone}({\mathbb P}({\mathcal E})) \isomto \gm \times \bpi_1^{\aone}({\mathbb P}^n).
\]
If $m = 2$, and furthermore $e(\mathcal{E})$ is trivial (see \textup{Definition \ref{defn:eulerclass}}), then there is a split short exact sequence of the form
\[
1 \longrightarrow \bpi_1^{\aone}(\pone) \longrightarrow \bpi_1^{\aone}({\mathbb P}(\mathcal E)) \longrightarrow \gm \longrightarrow 1;
\]
the class of the extension is determined by the image of $c_1(\mathcal{E})$ in $\Pic({\mathbb P}^n)/2\Pic({\mathbb P}^n) = \Z/2\Z$.
\end{thmintro}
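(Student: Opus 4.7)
My plan is to deduce the statement from the two more general computations cited in the theorem, applied to the specific base $\mathbb{P}^n$ with $n \ge 2$, and then to pin down the extension class in the rank-two case. The starting point is the $\aone$-fiber sequence $\mathbb{P}^{m-1} \to \mathbb{P}(\mathcal{E}) \to \mathbb{P}^n$ associated to the projective bundle, together with the long exact sequence of $\aone$-homotopy sheaves furnished by Corollary \ref{cor:projectivebundlelongexactsequence}. By Morel's computations, $\bpi_1^{\aone}(\mathbb{P}^n) \cong \gm$ whenever $n \ge 2$, while the fiber contribution $\bpi_1^{\aone}(\mathbb{P}^{m-1})$ is $\gm$ when $m \ge 3$ and the non-abelian central extension of $\gm$ by $\K_2^{\MW}$ of Theorem \ref{thm:centralextension} when $m=2$.

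For $m \ge 3$, I would invoke Theorem \ref{thm:rankn} directly for the $\aone$-connected base $\mathbb{P}^n$. The content of that theorem is that, because the fiber has abelian $\aone$-fundamental group when $m \ge 3$, the potential obstruction to writing $\bpi_1^{\aone}(\mathbb{P}(\mathcal{E}))$ as a product is governed by a higher obstruction class (a cousin of the Euler class), which lies in a strictly $\aone$-invariant cohomology group of the base. For $\mathbb{P}^n$ with $n \geq 2$, I would check that the relevant $\aone$-cohomology vanishes, so that both the connecting homomorphism $\bpi_2^{\aone}(\mathbb{P}^n) \to \bpi_1^{\aone}(\mathbb{P}^{m-1})$ and the residual extension of $\gm$ by $\gm$ are trivial; this yields the asserted product decomposition.

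For $m = 2$ with $e(\mathcal{E})=0$, I would apply Theorem \ref{thm:projectivebundletrivialeulerclass} to produce the split short exact sequence
\[
1 \longrightarrow \bpi_1^{\aone}(\pone) \longrightarrow \bpi_1^{\aone}(\mathbb{P}(\mathcal{E})) \longrightarrow \bpi_1^{\aone}(\mathbb{P}^n) \cong \gm \longrightarrow 1.
\]
The main remaining task is to identify the isomorphism class of the resulting semi-direct product in terms of $c_1(\mathcal{E})$. I would exploit two observations: first, $\mathbb{P}(\mathcal{E}) \cong \mathbb{P}(\mathcal{E} \otimes \mathcal{L})$ for any line bundle $\mathcal{L}$, so the extension class is an invariant of the image of $c_1(\mathcal{E})$ in $\Pic(\mathbb{P}^n)/2\Pic(\mathbb{P}^n) = \Z/2\Z$; second, the extension class produced by Theorem \ref{thm:projectivebundletrivialeulerclass} transforms in a tractable way under twists by line bundles, and is manifestly trivial when $\mathcal{E} = \O \oplus \O$. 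Direct comparison with the non-trivial model case $\mathcal{E} = \O \oplus \O(1)$ would then fix the correspondence between the extension class and the parity of $c_1(\mathcal{E})$.

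The principal obstacle is the bookkeeping with the non-abelian sheaf $\bpi_1^{\aone}(\pone)$ in the rank-two case. Even once the split extension is established, classifying the possible semi-direct product structures requires understanding outer automorphisms of a nontrivial central extension of $\gm$ by $\K_2^{\MW}$, and verifying that exactly two classes survive, distinguished by $c_1$ modulo $2$, will be the most delicate step.
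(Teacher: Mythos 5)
Your proposal identifies the correct source theorems and the right hypotheses to apply them with (for $X=\mathbb{P}^n$, $n\geq 2$, the $\aone$-fundamental group $\gm$ is a split torus, so condition (i) of both Theorem~\ref{thm:rankn} and Theorem~\ref{thm:projectivebundletrivialeulerclass} applies). However, the explanation you give of \emph{why} those theorems work contains a substantive misattribution, and in the rank-two case you leave open precisely the step the cited theorem already resolves.

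For $m \geq 3$, your description of the mechanism is wrong. You say the triviality of the connecting homomorphism $\bpi_2^{\aone}(\mathbb{P}^n) \to \bpi_1^{\aone}(\mathbb{P}^{m-1})$ is governed by a ``higher obstruction class (a cousin of the Euler class)'' living in cohomology of the base, and that one should ``check that the relevant $\aone$-cohomology vanishes.'' In fact no Euler-class-type obstruction is involved in this range at all. The proof of Lemma~\ref{lem:ranknconnectinghomomorphism} (which feeds into Theorem~\ref{thm:rankn}) shows the connecting homomorphism factors through $\bpi_1^{\aone}(\mathbb{A}^m\setminus 0)$, which is the zero sheaf for $m\geq 3$; this holds over \emph{any} $\aone$-connected base and has nothing to do with cohomology of $\mathbb{P}^n$ or the specific rank of $\bpi_2^{\aone}$ of the base. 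The remaining extension problems are then killed by $\Pic(T)=0$ for a split torus (showing splitness) and $H^1_{\Nis}(X,\Z/2)=0$ for $\aone$-connected $X$ (identifying the conjugation action), again not via any Euler-class argument.

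For $m = 2$, once you invoke Theorem~\ref{thm:projectivebundletrivialeulerclass} with condition (i), the conclusion you call ``the main remaining task'' --- that the group structure is determined by $c_1(\mathcal{E})$ modulo $2$ --- is already part of that theorem's statement: the only thing left to observe is $\det\mathcal{E}=c_1(\mathcal{E})$ under the identification $\Pic(\mathbb{P}^n)\cong\Z$. You then sketch an alternative route through twisting invariance and a comparison of model bundles $\O\oplus\O$ versus $\O\oplus\O(1)$, and flag ``classifying the possible semi-direct product structures'' via outer automorphisms of $\Faone(1)$ as the most delicate unverified step. But that is exactly what the paper's proof handles, via the abelianity of $\Aut(\Faone(1))$ established in Lemma~\ref{lem:autfaone1} and the identification of the map $\bpi_1^{\aone}(X)\to\bpi_0^{\aone}(PGL_2)={\mathcal H}^1_{\et}(\mu_2)$ with reduction of $\det\mathcal{E}$ modulo $2$ given in Proposition~\ref{prop:modnplus1orientationcharacter}. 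Leaving this as an acknowledged gap means your proposal does not actually close the argument in the $m=2$ case.
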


\begin{remintro}
In \cite[Theorem 8.14]{MField}, Morel introduces an Euler class, defined via obstruction theory, that provides the only obstruction to splitting a rank $n$ vector bundle on a smooth affine scheme of dimension $\leq n$.  The Euler class we define is conceived in the same spirit: it provides an obstruction to splitting a vector bundle up to the $\aone$-$2$-(co)skeleton of the base (see Definition \ref{defn:aonepostnikov}) in a manner independent of the dimension of $X$ (or whether $X$ is affine), though at the expense of introducing some $\aone$-connectivity hypotheses.  It can be shown that our Euler class coincides with Morel's Euler class, but we have not provided a detailed verification here.  In a sense, our discussion is classical ({\em cf.} \cite[Chapter 12]{MilnorStasheff}), with some additional contortions necessitated in order to avoid imposing orientability hypotheses.
\end{remintro}

Regarding computations of $\aone$-homotopy groups, this work suggests two natural questions.  First, in the case where $\mathcal{E}$ is a rank $2$ vector bundle on an $\aone$-connected smooth variety and $e(\mathcal{E})$ is non-trivial, the theorem above is definitely not correct; we explain this in more detail in Example \ref{ex:nontrivialeulerclass}.  What does the $\aone$-fundamental group look like in this case?  Second, providing a description of higher $\aone$-homotopy groups of projective bundles will require more effort. Proposition \ref{prop:splitvectorbundles} provides some statements in the case of split vector bundles and Lemma \ref{lem:higherobstructions} gives a statement regarding possibly non-split vector bundles in the presence of strong $\aone$-connectivity hypotheses on the base variety (satisfied, for example, if the base variety is a high-dimensional projective space). Recent computations of higher $\aone$-homotopy sheaves of punctured affine spaces (see \cite{AsokFaselThreefolds,AsokFaselSpheres,AsokFaselpi3a3minus0}) open the door to providing a more detailed description of the next obstruction in the case of ${\mathbb P}^1$-bundles, and the first ``interesting" obstruction in the case of ${\mathbb P}^2$-bundles.

\subsubsection*{Overview of sections}
Section \ref{s:preliminaries} recalls a number of general facts from $\aone$-homotopy theory and $\aone$-algebraic topology, especially aspects related to the theory of the $\aone$-fundamental sheaf of groups.  The subject we consider is quite young an in an attempt to keep the paper reasonably self-contained, we have chosen to repeat statements of results.  Section \ref{s:fibersequences} collects a number of facts about specific $\aone$-fiber sequences related to classifying spaces for $SL_n, GL_n$ and $PGL_n$, which will be necessary for analyzing the various obstructions that arise.  Finally, the main results are proven in Sections \ref{s:splittingobstructions} and \ref{s:isomorphichomotopygroups}.  We refer the reader to the introduction to each section for a more detailed list of contents.

\subsubsection*{Acknowledgements}
We wish to thank Fabien Morel for many discussions related to the $\aone$-homotopy classification problem, and Brent Doran for discussions about the theory of variation of GIT quotients; taken together, these discussions provided the motivation for the construction of the examples required for Theorem \ref{thmintro:homotopyinequivalentisomorphichomotopy}.  We thank Marc Levine and Christian Haesemeyer for discussions about notions of motivic degree and Euler classes.  We thank Matthias Wendt for discussions regarding his work and a number of insightful comments on a draft of this paper.  We also thank Lukas-Fabian Moser for making preliminary versions of his results available to us, and Ben Williams for discussions about obstructions. Finally, we thank one of the referees of a previous version of this paper for providing an extremely detailed report with a number of comments and corrections that certainly improved its form and substance.

\section{Review of some $\aone$-algebraic topology}
\label{s:preliminaries}
In this section, we review some basic facts from $\aone$-homotopy theory and $\aone$-algebraic topology.  The material in this section will be used throughout the paper.  Much of what we write here can be found in the Morel's beautiful work \cite{MField}, but we have provided some proofs of results stated but not proven in {\em ibid}.

\subsubsection*{$\aone$-homotopic preliminaries}
Fix a field $k$; in the body of the text $k$ is assumed to have characteristic $0$ if no alternative hypotheses are given.  Write $\Sm_k$ for the category of schemes separated, smooth and finite type over $k$.  The word {\em sheaf} we will always mean Nisnevich sheaf on $\Sm_k$.  We write $\Spc_k$ for the category of simplicial Nisnevich sheaves on $\Sm_k$; objects of the category $\Spc_k$ will be referred to as $k$-spaces, or simply as {\em spaces} if $k$ is clear from context.  Sending a smooth scheme $X$ to the representable functor $\hom_{\Sm_k}(\cdot,X)$ and taking the associated constant simplicial sheaf (i.e., all face and degeneracy maps are the identity map) determines a fully faithful functor $\Sm_k \to \Spc_k$.  Spaces will generally be denoted by calligraphic letters, while schemes will be denoted by capital roman letters; we use the composite functor just mentioned to identify $\Sm_k$ with its essential image in $\Spc_k$ and we use the same notation for both a scheme and the associated space.  Write $\Spc_{k,\bullet}$ for the category of pointed $k$-spaces, i.e., pairs $({\mathcal X},x)$ consisting of a $k$-space ${\mathcal X}$ and a morphism $x: \Spec k \to {\mathcal X}$.

We write $\hsnis$ for the (Nisnevich) simplicial homotopy category and $\hspnis$ for its pointed analog; each of these categories is the homotopy category of a model structure on $\Spc_k$ or $\Spc_{k,\bullet}$.  For more details regarding the precise definitions, we refer the reader \cite[\S 2 Definition 1.2]{MV}.  The set of morphisms between two spaces in $\hsnis$ or $\hspnis$ will be denoted $[-,-]_s$, though when considering pointed homotopy classes of maps, the base-point will be explicitly specified unless it is clear from context.  For a pointed space $(\mathcal{X},x)$, we write $\Sigma^i_s$ for the simplicial suspension operation $S^i_s \wedge -$ and ${\bf R}\Omega^1_s \mathcal{X}$ for the space $\Omega^1_s \mathcal{X}^f$, where $\mathcal{X}^f$ is a simplicially fibrant model of $\mathcal{X}$.  As usual, simplicial suspension is left adjoint to (simplicial) looping.

We use the Morel-Voevodsky $\aone$-homotopy category $\ho{k}$ and its pointed analog $\hop{k}$; these categories are constructed as a (left) Bousfield localization of either $\hsnis$ or $\hspnis$; see \cite[\S 3 Definition 2.1]{MV}.  In particular, the category $\ho{k}$ is equivalent to the subcategory of $\hsnis$ consisting of $\aone$-local objects, and the inclusion of the subcategory of $\aone$-local objects admits a left adjoint called the $\aone$-localization functor.  There is a choice of such a functor, for which we write $L_{\aone}$, that commutes with formation of finite products (one model of the functor $L_{\aone}$ is given in \cite[\S 2 Lemma 3.20]{MV}, and one replaces $Ex$ in this lemma by the Godement resolution functor via \cite[\S 2 Theorem 1.66]{MV}).  We write $[-,-]_{\aone}$ for morphisms in $\ho{k}$ or $\hop{k}$ and explicitly indicate the base-point in the latter case.  

An important fact regarding the $\aone$-local model structure is that it is proper; in particular, pullbacks of $\aone$-weak equivalences along $\aone$-fibrations are again $\aone$-weak equivalences; see \cite[\S 2 Theorem 3.2]{MV}.  We will also import various definitions from classical homotopy theory (e.g., regarding connectivity) to the simplicial or $\aone$-homotopy category by prepending either the word simplicial or the symbol $\aone$; when we do not give precise definitions, the terms are defined in analogy with their classical topological counterparts.

If ${\mathcal X}$ is a space, the sheaf of simplicial connected components, denoted $\bpi_0^s(\mathcal{X})$, is the Nisnevich sheaf on $\Sm_k$ associated with the presheaf $U \mapsto [U,\mathcal{X}]_s$.  Similarly, the sheaf of $\aone$-connected components of ${\mathcal X}$, denoted $\bpi_0^{\aone}({\mathcal X})$ is the Nisnevich sheaf on $\Sm_k$ associated with the presheaf $U \mapsto [U,{\mathcal X}]_{\aone}$.  A space $\mathcal{X}$ is simplicially connected if the canonical morphism $\bpi_0^s(\mathcal{X}) \to \Spec k$ is an isomorphism, and $\aone$-connected if the canonical morphism $\bpi_0^{\aone}(\mathcal{X}) \to \Spec k$ is an isomorphism.  We recall the following result, which provides many examples of $\aone$-connected varieties, and suffices to establish $\aone$-connectedness of most of the examples in the remainder of the paper.

\begin{prop}
\label{prop:aoneconnectedness}
If $k$ is a field, and $X$ is a smooth scheme admitting an open cover by open sets isomorphic to affine space (with non-empty intersections), then $X$ is $\aone$-connected.
\end{prop}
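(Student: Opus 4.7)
The plan is to exploit the given Zariski cover $\mathcal{U}=\{U_i\}$ of $X$ together with two elementary inputs: first, that each chart $U_i$, being isomorphic to an affine space, is $\aone$-contractible, so that $\bpi_0^{\aone}(U_i)=\Spec k$; and second, that the Čech simplicial object $\check{C}(\mathcal{U})_\bullet$ is simplicially (and hence $\aone$-)weakly equivalent to $X$, since the cover is Nisnevich.  I would then apply the functor $\bpi_0^{\aone}\colon \ho{k}\to \Shv_{\Nis}(\Sm_k)$, which is left adjoint to the inclusion of discrete sheaves into $\ho{k}$ (as constant $\aone$-local simplicial sheaves) and therefore preserves homotopy colimits, rewriting $\bpi_0^{\aone}(X)$ as the colimit of the simplicial diagram $\bpi_0^{\aone}(\check{C}(\mathcal{U})_\bullet)$ in sheaves of sets.

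This simplicial colimit reduces to the coequalizer
\[
\bpi_0^{\aone}(X)\;=\;\mathrm{coeq}\Bigl(\bpi_0^{\aone}\bigl({\textstyle\coprod}_{i,j}\,U_i\cap U_j\bigr)\rightrightarrows \bpi_0^{\aone}\bigl({\textstyle\coprod}_i\,U_i\bigr)\Bigr),
\]
whose target equals $\coprod_i\Spec k$.  For each pair $(i,j)$ with $U_i\cap U_j$ non-empty, $\bpi_0^{\aone}(U_i\cap U_j)$ is a non-empty sheaf (it admits sections over Nisnevich neighborhoods of any point of $U_i\cap U_j$), and its two face-map images sit in the $i$-th and $j$-th copies of $\Spec k$; the coequalizer therefore identifies these copies.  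Since the hypothesis forces every pairwise intersection to be non-empty, all copies of $\Spec k$ collapse to a single one, and I would conclude $\bpi_0^{\aone}(X)=\Spec k$.

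The main obstacle is the verification that $\bpi_0^{\aone}$ really does convert the Čech hypercover description of $X$ into the asserted coequalizer in sheaves of sets; this is a standard point but requires a little care about (homotopy) colimits in sheaves versus presheaves.  A cleaner, more elementary alternative that I would use to sidestep the hypercover bookkeeping is induction on the number of charts: at each stage, a Zariski decomposition yields a Mayer-Vietoris homotopy pushout $U\cap V\to U\sqcup V\to U\cup V$ in $\ho{k}$ (valid because the $\aone$-local model structure is proper), and if $U$ and $V$ are $\aone$-connected and $U\cap V$ is non-empty, then applying the left adjoint $\bpi_0^{\aone}$ converts this to the pushout $\Spec k\leftarrow \bpi_0^{\aone}(U\cap V)\to \Spec k$, which evaluates to $\Spec k$.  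Iterating produces the conclusion under the stated hypothesis.
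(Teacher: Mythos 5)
The paper recalls this proposition without proof; the source, \cite{AM}, proves it via $\aone$-chain connectedness together with Morel's criterion that $\aone$-connectedness can be tested on sections over finitely generated field extensions of $k$ (the present paper invokes exactly this criterion in the proof of Lemma \ref{lem:bslnaonesimplyconnected}, citing \cite[Proposition 2.2.7]{AM}). Your \v{C}ech / Mayer--Vietoris route is genuinely different. One technical issue before the main point: $\bpi_0^{\aone}$ is \emph{not} left adjoint to the inclusion of discrete Nisnevich sheaves into $\ho{k}$, because a discrete sheaf $F$ viewed as a constant simplicial sheaf is $\aone$-local only when $F$ is $\aone$-invariant, and $\bpi_0^{\aone}$ is not known to land in $\aone$-invariant sheaves; consequently $\bpi_0^{\aone}$ need not preserve homotopy colimits on the nose. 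What you do get---combining the fact that $\bpi_0^s$ preserves homotopy colimits with the unstable connectivity theorem's epimorphism $\bpi_0^s(\mathcal{Y}) \twoheadrightarrow \bpi_0^{\aone}(\mathcal{Y})$---is a natural \emph{epimorphism} from your coequalizer onto $\bpi_0^{\aone}(X)$. That is enough to conclude, but only once the coequalizer itself is shown to be $\Spec k$. (Also, a Zariski cover gives a homotopy pushout square because of the Nisnevich descent built into the model structure, not because the $\aone$-local model structure is proper.)

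The substantive gap is in the collapse of the coequalizer. Coequalizers of Nisnevich sheaves of sets are computed stalkwise, so for $\mathrm{coeq}\bigl(\coprod_{i,j}\bpi_0^{\aone}(U_i \cap U_j) \rightrightarrows \coprod_i \Spec k\bigr)$ to equal $\Spec k$ one needs each structure map $\bpi_0^{\aone}(U_i \cap U_j) \to \Spec k$ to be an \emph{epimorphism} of sheaves, i.e., to have non-empty stalk at every henselian local scheme in the site, and in particular at $\Spec k$ itself. Your justification---that sections exist over Nisnevich neighborhoods of points of $U_i \cap U_j$---shows the sheaf is not initial, which is strictly weaker. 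Over an infinite field the required epimorphism does hold, since a non-empty open subscheme of $\mathbb{A}^n_k$ has $k$-rational points, and these surject onto $\bpi_0^{\aone}$; with this noted your argument goes through. Over a finite field it can fail: for example $\mathbb{A}^1_{\mathbb{F}_2}\setminus\{0,1\}$ is $\aone$-rigid and has no $\mathbb{F}_2$-point, so $\bpi_0^{\aone}$ of it has empty stalk at $\Spec\mathbb{F}_2$ and the corresponding copies of $\Spec k$ in the coequalizer would not be identified over $\Spec k$. You should therefore either add the hypothesis that $k$ is infinite (harmless here, since the paper defaults to characteristic zero) or supply a separate argument for small residue fields, which is in effect what the $\aone$-chain-connectedness route accomplishes.
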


If $({\mathcal X},x)$ is a pointed space, then $\bpi_i^s(\mathcal{X},x)$ is the Nisnevich sheaf on $\Sm_k$ associated with the presheaf $U \mapsto [S^i_s \wedge U_+,{\mathcal X}]_s$, and $\bpi_i^{\aone}(\mathcal{X},x)$ is the Nisnevich sheaf on $\Sm_k$ associated with the presheaf $U \mapsto [S^i_s \wedge U_+,{\mathcal X}]_{\aone}$ (here, we take pointed homotopy classes of maps).  These are sheaves of groups for $i > 0$, and sheaves of abelian groups for $i > 1$.  For notational compactness, we will often suppress base-points when writing $\aone$-homotopy groups when the space under consideration is either $\aone$-connected or if the choice of base-point is clear from context.

A presheaf $\F$ on $\Sm_k$ is $\aone$-invariant if for every $U \in \Sm_k$ the canonical map $\F(U) \to \F(U \times \aone)$ is a bijection.  A sheaf of groups $G$ (possibly non-abelian) is {\em strongly $\aone$-invariant} if the cohomology presheaves $H^i_{\Nis}(\cdot,G)$ are $\aone$-invariant for $i = 0,1$.  A sheaf of abelian groups $A$ is {\em strictly $\aone$-invariant} if the cohomology presheaves $H^i_{\Nis}(\cdot,A)$ are $\aone$-invariant for $i \geq 0$.  The main structural properties of the sheaves $\bpi_i^{\aone}(\mathcal{X},x)$ are summarized in the following fundamental results due to Morel.

\begin{thm}[{\cite[Theorem 6.1 and Corollary 6.2]{MField}}]
\label{thm:pi1stronglyaoneinvariant}
If $({\mathcal{X}},x)$ is a pointed space, then for any integer $i > 0$ the sheaves $\bpi_i^{\aone}(\mathcal{X},x)$ are strongly $\aone$-invariant.
\end{thm}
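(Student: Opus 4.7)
The plan is to follow Morel's approach by exhibiting a concrete model for the $\aone$-localization functor $L_{\aone}$ and analyzing its simplicial homotopy sheaves directly. Since $\bpi_i^{\aone}(\mathcal{X}, x) \cong \bpi_i^s(L_{\aone}\mathcal{X}, x)$ (once $L_{\aone}\mathcal{X}$ is realized by a simplicially fibrant object), it suffices to produce a functorial, simplicially fibrant representative of $L_{\aone}\mathcal{X}$ whose simplicial homotopy sheaves are strongly $\aone$-invariant in every positive degree.

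The concrete model I would use is the transfinite iterate of $R := \text{Sing}^{\aone} \circ \text{Ex}^{\Nis}$, where $\text{Ex}^{\Nis}$ is a Nisnevich-local fibrant replacement and $\text{Sing}^{\aone}$ is the diagonal of the bisimplicial sheaf $n \mapsto \mathcal{Y}(\,\cdot\, \times \Delta^n_{\aone})$. Standard arguments (compare \cite[\S 2 Lemma 3.20]{MV}) show that some sufficiently large ordinal iterate $R^{\infty}\mathcal{X}$ is simplicially fibrant and naturally $\aone$-weakly equivalent to $\mathcal{X}$, and that this construction commutes with finite products.

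The core of the argument is a recognition principle: a pointed simplicially fibrant connected space $(\mathcal{Y}, y)$ is $\aone$-local if and only if $\bpi_1^s(\mathcal{Y}, y)$ is a strongly $\aone$-invariant sheaf of groups and each $\bpi_i^s(\mathcal{Y}, y)$ with $i \geq 2$ is a strictly $\aone$-invariant sheaf of abelian groups. The non-obvious direction is an induction up the Postnikov tower of $\mathcal{Y}$: each stage is a twisted Eilenberg--MacLane fibration whose fiber is determined by one of these sheaves, and the obstructions to comparing maps from $U$ and from $U \times \aone$ are governed by Nisnevich cohomology with coefficients in the relevant homotopy sheaf. The hypothesis of strong or strict $\aone$-invariance is precisely what kills these obstructions stage by stage, so $\mathcal{Y}$ is $\aone$-local.

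The hardest step is to prove that each sheaf $\bpi_i^s(R^{\infty}\mathcal{X}, x)$ is actually strongly $\aone$-invariant. Pointwise $\aone$-invariance of sections of these sheaves follows relatively directly from the presence of $\text{Sing}^{\aone}$ in the construction, but upgrading this to $\aone$-invariance of first Nisnevich cohomology requires genuine geometric input: Gabber-style presentation lemmas for Nisnevich-local rings, together with a careful analysis of how Nisnevich sheafification interacts with iterated $\text{Sing}^{\aone}$-quotients of sheaves of groups. This is essentially Morel's \emph{unstable $\aone$-connectivity theorem} from \cite{MField}. Once it is in hand, the recognition principle identifies $R^{\infty}\mathcal{X}$ as a valid model of $L_{\aone}\mathcal{X}$, and the theorem follows.
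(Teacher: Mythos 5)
The paper does not prove this statement; it imports it wholesale from Morel's work (\cite[Theorem 6.1 and Corollary 6.2]{MField}), so the comparison is against what Morel actually does and whether your outline is viable. Your overall architecture --- take a transfinite iterate $R^{\infty}\mathcal{X}$ of the $\aone$-singular-complex construction composed with Nisnevich-local fibrant replacement as an explicit model of $L_{\aone}\mathcal{X}$, and then prove directly that its simplicial homotopy sheaves are strongly $\aone$-invariant --- is indeed the skeleton of Morel's approach. But the proposal has two concrete problems.

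First, your ``recognition principle'' has its directions reversed: you call the implication ``strongly/strictly $\aone$-invariant homotopy sheaves $\Rightarrow$ $\aone$-local'' the non-obvious direction and implicitly treat the converse as easy, but the converse (which you do not argue) is precisely the theorem in question. Second, and more seriously, the step you identify as ``hardest'' is misattributed: showing that $\bpi_i^s(R^{\infty}\mathcal{X})$ is strongly $\aone$-invariant is not ``essentially Morel's unstable $\aone$-connectivity theorem.'' In this paper the unstable $\aone$-connectivity theorem for $i \geq 1$ (Corollary \ref{cor:unstablehigherconnectivitytheorem}, corresponding to Morel's Theorem 6.38) is proved as a \emph{consequence} of Theorems \ref{thm:pi1stronglyaoneinvariant} and \ref{thm:stronglyaoneinvariantabelian}; invoking it here is circular. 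The much weaker Morel--Voevodsky connectivity statement \cite[\S 3 Corollary 3.22]{MV} is independent but only controls $\bpi_0^{\aone}$ and cannot yield strong $\aone$-invariance of the higher homotopy sheaves. The genuine content of the hard step is the theory of unramified sheaves and strongly $\aone$-invariant sheaves of groups developed across several chapters of \cite{MField}, including Gabber's geometric presentation lemma and a careful analysis of how Nisnevich sheafification interacts with $\aone$-invariant presheaves. In particular, the claim you call ``relatively direct'' --- that the homotopy \emph{sheaves} of $R^{\infty}\mathcal{X}$ are pointwise $\aone$-invariant --- is already nontrivial: $\aone$-invariance holds automatically only at the level of presheaves (since $R^{\infty}\mathcal{X}$ is $\aone$-local), and Nisnevich sheafification does not formally preserve it. Your sketch correctly identifies the starting model but leaves the entire technical heart of the proof as an unexplained black box.
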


\begin{thm}[{\cite[Theorem 5.46]{MField}}]
\label{thm:stronglyaoneinvariantabelian}
If $k$ is a perfect field, a strongly $\aone$-invariant sheaf of abelian groups on $\Sm_k$ is strictly $\aone$-invariant.
\end{thm}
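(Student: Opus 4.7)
The plan is to follow Morel's strategy, constructing a ``Rost--Schmid'' complex that computes the Nisnevich cohomology of a strongly $\aone$-invariant sheaf $A$ on any smooth scheme $X$, and then showing this complex is manifestly $\aone$-invariant. The goal is $H^i_{\Nis}(U, A) \isomto H^i_{\Nis}(U \times \aone, A)$ for all $i \geq 0$ and all $U \in \Sm_k$; the cases $i=0,1$ are given by hypothesis, so the substantive content is in the higher cohomology.

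First, I would establish the auxiliary construction of the \emph{contraction} $A_{-1}$, defined by $A_{-1}(U) := \ker\bigl(A(U \times \gm) \to A(U)\bigr)$ along the unit section. A direct argument, using only the hypothesis that $A$ is strongly $\aone$-invariant and the Mayer--Vietoris sequence associated to the standard cover of $\aone \setminus 0$, shows that $A_{-1}$ is again strongly $\aone$-invariant. Iterating gives a tower $A, A_{-1}, A_{-2}, \ldots$ of strongly $\aone$-invariant sheaves; these will supply the coefficients for the Rost--Schmid complex.

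Second, I would construct, for each $n$, transfer maps $\mathrm{Tr}_{L/K} : A_{-n}(L) \to A_{-n}(K)$ for finite separable extensions $L/K$ of finitely generated fields over $k$, together with residues $\partial : A_{-n}(K) \to A_{-n-1}(\kappa(v))$ for discrete valuations $v$ on $K$, and verify the expected compatibilities (additivity, projection formula, reciprocity along a smooth curve). The perfectness of $k$ enters decisively here: it guarantees that every residue field of a codimension $p$ point of a smooth $k$-scheme is a separable extension of its ``generic'' counterpart, so transfers are defined on every term that will appear. With these ingredients, one assembles the Rost--Schmid complex
\[
C^p(X; A) \;=\; \bigoplus_{x \in X^{(p)}} A_{-p}\bigl(\kappa(x);\, \det(\mathfrak{m}_x/\mathfrak{m}_x^2)^{\vee}\bigr),
\]
with differentials the usual signed sums of residues composed with transfers along residue field extensions, and one shows via a Cousin/Gersten argument (using strong invariance to kill the $H^1$ terms in codimension one) that $H^p\bigl(C^*(X; A)\bigr) \cong H^p_{\Nis}(X, A)$.

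Finally, strict $\aone$-invariance follows by comparing $C^*(X; A)$ with $C^*(X \times \aone; A)$: codimension $p$ points of $X \times \aone$ are either codimension $p$ points of $X$ pulled back (contributing $A_{-p}$ of a purely transcendental extension, which is absorbed by $\aone$-invariance of $A_{-p}$ on fields), or closed points in the $\aone$-fiber over a codimension $p-1$ point of $X$, and the transfer/residue structure arranges this contribution into a complex whose cohomology vanishes. The main obstacle, by a wide margin, is the construction and verification of the transfers on $A_{-n}$ along finite separable extensions: one must build them intrinsically from the sheaf structure (there is no a priori motivic input), prove independence of all choices, and establish reciprocity along smooth projective curves, all of which require a substantial amount of technical machinery and are what force the perfectness hypothesis. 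Once these foundations are in place, the actual verification of $\aone$-invariance of the complex is comparatively formal.
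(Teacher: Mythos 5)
The paper does not prove this theorem; it simply cites it as Morel's \cite[Theorem 5.46]{MField}, so there is no proof in the paper to compare against. Your sketch does capture the architecture of the cited proof: contractions $A_{-1}$, their preservation of strong $\aone$-invariance, transfers, the Rost--Schmid complex, resolving the sheaf by a Cousin/Gersten argument, and comparing the complexes for $X$ and $X \times \aone$. You also correctly identify the construction and verification of transfers as by far the heaviest ingredient.

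There is, however, a genuine gap in the way you invoke perfectness. Your claim that perfectness of $k$ ``guarantees that every residue field of a codimension $p$ point of a smooth $k$-scheme is a separable extension of its generic counterpart'' is false, and restricting to transfers along \emph{separable} finite extensions would break the argument in positive characteristic. Concretely, let $k$ be perfect of characteristic $p$, let $K = k(s,t)$, and consider the closed point of $\aone_K$ cut out by $u^p - t$: its residue field $K(t^{1/p})$ is purely inseparable over $K$, yet extensions of exactly this type occur among the finite extensions along which the Rost--Schmid differential must transfer (after normalizing the closure of a higher-codimension point). Morel accordingly constructs transfers for \emph{all} finite extensions of finitely generated fields over $k$, and handling the inseparable case is a significant part of the difficulty. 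The role of perfectness is subtler than you suggest: it guarantees that every finitely generated extension of $k$ is separably generated (admits a separating transcendence basis), which is what makes the geometric transfer and specialization constructions go through, and it keeps the residue fields themselves perfect in the low-dimensional cases where that matters. In characteristic zero, where every field is perfect, this entire issue disappears; it is precisely positive characteristic where your separability restriction would leave holes in the Rost--Schmid differentials.
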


\begin{notation}
We write $\Gr^{\aone}_k$ for the category of strongly $\aone$-invariant sheaves of groups, and $\Ab^{\aone}_k$ for the category of strictly $\aone$-invariant sheaves of groups.
\end{notation}

\subsubsection*{Simplicial homotopy classification of torsors}
Let $G$ be a Nisnevich sheaf of groups.  Let $EG$ denote the \u Cech construction of the epimorphism $G \to \Spec k$, and let $BG$ denote the (Nisnevich) sheaf quotient $EG/G$ for the diagonal (right) action of $G$ on $EG$.  The space $BG$, which is the simplicial classifying space for $G$, has a canonical base point, which we denote $\ast$ in the sequel. We write $H^1_{\Nis}({\mathcal X},G)$ for the set of Nisnevich locally trivial $G$-torsors over ${\mathcal X}$, i.e., triples $(P,\pi,G)$ consisting of a (right) $G$-space $P$, a morphism $\pi: {\mathcal P} \to {\mathcal X}$ equivariant for the trivial right action on ${\mathcal X}$, and isomorphism of Nisnevich sheaves $G \times {\mathcal P} \isomt {\mathcal P} \times_{\mathcal X} {\mathcal P}$.  The terminology classifying space is justified by the following result.

\begin{thm}[{\cite[\S 4 Propositions 1.15 and 1.16]{MV}}]
\label{thm:simplicialhomotopyclassificationoftorsors}
If $G$ is a Nisnevich sheaf of groups, then for any space ${\mathcal X}$ there is a canonical bijection
\[
[{\mathcal X},BG]_{s} \isomto H^1_{\Nis}({\mathcal X},G).
\]
Moreover, for any integer $i > 0$, and any smooth scheme $U$, the group $[\Sigma^i_s U_+,BG]_s$ is isomorphic to $G(U)$ if $i = 1$ and is trivial if $i > 1$.
\end{thm}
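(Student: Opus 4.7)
The plan is to construct the comparison map by pulling back a universal torsor, identify this map as an isomorphism via a local-to-global argument exploiting Nisnevich local triviality, and then deduce the homotopical statement from the simplicial fibration $G \to EG \to BG$ combined with simplicial loop/suspension adjunction.

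First I would fix notation: the simplicial sheaf $EG$ is the \v Cech nerve of $G \to \Spec k$, so $EG_n = G^{n+1}$ with the standard face/degeneracy maps, and $BG = EG/G$ for the diagonal right $G$-action. Because $G \to \Spec k$ is a Nisnevich epimorphism of sheaves, $EG \to \Spec k$ is a simplicial weak equivalence (the standard argument: the \v Cech nerve of an epimorphism of sheaves is simplicially contractible, as one checks by a stalk-wise section). Moreover $EG \to BG$ is itself a Nisnevich-locally trivial $G$-torsor, the \emph{universal} one. Pullback of this torsor along a map $f \colon \mathcal X \to BG$ defines a set-valued map
\[
\Phi \colon [\mathcal X, BG]_s \longrightarrow H^1_{\Nis}(\mathcal X, G),
\]
well-defined because pullbacks of torsors along simplicially homotopic maps are isomorphic (use properness and the fact that $\aone$ is nowhere used here, only simplicial homotopy invariance of torsor pullback along projections $\mathcal X \times \Delta^1 \to \mathcal X$, which follows from Nisnevich local triviality and the fact that $G$-torsors on $\mathcal X \times \Delta^1$ pull back from $\mathcal X$ up to isomorphism).

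Next I would establish that $\Phi$ is a bijection. For surjectivity, given $P \to \mathcal X$ a Nisnevich $G$-torsor, choose a trivializing Nisnevich hypercover $\mathcal U_\bullet \to \mathcal X$. Then $P \times_{\mathcal X} \mathcal U_\bullet$ is isomorphic to $G \times \mathcal U_\bullet$, and the transition data assemble into a simplicial map $\mathcal U_\bullet \to EG$ equivariant for $G$, inducing a map $\mathcal X \to BG$ after passing to quotients (modulo replacing $\mathcal X$ by its associated hypercover, which is a simplicial weak equivalence by Nisnevich descent, cf.\ \cite[\S2 Theorem 1.66]{MV}). For injectivity, two maps $f_0, f_1 \colon \mathcal X \to BG$ with isomorphic pulled-back torsors produce, via the same Cech construction on the cylinder $\mathcal X \times \Delta^1$, a simplicial homotopy between $f_0$ and $f_1$. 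The main obstacle in this paragraph is purely bookkeeping: namely keeping track of the passage between hypercovers and their associated simplicial sheaves and checking that what one writes down really is natural. This is where the bulk of the work in \cite[\S4]{MV} lies.

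For the second statement, I would use the simplicial loop/suspension adjunction to rewrite
\[
[\Sigma^i_s U_+, BG]_s \;\cong\; [U_+, \mathbf{R}\Omega^i_s BG]_s.
\]
The simplicial fibration $G \to EG \to BG$, combined with the contractibility of $EG$ noted above, yields a simplicial weak equivalence $G \isomto \mathbf{R}\Omega_s BG$ (view $G$ as a discrete simplicial sheaf). Thus for $i = 1$ one gets $[U_+, G]_s = \hom_{\Shv}(U, G) = G(U)$, since $G$ is representable by a discrete simplicial sheaf and Nisnevich sheaves with no higher simplicial structure have no higher simplicial maps from a representable. For $i > 1$, iterate: $\mathbf{R}\Omega_s^{i-1} G$ is trivial because $G$ is a discrete sheaf, so it has trivial higher simplicial homotopy sheaves, whence $\mathbf{R}\Omega_s^{i-1} G \simeq \ast$ in $\hspnis$. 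I expect the mildly delicate point here to be the identification $G \isomto \mathbf{R}\Omega_s BG$: one needs either to take a simplicially fibrant replacement of $BG$ before looping, or equivalently to invoke the long exact sequence of a simplicial fibration together with the fact that $\bpi_0^s(G) = G$ and $\bpi_i^s(G) = 0$ for $i > 0$.
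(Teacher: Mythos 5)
The paper does not prove this statement: it is recalled verbatim from \cite[\S 4 Propositions 1.15 and 1.16]{MV} with a citation and no argument, so there is no in-text proof to compare against; your sketch is being judged on its own merits and against the \cite{MV} argument it is trying to reconstruct.

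Your treatment of the first bijection $[\mathcal{X},BG]_s \isomto H^1_{\Nis}(\mathcal{X},G)$ — pull back the universal torsor $EG \to BG$, then for surjectivity trivialize an arbitrary torsor over a Nisnevich hypercover and assemble the transition data into a map to $BG$ — is the right shape and matches what is in \cite{MV}. One false note: the appeal to ``properness'' to get well-definedness of the pullback map is misplaced. Properness of a model structure governs stability of weak equivalences under (co)base change along (co)fibrations and has nothing to do with homotopy invariance of $H^1_{\Nis}(-,G)$; what you actually need (and then go on to gesture at correctly) is that the two restrictions $i_0^*, i_1^*\colon H^1_{\Nis}(\mathcal{X}\times\Delta^1_s,G)\to H^1_{\Nis}(\mathcal{X},G)$ coincide.

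The genuine gap is in the second assertion. You derive it from a weak equivalence $G \isomt {\bf R}\Omega^1_s BG$, which you in turn justify by calling $G\to EG\to BG$ a ``simplicial fibration.'' But $EG \to BG$ is \emph{not} a fibration in the injective local model structure; it is only a local fibration (a Kan fibration on stalks), and the claim that its global homotopy fiber is $G$ — rather than something that merely agrees with $G$ stalkwise — is exactly the kind of local-to-global statement the whole theorem is about. You flag this as ``mildly delicate'' and offer two fixes, but neither closes the loop: replacing $BG$ by a fibrant model is necessary but does not by itself identify the fiber, and invoking ``the long exact sequence of a simplicial fibration'' presupposes the very fibration-sequence structure you are trying to establish. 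Note also that the paper's own dependency runs the other way: Corollary~\ref{cor:loopsclassifyingadjointness} deduces ${\bf R}\Omega^1_s BG \isomt G$ \emph{from} this theorem (by applying it to $\Sigma^i_s U_+$, using adjunction, and sheafifying), not as an input to it. To make your sketch self-contained you would need either to compute $H^1_{\Nis}(\Sigma^i_s U_+,G)$ directly by a clutching-function argument and cite the first part, or to prove that $BG$ — being stalkwise a $K(G,1)$ for the Nisnevich sheaf $G$ — has a fibrant replacement whose fiber over the basepoint is $G$, say via a descent spectral sequence; the former is closer to what \cite{MV} and this paper actually do.
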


If $G$ is a linear algebraic group viewed as a Nisnevich sheaf of groups and $X$ is a smooth scheme, the set $H^1_{\Nis}(X,G)$ studied above can actually be identified with the set of Nisnevich locally trivial $G$-torsors on $X$ in the usual sense.  For our purposes, it suffices to observe that a Nisnevich locally trivial $G$-torsor on $X$ in the usual sense gives rise to a $G$-torsor on $X$ in the sense above by means of the Yoneda embedding.

\begin{cor}
\label{cor:pi0linearalgebraicgroup}
If $G$ is a Nisnevich sheaf of groups, then $\bpi_0^{\aone}(BG) = \ast$.
\end{cor}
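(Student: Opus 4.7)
The plan is to prove $\bpi_0^{\aone}(BG) = \Spec k$ in two stages: first establish that $BG$ is simplicially connected, i.e., $\bpi_0^s(BG) = \Spec k$, and then upgrade this to $\aone$-connectedness using the general principle that $\bpi_0^{\aone}(\mathcal{X})$ is always a sheaf-theoretic quotient of $\bpi_0^s(\mathcal{X})$.

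For the first stage, I would apply Theorem \ref{thm:simplicialhomotopyclassificationoftorsors}, which identifies the presheaf $U \mapsto [U, BG]_s$ on $\Sm_k$ with $U \mapsto H^1_{\Nis}(U, G)$. The Nisnevich stalks of the sheaf $\bpi_0^s(BG)$ are evaluated on Henselian local schemes $\Spec R$; the defining property of the Nisnevich topology ensures that every Nisnevich cover of such a scheme admits a section, so $H^1_{\Nis}(\Spec R, G) = \ast$ for every Henselian local ring $R$. Sheafifying yields $\bpi_0^s(BG) = \Spec k$.

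For the second stage, the key input is the general statement that the canonical morphism $\bpi_0^s(\mathcal{X}) \to \bpi_0^{\aone}(\mathcal{X})$ is an epimorphism of Nisnevich sheaves for any space $\mathcal{X}$. Heuristically, the passage from the simplicial to the $\aone$-homotopy category only identifies additional maps and never separates them. Concretely, this can be read off the Morel--Voevodsky model of the $\aone$-localization $L_{\aone}$ referenced in the excerpt as \cite[\S 2 Lemma 3.20]{MV}: $L_{\aone}(\mathcal{X})$ is constructed by iterating the singular functor $\operatorname{Sing}^{\aone}$ (whose $0$-simplices coincide with those of $\mathcal{X}$, so its effect on $\bpi_0^s$ is a surjection of sheaves) and a Nisnevich fibrant replacement (which preserves $\bpi_0^s$ by construction). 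Applied to $\mathcal{X} = BG$, together with the first stage, this forces $\bpi_0^{\aone}(BG)$ to be a quotient of the terminal sheaf, hence $\bpi_0^{\aone}(BG) = \Spec k$.

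The main obstacle is the second stage. The first is essentially immediate once one has the classification of torsors and the vanishing of Nisnevich cohomology on Henselian local schemes. The second is formal once the structure of $L_{\aone}$ is unwound, but it relies on either explicitly invoking that model or citing the corresponding general statement from Morel's work; in the context of this paper it is cleanest to cite rather than reprove.
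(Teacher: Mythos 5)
Your proposal is correct and follows essentially the same route as the paper: establish triviality of $\bpi_0^s(BG)$ on Henselian local stalks via Theorem \ref{thm:simplicialhomotopyclassificationoftorsors}, then pass through the epimorphism $\bpi_0^s(BG) \to \bpi_0^{\aone}(BG)$ (which the paper simply cites as the unstable $\aone$-connectivity theorem, \cite[\S 3 Corollary 3.22]{MV}, rather than unwinding the construction of $L_{\aone}$ as you sketch).
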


\begin{proof}
By the unstable $\aone$-connectivity theorem \cite[\S 3 Corollary 3.22]{MV} there is an epimorphism
\[
\bpi_0^s(BG) \longrightarrow \bpi_0^{\aone}(BG).
\]
The sheaf $\bpi_0^s(BG)$ is, by definition, the Nisnevich sheafification of $U \mapsto [U,BG]_{s}$.  To check triviality of this sheaf, it suffices to check triviality over stalks.  By Theorem \ref{thm:simplicialhomotopyclassificationoftorsors} and the discussion just prior to the statement, if $S$ is a Henselian local scheme, $[S,BG]_s$ corresponds to the set of isomorphism classes of Nisnevich locally trivial $G$-torsors over $S$.  However, Nisnevich locally trivial $G$-torsors over $S$ are trivial.
\end{proof}

\begin{rem}
\label{rem:colimits}
One immediate consequence of the above discussion is the fact that a Nisnevich sheaf of groups $G$ is strongly $\aone$-invariant if and only if the classifying space $BG$ is $\aone$-local; we use this observation repeatedly in the sequel.  Using Theorem \ref{thm:pi1stronglyaoneinvariant}, the inclusion of the subcategory of strongly $\aone$-invariant sheaves of groups into the category of Nisnevich sheaves of groups admits a left adjoint defined by $G \mapsto \bpi_1^{\aone}(BG)$: this left adjoint creates finite colimits, e.g., amalgamated sums.
\end{rem}

If $BG^f$ is a (simplicially) fibrant model of $BG$, then any element of $[{\mathcal X},BG]_s$ can be represented by a morphism of simplicial sheaves ${\mathcal X} \to BG^f$.  If ${\mathcal X}$ is simplicially connected, we can choose a base-point $x \in {\mathcal X}(k)$ making the aforementioned morphism a morphism of pointed simplicial sheaves.  Thus, any $G$-torsor on a simplicially connected space ${\mathcal X}$ can be represented by a pointed morphism ${\mathcal X} \to BG^f$ for an appropriate choice of base-point.  In the sequel, $X$ will be an $\aone$-connected smooth scheme, in which case the $\aone$-localization $L_{\aone}X$ is a simplicially connected space by the unstable $\aone$-connectivity theorem \cite[\S 3 Corollary 3.22]{MV}.

\begin{cor}
\label{cor:loopsclassifyingadjointness}
If $G$ is a Nisnevich sheaf of groups, there is a canonical simplicial weak equivalence ${\bf R}\Omega^1_s BG \isomt G$.
\end{cor}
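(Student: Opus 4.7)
The plan is to produce a canonical map $G \to {\bf R}\Omega^1_s BG$ and then check it is an isomorphism on all simplicial homotopy sheaves. Fix a simplicially fibrant model $BG^f$, so that ${\bf R}\Omega^1_s BG = \Omega^1_s BG^f$.

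First I would construct the map. The sheaf of groups $G$, viewed as a constant simplicial sheaf, is simplicially fibrant, and the composite of the canonical $G$-torsor map $EG \to BG$ with a fibrant replacement $BG \to BG^f$ can be factored, using properness of the simplicial model structure, as a trivial cofibration followed by a simplicial fibration $\widetilde{EG} \to BG^f$ whose fiber over the base point is a model for ${\bf R}\Omega^1_s BG$. The (right) $G$-action on $EG$ identifies this fiber, up to simplicial weak equivalence, with $G$, providing the desired canonical morphism $G \to \Omega^1_s BG^f$. Equivalently and more functorially, one can take the map arising from the $(\Sigma^1_s, \Omega^1_s)$-adjunction applied to the map $\Sigma^1_s G \to BG^f$ representing the tautological class in $[\Sigma^1_s G_+, BG]_s = G(G)$ corresponding to the identity element, via Theorem \ref{thm:simplicialhomotopyclassificationoftorsors}.

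Second I would verify that this map is a simplicial weak equivalence by comparing simplicial homotopy sheaves stalkwise on $\Sm_k$. For any smooth $U$ and any integer $i \geq 0$, loop-suspension adjunction yields
\[
[\Sigma^i_s U_+, \Omega^1_s BG^f]_s \;\cong\; [\Sigma^{i+1}_s U_+, BG]_s.
\]
By the second assertion of Theorem \ref{thm:simplicialhomotopyclassificationoftorsors}, the right hand side is $G(U)$ for $i = 0$ and trivial for $i \geq 1$. Thus the simplicial homotopy sheaves of $\Omega^1_s BG^f$ vanish in positive degrees and $\bpi_0^s(\Omega^1_s BG^f) \cong G$; on the other hand, the constant simplicial sheaf $G$ has precisely the same simplicial homotopy sheaves. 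It remains to check that the canonical map $G \to \Omega^1_s BG^f$ constructed above realizes the identification on $\bpi_0^s$: this is essentially the statement that the tautological torsor classified by $\mathrm{id}: G \to G$ corresponds under the bijection of Theorem \ref{thm:simplicialhomotopyclassificationoftorsors} to the identity element of $G(G)$, which follows by tracing through the construction of $EG$ as a \u Cech object.

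The only subtlety is step one, namely making the map $G \to {\bf R}\Omega^1_s BG$ canonical (as opposed to merely an abstract isomorphism on homotopy sheaves). Once this is handled via either of the two constructions above, step two is formal from Theorem \ref{thm:simplicialhomotopyclassificationoftorsors}.
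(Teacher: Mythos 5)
Your proposal is correct, and the core computation — using the $(\Sigma^1_s,\Omega^1_s)$-adjunction together with Theorem \ref{thm:simplicialhomotopyclassificationoftorsors} to see that $\bpi_i^s({\bf R}\Omega^1_s BG)$ is $G$ for $i=0$ and trivial for $i>0$ — is exactly the paper's. The one place where you work harder than necessary is in the construction of a canonical morphism $G \to {\bf R}\Omega^1_s BG$: the paper sidesteps this entirely by going in the other direction, observing that the canonical truncation morphism ${\bf R}\Omega^1_s BG \to \bpi_0^s({\bf R}\Omega^1_s BG) = G$ (which exists for any space) is the simplicial weak equivalence, so there is never a question of whether the map is canonical. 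Your extra step of constructing the map via the path fibration or via the tautological class is not wrong, but it introduces a base-point bookkeeping detail that the paper's direction of comparison makes invisible.
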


\begin{proof}
For any smooth scheme $U$ there is an adjunction
\[
[\Sigma^i_s U_+,{\bf R}\Omega^1_s BG]_s \isomto [\Sigma^{i+1}_s U_+,BG]_s.
\]
By Theorem \ref{thm:simplicialhomotopyclassificationoftorsors}, if $i = 0$ the presheaf on the right hand side is precisely $G(U)$, and if $i > 0$ it is trivial.  In other words, after sheafification, the morphism of spaces ${\bf R}\Omega^1_s BG \to \bpi_0^s({\bf R}\Omega^1_s BG) = G$ is a simplicial weak equivalence.
\end{proof}

\subsubsection*{Generalities on fiber sequences}
We fix some results about $\aone$-fiber sequences; this material is taken from \cite[\S 6.2]{Hovey}.  In any pointed model category, there is a notion of loop object.  In the setting of $\aone$-homotopy theory, if $(\mathcal{X},x)$ is a pointed $\aone$-fibrant space, then the simplicial loop space $\Omega^1_s {\mathcal X}$ is precisely the model categorical notion of loop space.

If $p: {\mathcal E} \to {\mathcal B}$ is an $\aone$-fibration between pointed $\aone$-fibrant objects (equivalently, by \cite[\S 2 Proposition 2.28]{MV}, simplicially fibrant and $\aone$-local objects), let ${\mathcal F}$ be the fiber of this map, and $i: {\mathcal F} \to {\mathcal E}$ be the inclusion of the fiber.  The general formalism of model categories gives an action of $\Omega^1_s {\mathcal B}$ on ${\mathcal F}$, specified functorially.  Here is the construction.  Given a space ${\mathcal A}$, an element of $[{\mathcal A},\Omega^1_s {\mathcal B}]_{\aone}$ can be represented by a morphism $h: {\mathcal A} \times \Delta^1_s \to {\mathcal B}$ (where $\Delta^1_s$ is the simplicial interval).  If $u: {\mathcal A} \to {\mathcal F}$ is a morphism, by composition we get a morphism $u' = i \circ u: {\mathcal A} \to {\mathcal E}$.  Let $\alpha: {\mathcal A} \times \Delta^1_s \to {\mathcal E}$ be a lift in the diagram
\[
\xymatrix{
{\mathcal A} \ar[r]^{u'}\ar[d]^{i_0} & {\mathcal E} \ar[d]^p\\
{\mathcal A} \times \Delta^1_s \ar[r]^{h} & {\mathcal B},
}
\]
where $i_0$ is the inclusion at $0$.  One then defines $[u] \cdot [h] = [w]$, where $w: {\mathcal A} \to {\mathcal F}$ is the unique map such that $i \circ w = \alpha \circ i_1$.  By \cite[Theorem 6.2.1]{Hovey}, this construction gives a well-defined right action of $[{\mathcal A},\Omega {\mathcal B}]_{\aone}$ on $[{\mathcal A},{\mathcal F}]$.  In this situation, there is a boundary morphism $\delta: \Omega^1_s {\mathcal B} \to {\mathcal F}$ defined as the composite
\begin{equation}
\label{eqn:boundaryhomomorphism}
\delta: \Omega^1_s {\mathcal B} {\longrightarrow} {\mathcal F} \times \Omega^1_s {\mathcal B} \longrightarrow {\mathcal F},
\end{equation}
where the first map is the product of the inclusion of the base-point and the identity map, and the second map is the action map just constructed.

\begin{defn}
\label{defn:aonefibration}
An {\em $\aone$-fiber sequence} is a diagram of pointed spaces ${\mathcal X} \to {\mathcal Y} \to \mathcal{Z}$ together with a right action of ${\bf R}\Omega^1_s {\mathcal Z}$ on ${\mathcal X}$ that is isomorphic in $\hop{k}$ to a diagram ${\mathcal F} \to {\mathcal E} \stackrel{p}{\to} {\mathcal B}$ where $p$ is an $\aone$-fibration of (pointed) $\aone$-fibrant spaces with ($\aone$-homotopy) fiber ${\mathcal F}$ together with the action of $\Omega^1_s {\mathcal B}$ on ${\mathcal F}$ discussed above.
\end{defn}

Fiber sequences in topology give rise to long exact sequences in homotopy groups; this result can be generalized to the context of an arbitrary pointed model category.  Applying this observation in the context of $\aone$-homotopy theory the next result is a consequence of (the dual of) \cite[Proposition 6.5.3]{Hovey} together with a sheafification argument.  (Note: a corresponding result holds for fiber sequences in the simplicial homotopy category as well.)

\begin{lem}
\label{lem:fibersequencelongexactsequence}
If ${\mathcal X} \to {\mathcal Y} \to \mathcal{Z}$ is an $\aone$-fiber sequence, then there is a long exact sequence in $\aone$-homotopy sheaves
\[
\cdots \longrightarrow \bpi_{i+1}^{\aone}({\mathcal Z}) \stackrel{\delta_*}{\longrightarrow} \bpi_i^{\aone}({\mathcal X}) \longrightarrow \bpi_i^{\aone}({\mathcal Y}) \longrightarrow \bpi_i^{\aone}({\mathcal Z}) \longrightarrow \cdots,
\]
where $\delta_*$ is the map on $\aone$-homotopy sheaves induced by the morphism $\delta$ of \textup{Equation \ref{eqn:boundaryhomomorphism}}, the sequence terminates with $\bpi_0^{\aone}({\mathcal Z})$ (here, the expression ``long exact sequence" is modified in a standard fashion for $i = 0$ or $1$ where the constituents are groups or pointed sets).
\end{lem}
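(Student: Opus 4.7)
The plan is to reduce to a general statement in pointed model categories, namely (the dual of) \cite[Proposition 6.5.3]{Hovey}, applied one smooth scheme at a time, and then to sheafify. By the definition of an $\aone$-fiber sequence, I may replace $\mathcal{X} \to \mathcal{Y} \to \mathcal{Z}$ by an isomorphic diagram $\mathcal{F} \to \mathcal{E} \stackrel{p}{\to} \mathcal{B}$ in $\hop{k}$, where $p$ is an $\aone$-fibration between pointed $\aone$-fibrant objects, $\mathcal{F}$ is its fiber, and the action of ${\bf R}\Omega^1_s \mathcal{B}$ on $\mathcal{F}$ is the one constructed just before Definition \ref{defn:aonefibration}. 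Since all $\aone$-homotopy sheaves and the map $\delta_*$ are invariant under isomorphism in $\hop{k}$, it suffices to prove the lemma for such a representative.

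Invoking \cite[Proposition 6.5.3]{Hovey} (in its dual formulation) in the pointed $\aone$-local model structure on $\Spc_{k,\bullet}$, one obtains, for any pointed space $(\mathcal{A},a)$, a natural long exact sequence
\[
\cdots \longrightarrow [\Sigma^{i+1}_s \mathcal{A}, \mathcal{B}]_{\aone} \longrightarrow [\Sigma^i_s \mathcal{A}, \mathcal{F}]_{\aone} \longrightarrow [\Sigma^i_s \mathcal{A}, \mathcal{E}]_{\aone} \longrightarrow [\Sigma^i_s \mathcal{A}, \mathcal{B}]_{\aone} \longrightarrow \cdots
\]
whose boundary map is, by the proof of \emph{loc. cit.}, induced by composition with the morphism $\delta$ of Equation \ref{eqn:boundaryhomomorphism} via the $\Sigma^1_s \dashv {\bf R}\Omega^1_s$ adjunction. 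Specializing to $\mathcal{A} = U_+$ for every $U \in \Sm_k$ produces a long exact sequence of presheaves on $\Sm_k$, and its $i$-th term is exactly the presheaf $U \mapsto [\Sigma^i_s U_+,-]_{\aone}$ whose Nisnevich sheafification was used to define the $\aone$-homotopy sheaf $\bpi_i^{\aone}(-)$.

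To pass to sheaves I would then invoke exactness of Nisnevich sheafification on the appropriate categories of presheaves (of pointed sets, groups, or abelian groups, depending on the position in the sequence); equivalently, exactness may be checked on stalks at Henselian local schemes, where the required exactness follows from the presheaf-level statement via filtered colimits. Identification of the resulting connecting morphism with $\delta_*$ is then tautological, as both arise by composing with the morphism $\delta$ of Equation \ref{eqn:boundaryhomomorphism}.

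The main obstacle is essentially bookkeeping rather than mathematical difficulty: at the tail of the sequence, with $i = 0$ or $1$, the constituents are only pointed sets or possibly non-abelian groups, and one must interpret ``long exact'' in the standard weakened sense (exactness as pointed sets at $\bpi_0$, exactness as groups at $\bpi_1$, together with the usual action of $\bpi_1^{\aone}(\mathcal{Z})$ on $\bpi_0^{\aone}(\mathcal{X})$). This is exactly the same verification as in classical topology, modulo the replacement of spaces by simplicial Nisnevich sheaves; the inputs needed are properness of the $\aone$-local model structure and Theorem \ref{thm:pi1stronglyaoneinvariant}, both of which are already in hand.
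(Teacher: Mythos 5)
Your proposal is correct and follows essentially the same route the paper does: the paper's justification is a one-sentence citation of (the dual of) \cite[Proposition 6.5.3]{Hovey} together with ``a sheafification argument,'' and you have simply unwound precisely that: replace the diagram by an $\aone$-fibration of pointed $\aone$-fibrant spaces, apply Hovey's result with $\mathcal{A} = U_+$ for each $U \in \Sm_k$ to get a long exact sequence of presheaves, and then sheafify. One small inaccuracy at the end: you list properness of the $\aone$-local model structure and Theorem \ref{thm:pi1stronglyaoneinvariant} among the needed inputs, but neither is actually used here — the $i = 0, 1$ ``weakened exactness'' bookkeeping is purely general pointed-model-category material (already part of the dual of \cite[Proposition 6.5.3]{Hovey}), and sheafification requires only exactness of the Nisnevich sheafification functor, not $\aone$-invariance of $\bpi_1^{\aone}$. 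This is a harmless over-attribution, not a gap.
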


\subsubsection*{Postnikov towers in $\aone$-homotopy theory}
\begin{defn}
\label{defn:aonepostnikov}
An {\em $\aone$-Postnikov tower} for a pointed space $(\mathcal{X},x)$ consists of a sequence of pointed spaces $(\mathcal{X}^{(n)},x)$, together with pointed maps $i_n: \mathcal{X} \to \mathcal{X}^{(n)}$, and pointed maps $p_n: \mathcal{X}^{(n)} \to \mathcal{X}^{(n-1)}$ having the following properties.
\begin{itemize}
\item[i)] The morphisms $p_n$ are all $\aone$-fibrations.
\item[ii)] The morphisms $i_n: \mathcal{X} \to \mathcal{X}^{(n)}$ induce isomorphisms of sheaves ${i_n}_*: \bpi_m^{\aone}(\mathcal{X}) \to \bpi_m^{\aone}(\mathcal{X}^{(n)})$ for $m \leq n$.
\item[iii)] The sheaves $\bpi_m^{\aone}(\mathcal{X}^{(n)})$ are trivial for $m > n$.
\item[iv)] The induced map $\mathcal{X} \to \operatorname{holim}_n \mathcal{X}^{(n)}$ is an $\aone$-weak equivalence.
\end{itemize}
\end{defn}

\begin{thm}[Morel-Voevodsky]
\label{thm:postnikovtowers}
An $\aone$-Postnikov tower exists, functorially in the input space ${\mathcal X}$.
\end{thm}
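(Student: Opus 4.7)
The plan is to construct the $\aone$-Postnikov tower by transporting the classical simplicial Postnikov tower across the passage to $\aone$-local objects; the essential input is Theorem \ref{thm:pi1stronglyaoneinvariant}, which ensures that simplicial truncation is compatible with $\aone$-localization.

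First I would functorially replace $(\mathcal{X},x)$ by a pointed $\aone$-fibrant model and so, without loss of generality, assume $(\mathcal{X},x)$ is itself $\aone$-fibrant; then $\bpi_m^s(\mathcal{X}) \to \bpi_m^{\aone}(\mathcal{X})$ is an isomorphism for every $m$. To such an $\mathcal{X}$ I would apply the standard functorial simplicial Postnikov construction (via coskeletal filtration, or the small object argument on simplicial horn inclusions of dimension $\geq n+2$), producing pointed simplicial fibrations $p_n: \mathcal{X}^{(n)} \to \mathcal{X}^{(n-1)}$ and compatible maps $i_n: \mathcal{X} \to \mathcal{X}^{(n)}$ satisfying: $i_n$ induces an isomorphism on $\bpi_m^s$ for $m \leq n$, $\bpi_m^s(\mathcal{X}^{(n)}) = 0$ for $m > n$, and the canonical map $\mathcal{X} \to \operatorname{holim}_n \mathcal{X}^{(n)}$ is a simplicial weak equivalence. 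Moreover, each $p_n$ can be arranged to sit in a principal fibration with fibre the Eilenberg--MacLane space $K(\bpi_n^s(\mathcal{X}),n)$, classified by a $k$-invariant $\kappa_n: \mathcal{X}^{(n-1)} \to K(\bpi_n^s(\mathcal{X}),n+1)$.

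The crucial and most delicate step, which I expect to be the main obstacle, is to prove by induction on $n$ that each $\mathcal{X}^{(n)}$ is $\aone$-local. For $n = 0$, $\mathcal{X}^{(0)} \simeq \bpi_0^s(\mathcal{X})$ is a discrete Nisnevich sheaf; in the pointed $\aone$-connected case this is just a point, and the general case reduces to this one after decomposing into $\aone$-connected components. For the inductive step, the principal fibration exhibits $\mathcal{X}^{(n)}$ as the homotopy fibre of $\kappa_n$. Since $\bpi_n^s(\mathcal{X}) = \bpi_n^{\aone}(\mathcal{X})$ is strongly $\aone$-invariant by Theorem \ref{thm:pi1stronglyaoneinvariant} (and strictly $\aone$-invariant for $n \geq 2$ by Theorem \ref{thm:stronglyaoneinvariantabelian}), an extension of the observation recorded in Remark \ref{rem:colimits} to higher Eilenberg--MacLane spaces shows that $K(\bpi_n^s(\mathcal{X}),n+1)$ is itself $\aone$-local. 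Combined with the inductive hypothesis that $\mathcal{X}^{(n-1)}$ is $\aone$-local, properness of the $\aone$-model structure then forces the homotopy fibre $\mathcal{X}^{(n)}$ to be $\aone$-local as well.

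Once $\aone$-locality of each stage is in hand, properties (i)--(iv) of Definition \ref{defn:aonepostnikov} follow formally: simplicial fibrations between $\aone$-local objects are $\aone$-fibrations, giving (i); the simplicial and $\aone$-homotopy sheaves coincide for $\aone$-local spaces, so (ii) and (iii) follow from the corresponding simplicial statements; and (iv) follows from the simplicial convergence statement together with the fact that a homotopy inverse limit of $\aone$-local spaces is $\aone$-local. Functoriality of the tower is inherited from functoriality of $\aone$-fibrant replacement and of the simplicial Postnikov construction.
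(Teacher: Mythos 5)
Your proposal takes the same overall route as the paper: form the simplicial Postnikov tower of an $\aone$-fibrant model (equivalently, of $L_{\aone}\mathcal{X}$), and use Morel's theorems on strong and strict $\aone$-invariance to show each stage is $\aone$-local, after which the properties of Definition \ref{defn:aonepostnikov} follow formally. So the global structure is right, but the inductive step as you have written it has a genuine gap.

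You describe $\mathcal{X}^{(n)}$ as the homotopy fibre of an untwisted $k$-invariant $\kappa_n\colon \mathcal{X}^{(n-1)} \to K(\bpi_n^s(\mathcal{X}),n+1)$ and conclude locality of the fibre from locality of the source and target. That principal-fibration picture is only valid when $\bpi_1$ acts trivially on $\bpi_n$ (a ``simple'' space). In the situation relevant to this paper the action is typically nontrivial: the $\aone$-fundamental group of a positive-dimensional $\aone$-connected smooth proper variety is always nontrivial (Remark \ref{rem:pi1nontrivial}), and its action on higher homotopy sheaves is exactly what the twisted Postnikov tower of Remark \ref{rem:functorialactionofpi1} records. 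In the general case, the $k$-invariant lands not in $K(\bpi_n,n+1)$ but in a twisted Eilenberg--MacLane space fibred over $B\bpi_1$, and to run your argument you would first have to prove \emph{that} space is $\aone$-local; but it is itself a fibration over $B\bpi_1$ with fibre $K(\bpi_n,n+1)$, so you end up needing the very locality-of-total-space fact you were trying to route around, making the $k$-invariant detour circular. The paper's argument is cleaner and avoids this entirely: the Postnikov projection $p_n\colon \mathcal{X}^{(n)} \to \mathcal{X}^{(n-1)}$ is a simplicial fibration with fibre $K(\bpi_n^{\aone}(\mathcal{X}),n)$; the base is $\aone$-local by the inductive hypothesis (base case $\mathcal{X}^{(1)} \simeq B\bpi_1^{\aone}(\mathcal{X})$, local by Theorem \ref{thm:pi1stronglyaoneinvariant}), the fibre is $\aone$-local by Theorems \ref{thm:pi1stronglyaoneinvariant} and \ref{thm:stronglyaoneinvariantabelian}, hence the total space $\mathcal{X}^{(n)}$ is $\aone$-local, and since $p_n$ is then a simplicial fibration between $\aone$-local fibrant objects it is automatically an $\aone$-fibration. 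This requires no simplicity hypothesis and no mention of $k$-invariants. Finally, the appeal to properness to get locality of the homotopy fibre is not quite the right justification: if $f$ is a map between $\aone$-local fibrant objects, any simplicial-fibration replacement of $f$ is an $\aone$-fibration between $\aone$-fibrant objects, so its fibre over the basepoint is $\aone$-fibrant and hence $\aone$-local by pullback stability of $\aone$-fibrations alone, with no need for right properness.
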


\begin{proof}[Sketch of proof.]
The proof of this fact is really a construction.  The {\em simplicial} Postnikov tower is constructed for any space ${\mathcal X}$ in \cite[\S 2 pp. 57-61]{MV}; roughly speaking this provides all of the statements in the simplicial homotopy category.  

Consider the simplicial Postnikov tower of $L_{\aone}{\mathcal X}$; we write $(L_{\aone}{\mathcal X})^{(i)}$ for the $i$-th stage of this tower.  Morel and Voevodsky show that the map
\[
L_{\aone}{\mathcal X} \longrightarrow \operatorname{holim}_i (L_{\aone}{\mathcal X}^{(i)})
\]
is a simplicial weak equivalence (that is the content of \cite[Definition 1.31]{MV}, that the Nisnevich topology satisfies this hypothesis is \cite[Theorem 1.37]{MV}).

By definition, $\bpi_i^{\aone}({\mathcal X}) = \bpi_i^s(L_{\aone}{\mathcal X})$.  By Theorem \ref{thm:pi1stronglyaoneinvariant} and Theorem \ref{thm:stronglyaoneinvariantabelian}, we know that $\bpi_1^{\aone}({\mathcal X})$ is strongly $\aone$-invariant and $\bpi_i^{\aone}({\mathcal X})$ is strictly $\aone$-invariant for any $i \geq 2$.  Equivalently, $B\bpi_1^{\aone}({\mathcal X})$ is $\aone$-local, and $K(\bpi_i^{\aone}({\mathcal X}),j)$ is $\aone$-local for any $i \geq 2$ and every $j \geq 0$.  By induction, one can then replace the map $(L_{\aone}{\mathcal X})^{(i)} \to (L_{\aone}{\mathcal X})^{(i-1)}$ by an $\aone$-fibration without changing the $\aone$-homotopy fiber.
\end{proof}

\subsubsection*{Looping and $\aone$-localization}
If $\mathcal{X}$ is a simplicially fibrant (but not necessarily $\aone$-local space), then there is a canonical pointed map $\mathcal{X} \to L_{\aone}{\mathcal X}$ that induces a morphism of spaces
\[
\Omega^1_s \mathcal{X} \longrightarrow L_{\aone}\Omega^1_s \mathcal{X} \longrightarrow \Omega^1_s L_{\aone}\mathcal{X}.
\]
While the first morphism is an $\aone$-weak equivalence by its very definition, the second morphism is not an $\aone$-weak equivalence in general.  Since $\bpi_0^{\aone}(\Omega^1_s L_{\aone}\mathcal{X}) = \bpi_1^{\aone}(\mathcal{X})$, by Theorem \ref{thm:pi1stronglyaoneinvariant}, a necessary condition that this morphism be an $\aone$-weak equivalence is that $\bpi_0^{\aone}(\Omega^1_s \mathcal{X})$ is a strongly $\aone$-invariant sheaf of groups.  Morel proves that this condition is also sufficient.

\begin{thm}[{\cite[Theorem 6.46]{MField}}]
\label{thm:pathloopsfibration}
The morphism $L_{\aone}\Omega^1_s \mathcal{X} \to \Omega^1_s L_{\aone}\mathcal{X}$ is an $\aone$-weak equivalence if and only if the sheaf of groups $\bpi_0^{\aone}(\Omega^1_s \mathcal{X})$ is strongly $\aone$-invariant.
\end{thm}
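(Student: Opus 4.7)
The plan is to argue the two directions separately, with the forward direction being routine and the converse requiring a Postnikov tower argument.

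\emph{Forward direction.} First I would verify that the loop space $\Omega^1_s L_{\aone}\mathcal{X}$ of an $\aone$-local space is itself $\aone$-local: for any smooth $U$, the $\Sigma^1_s$–$\Omega^1_s$ adjunction identifies $[U,\Omega^1_s L_{\aone}\mathcal{X}]_s$ with $[\Sigma^1_s U_+, L_{\aone}\mathcal{X}]_s$, and since $\Sigma^1_s$ preserves $\aone$-weak equivalences, the functor $[-,L_{\aone}\mathcal{X}]_s$ being $\aone$-invariant on $\Sigma^1_s U_+$ and $\Sigma^1_s(U\times\aone)_+$ forces $\aone$-invariance on the level of $U$. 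In particular, $\bpi_0^{\aone}(\Omega^1_s L_{\aone}\mathcal{X}) = \bpi_0^s(\Omega^1_s L_{\aone}\mathcal{X}) = \bpi_1^{\aone}(\mathcal{X})$, which is strongly $\aone$-invariant by Theorem \ref{thm:pi1stronglyaoneinvariant}. Since an $\aone$-weak equivalence induces an isomorphism on $\bpi_0^{\aone}$ and $\bpi_0^{\aone}(L_{\aone}\Omega^1_s\mathcal{X}) = \bpi_0^{\aone}(\Omega^1_s\mathcal{X})$, the conclusion follows.

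\emph{Converse, main construction.} Assume $G := \bpi_0^{\aone}(\Omega^1_s\mathcal{X})$ is strongly $\aone$-invariant, so that $BG$ is $\aone$-local by Remark \ref{rem:colimits}. The goal is to exhibit $\Omega^1_s L_{\aone}\mathcal{X}$ as having precisely the universal property of $L_{\aone}\Omega^1_s \mathcal{X}$. I would begin by building a classifying map: the first simplicial Postnikov section of $\mathcal{X}$ gives $\mathcal{X} \to B\bpi_1^s(\mathcal{X})$, which I compose with the canonical morphism $B\bpi_1^s(\mathcal{X}) \to BG$ induced by the unit of the adjunction $\bpi_1^s(\mathcal{X}) \to G$ from Remark \ref{rem:colimits}. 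Because $BG$ is $\aone$-local, the resulting map $\mathcal{X} \to BG$ factors uniquely through $\alpha\colon L_{\aone}\mathcal{X} \to BG$.

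\emph{Converse, fibration analysis.} The main obstacle is verifying that $\alpha$ plays the role of the \emph{$\aone$-first Postnikov section} of $L_{\aone}\mathcal{X}$, i.e. that $\alpha$ induces an isomorphism on $\bpi_1^{\aone}$ and that its simplicial homotopy fiber $\mathcal{F}$ is simplicially $1$-connected. Granting this, I would then loop the simplicial fibration $\mathcal{F} \to L_{\aone}\mathcal{X} \to BG$ to obtain a simplicial fibration $\Omega^1_s\mathcal{F} \to \Omega^1_s L_{\aone}\mathcal{X} \to G$, from which $\bpi_0^s(\Omega^1_s L_{\aone}\mathcal{X}) = G = \bpi_0^{\aone}(\Omega^1_s\mathcal{X})$ follows since $\Omega^1_s\mathcal{F}$ is simplicially connected.

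\emph{Extension to higher homotopy sheaves.} To upgrade this to an $\aone$-weak equivalence rather than just a $\bpi_0$ comparison, I would iterate the construction along the $\aone$-Postnikov tower of $L_{\aone}\mathcal{X}$ supplied by Theorem \ref{thm:postnikovtowers}. Since $\Omega^1_s$ commutes with homotopy limits, it is enough to check at each stage that $\Omega^1_s$ of the $\aone$-fibration $(L_{\aone}\mathcal{X})^{(n)} \to (L_{\aone}\mathcal{X})^{(n-1)}$ is comparable, stage by stage, to the corresponding simplicial construction on $\Omega^1_s\mathcal{X}$. The successive fibers are of the form $K(\bpi_n^{\aone}(\mathcal{X}), n)$ for strictly $\aone$-invariant $\bpi_n^{\aone}(\mathcal{X})$ with $n \geq 2$, so their loops are Eilenberg–MacLane spaces for strongly $\aone$-invariant sheaves and the induction is reduced to an explicit computation. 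The $n=1$ case is exactly the construction of $\alpha$ above, which is the step powered by the hypothesis on $G$.
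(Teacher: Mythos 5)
The paper does not prove this statement; it is cited from Morel's \emph{${\mathbb A}^1$-algebraic topology over a field} (Theorem 6.46) without an in-text proof, so there is no ``paper's proof'' to compare against. I therefore evaluate the proposal on its own merits.

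Your forward direction is correct in substance. The loop space of an $\aone$-local, simplicially fibrant object is again $\aone$-local because $\Sigma^1_s$ is left Quillen for the $\aone$-local structure, and from there the identification $\bpi_0^{\aone}(\Omega^1_s L_{\aone}\mathcal{X}) = \bpi_1^{\aone}(\mathcal{X})$ and Morel's Theorem \ref{thm:pi1stronglyaoneinvariant} do the rest. One small caution: $\aone$-locality is detected on mapping \emph{spaces}, not just on $\pi_0$ of mapping spaces, so you should loop the criterion over all $S^n_s \wedge U_+$; this is a routine fix.

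The converse direction has a real gap. You explicitly ``grant'' the central assertion — that $\alpha\colon L_{\aone}\mathcal{X} \to BG$ induces an isomorphism on $\bpi_1^{\aone}$ — and this is not a side technicality; it is precisely the content of the theorem on $\pi_1$. It \emph{can} be proven by a universal-property argument you did not make: the unstable $\aone$-connectivity theorem gives an \emph{epimorphism} $\bpi_1^s(\mathcal{X}) \twoheadrightarrow G$, and combining the uniqueness in Theorem \ref{thm:pi1initialstronglyaoneinvariant} with the commutativity of the square relating $\Omega^1_s\mathcal{X}$, $L_{\aone}\Omega^1_s\mathcal{X}$, and $\Omega^1_s L_{\aone}\mathcal{X}$, the induced maps $\bpi_1^{\aone}(\mathcal{X}) \to G$ and $G \to \bpi_1^{\aone}(\mathcal{X})$ are checked to be mutually inverse because one can cancel the epimorphism $\bpi_1^s(\mathcal{X}) \twoheadrightarrow G$ on the right. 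Without the epimorphism, the uniqueness argument does not close. More seriously, your proposed iteration over the Postnikov tower is not actually an induction: to compare $\Omega^1_s$ of the $\aone$-Postnikov tower of $L_{\aone}\mathcal{X}$ with the $\aone$-Postnikov tower of $\Omega^1_s\mathcal{X}$, you would need to know that $L_{\aone}$ commutes with $\Omega^1_s$ on the $1$-connected cover $\tilde{\mathcal{X}}$ — but this is the same theorem for $\tilde{\mathcal{X}}$ (whose $\bpi_0^{\aone}(\Omega^1_s\tilde{\mathcal{X}})$ is trivial, so the hypothesis holds vacuously), not a smaller case. The proof in Morel goes through a specific construction of the $\aone$-localization functor (alternating $\operatorname{Sing}^{\aone}_*$ with Nisnevich-local fibrant replacement) and a transfinite argument showing the strong $\aone$-invariance hypothesis propagates through the stages; the Postnikov approach as written does not substitute for that analysis.
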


\subsubsection*{$\aone$-covering spaces}
One consequence of the existence of the $\aone$-Postnikov tower is the existence of an $\aone$-universal covering space and a corresponding collection of results one refers to as $\aone$-covering space theory.  For more details about the constructions below, see \cite[\S 7.1]{MField}.

\begin{defn}
\label{defn:aoneuniversalcover}
If $\mathcal{X}$ is a space, an {\em $\aone$-cover} of $\mathcal{X}$ is a morphism $\mathcal{X}' \to \mathcal{X}$ that has the unique right lifting property with respect to morphisms that are simultaneously $\aone$-weak equivalences and cofibrations.  If $\mathcal{X}$ is an $\aone$-connected space, then an {$\aone$-universal cover} of $\mathcal{X}$ is a space $\tilde{\mathcal{X}}$ that is $\aone$-connected and $\aone$-simply connected.
\end{defn}

\begin{rem}
By their very definition, $\aone$-covers are $\aone$-fibrations.  Therefore, they give rise to $\aone$-fiber sequences.  We will use this observation repeatedly in the sequel.
\end{rem}

If $(\mathcal{X},x)$ is a pointed $\aone$-connected space, then the $\aone$-Postnikov tower shows that $\mathcal{X}^{(0)}$ is $\aone$-weakly equivalent to the chosen base-point, and $\mathcal{X}^{(1)}$ is $\aone$-weakly equivalent to $B\bpi_1^{\aone}({\mathcal X},x)$.  Morel deduces from these observations (see \cite[Theorem 7.8]{MField}) that the $\aone$-homotopy fiber of the map $\mathcal{X} \to B\bpi_1^{\aone}({\mathcal X},x)$ is an $\aone$-universal cover of $\mathcal{X}$.  More precisely, for any $\aone$-connected space $\mathcal{X}$, an $\aone$-universal covers exists, and the construction is functorial in the input space.

Using Theorem \ref{thm:simplicialhomotopyclassificationoftorsors} and the fact that $B\bpi_1^{\aone}({\mathcal X},x)$ is $\aone$-local, one observes that $\tilde{\mathcal{X}} \to \mathcal{X}$ is a (``Nisnevich locally trivial") $\bpi_1^{\aone}(\mathcal{X},x)$-torsor over $\mathcal{X}$.  Said differently, $\tilde{\mathcal{X}}$ admits a free transitive right action of $\bpi_1^{\aone}({\mathcal X},x)$, which one can think of as the action by ``deck transformations."

\begin{rem}
\label{rem:functorialactionofpi1}
Suppose ${\mathcal X}$ is an $\aone$-connected space.  Fix a base-point $x \in \mathcal{X}(k)$ and set $\bpi := \bpi_1^{\aone}({\mathcal X},x)$.  Since the $\aone$-universal cover $\tilde{\mathcal{X}} \to {\mathcal X}$ is an $\aone$-simply connected space equipped with a free right action of $\bpi$, functoriality of the Postnikov tower shows that the Postnikov tower of $\tilde{{\mathcal X}}$ admits a right $\bpi$-action.  Keeping track of the action of $\bpi$ gives rise to a {\em twisted} Postnikov tower of $\mathcal{X}$, which will be necessary for doing obstruction theory later.  This twisted Postnikov tower is explained in the simplicial setting in \cite[VI.4-5]{GoerssJardine}.
\end{rem}

\begin{lem}
\label{lem:aonecoveringsdontchangehigherhomotopy}
If $f: (\mathcal{Y}',y') \to (\mathcal{Y},y)$ is a pointed $\aone$-cover, then the induced map
\[
\bpi_i^{\aone}(\mathcal{Y'}) \longrightarrow \bpi_i^{\aone}(\mathcal{Y})
\]
is an epimorphism for $i \geq 1$ and an isomorphism for $i > 1$.
\end{lem}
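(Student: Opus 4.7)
The plan is to realize $f$ as an $\aone$-fibration, identify its $\aone$-homotopy fiber, and invoke the long exact sequence of Lemma \ref{lem:fibersequencelongexactsequence}. Since $\aone$-covers are $\aone$-fibrations (by the remark immediately following Definition \ref{defn:aoneuniversalcover}), the map $f$ fits into an $\aone$-fiber sequence $\mathcal{F} \to \mathcal{Y}' \to \mathcal{Y}$, where $\mathcal{F}$ denotes the pointed $\aone$-homotopy fiber over $y$.

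The central technical step is to establish that $\mathcal{F}$ is \emph{$\aone$-discrete} in the sense that $\bpi_i^{\aone}(\mathcal{F},y') = \ast$ for every integer $i \geq 1$. This is where the defining unique right lifting property enters. Arguing stalkwise, a class in $\bpi_i^{\aone}(\mathcal{F})(U)$ is represented (after passage to a suitable fibrant model) by a pointed morphism $\alpha : S^i_s \wedge U_+ \to \mathcal{F}$. The composite of $\alpha$ into $\mathcal{Y}$ then admits a canonical pointed nullhomotopy along the inclusion $S^i_s \wedge U_+ \hookrightarrow C \wedge U_+$ of the sphere into its simplicial cone $C$, which is an $\aone$-trivial cofibration. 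The unique right lifting property of $f$ against this trivial cofibration produces a lift of the nullhomotopy to $\mathcal{Y}'$, which by construction factors through $\mathcal{F}$ and therefore exhibits $\alpha$ as pointed-nullhomotopic.

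Given the vanishing of $\bpi_i^{\aone}(\mathcal{F})$ for $i \geq 1$, the conclusion is read off the long exact sequence. For $i > 1$, the four-term segment
\[
\ast = \bpi_i^{\aone}(\mathcal{F}) \longrightarrow \bpi_i^{\aone}(\mathcal{Y}') \longrightarrow \bpi_i^{\aone}(\mathcal{Y}) \longrightarrow \bpi_{i-1}^{\aone}(\mathcal{F}) = \ast
\]
forces the middle map to be an isomorphism; at $i = 1$ the analogous segment beginning with $\bpi_1^{\aone}(\mathcal{F}) = \ast$ yields the asserted statement on $\bpi_1^{\aone}$. The main obstacle is the rigorous identification of $\mathcal{F}$ as $\aone$-discrete: while morally this is transparent from the lifting property (in direct analogy with the classical fact that fibers of topological covering maps are discrete), careful treatment of the sheafifications defining $\bpi_i^{\aone}$, the passage to $\aone$-fibrant replacements, and the verification that the sphere-cone inclusions remain $\aone$-trivial cofibrations after smashing with $U_+$, all require some attention to the model-categorical machinery.
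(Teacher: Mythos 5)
Your proof is correct and follows essentially the same route as the paper's. Both arguments proceed by identifying the $\aone$-homotopy fiber $\mathcal{F}$, showing its higher $\aone$-homotopy sheaves vanish via the unique right lifting property, and reading the conclusion off the long exact sequence of Lemma~\ref{lem:fibersequencelongexactsequence}. The paper organizes the lifting argument slightly more cleanly in two respects: first, it reduces up front (WLOG) to the case where $\mathcal{Y}'$ and $\mathcal{Y}$ are $\aone$-fibrant, so that $\mathcal{F}$ is the honest fiber and no separate fibrant replacement of $\mathcal{F}$ is needed (this disposes of the technicality you flag regarding ``passage to a suitable fibrant model''); second, it observes that since $\aone$-covers are stable under pullback, $\mathcal{F} \to \Spec k$ is itself an $\aone$-cover, and then applies the unique right lifting property directly to this map against the cone inclusion $S^i_s \hookrightarrow CS^i_s$. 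Your version lifts instead against $f$ itself and factors the result through the fiber, which amounts to the same diagram; but packaging the argument around the cover $\mathcal{F} \to \Spec k$ makes the ``discreteness'' of the fiber (in direct analogy with topological covering spaces) more transparent and avoids the need to track the $U_+$-smash explicitly. Your worry about $S^i_s \wedge U_+ \hookrightarrow CS^i_s \wedge U_+$ remaining an $\aone$-trivial cofibration is resolved by the fact that the Morel--Voevodsky model structure is simplicial and all objects are cofibrant in the injective structure, so smashing with $U_+$ preserves (simplicial, hence $\aone$-) trivial cofibrations.
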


\begin{proof}
We can assume without loss of generality that $\mathcal{Y}'$ and $\mathcal{Y}$ are $\aone$-fibrant.  In that case, the $\aone$-homotopy fiber of the map $f$ is precisely the actual fiber of this morphism.  Let $\mathcal{F} = \operatorname{hofib}(f)$.  Since pullbacks of $\aone$-coverings are $\aone$-coverings, it follows that $\mathcal{F}$ is an $\aone$-covering of $\Spec k$.  The unique right lifting property then implies that any pointed morphism $S^i_s \to \mathcal{F}$ is $\aone$-homotopically constant if $i > 0$, and so the canonical morphism $\mathcal{F} \to \bpi_0^{\aone}(\mathcal{F})$ is an isomorphism.  The result of the claim follows from the long exact sequence in $\aone$-homotopy sheaves of a fibration.
\end{proof}

The following result, which is a straightforward consequence of obstruction theory and the fact that $\bpi_1^{\aone}({\mathcal X})$ is strongly $\aone$-invariant (equivalently $B\bpi_1^{\aone}(\mathcal{X})$ is $\aone$-local), provides a universality property of $\bpi_1^{\aone}(\mathcal{X})$; see \cite[Lemma B.7]{MField}.

\begin{thm}
\label{thm:pi1initialstronglyaoneinvariant}
If $(\mathcal{X},x)$ is a pointed $\aone$-connected space, and $G$ is any strongly $\aone$-invariant sheaf of groups, then the morphism
\[
[(\mathcal{X},x),BG]_{\aone} \longrightarrow \hom_{\Gr^{\aone}_k}(\bpi_1^{\aone}(X),G)
\]
induced by evaluation on $\bpi_1^{\aone}$ is a bijection.
\end{thm}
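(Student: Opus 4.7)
The plan is to factor the evaluation map as a chain of three bijections. Since $G$ is strongly $\aone$-invariant, $BG$ is $\aone$-local by Remark \ref{rem:colimits}, so the canonical map $\mathcal{X} \to L_{\aone}\mathcal{X}$ yields
\[
[(\mathcal{X},x), BG]_{\aone} \;\isomto\; [(L_{\aone}\mathcal{X},x), BG]_s.
\]
Applying Theorem \ref{thm:postnikovtowers} to $L_{\aone}\mathcal{X}$, let $\mathcal{X}^{(n)}$ be its $n$-th $\aone$-Postnikov stage. Since $\mathcal{X}$ is $\aone$-connected, $L_{\aone}\mathcal{X}$ is simplicially connected by the unstable $\aone$-connectivity theorem, and by construction $\mathcal{X}^{(1)}$ is $\aone$-weakly equivalent to $B\bpi_1^{\aone}(\mathcal{X},x)$, which is itself $\aone$-local by Theorem \ref{thm:pi1stronglyaoneinvariant} and Remark \ref{rem:colimits}.

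The heart of the argument is to show that the canonical map $L_{\aone}\mathcal{X} \to \mathcal{X}^{(1)}$ induces a bijection
\[
[\mathcal{X}^{(1)}, BG]_s \;\isomto\; [L_{\aone}\mathcal{X}, BG]_s.
\]
I would argue by induction along the Postnikov tower: the fiber of each transition $\mathcal{X}^{(n+1)} \to \mathcal{X}^{(n)}$ is a (possibly twisted) Eilenberg--MacLane sheaf $K(\bpi_{n+1}^{\aone}(\mathcal{X}), n+1)$, so the obstructions to lifting a map $\mathcal{X}^{(n)} \to BG$ to $\mathcal{X}^{(n+1)}$---and the obstructions to homotoping two such lifts---lie in Nisnevich sheaf cohomology of $\mathcal{X}^{(n)}$ with coefficients in $\bpi_i^s(BG)$ for some $i \geq 2$. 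All these coefficient sheaves vanish by Theorem \ref{thm:simplicialhomotopyclassificationoftorsors}, so every map extends uniquely at each stage of the tower; passing to the homotopy inverse limit $L_{\aone}\mathcal{X} \simeq \operatorname{holim}_n \mathcal{X}^{(n)}$ then gives the desired bijection.

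Finally, I would identify $[B\bpi_1^{\aone}(\mathcal{X},x), BG]_s$ with $\hom_{\Gr^{\aone}_k}(\bpi_1^{\aone}(\mathcal{X},x), G)$ via Corollary \ref{cor:loopsclassifyingadjointness}: looping a pointed simplicial homotopy class $BH \to BG$ produces a morphism of sheaves of groups $H \to G$, and this correspondence is bijective on pointed homotopy classes. Composing these three bijections recovers the evaluation map of the theorem. The main obstacle is the middle, obstruction-theoretic step: one must track the potentially twisted action of $\bpi_1^{\aone}(\mathcal{X})$ on the higher Postnikov stages described in Remark \ref{rem:functorialactionofpi1}, but because $BG$ has only a single non-trivial simplicial homotopy sheaf, the obstruction coefficient sheaves all vanish regardless of the twisting, so the twisted obstruction theory reduces to the untwisted statement.
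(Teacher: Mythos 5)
The paper itself does not give a proof of this theorem: it cites \cite[Lemma B.7]{MField} and offers only the one-line gloss that the result ``is a straightforward consequence of obstruction theory'' plus the fact that $B\bpi_1^{\aone}(\mathcal{X})$ is $\aone$-local. Your proposal elaborates exactly this sketch, and the skeleton is right: reduce to simplicial maps out of $L_{\aone}\mathcal{X}$ because $BG$ is $\aone$-local, then exploit that $BG$ has a single nontrivial simplicial homotopy sheaf concentrated in degree $1$, and finally identify pointed simplicial maps $B\bpi_1^{\aone}(\mathcal{X}) \to BG$ with homomorphisms of sheaves of groups.

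However, the heart of the argument is stated in the wrong direction. You describe analyzing ``the obstructions to lifting a map $\mathcal{X}^{(n)} \to BG$ to $\mathcal{X}^{(n+1)}$,'' but there is no such obstruction: since the Postnikov transition maps go $p_{n+1}\colon \mathcal{X}^{(n+1)} \to \mathcal{X}^{(n)}$ (Definition \ref{defn:aonepostnikov}(i)), any map $\mathcal{X}^{(n)} \to BG$ pulls back tautologically along $p_{n+1}$. What actually needs obstruction theory is the converse, descent: a map $\mathcal{X}^{(n+1)} \to BG$ (and ultimately a map $L_{\aone}\mathcal{X} \to BG$) factors, uniquely up to pointed simplicial homotopy, through $\mathcal{X}^{(n)}$, and iterating, through $\mathcal{X}^{(1)} \simeq B\bpi_1^{\aone}(\mathcal{X})$. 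The cleanest way to run that descent is via functoriality of the Postnikov truncation applied to the \emph{target}: because $BG$ is itself a $1$-type by Theorem \ref{thm:simplicialhomotopyclassificationoftorsors}, the $1$-truncation of any map $L_{\aone}\mathcal{X} \to BG$ supplies the required factoring through $\mathcal{X}^{(1)}$, and uniqueness comes from applying the same truncation to a homotopy. Your formulation with coefficient sheaves $\bpi_i^s(BG)$, $i \geq 2$, vanishing is the correct underlying reason, but the statement you wrote down is not the statement that needs proving. Relatedly, the passage to $\operatorname{holim}_n \mathcal{X}^{(n)}$ is stated too casually: $[\operatorname{holim}_n \mathcal{X}^{(n)}, BG]_s$ is controlled by $\lim_n [\mathcal{X}^{(n)}, BG]_s$ only up to a $\lim^1$ term, so you need to first establish that each transition $[\mathcal{X}^{(n)}, BG]_s \to [\mathcal{X}^{(n+1)}, BG]_s$ is already a bijection (which, again, is the descent statement) before the tower collapses. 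Finally, Corollary \ref{cor:loopsclassifyingadjointness} only gives the equivalence $\mathbf{R}\Omega^1_s BG \simeq G$; to conclude $[B\bpi_1^{\aone}(\mathcal{X}), BG]_s \isomt \hom_{\Gr^{\aone}_k}(\bpi_1^{\aone}(\mathcal{X}), G)$ you must also observe that looping induces a bijection on pointed homotopy classes for these particular $1$-types, and check that the composite of your three bijections really is the evaluation-on-$\bpi_1^{\aone}$ map of the theorem statement; both are routine but should be said.
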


One consequence of this theorem is the following result that is referred to as the {\em unstable $\aone$-connectivity theorem}: a simplicially $i$-connected space is $\aone$-$i$-connected.

\begin{cor}[{\cite[Theorem 6.38]{MField}}]
\label{cor:unstablehigherconnectivitytheorem}
Suppose $i \geq 0$ is an integer.  If $({\mathcal X},x)$ is a pointed simplicially $i$-connected space (i.e., $\bpi_j^s({\mathcal X},x) = 0$ for all $j \leq i$), then $L_{\aone}{\mathcal X}$ is simplicially $i$-connected.
\end{cor}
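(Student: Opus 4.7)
The plan is to proceed by induction on $i$. The case $i = 0$ is the unstable $\aone$-connectivity theorem of Morel-Voevodsky \cite[\S 3 Corollary 3.22]{MV} already invoked in the sketch of Theorem \ref{thm:postnikovtowers}. For the inductive step, I would assume the statement for $i - 1 \geq 0$ and let $(\mathcal{X},x)$ be pointed simplicially $i$-connected. Since $\mathcal{X}$ is then in particular simplicially $(i-1)$-connected, the inductive hypothesis gives that $L_{\aone}\mathcal{X}$ is simplicially $(i-1)$-connected, so it remains only to prove that $\bpi_i^s(L_{\aone}\mathcal{X}) = \bpi_i^{\aone}(\mathcal{X})$ is trivial.

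Set $A := \bpi_i^{\aone}(\mathcal{X})$. By Theorem \ref{thm:pi1stronglyaoneinvariant} (combined with Theorem \ref{thm:stronglyaoneinvariantabelian} when $i \geq 2$), the sheaf $A$ is strongly (respectively strictly) $\aone$-invariant, so its Eilenberg-MacLane space $K(A,i)$ — interpreted as $BA$ when $i = 1$ — is $\aone$-local. Since $L_{\aone}\mathcal{X}$ is simplicially $(i-1)$-connected with $\bpi_i^s = A$, its simplicial Postnikov tower yields a canonical classifying map $\eta: L_{\aone}\mathcal{X} \to K(A,i)$ inducing the identity on $\bpi_i^s$. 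It therefore suffices to show that $\eta$ is simplicially null-homotopic, since this forces the identity $A \to A$ to be null and hence $A = 0$.

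To establish this I plan to examine the precomposition
\[
\mathcal{X} \longrightarrow L_{\aone}\mathcal{X} \stackrel{\eta}{\longrightarrow} K(A,i).
\]
The target is simplicially $i$-truncated, so the universal property of the $i$-th simplicial Postnikov section factors this map through $\mathcal{X}^{(i)}$. But $\mathcal{X}$ is simplicially $i$-connected by hypothesis, so $\mathcal{X}^{(i)} \simeq \ast$ in $\hsnis$, and the composite is null-homotopic. The $\aone$-locality of $K(A,i)$ then produces a chain of bijections
\[
[L_{\aone}\mathcal{X}, K(A,i)]_s = [L_{\aone}\mathcal{X}, K(A,i)]_{\aone} = [\mathcal{X}, K(A,i)]_{\aone} = [\mathcal{X}, K(A,i)]_s,
\]
the middle equality using that $\mathcal{X} \to L_{\aone}\mathcal{X}$ is an $\aone$-weak equivalence; under this chain the class of $\eta$ pulls back to the null composite above, so $\eta$ itself is null and $A = 0$, completing the induction.

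The hard part will be the case $i = 1$, where $A$ is only a (possibly non-abelian) sheaf of groups and the morphism sets $[\,\cdot\,,BA]_s$ are pointed sets rather than abelian groups. There one must appeal to the non-abelian simplicial Postnikov truncation to produce $\eta$ and to its universal property against simplicially $1$-truncated targets, both of which are still available because $A$ is at worst a sheaf of groups; Theorem \ref{thm:simplicialhomotopyclassificationoftorsors} can also be used to interpret $[\,\cdot\,,BA]_s$ concretely in terms of $A$-torsors. With these modifications the argument above goes through verbatim.
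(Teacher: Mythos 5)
Your proof is correct and follows essentially the same route as the paper: both rely on the strong/strict $\aone$-invariance of $\bpi_i^{\aone}(\mathcal{X})$ to make $K(A,i)$ (resp.\ $BA$) $\aone$-local, then compare simplicial and $\aone$-homotopy classes of maps into it and use simplicial $i$-connectivity of $\mathcal{X}$ to kill the relevant mapping set. The only difference is presentational: where the paper tests $\bpi_i^{\aone}(\mathcal{X})$ against an arbitrary strongly (resp.\ strictly) $\aone$-invariant $G$ and invokes the Yoneda lemma, you instantiate directly at $A = \bpi_i^{\aone}(\mathcal{X})$ via the Postnikov classifying map $\eta$ — which amounts to unfolding the Yoneda step rather than replacing the underlying argument.
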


\begin{proof}
If ${\mathcal X}$ is a pointed $0$-connected space, then $L_{\aone}{\mathcal X}$ is pointed and $0$-connected by \cite[\S 2 Corollary 3.22]{MV}.  Therefore, suppose ${\mathcal X}$ is a pointed and simplicially $1$-connected space.  Since $L_{\aone}{\mathcal X}$ is simplicially $0$-connected, it suffices to prove $\bpi_1^{\aone}({\mathcal X},x) = \bpi_1^s(L_{\aone}{\mathcal X})$ is trivial.  Equivalently, it suffices by Theorem \ref{thm:pi1initialstronglyaoneinvariant} and the Yoneda lemma to prove that for any strongly $\aone$-invariant sheaf of groups $G$, the set $[(\mathcal{X},x),BG]_{\aone}$ is trivial.  If $G$ is strongly $\aone$-invariant, $BG$ is $\aone$-local, so the canonical map $[({\mathcal X},x),BG]_s \to [({\mathcal X},x),BG]_{\aone}$ is an isomorphism.  However, since ${\mathcal X}$ is simplicially $1$-connected, it follows that the canonical map $[({\mathcal X},x),BG]_s \to \hom(\bpi_1^s({\mathcal X},x),G)$ is a bijection as well (the homomorphisms on the right hand side are taken in the category of Nisnevich sheaves of groups).  Since ${\mathcal X}$ is simplicially $1$-connected, this pointed set is trivial.

There are a number of ways to prove the result for $i \geq 2$, but all the methods we know implicitly involve Theorem \ref{thm:stronglyaoneinvariantabelian}.  We proceed by induction on $i$.  Suppose $L_{\aone}{\mathcal X}$ is simplicially $(i-1)$-connected and ${\mathcal X}$ is simplicially $i$-connected.  It suffices by the Yoneda lemma to show that if $A$ is an arbitrary strictly $\aone$-invariant sheaf of groups, then $\hom_{\Ab^{\aone}_k}(\bpi_i^{\aone}({\mathcal X}),A)$ is trivial.  Indeed, using the fact that $L_{\aone}({\mathcal X})$ is simplicially $(i-1)$-connected, one can show that the induced map
\[
[{\mathcal X},K(A,i)]_{\aone} \to \hom_{\Ab^{\aone}_k}(\bpi_i^{\aone}({\mathcal X}),A)
\]
is a bijection (one argues using obstruction theory; see again \cite[Lemma B.7]{MField} or \cite[Theorem 3.30]{ADExcision}).  In that case, the assumption that $A$ be strictly $\aone$-invariant is equivalent to $K(A,i)$ being $\aone$-local.  Therefore, the map $[{\mathcal X},K(A,i)]_{s} \to [{\mathcal X},K(A,i)]_{\aone}$ is a bijection, and the set on the left hand side is trivial since ${\mathcal X}$ is simplicially $i$-connected.
\end{proof}

\subsubsection*{Properties of the $\aone$-fundamental group}
If $T$ is a split torus, $T$ is strongly $\aone$-invariant.  Indeed, $T$ is $\aone$-invariant since there are no nonconstant maps from $\aone$ to $T$ (in fact, $T$ is $\aone$-rigid in the sense of \cite[\S 3 Example 2.4]{MV}), and $H^1_{\Nis}(\cdot,T)$ is $\aone$-invariant by homotopy invariance of the Picard group.  The following result encodes some facts about the role of torus torsors in $\aone$-covering space theory.

\begin{prop}
\label{prop:fundamentalgroupproperties}
Assume $(X,x)$ is a pointed $\aone$-connected smooth scheme, and let $T$ be a split torus.
\begin{itemize}
\item[i)] There is a canonical isomorphism $H^1_{\Nis}(X,T) \isomt \hom_{\Gr_k}(\bpi_1^{\aone}(X,x),T)$.
\item[ii)] If $f: \tilde{X} \to X$ is a $T$-torsor with $\tilde{X}$ also $\aone$-connected, then for $\tilde{x} \in \tilde{X}(k)$ satisfying $f(\tilde{x}) = x$, there is a short exact sequence
    \[
    1 \longrightarrow \bpi_1^{\aone}(\tilde{X},\tilde{x}) \longrightarrow \bpi_1^{\aone}(X,x) \longrightarrow \bpi_0^{\aone}(T) \longrightarrow 1, and
    \]
\item[iii)] there are isomorphisms $\bpi_i^{\aone}(\tilde{X},\tilde{x}) \isomt \bpi_i^{\aone}(X,x)$ for all $i > 1$.
\end{itemize}
\end{prop}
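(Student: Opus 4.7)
The plan is to treat the three parts in sequence, with (i) being essentially formal and (ii)--(iii) deduced from a single $\aone$-fiber sequence.

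For (i), I would chain the following identifications. By Theorem \ref{thm:simplicialhomotopyclassificationoftorsors}, $H^1_{\Nis}(X,T) \cong [X,BT]_s$. Since $T$ is strongly $\aone$-invariant (as recalled just before the proposition), Remark \ref{rem:colimits} yields that $BT$ is $\aone$-local, so $[X,BT]_s \cong [X,BT]_{\aone}$. Because $T$-torsors over $\Spec k$ are trivial and $T$ is abelian, the forgetful map from pointed to unpointed $\aone$-homotopy classes $[(X,x),BT]_{\aone,\bullet} \to [X,BT]_{\aone}$ is a bijection: surjectivity uses the trivialization at $x$, and injectivity uses that the change-of-basepoint action of $\bpi_1^s(BT) = T$ on itself is trivial since $T$ is abelian. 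Finally, Theorem \ref{thm:pi1initialstronglyaoneinvariant} identifies $[(X,x),BT]_{\aone,\bullet}$ with $\hom_{\Gr^{\aone}_k}(\bpi_1^{\aone}(X,x),T)$, which equals $\hom_{\Gr_k}(\bpi_1^{\aone}(X,x),T)$ since any group homomorphism into the strongly $\aone$-invariant sheaf $T$ automatically lies in $\Gr^{\aone}_k$.

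For (ii) and (iii), the plan is to exhibit $\tilde X \to X \to BT$ as an $\aone$-fiber sequence in the sense of Definition \ref{defn:aonefibration}. The $T$-torsor $f$ is classified by a map $\phi:X \to BT$, and $\tilde X$ is realized as the pullback of the universal torsor $ET \to BT$ along $\phi$. Nisnevich local triviality of $T$-torsors together with simplicial contractibility of $ET$ ensures that $\tilde X$ is the $\aone$-homotopy fiber of $\phi$, and a compatible choice of basepoints gives the desired $\aone$-fiber sequence. Applying Lemma \ref{lem:fibersequencelongexactsequence} and using that $\bpi_1^{\aone}(BT) \cong \bpi_0^{\aone}(T) \cong T$ (via Corollary \ref{cor:loopsclassifyingadjointness} and the $\aone$-rigidity of the split torus) while $\bpi_i^{\aone}(BT) = 0$ for $i \geq 2$, the $\aone$-connectedness of $\tilde X$ and $X$ forces the outer terms to vanish. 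The relevant portion of the long exact sequence then collapses to the short exact sequence in (ii), and the segments for $i > 1$ collapse to the isomorphisms of (iii).

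The principal subtlety is verifying that the Nisnevich fiber $\tilde X$ of $f$ is truly the $\aone$-homotopy fiber of $\phi$, rather than merely the simplicial fiber; this uses properness of the $\aone$-local model structure combined with Nisnevich local triviality of $T$-torsors to replace $\phi$ by an $\aone$-fibration without changing either the fiber or its $\aone$-homotopy type. The remainder is bookkeeping in the long exact sequence.
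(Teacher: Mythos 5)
Your proposal is correct and follows essentially the same approach as the paper: part (i) by combining the simplicial classification of torsors, $\aone$-locality of $BT$, and the universal property of $\bpi_1^{\aone}$, and parts (ii)--(iii) by exhibiting $\tilde X \to X \to BT$ as an $\aone$-fiber sequence and reading off the long exact sequence. The only cosmetic difference is that for (iii) the paper cites Lemma~\ref{lem:aonecoveringsdontchangehigherhomotopy} (which uses the fiber sequence of the cover itself, with $\aone$-discrete fiber), whereas you deduce it directly from the same long exact sequence used for (ii) together with the vanishing of $\bpi_i^{\aone}(BT)$ for $i\ge 2$; these are equivalent and both are valid.
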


\begin{proof}
The first statement is a consequence of Theorem \ref{thm:pi1initialstronglyaoneinvariant} using the fact that $T$ is an abelian sheaf of groups.  In that case, the set $H^1_{\Nis}(X,T) = [X,BT]_{\aone}$ is the quotient of $[(X,x),BT]_{\aone}$ by the induced conjugation action of $T(k)$, which is trivial since $T$ is abelian.  The second statement follows immediately from Lemma \ref{lem:fibersequencelongexactsequence} and Theorem \ref{thm:simplicialhomotopyclassificationoftorsors}.  The third statement follows immediately from Lemma \ref{lem:aonecoveringsdontchangehigherhomotopy}.
\end{proof}

\begin{rem}
\label{rem:pi1nontrivial}
An immediate consequence of the first observation is that the $\aone$-fundamental group of a (strictly positive dimensional) smooth proper $\aone$-connected scheme is always non-trivial: take $T = \gm$ and use the fact that the Picard group of a (strictly positive dimensional) smooth proper $k$-scheme is always non-trivial.
\end{rem}

\subsubsection*{$\aone$-homotopy theory of projective spaces and punctured affine spaces}
We now recall some results, due to Morel, regarding the structure of the $\aone$-homotopy groups of projective space.  The results are essentially a breezy overview of \cite[\S 7.3]{MField} introducing the main objects we will need in subsequent sections.

The standard open cover of $\pone$ by two copies of $\aone$ with intersection $\gm$ realizes $\pone$ as the colimit of the diagram $\aone \leftarrow \gm \to \aone$.  The natural map from the homotopy colimit of this diagram to the colimit of this diagram is an $\aone$-weak equivalence, and yields an $\aone$-weak equivlence $\Sigma^1_s \gm \cong \pone$  \cite[\S 3 Corollary 2.18]{MV}.  Using induction and a similar open covering by two sets, one shows that ${\mathbb A}^n \setminus 0$ is $\aone$-weakly equivalent to $\Sigma^{n-1}_s \gm^{\wedge n}$ \cite[\S 3 Example 2.20]{MV}.  It follows that ${\mathbb A}^n \setminus 0$ is simplicially $(n-2)$-connected.  By the unstable $\aone$-connectivity theorem (see Corollary \ref{cor:unstablehigherconnectivitytheorem}), it follows that ${\mathbb A}^n \setminus 0$ is also $(n-2)$-$\aone$-connected.

Morel gives a description of the $(n-1)$st $\aone$-homotopy group of ${\mathbb A}^n \setminus 0$ in terms of so-called Milnor-Witt K-theory sheaves (see \cite[\S 4]{MICM} or \cite[\S 3]{MField} for details regarding this theory).  One could take the following result as a definition of Milnor-Witt K-theory sheaves, and this point of view will suffice for much of the paper.  However, the results of \cite[\S 3]{MField} actually provide a concrete description of the sections of this sheaf over fields, which completely determines the sheaf since it is strictly $\aone$-invariant.

\begin{prop}[{\cite[Theorem 6.40]{MField}}]
\label{prop:aonehomotopygroupsofpuncturedaffinespace}
For every integer $n \geq 2$, there is a canonical isomorphism $\bpi_{n-1}^{\aone}({\mathbb A}^n \setminus 0) \isomt \K^{\MW}_n$.
\end{prop}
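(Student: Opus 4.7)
The plan is to combine the $\aone$-weak equivalence $\mathbb{A}^n\setminus 0 \simeq \Sigma^{n-1}_s \gm^{\wedge n}$ recalled just before the statement with an $\aone$-Hurewicz-style representability computation, and then match the resulting universal object with Milnor--Witt $K$-theory via its generators-and-relations presentation. I would first observe that by the unstable $\aone$-connectivity theorem (Corollary \ref{cor:unstablehigherconnectivitytheorem}) $\mathbb{A}^n\setminus 0$ is $(n-2)$-$\aone$-connected, so that $\bpi_{n-1}^{\aone}(\mathbb{A}^n\setminus 0)$ is the first non-trivial $\aone$-homotopy sheaf; for $n\geq 3$ it is strictly $\aone$-invariant by Theorem \ref{thm:pi1stronglyaoneinvariant} and Theorem \ref{thm:stronglyaoneinvariantabelian}, and for $n=2$ it is strongly $\aone$-invariant.

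Next I would use the $\aone$-Postnikov tower to implement an $\aone$-Hurewicz theorem at level $n-1$. For $n\geq 3$ the $(n-1)$-stage $(\mathbb{A}^n\setminus 0)^{(n-1)}$ is $\aone$-weakly equivalent to $K(\bpi_{n-1}^{\aone}(\mathbb{A}^n\setminus 0), n-1)$, and the same obstruction-theoretic argument used in the proof of Corollary \ref{cor:unstablehigherconnectivitytheorem} yields, for any strictly $\aone$-invariant abelian sheaf $A$, a natural bijection
\[
[\mathbb{A}^n\setminus 0, K(A,n-1)]_{\aone} \isomto \hom_{\Ab^{\aone}_k}(\bpi_{n-1}^{\aone}(\mathbb{A}^n\setminus 0), A).
\]
Using the $\aone$-weak equivalence $\mathbb{A}^n\setminus 0 \simeq \Sigma^{n-1}_s \gm^{\wedge n}$, the $\aone$-localness of $K(A,n-1)$, and the simplicial loop-suspension adjunction, I would rewrite the left-hand side as
\[
[\Sigma^{n-1}_s \gm^{\wedge n}, K(A,n-1)]_s \;=\; [\gm^{\wedge n}, A]_s.
\]
By the Yoneda lemma this characterizes $\bpi_{n-1}^{\aone}(\mathbb{A}^n\setminus 0)$ up to canonical isomorphism as the free strictly $\aone$-invariant sheaf of abelian groups on the pointed Nisnevich sheaf $\gm^{\wedge n}$.

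The main obstacle is then to identify this universal object with $\K^{\MW}_n$. I would appeal to Morel's explicit presentation of $\K^{\MW}_*$ by generators (symbols $[u]$ for $u\in\gm$ and the Hopf element $\eta$) together with the Steinberg and $\eta$-relations, and verify that (i) the resulting unramified sheaf $\K^{\MW}_n$ is strictly $\aone$-invariant and (ii) the tautological $n$-fold symbol morphism $\gm^{\wedge n}\to\K^{\MW}_n$ is initial among pointed maps from $\gm^{\wedge n}$ into strictly $\aone$-invariant abelian sheaves; this step is the technical heart and uses the full force of Morel's computation of sections over fields and Rost-type residue maps. Once (i) and (ii) are in place the canonical comparison map $\K^{\MW}_n \to \bpi_{n-1}^{\aone}(\mathbb{A}^n\setminus 0)$ is an isomorphism by Yoneda. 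The case $n=2$ requires an additional preliminary step: the $\aone$-Hurewicz argument in the form above needs an abelian target, so I would first show that $\bpi_1^{\aone}(\mathbb{A}^2\setminus 0)$ is in fact abelian (an $\aone$-version of ``$\pi_1$ of a suspension is abelian," which here follows because $\mathbb{A}^2\setminus 0 \simeq \Sigma^1_s \gm^{\wedge 2}$ is simplicially $0$-connected and the suspension structure endows its $\aone$-fundamental sheaf with a second, commuting group law), after which the argument proceeds exactly as for $n\geq 3$.
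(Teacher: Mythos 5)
The paper does not actually prove this proposition: it is stated as a recollection of Morel's Theorem~6.40 in \cite{MField} (the bracketed citation is the whole ``proof''), and the surrounding text even suggests one may take it as a \emph{definition} of $\K^{\MW}_n$. So there is no internal proof in the paper to compare against; what follows is a critique of your sketch on its own merits.

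For $n \geq 3$ your plan is essentially Morel's: use $(n-2)$-$\aone$-connectivity, the Postnikov tower, and an obstruction-theoretic Hurewicz-type representability to identify $\bpi_{n-1}^{\aone}(\mathbb{A}^n\setminus 0)$ as the free strictly $\aone$-invariant abelian sheaf on $\gm^{\wedge n}$, and then cite Morel's identification of that universal object with $\K^{\MW}_n$. You correctly flag step~(ii) as the technical heart, and leaving it as a pointer to Morel's presentation is reasonable at this level of detail.

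The $n=2$ case, however, contains a genuine gap. You claim that $\bpi_1^{\aone}(\mathbb{A}^2\setminus 0)$ is abelian by an Eckmann--Hilton-style ``$\pi_1$ of a suspension is abelian'' argument, on the grounds that $\mathbb{A}^2 \setminus 0 \simeq \Sigma^1_s \gm^{\wedge 2}$ is $0$-connected and suspension gives a second, commuting law. This reasoning is invalid: the co-H structure on a suspension gives extra structure on \emph{mapping out} of it, not on $[S^1_s, -]$, and already in classical topology $\pi_1(\Sigma X)$ is not abelian when $X$ is not connected (e.g.\ $\pi_1(\Sigma(\text{three points})) = \pi_1(S^1\vee S^1)$ is free of rank two). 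Since $\gm^{\wedge 2}$ is not $0$-connected (simplicially or $\aone$-locally), the argument does not apply. Worse, the same heuristic, applied verbatim to $\pone \simeq \Sigma^1_s \gm$, would prove that $\Faone(1) = \bpi_1^{\aone}(\pone)$ is abelian, which directly contradicts Theorem~\ref{thm:centralextension} and the paper's repeated emphasis that $\Faone(1)$ is non-abelian. The correct route is via the characterization (the analogue of Lemma~\ref{lem:freestronglyaoneinvariant} with $\gm^{\wedge 2}$ in place of $\gm$) of $\bpi_1^{\aone}(\mathbb{A}^2\setminus 0)$ as the free strongly $\aone$-invariant sheaf of groups on $\gm^{\wedge 2}$, followed by a genuinely computational verification--essentially Morel's explicit presentation of $\K^{\MW}_2$--that this free object happens to be abelian; abelianness is a theorem, not a formality.
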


Combining Propositions \ref{prop:fundamentalgroupproperties} and \ref{prop:aonehomotopygroupsofpuncturedaffinespace}, one can deduce the following computations of the $\aone$-homotopy groups of projective spaces; these properties will be used repeatedly in Section \ref{s:splittingobstructions}.

\begin{lem}
\label{lem:homotopygroupsofprojectivespace}
Suppose $n$ is an integer $\geq 2$.  There are isomorphisms
\[
\bpi_i^{\aone}({\mathbb P}^n) = \begin{cases}
\gm & \text{ if } i = 1 \\
0 & \text{ if } 1 < i < n, \text{ and }\\
\K^{\MW}_{n+1} & \text{ if } i = n.
\end{cases}
\]
\end{lem}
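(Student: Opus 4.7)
The plan is to exploit the tautological $\gm$-torsor ${\mathbb A}^{n+1} \setminus 0 \to {\mathbb P}^n$ and apply Proposition \ref{prop:fundamentalgroupproperties} together with the connectivity/computation of $\aone$-homotopy groups of punctured affine space recalled just before the lemma. The key observation is that all three assertions of the lemma follow from facts already in hand once one verifies that the total space of this $\gm$-torsor is $\aone$-connected (and in fact highly $\aone$-connected) for $n \geq 2$.

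First I would recall that ${\mathbb A}^{n+1} \setminus 0$ is $\aone$-weakly equivalent to $\Sigma^n_s \gm^{\wedge (n+1)}$, so it is simplicially $(n-1)$-connected, and hence $(n-1)$-$\aone$-connected by Corollary \ref{cor:unstablehigherconnectivitytheorem}. For $n \geq 2$ this means ${\mathbb A}^{n+1} \setminus 0$ is $\aone$-connected and, moreover, $\aone$-simply connected. Choosing compatible base-points on ${\mathbb A}^{n+1} \setminus 0$ and ${\mathbb P}^n$, Proposition \ref{prop:fundamentalgroupproperties}(ii) applied to the $\gm$-torsor ${\mathbb A}^{n+1} \setminus 0 \to {\mathbb P}^n$ yields a short exact sequence
\[
1 \longrightarrow \bpi_1^{\aone}({\mathbb A}^{n+1} \setminus 0) \longrightarrow \bpi_1^{\aone}({\mathbb P}^n) \longrightarrow \bpi_0^{\aone}(\gm) \longrightarrow 1.
\]
The left-hand sheaf vanishes by the $\aone$-simple connectivity just established. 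Since $\gm$ is $\aone$-rigid, it is already $\aone$-local, so $\bpi_0^{\aone}(\gm) = \gm$. The sequence therefore collapses to an isomorphism $\bpi_1^{\aone}({\mathbb P}^n) \isomto \gm$.

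For the higher homotopy groups I would invoke Proposition \ref{prop:fundamentalgroupproperties}(iii), which yields $\bpi_i^{\aone}({\mathbb P}^n) \cong \bpi_i^{\aone}({\mathbb A}^{n+1} \setminus 0)$ for all $i > 1$. Combining this with the $(n-1)$-$\aone$-connectivity of ${\mathbb A}^{n+1} \setminus 0$ gives vanishing in the range $1 < i < n$, and Proposition \ref{prop:aonehomotopygroupsofpuncturedaffinespace} (applied with $n$ replaced by $n+1$) identifies $\bpi_n^{\aone}({\mathbb A}^{n+1} \setminus 0)$ with $\K^{\MW}_{n+1}$, producing the last identification $\bpi_n^{\aone}({\mathbb P}^n) \cong \K^{\MW}_{n+1}$.

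There is no real obstacle here: everything is an assembly of results already stated in the preliminaries. The only minor point requiring a sentence of justification is confirming that ${\mathbb A}^{n+1} \setminus 0$ really is $\aone$-connected (so that Proposition \ref{prop:fundamentalgroupproperties} applies), which, as indicated above, is immediate from the hypothesis $n \geq 2$ together with the simplicial suspension description of punctured affine space and Corollary \ref{cor:unstablehigherconnectivitytheorem}.
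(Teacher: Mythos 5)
Your proposal is correct and takes essentially the same approach the paper intends: the paper simply states that the lemma follows by ``combining Propositions \ref{prop:fundamentalgroupproperties} and \ref{prop:aonehomotopygroupsofpuncturedaffinespace}'' without writing out the details, and you have supplied exactly those details, correctly using the $\gm$-torsor ${\mathbb A}^{n+1} \setminus 0 \to {\mathbb P}^n$, the $(n-1)$-$\aone$-connectivity of ${\mathbb A}^{n+1} \setminus 0$ (via the suspension description and Corollary \ref{cor:unstablehigherconnectivitytheorem}), and the identification $\bpi_n^{\aone}({\mathbb A}^{n+1} \setminus 0) \cong \K^{\MW}_{n+1}$.
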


\begin{rem}
\label{rem:rpncomparison}
For the sake of comparison, we note that the above computation bears a formal resemblance to the computation of the homotopy groups of ${\mathbb{RP}}^{n-1}$.  Indeed, the structure of the group $\pi_i({\mathbb{RP}}^{n-1})$ ($i > 0$) depends on $n$.  If $n = 2$, then ${\mathbb{RP}}^{n-1} = S^1$.  In that case, $\pi_i({\mathbb{RP}}^{1})$ is non-vanishing only for $i = 1$, in which case it is isomorphic to $\Z$.  If $n > 2$, then the canonical map $S^{n-1} \to {\mathbb{RP}}^{n-1}$ is a covering space and gives identifications $\pi_1({\mathbb{RP}}^{n-1}) = \Z/2$ and $\pi_i(S^{n-1}) = \pi_i({\mathbb{RP}}^{n-1})$ for $i > 1$.  In particular, $\pi_i({\mathbb{RP}}^{n-1})$ vanishes in the range $2 < i < n-1$.
\end{rem}

\begin{rem}
\label{rem:homotopygroupsproducts}
The fact that one can choose $L_{\aone}$ to commute with formation of finite products implies that the $i$-th $\aone$-homotopy group of a product of $\aone$-connected spaces is the product of the $i$-th $\aone$-homotopy groups of the constituent factors (one can check the isomorphism on stalks and reduce to the corresponding fact for simplicial sets).
\end{rem}

The homotopy colimit description of $\pone$ above gives the identification $\bpi_1^{\aone}(\pone) = \bpi_1^{\aone}(\Sigma^1_s \gm)$.  Any space of simplicial dimension $0$ (e.g., a sheaf of groups) is simplicially fibrant by \cite[\S 2 Proposition 1.13]{MV}.  Using this fact, Corollary \ref{cor:loopsclassifyingadjointness}, and the loop-suspension adjunction there is a sequence of bijections
\[
\hom_{\Spc_{k,\bullet}}(\gm,G) \longrightarrow [\gm,G]_s \longrightarrow [\gm,\Omega^1_s BG]_s \longrightarrow [\Sigma^1_s \gm,(BG,\ast)]_s
\]
for any Nisnevich sheaf of groups $G$.

If $G$ is, in addition, strongly $\aone$-invariant, there is an identification $[\Sigma^1_s \gm,(BG,\ast)]_s = [\Sigma^1_s \gm,(BG,\ast)]_{\aone}$.  Precomposing the (bijective) morphism of Theorem \ref{thm:pi1initialstronglyaoneinvariant} with the sequence of maps described in the previous paragraph gives the following result.

\begin{lem}
\label{lem:freestronglyaoneinvariant}
If $G$ is a strongly $\aone$-invariant sheaf of groups, then the morphism
\[
\hom_{\Spc_{k,\bullet}}(\gm,G) \longrightarrow \hom_{\Gr^{\aone}_k}(\bpi_1^{\aone}(\Sigma^1_s \gm),G)
\]
described in the previous two paragraphs is a bijection, functorially in $G$.
\end{lem}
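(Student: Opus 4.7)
The plan is to verify that each link in the chain of maps described in the two paragraphs preceding the lemma is a bijection natural in $G$, and then conclude by composition. I will walk through the five links in order.

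First, I would observe that $G$, regarded as a Nisnevich sheaf in simplicial dimension zero, is simplicially fibrant by \cite[\S 2 Proposition 1.13]{MV}, so that the canonical map $\hom_{\Spc_{k,\bullet}}(\gm,G)\to [\gm,G]_s$ identifying a pointed morphism with its pointed simplicial homotopy class is a bijection. Next, Corollary \ref{cor:loopsclassifyingadjointness} provides a simplicial weak equivalence ${\bf R}\Omega^1_s BG \isomt G$, so postcomposition gives the second bijection $[\gm,G]_s\isomt[\gm,{\bf R}\Omega^1_s BG]_s$. The third map $[\gm,{\bf R}\Omega^1_s BG]_s\isomt [\Sigma^1_s \gm,BG]_s$ is the loop-suspension adjunction in the pointed simplicial model category $\Spc_{k,\bullet}$.

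The fourth map $[\Sigma^1_s \gm,BG]_s\to[\Sigma^1_s \gm,BG]_{\aone}$ is a bijection because, by Remark \ref{rem:colimits}, the hypothesis that $G$ be strongly $\aone$-invariant is equivalent to $BG$ being $\aone$-local. Finally, the space $\Sigma^1_s \gm$ is pointed and simplicially $0$-connected, hence $\aone$-$0$-connected by Corollary \ref{cor:unstablehigherconnectivitytheorem}, so Theorem \ref{thm:pi1initialstronglyaoneinvariant} applies to give the final bijection
\[
[\Sigma^1_s \gm,BG]_{\aone} \isomt \hom_{\Gr^{\aone}_k}(\bpi_1^{\aone}(\Sigma^1_s \gm),G),
\]
via evaluation on $\bpi_1^{\aone}$.

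Each individual bijection is manifestly natural in $G$, since each arises either from an adjunction, from the $\aone$-localization functor, or from the universal property of Theorem \ref{thm:pi1initialstronglyaoneinvariant}; composing them yields a bijection natural in $G$, as claimed. The only real issue is bookkeeping: one must verify that the resulting composite coincides with the map advertised in the lemma, but this is a short diagram chase unwinding the adjunctions against the $\bpi_1^{\aone}$-evaluation of Theorem \ref{thm:pi1initialstronglyaoneinvariant}. I do not anticipate any substantive obstacle here — the argument is essentially the organization of a sequence of formal bijections, each already justified by results proved earlier in this section.
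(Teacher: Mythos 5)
Your proposal is correct and reproduces the paper's argument: the paper states the lemma as a conclusion of the two preceding paragraphs, which carry out exactly the chain of five bijections you spell out (fibrancy of $G$ in simplicial dimension zero, Corollary \ref{cor:loopsclassifyingadjointness}, loop--suspension adjunction, $\aone$-locality of $BG$, and Theorem \ref{thm:pi1initialstronglyaoneinvariant}). The one small addition you make — invoking Corollary \ref{cor:unstablehigherconnectivitytheorem} to justify that $\Sigma^1_s\gm$ is $\aone$-connected, so that Theorem \ref{thm:pi1initialstronglyaoneinvariant} applies — is a point the paper leaves implicit, and it is correct.
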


This identification can be interpreted as saying $\bpi_1^{\aone}(\pone)$ is the free strongly $\aone$-invariant sheaf of groups generated by $\gm$, and for that reason one uses the following notation.

\begin{notation}
\label{notation:fundamentalgroupofpone}
Set $\Faone(1) := \bpi_1^{\aone}(\pone)$.
\end{notation}

\begin{lem}
There is a short exact sequence of sheaves of groups of the form
\[
1 \longrightarrow \K^{\MW}_2 \longrightarrow \Faone(1) \longrightarrow \gm \longrightarrow 1
\]
The second to last arrow on the right admits a set-theoretic splitting.
\end{lem}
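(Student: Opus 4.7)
The plan is to realize $\pone$ as the base of the tautological $\gm$-torsor and then apply the machinery of Proposition~\ref{prop:fundamentalgroupproperties}. Concretely, the quotient map $\aone_k^2 \setminus 0 \to \pone$ sending $(x,y) \mapsto [x:y]$ is a Nisnevich locally trivial $\gm$-torsor, and by the discussion preceding Proposition~\ref{prop:aonehomotopygroupsofpuncturedaffinespace}, the total space $\aone_k^2 \setminus 0$ is $0$-$\aone$-connected (being simplicially equivalent to $\Sigma^1_s \gm^{\wedge 2}$, which is simplicially $0$-connected, combined with the unstable $\aone$-connectivity theorem).

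With this in hand, Proposition~\ref{prop:fundamentalgroupproperties}(ii) immediately produces an exact sequence
\[
1 \longrightarrow \bpi_1^{\aone}(\aone_k^2 \setminus 0) \longrightarrow \bpi_1^{\aone}(\pone) \longrightarrow \bpi_0^{\aone}(\gm) \longrightarrow 1.
\]
To identify the terms: Proposition~\ref{prop:aonehomotopygroupsofpuncturedaffinespace} gives $\bpi_1^{\aone}(\aone_k^2 \setminus 0) \cong \K^{\MW}_2$. For the quotient, recall that $\gm$ is $\aone$-rigid (in particular $\aone$-local), so $\bpi_0^{\aone}(\gm) = \gm$ as sheaves. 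Substituting yields the desired short exact sequence
\[
1 \longrightarrow \K^{\MW}_2 \longrightarrow \Faone(1) \longrightarrow \gm \longrightarrow 1.
\]

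For the set-theoretic splitting, I would use Lemma~\ref{lem:freestronglyaoneinvariant} applied to the strongly $\aone$-invariant sheaf of groups $G = \Faone(1)$ itself. The identity homomorphism in $\hom_{\Gr^{\aone}_k}(\Faone(1),\Faone(1))$ corresponds under the bijection of that lemma to a distinguished pointed morphism of spaces
\[
\eta \colon \gm \longrightarrow \Faone(1).
\]
Trace this through the chain of bijections: $\eta$ is obtained from the identity on $\pone = \Sigma^1_s \gm$ as the image of $\mathrm{id}_{\Sigma^1_s\gm}$ under $[\Sigma^1_s\gm,\Sigma^1_s\gm]_{\aone} \cong [\gm,\Omega^1_s L_{\aone}\pone]_s$ followed by passage to $\bpi_0$ (noting that $\bpi_0^{\aone}$ of the $\aone$-loop space of $\pone$ is $\Faone(1)$, and $\bpi_0^{\aone}(\gm) = \gm$).

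Finally, to verify $\eta$ is a section of $\Faone(1) \to \gm$ set-theoretically, I would apply Lemma~\ref{lem:freestronglyaoneinvariant} a second time with $G = \gm$, under which the bijection identifies the group homomorphism $\Faone(1) \to \gm$ constructed above with the map of pointed spaces arising from the classifying morphism $\pone \to B\gm$ of the tautological $\gm$-torsor; unwinding the adjunctions, this pointed map $\gm \to \gm$ is the identity. Naturality of the bijection in $G$ therefore forces the composite $\gm \xrightarrow{\eta} \Faone(1) \to \gm$ to be the identity pointed map on the nose. The main obstacle is precisely this last compatibility check: one must carefully trace the identity on $\pone$ through the loop-suspension adjunction, the canonical equivalence $\Omega^1_s B\gm \simeq \gm$ of Corollary~\ref{cor:loopsclassifyingadjointness}, and the construction of the boundary map in Equation~\ref{eqn:boundaryhomomorphism} to confirm that the two manifestations of the map $\Faone(1) \to \gm$ really do agree.
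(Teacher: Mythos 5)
Your proof is correct and takes essentially the same route as the paper: the short exact sequence comes from the $\gm$-torsor ${\mathbb A}^2\setminus 0\to\pone$ via Propositions \ref{prop:fundamentalgroupproperties} and \ref{prop:aonehomotopygroupsofpuncturedaffinespace}, and the set-theoretic splitting from applying Lemma \ref{lem:freestronglyaoneinvariant} with $G=\Faone(1)$ and using its naturality in $G$ to check that $\gm\to\Faone(1)\to\gm$ is the identity. You are somewhat more explicit than the paper about the residual compatibility check identifying the quotient $\Faone(1)\to\gm$ with the image of $\mathrm{id}_{\gm}$ under the bijection of Lemma \ref{lem:freestronglyaoneinvariant} (the paper compresses this into the word ``functoriality''), which is a fair point to flag.
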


\begin{proof}
The usual $\gm$-torsor ${\mathbb A}^2 \setminus 0 \to \pone$ is an $\aone$-covering space and thus gives rise to an $\aone$-fiber sequence.  The first statement then follows immediately by combining Propositions \ref{prop:fundamentalgroupproperties} and  \ref{prop:aonehomotopygroupsofpuncturedaffinespace}.

Taking $G = \Faone(1)$ in Lemma \ref{lem:freestronglyaoneinvariant}, the identity map $\Faone(1) \to \Faone(1)$ corresponds to a pointed morphism of sheaves of sets $\gm \to \Faone(1)$.  Functoriality of the construction in Lemma \ref{lem:freestronglyaoneinvariant} applied to the morphism of strongly $\aone$-invariant sheaves of groups $\Faone(1) \to \gm$ shows that the composite map $\gm \to \gm$ is the identity map and thus the claimed morphism is in fact a set-theoretic splitting.
\end{proof}

The set-theoretic section gives an isomorphism of sheaves of sets $\psi: \K^{\MW}_2 \times \gm \isomt \Faone(1)$.  The group structure on $\Faone(1)$ is then specified by a factor set, i.e., a morphism of sheaves $\Phi: \gm \times \gm \to \K^{\MW}_2$ (satisfying an appropriate cocycle condition) by means of the formula
\[
\psi(a,x)\psi(b,y) = (ab\Phi(xy),xy).
\]
Morel identifies this factor set explicitly. To this end, he constructs a symbol morphism $\sigma_2: \gm \wedge \gm \to \K^{\MW}_2$ (see \cite[Theorem 3.37]{MField}) that can be precomposed with the epimorphism $\gm \times \gm \to \gm \wedge \gm$ that collapses $\gm \vee \gm$; we refer to this composite morphism also as the symbol morphism.

\begin{thm}[{\cite[Theorem 7.29]{MField}}]
\label{thm:centralextension}
The sheaf $\Faone(1)$ is a central extension of $\gm$ by $\K^{\MW}_2$ with corresponding factor set $\Phi: \gm \times \gm \to \K^{\MW}_2$ given by the symbol morphism.
\end{thm}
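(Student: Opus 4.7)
The plan is to determine the group structure on $\Faone(1)$ via the set-theoretic splitting $\psi: \K^{\MW}_2 \times \gm \isomto \Faone(1)$ established in the preceding lemma. Since $\K^{\MW}_2$ is abelian and normal in $\Faone(1)$, the group law is encoded by a conjugation action $\alpha: \gm \to \Aut(\K^{\MW}_2)$ and a normalized factor set $\Phi: \gm \times \gm \to \K^{\MW}_2$, and the theorem asserts exactly that $\alpha$ is trivial and $\Phi = \sigma_2$.

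My strategy is to leverage the universal property of $\Faone(1)$ from Lemma \ref{lem:freestronglyaoneinvariant} to compare $\Faone(1)$ with a concretely-built model. Let $\tilde G$ denote the abstractly-defined central extension of $\gm$ by $\K^{\MW}_2$ whose factor set is the symbol morphism $\sigma_2: \gm \times \gm \to \K^{\MW}_2$. The first substantive step is to verify that $\tilde G$ is strongly $\aone$-invariant as a Nisnevich sheaf of groups; this is somewhat delicate since extensions of strongly $\aone$-invariant sheaves are not automatically so, but centrality together with strict $\aone$-invariance of $\K^{\MW}_2$ and the fact that $\sigma_2$ factors through the $\aone$-invariant $\gm \wedge \gm$ should permit a direct cocycle-level verification that $H^1_{\Nis}(-,\tilde G)$ is $\aone$-invariant.

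Granting strong $\aone$-invariance of $\tilde G$, the tautological pointed map of sheaves $\gm \to \tilde G$ sending $x \mapsto \psi(0,x)$ extends uniquely by Lemma \ref{lem:freestronglyaoneinvariant} to a morphism of strongly $\aone$-invariant sheaves of groups $F: \Faone(1) \to \tilde G$, compatible with the projections to $\gm$. A Five Lemma argument then reduces the theorem to showing that the induced map on kernels $\K^{\MW}_2 \to \K^{\MW}_2$ is an isomorphism. To this end, I would identify $\K^{\MW}_2 \subset \Faone(1)$ concretely with the image coming from the $\aone$-fiber sequence $\gm \to \mathbb{A}^2 \setminus 0 \to \pone$ via Proposition \ref{prop:aonehomotopygroupsofpuncturedaffinespace}, and then check on Morel's standard generators of $\K^{\MW}_2$ (namely, the symbols $[a][b]$) that $F$ acts as the identity. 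Once $F$ is an isomorphism, both centrality of the original extension and the identification $\Phi = \sigma_2$ follow immediately from the corresponding properties of $\tilde G$ by construction.

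The main obstacle I expect is twofold: first, verifying the strong $\aone$-invariance of $\tilde G$, since non-abelian $\aone$-invariance is notoriously subtle; second, and more conceptually, identifying the restriction of $F$ to the kernel. This last point amounts to recognizing that the commutator $[\psi(0,x),\psi(0,y)] \in \Faone(1)$ corresponds, under the isomorphism $\bpi_1^{\aone}(\mathbb{A}^2 \setminus 0) \cong \K^{\MW}_2$, to the symbol $\sigma_2(x,y)$. This is fundamentally a statement about a Whitehead-product-like construction $\gm \wedge \gm \to \mathbb{A}^2 \setminus 0$ whose identification requires Morel's generators-and-relations description of Milnor--Witt K-theory from \cite[\S 3]{MField}; it is this computation, rather than the formal framework, that carries the real content of the theorem.
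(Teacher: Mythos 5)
The paper does not prove this statement: Theorem \ref{thm:centralextension} is quoted directly from \cite[Theorem 7.29]{MField}, and the surrounding text only sets up the short exact sequence, the set-theoretic splitting $\psi$, and the symbol morphism $\sigma_2$. There is therefore no internal argument against which to compare your proposal; Morel's proof is the only reference.

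Evaluated on its own, your outline is plausible in its broad strokes, but the two steps you defer carry essentially all of the weight, and one of your claims about the second step is wrong as stated. First, the strong $\aone$-invariance of the abstractly-built extension $\tilde{G}$ is not addressed by anything in this paper and is not a formal consequence of centrality; you would need to import or reprove the relevant structure theory from \cite{MField}, and ``a direct cocycle-level verification'' is not an argument. Second, your assertion that the commutator $[\psi(0,x),\psi(0,y)]$ corresponds to the symbol $\sigma_2(x,y)$ cannot be right: in any central extension of $\gm$ by an abelian sheaf with factor set $\Phi$, the commutator of two lifts equals the antisymmetrization $\Phi(x,y)-\Phi(y,x)$. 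If $\Phi=\sigma_2$, this is $[x][y]-[y][x]=(1+\langle -1\rangle)[x][y]$ in $\K^{\MW}_2$, which is not $[x][y]$. Commutators therefore detect the factor set only modulo symmetric $2$-cocycles and cannot by themselves determine the restriction of $F$ to $\K^{\MW}_2$; the argument as written would leave the factor set underdetermined. The genuine content of the theorem is the identification of a geometrically defined map $\gm \wedge \gm \to \mathbb{A}^2 \setminus 0$ with the algebraic symbol $\sigma_2$, and your sketch acknowledges but does not carry out this computation.
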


\subsubsection*{Automorphisms of some $\aone$-homotopy sheaves}
When we study group structures on $\aone$-homotopy sheaves in Section \ref{s:splittingobstructions}, we will need some information about the sheaf of automorphisms of various homotopy sheaves.  While most cases that appear are short enough to be treated without disturbing the exposition, the automorphisms of $\bpi_1^{\aone}(\pone)$ require more care.

The sheaf of endomorphisms of $\Faone(1)$ admits a rather explicit description.  Indeed, we know that $\hom_{\Gr^{\aone}_k}(\Faone(1),\Faone(1)) = \hom_{\Spc_{k,\bullet}}(\gm,\Faone(1))$ since $\Faone(1)$ is the free strongly $\aone$-invariant sheaf of groups on $\gm$.  The sheaf of morphisms from $\gm$ to $\Faone(1)$ admits a particularly nice description in terms of what are called ``contractions."  If $\F$ is a sheaf of pointed sets, set $\F_{-1} := \underline{\hom}_{\Spc_{k,\bullet}}(\gm,\F)$ (see \cite[Definition 4.3.10]{MIntro}); this construction is functorial by its very definition and by \cite[Lemma 7.33]{MField} respects exact sequences of strongly $\aone$-invariant sheaves of groups.  Using these observations, one can prove the following result.

\begin{lem}[{\cite[Corollary 7.34]{MField}}]
\label{lem:autfaone1}
There is a canonical isomorphism $\Faone(1)_{-1} \cong \Z \oplus \K^{\MW}_1$, and $\Aut(\Faone(1))$ is a sheaf of abelian groups.
\end{lem}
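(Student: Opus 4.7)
The plan is to deduce both assertions by applying the contraction functor $(-)_{-1} = \underline{\hom}_{\Spc_{k,\bullet}}(\gm, -)$ to the central extension
\[
1 \longrightarrow \K^{\MW}_2 \longrightarrow \Faone(1) \longrightarrow \gm \longrightarrow 1
\]
supplied by Theorem \ref{thm:centralextension}. By \cite[Lemma 7.33]{MField}, this functor preserves exactness of sequences of strongly $\aone$-invariant sheaves of groups; combined with Morel's standard identifications for Milnor-Witt K-theory sheaves, namely $(\K^{\MW}_n)_{-1} \cong \K^{\MW}_{n-1}$ and in particular $(\gm)_{-1} \cong \Z$, this yields a short exact sequence
\[
0 \longrightarrow \K^{\MW}_1 \longrightarrow \Faone(1)_{-1} \longrightarrow \Z \longrightarrow 0.
\]

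To split this sequence I would exhibit a canonical section. Under the identification $\hom_{\Gr^{\aone}_k}(\Faone(1), \Faone(1)) \cong \hom_{\Spc_{k,\bullet}}(\gm, \Faone(1))$ from Lemma \ref{lem:freestronglyaoneinvariant}, the identity endomorphism of $\Faone(1)$ corresponds to the set-theoretic splitting $\gm \hookrightarrow \Faone(1)$; its image under the displayed projection to $(\gm)_{-1} = \Z$ is the identity map of $\gm$, i.e.\ $1 \in \Z$. This element therefore provides a section of the exact sequence above, yielding the canonical decomposition $\Faone(1)_{-1} \cong \Z \oplus \K^{\MW}_1$, which in particular exhibits $\Faone(1)_{-1}$ as a sheaf of abelian groups under addition.

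For the second assertion, Lemma \ref{lem:freestronglyaoneinvariant} also identifies the sheaf of endomorphisms of $\Faone(1)$ with $\Faone(1)_{-1}$ as a sheaf of sets, and $\Aut(\Faone(1))$ is then the subsheaf of units for the composition monoid structure on $\Faone(1)_{-1}$. Projection onto the $\Z$-factor records the degree of the induced endomorphism of the abelianization $\gm = \Faone(1)/\K^{\MW}_2$; since an automorphism of $\Faone(1)$ must descend to an automorphism of $\gm$, its $\Z$-coordinate must lie in $\{\pm 1\}$, so $\Aut(\Faone(1))$ sits inside $\{\pm 1\} \times \K^{\MW}_1$. To conclude, I would write down the composition law on $\Z \oplus \K^{\MW}_1$ and verify that it restricts to a commutative operation on this two-sheeted subsheaf.

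The main obstacle is the explicit computation of this composition law. Starting from $(n, \alpha) \in \Z \oplus \K^{\MW}_1$, one must unwind the universal-property correspondence of Lemma \ref{lem:freestronglyaoneinvariant} to produce the associated homomorphism $\phi_{(n, \alpha)}: \Faone(1) \to \Faone(1)$ in terms of the factor-set presentation of Theorem \ref{thm:centralextension}, and then compose two such endomorphisms. The commutator corrections are controlled by the symbol morphism $\gm \wedge \gm \to \K^{\MW}_2$; restricting to $n \in \{\pm 1\}$ and recording the effect on the $\K^{\MW}_1$-coordinate of $\Faone(1)_{-1}$, I expect the symmetry properties of the symbol morphism, together with centrality of $\K^{\MW}_2$ in $\Faone(1)$, to force the composition on $\Aut(\Faone(1))$ to be commutative.
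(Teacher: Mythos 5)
Your overall strategy mirrors the paper's sketch exactly: both identify the endomorphism sheaf of $\Faone(1)$ with $\Faone(1)_{-1}$ via freeness (Lemma \ref{lem:freestronglyaoneinvariant}), then apply the contraction functor to the central extension of $\gm$ by $\K^{\MW}_2$. The derivation of the abelian-group decomposition $\Faone(1)_{-1} \cong \Z \oplus \K^{\MW}_1$ via the exactness of $(-)_{-1}$, the identifications $(\K^{\MW}_2)_{-1} \cong \K^{\MW}_1$ and $(\gm)_{-1} \cong \Z$, and the splitting by the identity endomorphism is correct and is precisely the argument the paper intends.

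The second assertion, however, you have not actually proved, and the way you set it up hides a real gap. You assert that the $\Z$-coordinate of an automorphism ``records the degree of the induced endomorphism of the abelianization $\gm = \Faone(1)/\K^{\MW}_2$,'' and infer that this coordinate lies in $\{\pm 1\}$. Two objections: first, $\gm$ is an abelian quotient of $\Faone(1)$ but not its abelianization in $\Gr^{\aone}_k$ --- the abelianization is the free strictly $\aone$-invariant sheaf on $\gm$, namely $\K^{\MW}_1$, since the commutator subsheaf of $\Faone(1)$ is only $h \cdot \K^{\MW}_2$, not all of $\K^{\MW}_2$. Second, and more seriously, for an endomorphism $\phi$ of $\Faone(1)$ to descend to $\gm$ at all you need $\phi(\K^{\MW}_2) \subseteq \K^{\MW}_2$; this would follow if $\K^{\MW}_2$ were exactly the center of $\Faone(1)$, but $\K^{\MW}_2$ is only known a priori to be contained in the center, and you have not argued that it is characteristic. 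Without this you cannot conclude that projection to the $\Z$-factor is multiplicative under composition, which is what the ``units have degree $\pm 1$'' step needs. In the argument the paper is pointing at, one instead identifies the composition monoid structure on $\Faone(1)_{-1}$ directly with a quadratic-form-type ring (in effect $GW$, split as $\Z \oplus \K^{\MW}_1$ by the rank), from which multiplicativity of the rank and commutativity of the unit group are immediate --- but, as the paper's remark right after the lemma warns, this identification genuinely requires input about the Hopf map $\eta$, which your plan never invokes. Your closing paragraph is candid that the composition law is an unresolved obstacle; that is where the actual content of the abelianness statement lives, so as written the proposal establishes the first clause of the lemma but only gestures at the second.
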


\begin{rem}
A direct description of the ring structure on $\Z \oplus \K^{\MW}_1$ requires some information about the Hopf map \cite[\S 7.3]{MField}.
\end{rem}

\section{Projective bundles and other $\aone$-fiber sequences}
\label{s:fibersequences}
In this section, we fix our notation for the study of projective bundles.  After reviewing some basic facts and terminology regarding projective bundles, we describe a number of results on $\aone$-fibrations associated with torsors.  The section ends with a review of some results of M. Wendt, drawing heavily on work of F. Morel, demonstrating that Zariski locally trivial $PGL_n$-torsors and associated projective bundles give rise to $\aone$-fiber sequences.  The remainder of the material in the section introduces techniques that allow us to effectively study the aforementioned fiber sequences (when the base of the fibration is $\aone$-connected), and these techniques will be used in the computations in Section \ref{s:splittingobstructions}.  Much of this discussion involves relating the fiber sequences for $PGL_n$-torsors, $GL_n$-torsors and $SL_n$-torsors.

\subsubsection*{Zariski locally trivial projective bundles}
Suppose $X$ is a smooth scheme and $\mathcal{E}$ is a finite rank locally free sheaf of $\O_X$-modules on $X$; we will refer to $\mathcal{E}$ as a vector bundle on $X$.  We also systematically abuse notation and identify ${\mathcal E}$ with the corresponding geometric vector bundle $\Spec \operatorname{Sym}^{\bullet} {\mathcal E}^{\vee} \to X$.  As usual, write ${\mathbb P}({\mathcal E})$ for the associated projective bundle with projection morphism ${\mathbb P}({\mathcal E}) \to X$.  Recall the following properties of projective bundles.

\begin{thm}[{\cite[\S 4.1-4.2]{EGAII}}]
Suppose $X$ is a scheme and ${\mathcal E}$ is a rank $n$ vector bundle on $X$.
\begin{itemize}
\item If $f: X' \to X$ and $\mathcal{E}' = f^*{\mathcal E}$, then there is a canonical isomorphism $X' \times_X {\mathbb P}({\mathcal E}) \cong {\mathbb P}({\mathcal E}')$.
\item If $\mathcal{L}$ is a line bundle on $X$, there is a canonical isomorphism ${\mathbb P}({\mathcal E}) \isomt {\mathbb P}(\mathcal{E} \tensor \mathcal{L})$
\item The set of sections of ${\mathbb P}({\mathcal E})$ is canonically in bijection with the set of locally-free subsheaves $\F \subset \mathcal{E}$ such that $\mathcal{E}/\mathcal{F}$ is invertible.
\end{itemize}
\end{thm}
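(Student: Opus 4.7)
The plan is to base all three items on the universal property of $\mathbb{P}(\mathcal{E})$: as a scheme over $X$, it represents the functor sending an $X$-scheme $f: T \to X$ to the set of isomorphism classes of invertible quotients of $f^*\mathcal{E}$, equivalently short exact sequences $0 \to \mathcal{F}' \to f^*\mathcal{E} \to \mathcal{L} \to 0$ with $\mathcal{L}$ invertible (and $\mathcal{F}'$ necessarily locally free of rank $n-1$). This universal property is a direct unwinding of the construction $\mathbb{P}(\mathcal{E}) = \operatorname{Proj}(\operatorname{Sym}^\bullet \mathcal{E}^\vee)$ together with the description of morphisms to a $\operatorname{Proj}$; granting this, all three statements become short computations with representable functors.

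For the first bullet, I would argue that both $X' \times_X \mathbb{P}(\mathcal{E})$ and $\mathbb{P}(f^*\mathcal{E})$ represent the same functor on $X'$-schemes, namely $T \mapsto \{$invertible quotients of the pullback of $\mathcal{E}$ to $T\}$, where in both cases the pullback is computed via the composite $T \to X' \to X$. The canonical isomorphism is then produced by Yoneda, and I would make this concrete by writing down the universal quotient on each side and checking they match. For the second bullet, tensoring with a line bundle $\mathcal{L}$ pulled back to $T$ gives a natural bijection between invertible quotients of $f^*\mathcal{E}$ and invertible quotients of $f^*(\mathcal{E} \otimes \mathcal{L})$, sending $\mathcal{E} \twoheadrightarrow \mathcal{M}$ to $\mathcal{E} \otimes \mathcal{L} \twoheadrightarrow \mathcal{M} \otimes \mathcal{L}$; this bijection is functorial in $T$, so again Yoneda produces the desired isomorphism over $X$.

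For the third bullet, a section of $\mathbb{P}(\mathcal{E}) \to X$ is precisely a morphism $s: X \to \mathbb{P}(\mathcal{E})$ over $X$, which by the universal property corresponds bijectively to an invertible quotient $\mathcal{E} \twoheadrightarrow \mathcal{L}$. The kernel $\mathcal{F} := \ker(\mathcal{E} \to \mathcal{L})$ is locally free of rank $n-1$ (one checks locally on $X$, where $\mathcal{E}$ splits as a direct sum and the quotient map is given by a surjection onto a rank one factor), and the quotient $\mathcal{E}/\mathcal{F} \cong \mathcal{L}$ is invertible. Conversely, a locally free subsheaf $\mathcal{F} \subset \mathcal{E}$ with invertible quotient produces an invertible quotient $\mathcal{E} \twoheadrightarrow \mathcal{E}/\mathcal{F}$, and these two constructions are mutually inverse.

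The only mildly delicate point is the local freeness claim in the kernel step of the third item, and the verification that the universal property as stated is the one satisfied by $\operatorname{Proj}(\operatorname{Sym}^\bullet \mathcal{E}^\vee)$ (as opposed to the dual convention); both can be checked locally on $X$ by trivializing $\mathcal{E}$, after which the statements reduce to the classical description of $\mathbb{P}^{n-1}_X$ as representing lines (quotients in our convention) in the free module. Since everything is standard EGA material, I would cite \cite{EGAII} for the universal property and present the three verifications as one-line applications of Yoneda.
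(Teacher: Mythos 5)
The paper does not prove this statement; it is quoted directly from EGA II, \S 4.1--4.2 as background. Your reconstruction via the universal property of $\operatorname{Proj}$ and Yoneda is exactly the standard EGA argument, and it is correct.

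One small internal inconsistency worth flagging: you write $\mathbb{P}(\mathcal{E}) = \operatorname{Proj}(\operatorname{Sym}^\bullet \mathcal{E}^\vee)$ but then describe the represented functor as invertible \emph{quotients} of $f^*\mathcal{E}$. Those two are the dual conventions of each other. With $\operatorname{Proj}(\operatorname{Sym}^\bullet \mathcal{E}^\vee)$ one parametrizes invertible quotients of $\mathcal{E}^\vee$, equivalently rank-one locally split subbundles of $\mathcal{E}$; the ``invertible quotients of $\mathcal{E}$'' functor, which is the one matching the paper's third bullet (corank-one subsheaves $\mathcal{F} \subset \mathcal{E}$ with invertible quotient), is represented by $\operatorname{Proj}(\operatorname{Sym}^\bullet \mathcal{E})$, the usual EGA/Grothendieck convention. (The paper's $\Spec \operatorname{Sym}^\bullet \mathcal{E}^\vee$ is its convention for the geometric total space of the vector bundle, not for $\mathbb{P}(\mathcal{E})$.) You acknowledge this as something to be checked, but you should settle on the $\operatorname{Proj}(\operatorname{Sym}^\bullet \mathcal{E})$ convention at the outset so the functor description and the third bullet line up without a dualization step. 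With that fixed, all three Yoneda arguments go through exactly as you outline.
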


\begin{defn}
\label{defn:homotopysection}
Suppose $\mathcal{E}$ is a rank $n$ vector bundle on a smooth $k$-scheme $X$.  An {\em $\aone$-homotopy section} of ${\mathbb P}({\mathcal E}) \to X$ consists of a smooth scheme $X'$, an $\aone$-weak equivalence $f: X' \to X$ and a subsheaf $\F \subset f^*\mathcal{E}$ with invertible quotient $f^*\mathcal{E}/\F$.
\end{defn}

\begin{rem}
It is not clear (to us) whether existence of an $\aone$-homotopy section is a strictly weaker condition than existence of a section.  Since the Picard group is $\aone$-homotopy invariant, the invertible quotient $f^*\mathcal{E}/\F$ on $X'$ gives rise to an invertible sheaf $\L$ on $X$.  Even if $f$ is flat (which is not an obvious consequence of the assumptions) the situation is unclear.  Indeed, by the assumptions on $X$ and $X'$, the theory of faithfully flat descent implies that
\[
\hom_{\O_X}(\mathcal{E},\L) \longrightarrow \hom_{\O_{X'}}(f^*\mathcal{E},f^*\mathcal{E}/\F)
\]
is injective (identified with the subspace of homomorphism preserving the descent datum), but it is not clear whether the morphism in question descends.  Nevertheless, it seems reasonable to speculate that existence of an $\aone$-homotopy section is equivalent to existence of a section of the pullback to an affine vector bundle torsor over $X$.
\end{rem}

\subsubsection*{Projective bundles over an $\aone$-connected base}
Suppose $X$ is a smooth scheme.  Since $PGL_n$ is a smooth group scheme, \cite[Theorem 11.8]{BrauerIII} guarantees that $PGL_n$-torsors are \'etale locally trivial.  There is a short exact sequence of algebraic groups (i.e., a short exact sequence of \'etale sheaves of groups)
\[
1 \longrightarrow \gm \longrightarrow GL_n \longrightarrow PGL_n \longrightarrow 1.
\]
The associated long exact sequence in (non-abelian) \'etale cohomology yields the sequence of maps
\[
\cdots \longrightarrow H^1_{\et}(X,GL_n) \longrightarrow H^1_{\et}(X,PGL_n) \stackrel{\psi}{\longrightarrow} H^2_{\et}(X,\gm),
\]
where, as usual, the first two terms are pointed sets.

The inclusion map $H^1_{\operatorname{Zar}}(X,GL_n) \to H^1_{\et}(X,GL_n)$ is an isomorphism, i.e., $GL_n$ is a special group in the sense of Serre.  The projective bundles corresponding to $PGL_n$-torsors in the image of the first map are precisely the projectivizations of vector bundles on $X$.  As a consequence, a projective bundle on $X$ is the projectivization of a vector bundle if and only if the image of the corresponding element of $H^1_{\et}(PGL_n)$ under $\psi$ is trivial in $H^2_{\et}(X,\gm)$.  The following result explains why we restrict to Zariski locally trivial projective space bundles in the remainder of the paper.

\begin{prop}
\label{prop:projectivebundlesonaoneconnectedvars}
Suppose $X$ is an $\aone$-connected smooth scheme (so $X(k)$ is non-empty by \textup{\cite[\S 3 Remark 2.5]{MV}}) and $\operatorname{char}(k)$ is not divisible by $n$.  Every projective bundle on $X$ of rank $n-1$ (i.e., with $n-1$-dimensional projective space fibers) that becomes trivial upon restriction to some $k$-rational point $x \in X(k)$ is the projectivization of a vector bundle.
\end{prop}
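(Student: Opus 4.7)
The plan is to prove that the Brauer class $\beta := \psi([P]) \in H^2_{\et}(X,\gm)$ of $P$ vanishes, since by the discussion preceding the statement this is equivalent to $P$ being the projectivization of a vector bundle. Because $\operatorname{char}(k) \nmid n$, the short exact sequence $1 \to \mu_n \to \SL_n \to \PGL_n \to 1$ of \'etale sheaves of groups produces a lift $\alpha \in H^2_{\et}(X,\mu_n)$ of $\beta$ (under the inclusion $\mu_n \hookrightarrow \gm$); the hypothesis that $P|_x$ is a trivial Severi--Brauer variety over $\Spec k$ immediately yields $\alpha|_x = 0$. It therefore suffices to show $\alpha = 0$ (equivalently $\beta = 0$).

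To promote the local triviality at $x$ to global vanishing, I would argue as follows. The triviality of $P|_x$ provides a $k$-point of $P$ lying above $x$; combined with smoothness of $P \to X$ (as a projective bundle), Hensel's lemma extends this to a section of $P$ over the Nisnevich local scheme $\Spec \O^h_{X,x}$, and hence to a section over some \'etale (Nisnevich) neighborhood $U' \to X$ of $x$. Consequently $\beta|_{U'} = 0$. Since $X$ is smooth, the Auslander--Goldman theorem provides an injection $\operatorname{Br}(X) \hookrightarrow \operatorname{Br}(k(X))$, reducing the vanishing of $\beta$ to the triviality of the generic fiber $P_{k(X)}$ as a Severi--Brauer variety over $k(X)$; the local section over $U'$ already trivializes $P$ after the finite separable base-change $k(U')/k(X)$.

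The $\aone$-connectedness of $X$ enters to complete the descent. It implies that any two $k$-points of $X$ are connected by a chain of $\aone$'s (so $X(k)/R = \ast$), and combined with the $\aone$-invariance of $\operatorname{Br}(-)[n]$ on smooth $k$-schemes (which follows from its identification with $H^2_{\et}(-,\mu_n)$ and $\aone$-invariance of \'etale $\mu_n$-cohomology), this propagates the triviality at $x$ to every $k$-point, and ultimately to the generic point. The main obstacle is the careful handling of the \'etale degree $d = [k(U'):k(X)]$: the standard restriction--corestriction argument using the $n$-torsion nature of $\alpha$ yields only $\gcd(d,n)\cdot \beta|_{k(X)} = 0$. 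The $\aone$-connectedness of $X$ is used to either choose the Nisnevich neighborhood $U'$ with $d$ coprime to $n$ (by varying $x$ along $\aone$-chains and using Nisnevich refinements), or else to bypass the issue entirely by identifying $\alpha$ as a pointed $\aone$-homotopy class $(X,x) \to K(\mu_n,2)$ and exploiting obstruction theory along the $\aone$-Postnikov tower, where the specific factorization of $\alpha$ through the map $B_{\et}\PGL_n \to K(\mu_n,2)$ forces triviality on an $\aone$-connected base.
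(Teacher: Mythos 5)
Your opening is correct: the goal is to show the Brauer class $\beta = \psi([P]) \in H^2_{\et}(X,\gm)$ vanishes, and the triviality of $P|_x$ gives $\beta|_x = 0$.  But from there your argument does not close, and the crucial input the paper actually uses is missing.

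The paper's proof rests on a single, strong fact cited from \cite[Proposition 4.1.2]{AM}: for $\aone$-connected smooth $X$, the pullback map $Br'(k) \to Br'(X)$ on the prime-to-$p$ cohomological Brauer group is an \emph{isomorphism}.  Granting this, the proof is one line: $\beta$ is pulled back from $\Spec k$, and evaluating along the section $x: \Spec k \to X$ (which retracts the pullback) shows $\beta$ equals the pullback of $\beta|_x = 0$, hence $\beta = 0$.  Your proposal never invokes this result; you instead try to reconstruct it by hand, and your attempt has a genuine gap.  Specifically: (i) $\aone$-invariance of $H^2_{\et}(-,\mu_n)$ (i.e.\ invariance under $- \times \aone$) is a much weaker statement than the assertion that $Br'(k) \to Br'(X)$ is an isomorphism when $X$ is $\aone$-connected, and it does not by itself imply that triviality at one $k$-point propagates to the generic point; (ii) your own restriction--corestriction analysis shows you only get $\gcd(d,n)\cdot\beta_{k(X)} = 0$, and you offer no argument that one can force $d$ coprime to $n$ by ``varying $x$ along $\aone$-chains'' (this is not a standard move and it is not clear it is even possible); (iii) the alternative ``obstruction theory along the $\aone$-Postnikov tower'' route is gestured at rather than carried out.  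Finally, knowing the generic fiber $P_{k(X)}$ splits after a finite separable extension is automatic for any Brauer class on a variety and does not use the local triviality at $x$, so that step does no work.  The fix is simply to cite and use the isomorphism $Br'(k) \isomt Br'(X)$ from \cite[Proposition 4.1.2]{AM}; with it, the argument you open with terminates immediately.
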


\begin{proof}
If $k$ is not algebraically closed, there may exist non-trivial \'etale locally trivial projective bundles over $\Spec k$, so the condition regarding triviality upon restriction to a $k$-rational point is clearly necessary.  Write $Br'(X)$ for the prime to $p$ part of the cohomological Brauer group of a smooth scheme $X$.   By \cite[Proposition 4.1.2]{AM}, if $X$ is $\aone$-connected, the canonical map $Br'(k) \to Br'(X)$ is an isomorphism.  Equivalently, for any integer $n$ coprime to $\operatorname{char}(k)$ and any element $[\mathcal{P}]$ of $H^1_{\et}(X,PGL_n)$ represented by a torsor $P \to X$, the class $\psi[\mathcal{P}]$ is pulled back from a class in $H^2_{\et}(\Spec k,\gm)$.

If $x \in X(k)$ is a $k$-rational point such that the restriction of $P$ to $x$ is trivial, then it follows that the original class $\psi[P]$ is trivial, and so $P$ is $PGL_n$-torsor obtained from a $GL_n$-torsor.  In other words, the associated projective space bundle is the projectivization of a vector bundle.  Note that the maps $Br'(k) \to Br'(X)$ induced by different $k$-rational points actually coincide, so the choice of $k$-rational point is irrelevant.
\end{proof}

\subsubsection*{$\aone$-fiber sequences and torsors}
Principal $G$-bundles are among the most common examples of fibrations in topology.  Analogously, Morel gave hypotheses under which Nisnevich locally trivial $G$-torsors give rise to $\aone$-fiber sequence.

\begin{thm}[{\cite[Theorem 6.50]{MField}}]
\label{thm:torsorsgivefibrations}
Suppose ${\mathcal X}$ is a $k$-space.  If $G$ is a Nisnevich sheaf of groups such that $\bpi_0^{\aone}(G)$ is strongly $\aone$-invariant, and if ${\mathcal P} \to {\mathcal X}$ is a (Nisnevich locally trivial) $G$-torsor on ${\mathcal X}$, then
\[
{\mathcal P} \longrightarrow {\mathcal X} \longrightarrow BG
\]
is an $\aone$-fiber sequence.  
\end{thm}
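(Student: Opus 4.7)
The plan is to recognize ${\mathcal P} \to {\mathcal X}$ as the pullback of the universal $G$-torsor $EG \to BG$ along the classifying map of ${\mathcal P}$, and then to upgrade the resulting simplicial fiber sequence to an $\aone$-fiber sequence using the hypothesis on $\bpi_0^{\aone}(G)$ to control the $\aone$-loop space of $BG$.

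First, by Theorem \ref{thm:simplicialhomotopyclassificationoftorsors}, the torsor ${\mathcal P}$ is classified by a morphism ${\mathcal X} \to BG$ and is recovered (up to simplicial weak equivalence) as the pullback of the universal $G$-torsor $EG \to BG$, where $EG$ is simplicially contractible by construction. Hence ${\mathcal P} \to {\mathcal X} \to BG$ is a simplicial fiber sequence in $\hspnis$. Next, I would choose an $\aone$-fibrant replacement $BG^{(\aone)}$ of $BG$ and factor the composite ${\mathcal X} \to BG^{(\aone)}$ as an $\aone$-acyclic cofibration ${\mathcal X} \to \tilde{{\mathcal X}}$ followed by an $\aone$-fibration $p \colon \tilde{{\mathcal X}} \to BG^{(\aone)}$ with fiber ${\mathcal F}$. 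By Definition \ref{defn:aonefibration}, the sequence ${\mathcal F} \to \tilde{{\mathcal X}} \to BG^{(\aone)}$ together with the induced $\Omega^1_s BG^{(\aone)}$-action on ${\mathcal F}$ is an $\aone$-fiber sequence, and there is a natural comparison map ${\mathcal P} \to {\mathcal F}$ covering ${\mathcal X} \to \tilde{{\mathcal X}}$. The goal is to show this comparison map is an $\aone$-weak equivalence.

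The key input is Theorem \ref{thm:pathloopsfibration}: by Corollary \ref{cor:loopsclassifyingadjointness}, $\bpi_0^{\aone}(\Omega^1_s BG)$ equals $\bpi_0^{\aone}(G)$, which is strongly $\aone$-invariant by hypothesis, so the canonical map yields an $\aone$-weak equivalence $L_{\aone}\Omega^1_s BG \isomto \Omega^1_s BG^{(\aone)}$. In particular $\bpi_i^{\aone}(BG) \cong \bpi_{i-1}^{\aone}(G)$ for $i \geq 1$, while $\bpi_0^{\aone}(BG)$ is trivial by Corollary \ref{cor:pi0linearalgebraicgroup}. I would then compare the long exact sequence of Lemma \ref{lem:fibersequencelongexactsequence} applied to ${\mathcal F} \to \tilde{{\mathcal X}} \to BG^{(\aone)}$ with the long exact sequence obtained by applying $L_{\aone}$ to the simplicial fiber sequence ${\mathcal P} \to {\mathcal X} \to BG$: the terms coming from ${\mathcal X}$ and from $BG$ match under these identifications, and a stalkwise five-lemma argument (with appropriate care in the low-degree non-abelian tail) forces ${\mathcal P} \to {\mathcal F}$ to induce an isomorphism on every $\aone$-homotopy sheaf.

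A cleaner alternative bypasses the five-lemma by exhibiting ${\mathcal P}$ directly as an $\aone$-homotopy pullback. Since the defining pullback square is already a homotopy pullback in the simplicial model structure and the $\aone$-model structure is right proper (cited in the preliminaries from \cite[\S 2 Theorem 3.2]{MV}), pulling back an $\aone$-fibrant replacement of $EG \to BG^{(\aone)}$ along ${\mathcal X} \to BG^{(\aone)}$ produces a space $\aone$-weakly equivalent to ${\mathcal P}$ whose strict fiber is ${\mathcal F}$. The main obstacle in either route is the passage from the simplicial to the $\aone$-loop space of $BG$, and this is precisely where the strong $\aone$-invariance hypothesis is indispensable: without it, $\Omega^1_s BG^{(\aone)}$ need not be $\aone$-weakly equivalent to $L_{\aone} G$, and the candidate fiber ${\mathcal F}$ would fail to compute the $\aone$-homotopy type of ${\mathcal P}$.
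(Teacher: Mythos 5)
The paper does not provide its own proof of this result; it is imported from Morel \cite[Theorem~6.50]{MField}, with only a gloss in the subsequent remark (Remark~\ref{rem:actionofloopsofthebase}) describing the action of ${\bf R}\Omega^1_s BG$ on ${\mathcal P}$. So I will evaluate your argument on its own terms. Your setup is sound: recovering ${\mathcal P}$ as the simplicial homotopy fiber of the classifying map, forming the comparison ${\mathcal P} \to {\mathcal F}$ with the strict fiber of an $\aone$-fibrant replacement, and pulling in Theorem~\ref{thm:pathloopsfibration} via Corollary~\ref{cor:loopsclassifyingadjointness} to control $\Omega^1_s BG^{(\aone)}$ are exactly the right ingredients. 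However, neither route you offer to close the argument actually works.

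The five-lemma route is circular. There is a long exact sequence in \emph{simplicial} homotopy sheaves attached to ${\mathcal P} \to {\mathcal X} \to BG$, but there is no long exact sequence of $\aone$-homotopy sheaves obtained ``by applying $L_{\aone}$'' to it. The existence of such a sequence is precisely equivalent to ${\mathcal P} \to {\mathcal X} \to BG$ being an $\aone$-fiber sequence --- which is the statement being proved. You cannot assume it as the second leg of a five-lemma comparison.

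The right properness route also has a real gap. Right properness says that pulling back an $\aone$-weak equivalence along an $\aone$-\emph{fibration} yields an $\aone$-weak equivalence. What you need is to compare ${\mathcal P} = EG \times_{BG} {\mathcal X}$ with $\tilde{EG} \times_{BG^{(\aone)}} {\mathcal X}$, i.e., to pull the $\aone$-weak equivalence $EG \to \tilde{EG}\times_{BG^{(\aone)}}BG$ back along the classifying map ${\mathcal X} \to BG$. But the classifying map is not a fibration, so right properness does not apply. More to the point, ``the square is a simplicial homotopy pullback and the $\aone$-model structure is right proper'' does not entail that the square is an $\aone$-homotopy pullback: left Bousfield localization of a right proper model category does not in general take homotopy pullbacks to homotopy pullbacks. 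If it did, the hypothesis that $\bpi_0^{\aone}(G)$ be strongly $\aone$-invariant would be superfluous, whereas it is the crux of Morel's theorem --- it is what allows one to identify, after $\aone$-localization, the strict fiber of the replaced fibration with $L_{\aone}{\mathcal P}$. Supplying that step (i.e., showing that $L_{\aone}$ turns this particular simplicial fibration into an $\aone$-fibration with the expected fiber) is exactly the content of Morel's proof, and your argument currently takes it for granted rather than proving it.
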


\begin{rem}
\label{rem:actionofloopsofthebase}
In the sequel, when we use this theorem, it will be important to understand the action of ${\mathbf R}\Omega^1_s BG$ on ${\mathcal P}$.  By Corollary \ref{cor:loopsclassifyingadjointness}, we know that ${\mathbf R}\Omega^1_s BG \isomt G$.  The $G$-action on ${\mathcal P}$ coming from its definition as a $G$-torsor is the required action.  To see that this is the case requires tracing through Morel's proof.  Under the hypothesis that $\bpi_0^{\aone}(G)$ is strongly $\aone$-invariant, Theorem \ref{thm:pathloopsfibration} shows that the map $L_{\aone}{\bf R}\Omega^1_s BG \to {\bf R}\Omega^1_s L_{\aone} BG$ is an $\aone$-weak equivalence.  In other words, there is an $\aone$-weak equivalence $L_{\aone}G \isomt {\bf R}\Omega^1_s L_{\aone}BG$.  The action of $G$ on ${\mathcal P}$ stemming from the fact that ${\mathcal P}$ is a $G$-torsor thus yields an action of $L_{\aone}(G)$ on $L_{\aone}{\mathcal P}$.  Applying $L_{\aone}$ to the map ${\mathcal X} \to BG$ makes this map into an $\aone$-fibration, and one identifies $L_{\aone}{\mathcal P}$ with the fiber of this morphism. 
\end{rem}

\subsubsection*{Some models of classifying spaces}
The spaces $BGL_n$ and $BSL_n$ have, in addition to the simplicial models mentioned before, geometric models.  We use the construction from \cite[\S 4.2]{MV}, sometimes called Totaro's model for the classifying space \cite[Remark 1.4]{Totaro}.  If $V$ is the standard $n$-dimensional representation of $GL_n$, then $V$ can be viewed as a representation of $SL_n$ by restriction.  For any integer $N \geq 0$, $GL_n$ (or $SL_n$) acts scheme-theoretically freely on the open subscheme $V_N \subset \hom_k(k^{N+n},V)$ whose points correspond to surjective linear maps.  The quotient $V_N/GL_n$ exists as a smooth scheme and is precisely the Grassmannian $Gr_{n,N}$.  Likewise, the quotient $V_N/SL_n$ exists as a smooth scheme and is the total space of a $\gm$-torsor over $Gr_{n,N}$.  There are transition morphisms $V_N \to V_{N+1}$ that yield morphisms $V_N/GL_n \to V_{N+1}/GL_n$ and $V_N/SL_n \to V_{N+1}/SL_n$ and we set $Gr_n = \colim_N V_N/GL_n$.  Likewise, set $B_{gm}SL_n = \colim_N V_N/SL_n$.  By definition $B_{gm}SL_n \to Gr_n$ is a $\gm$-torsor.

\begin{prop}[{\cite[\S 4 Propositions 2.3 and 2.6]{MV}}]
The space $\colim_N V_N$ is $\aone$-contractible, and the simplicial homotopy class of maps $B_{gm}SL_n \to BSL_n$ (resp. $Gr_n \to BGL_n$) classifying the $SL_n$-torsor $\colim_N V_N \to B_{gm}SL_n$ (resp. $\colim_N V_N \to Gr_n$) is an $\aone$-weak equivalence.
\end{prop}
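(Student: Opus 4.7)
The plan has two ingredients. First, I would prove that $E := \colim_N V_N$ is $\aone$-contractible via an explicit swindle. Second, I would feed this into Theorem \ref{thm:torsorsgivefibrations} and Lemma \ref{lem:fibersequencelongexactsequence} to conclude the two $\aone$-weak equivalences simultaneously.

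For the swindle, fix a surjection $v_0 \colon k^{2n} \twoheadrightarrow k^n$, say the projection onto the first $n$ coordinates, regarded as a $k$-point of $V_n$. Viewing a point of $V_N$ as an $n \times (N+n)$-matrix $M$ of rank $n$, the juxtaposition $[M \mid v_0]$ is an $n \times (N+3n)$-matrix whose right-hand block alone is already surjective; hence $[tM \mid v_0]$ and $[M \mid t v_0]$ both lie in $V_{N+2n}$ for every $t \in \aone$. The morphism $(t, M) \mapsto [tM \mid v_0]$ is then an $\aone$-homotopy in $V_{N+2n}$ from the constant map $M \mapsto [0 \mid v_0]$ at $t = 0$ to $M \mapsto [M \mid v_0]$ at $t = 1$, while $(t, M) \mapsto [M \mid tv_0]$ is an $\aone$-homotopy from the stabilization $M \mapsto [M \mid 0]$ at $t = 0$ to $M \mapsto [M \mid v_0]$ at $t = 1$. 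These are compatible as $N$ varies, so after concatenation and passage to the colimit the identity on $E$ becomes $\aone$-homotopic to a constant, which is the definition of $\aone$-contractibility.

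For the classifying-map comparisons, the free actions of $SL_n$ and $GL_n$ on each $V_N$ realize $E \to B_{gm}SL_n$ and $E \to Gr_n$ as Zariski (and hence Nisnevich) locally trivial torsors. The group $SL_n$ is $\aone$-connected (it is generated by its elementary subgroups, each isomorphic to $\ga$), so $\bpi_0^{\aone}(SL_n) = \ast$ is trivially strongly $\aone$-invariant; and $\bpi_0^{\aone}(GL_n) = \gm$ is strongly $\aone$-invariant by the discussion preceding Proposition \ref{prop:fundamentalgroupproperties}. Theorem \ref{thm:torsorsgivefibrations} therefore produces $\aone$-fiber sequences
\[
E \longrightarrow B_{gm}SL_n \longrightarrow BSL_n \quad \text{and} \quad E \longrightarrow Gr_n \longrightarrow BGL_n,
\]
whose right-hand maps represent the simplicial homotopy classes classifying the respective torsors. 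Lemma \ref{lem:fibersequencelongexactsequence}, combined with $\aone$-contractibility of $E$, forces these classifying maps to induce isomorphisms on $\bpi_i^{\aone}$ for every $i \geq 1$. Corollary \ref{cor:pi0linearalgebraicgroup} gives $\bpi_0^{\aone}(BSL_n) = \bpi_0^{\aone}(BGL_n) = \ast$, while $B_{gm}SL_n$ and $Gr_n$ are $\aone$-connected by a Schubert-cell application of Proposition \ref{prop:aoneconnectedness} at each finite stage, compatibly as $N$ grows. A Whitehead-type criterion in the pointed $\aone$-homotopy category of $\aone$-connected spaces (detecting $\aone$-weak equivalence from isomorphisms on all $\aone$-homotopy sheaves via the $\aone$-Postnikov tower of Theorem \ref{thm:postnikovtowers}) then upgrades the sheaf isomorphisms to the desired $\aone$-weak equivalences.

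The principal obstacle is verifying the swindle cleanly; once $E$ is known to be $\aone$-contractible, every remaining step is a formal application of the infrastructure of Section \ref{s:preliminaries}. A secondary concern is commuting $\aone$-homotopy invariants with the filtered colimits defining $E$, $B_{gm}SL_n$, and $Gr_n$, but because the contracting homotopies and the $\aone$-connectedness arguments are set up compatibly at every finite stage, the needed statements descend to the colimits without further work.
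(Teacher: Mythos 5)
This proposition appears in the paper only as a citation to Morel--Voevodsky, with no in-paper proof, so there is nothing to compare your argument against directly; I will evaluate it on its own terms. The second half of your plan is sound in outline: $SL_n$ and $GL_n$ are special, so $E := \colim_N V_N \to B_{gm}SL_n$ and $E \to Gr_n$ are Zariski (hence Nisnevich) locally trivial torsors, $\bpi_0^{\aone}(SL_n)=\ast$ and $\bpi_0^{\aone}(GL_n)=\gm$ are strongly $\aone$-invariant, and once $E$ is $\aone$-contractible the combination of Theorem~\ref{thm:torsorsgivefibrations}, Lemma~\ref{lem:fibersequencelongexactsequence}, the $\aone$-connectedness of all four spaces, and the $\aone$-Whitehead theorem gives the weak equivalences. (It is heavier machinery than the source actually uses; the bar-resolution comparison between two torsors with $\aone$-contractible total spaces, as in the proof of Lemma~\ref{lem:modelforbetmun}, would do the job without invoking Morel's fibration theorem.)

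There is, however, a genuine gap in the contractibility argument, precisely where you flag the ``principal obstacle'' and then wave it away. The assertion ``These are compatible as $N$ varies'' is false. Let $\iota_N \colon V_N \to V_{N+1}$, $M \mapsto [M \mid 0]$, be the transition map, and consider your first homotopy $H_N \colon V_N \times \aone \to V_{N+2n}$, $(M,t) \mapsto [M \mid t v_0]$. Compatibility with the colimit structure would require
\[
\iota_{N+2n} \circ H_N \;=\; H_{N+1} \circ (\iota_N \times \mathrm{id}),
\]
but the left-hand side sends $(M,t)$ to $[M \mid t v_0 \mid 0]$ while the right-hand side sends it to $[M \mid 0 \mid t v_0]$. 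These differ by a nontrivial permutation of the last $2n+1$ columns, so the square does not commute; the $H_N$ do not assemble into a single morphism $E \times \aone \to E$, and the sentence ``the identity on $E$ becomes $\aone$-homotopic to a constant'' does not follow as written.

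The argument is repairable, and the repair is the actual content of the swindle. Your two homotopies do show, for each fixed $N$, that the $2n$-fold transition $V_N \to V_{N+2n}$ is $\aone$-null-homotopic. Combine this with the fact that filtered colimits along monomorphisms of simplicial Nisnevich sheaves commute with $L_{\aone}$ and with formation of homotopy sheaves (so that $\bpi_i^{\aone}(E) \cong \colim_N \bpi_i^{\aone}(V_N)$): a directed colimit of sheaves in which every composite of $2n$ consecutive transition maps is zero vanishes, so $\bpi_i^{\aone}(E) = \ast$ for all $i$, and since $E$ is pointed and $\aone$-connected this yields $\aone$-contractibility. Alternatively, one can insert an auxiliary $\aone$-homotopy straightening each offending column permutation through an $\aone$-connected special linear group, but that is more work and essentially amounts to the same observation. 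You should replace the unjustified compatibility claim with one of these two arguments.
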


The map $SL_n \to PGL_n$ is a $\mu_n$-torsor.  If $n$ does not divide $\operatorname{char} k$, then $\mu_n$ is a finite \'etale group scheme, and \'etale locally trivial $\mu_n$-torsors can be described by $\aone$-homotopy classes of maps in $\ho{k}$.  Indeed, under the stated hypotheses on the characteristic, a simplicial classifying space for \'etale locally trivial $\mu_n$-torsors, denoted $B_{\et}\mu_n$, is obtained by pushing forward the classifying space for the \'etale sheaf of groups $\mu_n$ from the \'etale to the Nisnevich topology; see \cite[\S 4]{MV} for details of this construction.  So long as $n$ is not divisible by $\operatorname{char} k$, the space $B_{\et}\mu_n$ is $\aone$-local by \cite[\S 4 Proposition 3.1]{MV}, and for any smooth scheme $U$ an adjunction argument shows (see \cite[\S 4 Proposition 1.16]{MV}) that
\[
[U,B_{\et}\mu_n]_{\aone} = H^1_{\et}(U,\mu_n).
\]
There are various geometric models for $B_{\et}\mu_n$ that are $\aone$-weakly equivalent, but there is one that will be particularly useful for us.

Consider the right action of $SL_n$ on $PGL_n$ where the center $\mu_n \subset SL_n$ acts trivially.  If $ESL_n$ is an $\aone$-contractible space with free $SL_n$-action (e.g., the space $\colim_n V_N$ discussed above), we can consider the space $(PGL_n \times ESL_n)/SL_n$ (here we take the Nisnevich sheaf quotient).  The projection map $PGL_n \to \ast$ gives rise to a morphism
\[
(PGL_n \times ESL_n)/SL_n \longrightarrow BSL_n.
\]
The quotient homomorphism $SL_n \to PGL_n$ gives rise to a morphism
\begin{equation}
\label{eqn:extendingsequenceforpgln}
ESL_n \cong (SL_n \times ESL_n)/SL_n \longrightarrow (PGL_n \times ESL_n)/SL_n
\end{equation}
that is by construction an \'etale locally trivial $\mu_n$-torsor.  Therefore, we can pick a morphism $(PGL_n \times ESL_n)/SL_n \to B_{\et}\mu_n$ classifying this \'etale locally trivial $\mu_n$-torsor.

\begin{lem}
\label{lem:modelforbetmun}
If $\operatorname{char} k$ does not divide $n$, any representative of the simplicial homotopy class of maps
\[
(PGL_n \times ESL_n)/SL_n \longrightarrow B_{\et}\mu_n
\]
classifying the $\mu_n$-torsor of \textup{Morphism \ref{eqn:extendingsequenceforpgln}} is an $\aone$-weak equivalence.
\end{lem}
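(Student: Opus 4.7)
The plan is to exhibit
\[
ESL_n \longrightarrow (PGL_n \times ESL_n)/SL_n \stackrel{\phi}{\longrightarrow} B_{\et}\mu_n
\]
as an $\aone$-fiber sequence, where $\phi$ denotes any representative of the classifying simplicial homotopy class, and then to conclude via the long exact sequence of Lemma \ref{lem:fibersequencelongexactsequence}, using that $ESL_n = \colim_N V_N$ is $\aone$-contractible.

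First I would record the preliminaries. Under the hypothesis $\operatorname{char}(k) \nmid n$, the sheaf $\mu_n$ is a finite \'etale group scheme on $\Sm_k$; it is $\aone$-rigid and therefore strictly $\aone$-invariant, and $B_{\et}\mu_n$ is $\aone$-local (as recalled just before the statement). By Morphism \ref{eqn:extendingsequenceforpgln}, the map $ESL_n \to (PGL_n \times ESL_n)/SL_n$ is an \'etale-locally-trivial $\mu_n$-torsor, and $\phi$ is chosen precisely to classify it; equivalently, $(PGL_n \times ESL_n)/SL_n$ is recovered from $\phi$ as the pullback of the universal torsor $E_{\et}\mu_n \to B_{\et}\mu_n$ along $\phi$.

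Next I would assemble the $\aone$-fiber sequence. This is an \'etale-topology analogue of Theorem \ref{thm:torsorsgivefibrations} and follows from essentially the same argument. Since $B_{\et}\mu_n$ is $\aone$-local, the simplicial homotopy Cartesian square expressing $ESL_n \to (PGL_n \times ESL_n)/SL_n$ as the pullback of the universal torsor remains $\aone$-homotopy Cartesian. Combined with Theorem \ref{thm:pathloopsfibration} applied to the strongly $\aone$-invariant sheaf $\mu_n$, so that ${\bf R}\Omega^1_s L_{\aone}B_{\et}\mu_n$ is $\aone$-weakly equivalent to $\mu_n$ and the induced action on $ESL_n$ agrees with the original torsor action, this exhibits the displayed sequence as an $\aone$-fiber sequence in the sense of Definition \ref{defn:aonefibration}.

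Finally I would conclude using the long exact sequence. Since every $\aone$-homotopy sheaf of $ESL_n$ vanishes, Lemma \ref{lem:fibersequencelongexactsequence} yields isomorphisms
\[
\phi_*: \bpi_i^{\aone}\bigl((PGL_n \times ESL_n)/SL_n\bigr) \isomto \bpi_i^{\aone}(B_{\et}\mu_n)
\]
for every $i \geq 1$ and shows that the source is $\aone$-connected (the target being $\aone$-connected by the \'etale analogue of Corollary \ref{cor:pi0linearalgebraicgroup}). Since the only nontrivial $\aone$-homotopy sheaf of $B_{\et}\mu_n$ is $\bpi_1^{\aone} = \mu_n$, the same holds for $(PGL_n \times ESL_n)/SL_n$, and $\phi$ induces an isomorphism at each stage of the $\aone$-Postnikov towers of Theorem \ref{thm:postnikovtowers}; it is therefore an $\aone$-weak equivalence. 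The main obstacle is the $\aone$-fiber sequence step: Theorem \ref{thm:torsorsgivefibrations} as stated is for Nisnevich-locally-trivial torsors, whereas the $\mu_n$-torsor in question is only \'etale-locally-trivial, and the hypothesis $\operatorname{char}(k) \nmid n$ is essential for bridging that gap by ensuring the $\aone$-locality of $B_{\et}\mu_n$.
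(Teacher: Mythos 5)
Your strategy — realize $ESL_n \to (PGL_n\times ESL_n)/SL_n \to B_{\et}\mu_n$ as an $\aone$-fiber sequence with $\aone$-contractible fiber and conclude $\phi$ is a weak equivalence — is genuinely different from the paper's, which first proves the statement is independent of the choice of $ESL_n$ (via a zigzag of \v Cech constructions) and then cites \cite[\S 4 Lemma 2.5]{MV} for Totaro's model. But your route has a gap at the $\bpi_0$ level that the paper's route sidesteps entirely.

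The claim that $B_{\et}\mu_n$ is $\aone$-connected ``by the \'etale analogue of Corollary \ref{cor:pi0linearalgebraicgroup}'' is false. That corollary's proof rests on the fact that a \emph{Nisnevich}-locally-trivial $G$-torsor over a Henselian local scheme $S$ is trivial; for an \'etale-locally-trivial $\mu_n$-torsor this fails, since $H^1_{\et}(S,\mu_n)$ is in general nonzero over a Henselian (as opposed to strictly Henselian) local ring. Indeed the paper itself records, just before Corollary \ref{cor:strongaoneinvariancepi0ofpgln}, that $\bpi_0^{\aone}(B_{\et}\mu_n) = {\mathcal H}^1_{\et}(\mu_n)$, which is generally a nontrivial sheaf. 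Consequently the long exact sequence of Lemma \ref{lem:fibersequencelongexactsequence}, with $\aone$-contractible fiber, only gives isomorphisms $\phi_*:\bpi_i^{\aone}\to\bpi_i^{\aone}$ for $i\geq 1$ together with \emph{injectivity} of $\phi_*$ on $\bpi_0^{\aone}$; it does not give surjectivity on $\bpi_0^{\aone}$, and your Postnikov-tower conclusion needs the $\bpi_0^{\aone}$-isomorphism as input. (Establishing that $\bpi_0^{\aone}$ of the source is also ${\mathcal H}^1_{\et}(\mu_n)$ and that $\phi_*$ induces an isomorphism there would, in the paper's logical order, rely on Lemma \ref{lem:betmuntoslnfibrationsequence} and Corollary \ref{cor:strongaoneinvariancepi0ofpgln}, which in turn depend on the statement you are trying to prove — so one cannot import those results without circularity.) There is also a smaller slip: it is $ESL_n$, not $(PGL_n\times ESL_n)/SL_n$, that is recovered as the pullback of the universal torsor along $\phi$, and one should be careful that the pullback square is only simplicially homotopy Cartesian up to replacing the base with its \v Cech resolution, since the torsor is \'etale-- but not Nisnevich--locally trivial. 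The paper's proof avoids all of these subtleties by reducing to Totaro's model and invoking the Morel--Voevodsky result directly.
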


\begin{proof}
The statement above is actually independent of the model of $ESL_n$ chosen.  The projection morphism $\pi: PGL_n \times ESL_n \to (PGL_n \times ESL_n )/SL_n$ is an epimorphism of Nisnevich sheaves, and thus the map $\breve{C}(\pi) \to (PGL_n \times ESL_n)/SL_n$ is a simplicial weak equivalence by \cite[\S 2 Lemma 1.15]{MV}.  Now, by definition $\breve{C}(\pi)_i \cong PGL_n \times ESL_n \times SL_n^{\times i+1}$.

Let $E'SL_n$ be another $\aone$-contractible space with free $SL_n$-action, and write $\pi': PGL_n \times E'SL_n \to (PGL_n \times E'SL_n)/SL_n$ for the associated quotient morphism.  Also, let $\pi'': PGL_n \times ESL_n \times E'SL_n \to (PGL_n \times ESL_n \times E'SL_n)/SL_n$ be the quotient morphism.  Now the projections $p: PGL_n \times ESL_n \times E'SL_n \to PGL_n \times ESL_n$ and $p':PGL_n \times ESL_n \times E'SL_n \to PGL_n \times E'SL_n$ induce morphisms of simplicial sheaves
\[
\breve{C}(\pi) \longleftarrow \breve{C}(\pi'') \longrightarrow \breve{C}(\pi').
\]
Since both $ESL_n$ and $E'SL_n$ are $\aone$-contractible by assumption, all of these morphisms are termwise $\aone$-weak equivalences, and therefore $\aone$-weak equivalences by \cite[\S 2 Proposition 2.14]{MV}.

Since we can pick the model of $ESL_n$, let us use Totaro's model from the proof of Lemma \ref{lem:slnglnrelation}.  In that case the result is precisely \cite[\S 4 Lemma 2.5]{MV}.
\end{proof}

\subsubsection*{Some $\aone$-homotopy theory of $BSL_n$, $BGL_n$}
Corollary \ref{cor:pi0linearalgebraicgroup} implies that the classifying spaces $BSL_n$, $BGL_n$ and $BPGL_n$ are all $\aone$-connected.  In fact, the same proof shows that classifying spaces have a canonical base-point.  Likewise, we will always point sheaves of groups by their identity section.

\begin{lem}
\label{lem:bslnaonesimplyconnected}
The space $SL_n$ is $\aone$-connected, and $BSL_n$ is $\aone$-$1$-connected.
\end{lem}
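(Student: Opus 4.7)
The plan is to prove $\aone$-connectedness of $SL_n$ by induction on $n$, using the standard representation to realize $SL_n$ as a torsor over $\aone^n \setminus 0$, and then to deduce $\aone$-$1$-connectedness of $BSL_n$ via the universal $SL_n$-torsor.

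For the first assertion, observe that $SL_n$ acts transitively on $\aone^n \setminus 0$ via its standard representation. The stabilizer $P \subset SL_n$ of the basis vector $e_n$ is a parabolic subgroup sitting in a split short exact sequence
\[
1 \longrightarrow \ga^{n-1} \longrightarrow P \longrightarrow SL_{n-1} \longrightarrow 1,
\]
and the orbit map realizes $SL_n \to \aone^n \setminus 0$ as a Zariski locally trivial $P$-torsor (explicit local sections can be written down on the standard affine cover of $\aone^n \setminus 0$). Starting from $SL_1 = \Spec k$ and supposing inductively that $SL_{n-1}$ is $\aone$-connected, the projection $P \to SL_{n-1}$ is a Zariski locally trivial $\aone^{n-1}$-bundle, hence an $\aone$-weak equivalence, so $P$ is $\aone$-connected. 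The sheaf $\bpi_0^{\aone}(P) = \ast$ is therefore trivially strongly $\aone$-invariant, so Theorem \ref{thm:torsorsgivefibrations} yields an $\aone$-fiber sequence $P \to SL_n \to \aone^n \setminus 0$. Since $\aone^n \setminus 0$ is $(n-2)$-$\aone$-connected (hence $\aone$-connected for $n \geq 2$), Lemma \ref{lem:fibersequencelongexactsequence} delivers $\bpi_0^{\aone}(SL_n) = \ast$, closing the induction.

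For the second assertion, take any $\aone$-contractible space $ESL_n$ equipped with a free $SL_n$-action (e.g., Totaro's model $\colim_N V_N$ recalled in the excerpt), so that $ESL_n \to BSL_n$ is a Nisnevich locally trivial $SL_n$-torsor. By the first part, $\bpi_0^{\aone}(SL_n)$ is trivial and hence strongly $\aone$-invariant, so Theorem \ref{thm:torsorsgivefibrations} produces an $\aone$-fiber sequence $SL_n \to ESL_n \to BSL_n$. Combining the long exact sequence of Lemma \ref{lem:fibersequencelongexactsequence} with the vanishing of all $\bpi_i^{\aone}(ESL_n)$ gives $\bpi_1^{\aone}(BSL_n) \cong \bpi_0^{\aone}(SL_n) = \ast$; together with Corollary \ref{cor:pi0linearalgebraicgroup} this shows that $BSL_n$ is $\aone$-$1$-connected. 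The only point requiring attention is that the hypothesis of Theorem \ref{thm:torsorsgivefibrations} on the structure group must hold at every stage, but this is automatic from the inductive setup since each relevant group is $\aone$-connected.
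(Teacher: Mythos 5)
Your argument is correct, but it takes a genuinely different route from the paper's. The paper disposes of $\aone$-connectedness of $SL_n$ in one line: for any field $L$, $SL_n(L)$ is generated by elementary matrices, each of which lies on an explicit $\aone$-curve through the identity, and since $\bpi_0^{\aone}$ is detected on sections over finitely generated field extensions (cf.\ \cite[Proposition 2.2.7]{AM}), this already gives $\bpi_0^{\aone}(SL_n) = \ast$ with no induction. You instead run an induction on $n$ via the $P$-torsor $SL_n \to {\mathbb A}^n \setminus 0$, where $P$ is the stabilizer of a vector. This works: $P$ does sit in a split extension $1 \to \ga^{n-1} \to P \to SL_{n-1} \to 1$, so $P \to SL_{n-1}$ is an $\aone$-weak equivalence and the inductive hypothesis makes $\bpi_0^{\aone}(P) = \ast$ strongly $\aone$-invariant, which licenses the use of Theorem \ref{thm:torsorsgivefibrations}; the long exact sequence of Lemma \ref{lem:fibersequencelongexactsequence}, together with $\aone$-connectedness of ${\mathbb A}^n \setminus 0$ for $n \geq 2$, then closes the induction. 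The deduction for $BSL_n$ via the universal torsor is identical in the two proofs. The paper's approach is shorter and avoids setting up any auxiliary fibration; yours is more in the spirit of the fiber-sequence machinery that the rest of the paper relies on, and has the side benefit of recovering along the way the $\aone$-weak equivalence $SL_2 \simeq {\mathbb A}^2 \setminus 0$ as the base case.

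Two small points of precision. First, the stabilizer of a nonzero vector in $SL_n$ is not a parabolic subgroup — parabolics contain a Borel, whereas $P \cong \ga^{n-1} \rtimes SL_{n-1}$ has strictly smaller dimension than the Borel even for $n=2$; the structure of $P$ that you actually use is described correctly, so this is purely a terminological slip. Second, Theorem \ref{thm:torsorsgivefibrations} as stated produces the $\aone$-fiber sequence $SL_n \to {\mathbb A}^n \setminus 0 \to BP$; to pass to the rotated version $P \to SL_n \to {\mathbb A}^n \setminus 0$ one identifies the $\aone$-loops of $BP$ with $P$, which needs $\bpi_0^{\aone}(P)$ strongly $\aone$-invariant (Theorem \ref{thm:pathloopsfibration}) — exactly the hypothesis you have verified, so the rotation is justified, but it is worth making this step explicit rather than presenting the rotated triangle as the direct output of Theorem \ref{thm:torsorsgivefibrations}.
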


\begin{proof}
That $\bpi_0^{\aone}(SL_n) = 1$ is a consequence of the fact that for any field $L$, $SL_n(L)$ is generated by elementary matrices and that $\aone$-connectedness can be checked on sections over finitely generated extensions of the base field (see, e.g., \cite[Proposition 2.2.7]{AM} for the second statement).  Since $\bpi_0^{\aone}(SL_n) = 1$, it follows that $SL_n$-torsors give rise to $\aone$-fiber sequences by Theorem \ref{thm:torsorsgivefibrations}.  Since $ESL_n$ is $\aone$-contractible, the second result follows from the first by considering the $\aone$-fiber sequence $SL_n \to ESL_n \to BSL_n$.
\end{proof}

Functoriality of the classifying space construction applied to the determinant homomorphism $GL_n \to \gm$ and the inclusion $SL_n \hookrightarrow GL_n$ give rise to a sequence of maps
\[
BSL_n \to BGL_n \to B\gm;
\]
a key property of this sequence of maps is summarized in the following result.

\begin{lem}
\label{lem:slnglnrelation}
There is an $\aone$-fiber sequence of the form $BSL_n \to BGL_n \to B\gm$.
\end{lem}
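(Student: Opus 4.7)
The plan is to exploit Totaro's geometric models $B_{gm}SL_n = \colim_N V_N/SL_n$ and $Gr_n = \colim_N V_N/GL_n$ for $BSL_n$ and $BGL_n$ recalled in the preceding discussion. The starting observation is that the determinant gives a short exact sequence of sheaves of groups
$$1 \longrightarrow SL_n \longrightarrow GL_n \stackrel{\det}{\longrightarrow} \gm \longrightarrow 1,$$
so $SL_n$ is a normal subgroup of $GL_n$ with quotient $\gm$. Concretely, the residual $GL_n/SL_n = \gm$-action on $V_N/SL_n$ is free with quotient $V_N/GL_n = Gr_{n,N}$, and hence $V_N/SL_n \to Gr_{n,N}$ is naturally a $\gm$-torsor. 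Passing to the colimit over $N$ yields a $\gm$-torsor $B_{gm}SL_n \to Gr_n$.

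Since $\gm$ is a split torus, it is strongly $\aone$-invariant (as noted in the paragraph preceding Proposition \ref{prop:fundamentalgroupproperties}), and in particular $\bpi_0^{\aone}(\gm) = \gm$ is strongly $\aone$-invariant. Theorem \ref{thm:torsorsgivefibrations} therefore applies to the $\gm$-torsor above and produces an $\aone$-fiber sequence
$$B_{gm}SL_n \longrightarrow Gr_n \longrightarrow B\gm.$$
Transporting along the $\aone$-weak equivalences $B_{gm}SL_n \isomto BSL_n$ and $Gr_n \isomto BGL_n$ from the proposition identifying Totaro's models with the simplicial classifying spaces then yields an $\aone$-fiber sequence $BSL_n \longrightarrow BGL_n \longrightarrow B\gm$.

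The one step requiring additional care is verifying that the map $Gr_n \to B\gm$ produced above agrees, up to $\aone$-homotopy, with the map induced by $\det: GL_n \to \gm$ via functoriality of the classifying space construction. This amounts to identifying the line bundle on $Gr_n$ classified by the $\gm$-torsor $B_{gm}SL_n \to Gr_n$ with the determinant of the tautological rank $n$ quotient bundle. Unwinding the action of $SL_n$ on $V_N$, a point of $V_N/SL_n$ over a given point of $Gr_{n,N}$ is precisely a choice of nowhere-vanishing section of the top exterior power of the tautological rank $n$ quotient, so the associated $\gm$-torsor is the frame bundle of the determinant of the tautological bundle, matching the classifying map induced by the determinant homomorphism on the level of $B(-)$.
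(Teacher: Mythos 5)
Your proof is correct and follows essentially the same route as the paper: use Totaro's geometric models, observe that $B_{gm}SL_n \to Gr_n$ is a $\gm$-torsor, and deduce the $\aone$-fiber sequence, then transport along the weak equivalences to the simplicial classifying spaces. The paper's version does not explicitly invoke Theorem \ref{thm:torsorsgivefibrations} (it instead notes directly that $\gm$-torsors are $\aone$-covers and therefore $\aone$-fibrations, and focuses on identifying the ${\bf R}\Omega^1_s L_{\aone}B\gm$-action on the fiber, which is part of the data of an $\aone$-fiber sequence by Definition \ref{defn:aonefibration}), whereas you cite the theorem and instead spend your extra care on verifying that the classifying map $Gr_n \to B\gm$ produced by the torsor agrees with the map induced by $\det$ — a point the paper leaves implicit. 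Both filled-in details are legitimate concerns; your version leaves the action implicit (it is covered by Remark \ref{rem:actionofloopsofthebase}), and the paper's version leaves the identification of the classifying map implicit, so the two proofs are complementary in what they choose to spell out.
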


\begin{proof}
The map $B_{gm}SL_n \to Gr_n$ is a $\gm$-torsor and is therefore classified by a map $Gr_n \to B\gm$.  Furthermore, $\gm$-torsors are $\aone$-covering spaces and in particular $\aone$-fibrations, so it follows that the $\aone$-homotopy fiber of $B_{gm}SL_n \to Gr_n$ is precisely $\gm$.  Since $B\gm$ is already $\aone$-local, the action ${\bf R}\Omega^1_s L_{\aone} B\gm$ on $BSL_n$ is precisely the action of $\gm$ on $L_{\aone} BSL_n$.
\end{proof}

\begin{prop}
\label{prop:slnglnrelation}
The determinant induces an isomorphism $\bpi_1^{\aone}(BGL_n) \cong \gm$, and for every $i > 1$, the inclusion map $SL_n \hookrightarrow GL_n$ induces isomorphisms $\bpi_i^{\aone}(BSL_n) \isomt \bpi_i^{\aone}(BGL_n)$.
\end{prop}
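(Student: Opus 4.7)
The plan is to apply the long exact sequence in $\aone$-homotopy sheaves (Lemma \ref{lem:fibersequencelongexactsequence}) to the $\aone$-fiber sequence $BSL_n \to BGL_n \to B\gm$ provided by Lemma \ref{lem:slnglnrelation}, and to read off the desired conclusions from the $\aone$-homotopy groups of $BSL_n$ and $B\gm$ that we already know.

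First I would compute $\bpi_i^{\aone}(B\gm)$ for every $i \geq 0$. Since $\gm$ is a split torus, it is strongly $\aone$-invariant (this is the observation preceding Proposition \ref{prop:fundamentalgroupproperties}), so by Remark \ref{rem:colimits} the space $B\gm$ is $\aone$-local and hence $\bpi_i^{\aone}(B\gm) = \bpi_i^s(B\gm)$. Corollary \ref{cor:loopsclassifyingadjointness} gives a simplicial weak equivalence ${\bf R}\Omega^1_s B\gm \isomt \gm$, so $\bpi_1^{\aone}(B\gm) \cong \gm$ while $\bpi_i^{\aone}(B\gm) = 0$ for all $i \neq 1$ (and $\bpi_0^{\aone}(B\gm)$ is a point by Corollary \ref{cor:pi0linearalgebraicgroup}). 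Next I would recall from Lemma \ref{lem:bslnaonesimplyconnected} that $BSL_n$ is $\aone$-$1$-connected, so $\bpi_0^{\aone}(BSL_n)$ and $\bpi_1^{\aone}(BSL_n)$ are both trivial.

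With these inputs in hand, I would splice the long exact sequence
\[
\cdots \to \bpi_{i+1}^{\aone}(B\gm) \to \bpi_i^{\aone}(BSL_n) \to \bpi_i^{\aone}(BGL_n) \to \bpi_i^{\aone}(B\gm) \to \bpi_{i-1}^{\aone}(BSL_n) \to \cdots
\]
into short pieces. For $i \geq 2$, both flanking terms $\bpi_{i+1}^{\aone}(B\gm)$ and $\bpi_i^{\aone}(B\gm)$ vanish, so the inclusion $SL_n \hookrightarrow GL_n$ induces isomorphisms $\bpi_i^{\aone}(BSL_n) \isomto \bpi_i^{\aone}(BGL_n)$. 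For $i = 1$, the relevant portion is
\[
0 = \bpi_1^{\aone}(BSL_n) \longrightarrow \bpi_1^{\aone}(BGL_n) \longrightarrow \bpi_1^{\aone}(B\gm) \longrightarrow \bpi_0^{\aone}(BSL_n) = \ast,
\]
which forces $\bpi_1^{\aone}(BGL_n) \isomto \bpi_1^{\aone}(B\gm) \cong \gm$; since the connecting map in the sequence is induced by the map $BGL_n \to B\gm$ of Lemma \ref{lem:slnglnrelation}, which by construction is the classifying map of the determinant, this identification is exactly the one induced by the determinant homomorphism.

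There is no serious obstacle in this argument; the only subtlety is verifying that the map $\bpi_1^{\aone}(BGL_n) \to \gm$ coming from the long exact sequence agrees with the one induced by the determinant, but this is immediate from the way the fiber sequence of Lemma \ref{lem:slnglnrelation} was constructed (its right-hand map is the determinant classifying map). Everything else is bookkeeping with the long exact sequence.
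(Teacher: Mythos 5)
Your proof is correct and follows essentially the same approach as the paper: take the long exact sequence in $\aone$-homotopy sheaves associated to the $\aone$-fiber sequence $BSL_n \to BGL_n \to B\gm$ of Lemma \ref{lem:slnglnrelation}, plug in the known vanishing of $\bpi_i^{\aone}(B\gm)$ for $i > 1$ and $\bpi_1^{\aone}(BSL_n)$, and read off the result. The only cosmetic difference is that you rederive $\bpi_i^{\aone}(B\gm)$ from Corollary \ref{cor:loopsclassifyingadjointness} and strong $\aone$-invariance of $\gm$, whereas the paper cites \cite[\S 4 Proposition 3.8]{MV} directly for the same fact.
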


\begin{proof}
Consider the long exact sequence in $\aone$-homotopy groups stemming from the $\aone$-fiber sequence of Lemma \ref{lem:slnglnrelation}.  By \cite[\S 4 Proposition 3.8]{MV} we know that $\bpi_1^{\aone}(B\gm) = \gm$ and $\bpi_i^{\aone}(B\gm) = 0$, for $i > 1$ and therefore $\bpi_i^{\aone}(BSL_n) \isomt \bpi_i^{\aone}(BGL_n)$ in this range.  Likewise, since $\bpi_1^{\aone}(BSL_n) = 1$ by Lemma \ref{lem:bslnaonesimplyconnected}, the map $\bpi_1^{\aone}(BGL_n) \to \bpi_1^{\aone}(B\gm) = \gm$ is an isomorphism.
\end{proof}

\begin{thm}[{\cite[Theorem 7.20]{MField}}]
\label{thm:homotopygroupsofsln}
Suppose $n$ is an integer $\geq 2$.  There are isomorphisms
\[
\bpi_1^{\aone}(SL_n) \cong \begin{cases}
\K^{\MW}_2 & \text{ if } n = 2, \text{ and }\\
\K^M_2 & \text{ if } n \geq 3.
\end{cases}
\]
Furthermore, for every integer $i \geq 0$ the maps $\bpi_{i+1}^{\aone}(BSL_n) \to \bpi_i^{\aone}(SL_n)$ induced by the fiber sequence $SL_n \longrightarrow ESL_n \longrightarrow BSL_n$ are isomorphisms.
\end{thm}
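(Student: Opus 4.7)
The plan is to handle the two assertions somewhat independently. The second assertion — that the edge maps $\bpi_{i+1}^{\aone}(BSL_n) \to \bpi_i^{\aone}(SL_n)$ are isomorphisms — follows immediately from Theorem \ref{thm:torsorsgivefibrations} applied to the universal $SL_n$-torsor $ESL_n \to BSL_n$: since $\bpi_0^{\aone}(SL_n)=1$ by Lemma \ref{lem:bslnaonesimplyconnected}, the hypothesis of that theorem is satisfied, so $SL_n \to ESL_n \to BSL_n$ is an $\aone$-fiber sequence, and the long exact sequence of Lemma \ref{lem:fibersequencelongexactsequence} together with the $\aone$-contractibility of $ESL_n$ produces the claimed isomorphisms. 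So the real work is in identifying $\bpi_1^{\aone}(SL_n)$.

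I would begin with the case $n=2$. The standard action of $SL_2$ on $\aone \setminus 0$ by sending a matrix to its first column realizes $\aone^2 \setminus 0$ as $SL_2/U$, where $U \cong \ga$ is the upper triangular unipotent subgroup. This yields a Zariski locally trivial $\ga$-torsor $SL_2 \to \aone^2 \setminus 0$. Since $\ga$ is $\aone$-contractible (and by Theorem \ref{thm:torsorsgivefibrations} gives an $\aone$-fiber sequence), the projection is an $\aone$-weak equivalence, and Proposition \ref{prop:aonehomotopygroupsofpuncturedaffinespace} identifies $\bpi_1^{\aone}(SL_2) \cong \bpi_1^{\aone}(\aone^2 \setminus 0) \cong \K^{\MW}_2$.

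For $n \geq 3$, I would use the analogous fiber sequence $SL_{n-1} \to SL_n \to \aone^n \setminus 0$ arising from the first-column projection (whose fiber over $e_1$ is $SL_{n-1} \times \aone^{n-1}$, so $\aone$-equivalent to $SL_{n-1}$). The long exact sequence in $\aone$-homotopy sheaves of this fibration reads
\[
\bpi_2^{\aone}(\aone^n \setminus 0) \longrightarrow \bpi_1^{\aone}(SL_{n-1}) \longrightarrow \bpi_1^{\aone}(SL_n) \longrightarrow \bpi_1^{\aone}(\aone^n \setminus 0).
\]
For $n \geq 3$ the rightmost term vanishes because $\aone^n \setminus 0$ is $(n-2)$-$\aone$-connected. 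For $n \geq 4$, the leftmost term also vanishes for the same reason, so $\bpi_1^{\aone}(SL_n) \cong \bpi_1^{\aone}(SL_{n-1})$, and an induction reduces everything to the case $n=3$.

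The critical and most delicate step is the case $n=3$: here the long exact sequence becomes
\[
\K^{\MW}_3 \longrightarrow \K^{\MW}_2 \longrightarrow \bpi_1^{\aone}(SL_3) \longrightarrow 0,
\]
using Proposition \ref{prop:aonehomotopygroupsofpuncturedaffinespace} and the $n=2$ computation. The main obstacle is identifying the connecting homomorphism $\K^{\MW}_3 \to \K^{\MW}_2$ as multiplication by the Hopf element $\eta \in \K^{\MW}_{-1}$, since taking the cokernel by $\eta$ on $\K^{\MW}_\ast$ is precisely Morel's description of Milnor K-theory $\K^M_2$. I would identify this boundary map by comparing the $SL_2$-fiber sequence for $SL_3 \to \aone^3 \setminus 0$ with the Hopf fibration $\aone^2 \setminus 0 \to {\mathbb P}^1$ after looping: the restriction of the fibration over the inclusion of a copy of $\aone^2 \setminus 0 \hookrightarrow \aone^3 \setminus 0$ should pull back, via the $\aone$-equivalence $SL_2 \simeq \aone^2 \setminus 0$, to a fibration whose connecting map is $\eta$-multiplication on Milnor-Witt K-theory; this is precisely the content of Morel's computation of the stable Hopf map's action on $\K^{\MW}_\ast$ in \cite[\S 6, \S 7]{MField}. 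Once the boundary is identified with $\eta$-multiplication, the cokernel is $\K^M_2$ by the definition of Milnor K-theory in terms of Milnor-Witt K-theory, completing the computation.
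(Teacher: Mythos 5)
The paper states this result with a direct citation to Morel's book and gives no proof of its own; the only indication of an argument in the paper is Remark~\ref{rem:slnstabilization}, which sketches exactly the fibration-and-stabilization strategy you employ ($SL_n \to SL_{n+1} \to SL_{n+1}/SL_n \simeq {\mathbb A}^{n+1}\setminus 0$, or equivalently your first-column version) and explicitly notes that the whole content is the map $\bpi_1^{\aone}(SL_2)\to\bpi_1^{\aone}(SL_3)$. Your plan reproduces that sketch faithfully and correctly: the identification $\bpi_1^{\aone}(SL_2)\cong\K^{\MW}_2$ via the $\ga$-torsor $SL_2\to{\mathbb A}^2\setminus 0$, the reduction to $n=3$ by connectivity of ${\mathbb A}^n\setminus 0$ for $n\geq 4$, the identification of the second claim via the long exact sequence of the universal torsor, and the recognition that the delicate point is showing the boundary map $\K^{\MW}_3\to\K^{\MW}_2$ is multiplication by $\eta$ (whose cokernel is indeed $\K^M_2$). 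Like the paper, you ultimately defer that boundary-map computation to Morel, which is appropriate; your gesture toward comparing with the looped Hopf fibration is the right intuition, though carrying it out rigorously is precisely the nontrivial part of Morel's Theorem 7.20 and would take substantially more work than the sketch suggests.
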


\begin{rem}
\label{rem:slnstabilization}
As we observed above, Theorem \ref{thm:torsorsgivefibrations} implies that $SL_n$-torsors give rise to $\aone$-fiber sequences.  In particular, there is an $\aone$-fiber sequence of the form
\[
SL_n \longrightarrow SL_{n+1} \longrightarrow SL_{n+1}/SL_n.
\]
One can identify $SL_{n+1}/SL_n$ with the smooth affine quadric in ${\mathbb A}^{2n+2}$ defined by the equation $\sum_{i=1}^{n+1} x_iy_i = 1$.  It is then straightforward to check that projection onto $(x_1,\ldots,x_n)$ is a Zariski locally trivial smooth morphism with affine space fibers, and therefore determines an $\aone$-weak equivalence $SL_{n+1}/SL_n \to {\mathbb A}^{n+1} \setminus 0$.  The computations of Proposition \ref{prop:aonehomotopygroupsofpuncturedaffinespace} show that if $n \geq 3$, the map $\bpi_1^{\aone}(SL_n) \to \bpi_1^{\aone}(SL_{n+1})$ is an isomorphism.  On the other hand the $\aone$-weak equivalence $SL_2 \to {\mathbb A}^2 \setminus 0$ gives a description of $\bpi_1^{\aone}(SL_2)$ by Proposition \ref{prop:aonehomotopygroupsofpuncturedaffinespace}.  Theorem \ref{thm:homotopygroupsofsln} is really a statement about the morphism $\bpi_1^{\aone}(SL_2) \to \bpi_1^{\aone}(SL_3)$; we will come back to this point later.
\end{rem}

\begin{rem}
\label{rem:aonesimplyconnectedcoverofbgln}
The map $BGL_n \to B\gm$ identifies $B\gm$ with the first stage of the $\aone$-Postnikov tower for $BGL_n$.  Thus, $BSL_n$ is the $\aone$-simply connected cover of $BGL_n$.
\end{rem}

\subsubsection*{Strong $\aone$-invariance of $\bpi_0^{\aone}(PGL_n)$}
We now prove that $\bpi_0^{\aone}(PGL_n)$ is strongly $\aone$-invariant, which by Theorem \ref{thm:torsorsgivefibrations} implies that $PGL_n$-torsors yield $\aone$-fiber sequences.  To see this, we will simply compute $\bpi_0^{\aone}(PGL_n)$.  We observed in Lemma \ref{lem:modelforbetmun} that the space $(PGL_n \times ESL_n)/SL_n$ is a model for $B_{\et}\mu_n$ and that projection onto the second factor provided a morphism $(PGL_n \times ESL_n)/SL_n \to BSL_n$.

\begin{lem}
\label{lem:slnpglnfibersequence}
If $\operatorname{char} k$ does not divide $n$, there is an $\aone$-fiber sequence of the form
\[
SL_n \longrightarrow PGL_n \longrightarrow B_{\et}\mu_n.
\]
\end{lem}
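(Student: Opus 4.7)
The plan is to realize the desired fiber sequence by first constructing an auxiliary $\aone$-fiber sequence and then extending it one step to the left by delooping the base. The subtlety is that $SL_n \to PGL_n$ is only \'etale locally trivial as a $\mu_n$-torsor, so Theorem \ref{thm:torsorsgivefibrations} cannot be applied to it directly; instead I apply that theorem to a Nisnevich locally trivial $SL_n$-torsor and then rotate.

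First I would equip $PGL_n \times ESL_n$ with the diagonal right $SL_n$-action: on $PGL_n$ via right translation through the quotient homomorphism $SL_n \to PGL_n$, and on $ESL_n$ by the free action defining the universal torsor. This diagonal action is free, and since $SL_n$ is special, the quotient map $PGL_n \times ESL_n \to (PGL_n \times ESL_n)/SL_n$ is a Nisnevich locally trivial $SL_n$-torsor whose classifying map to $BSL_n$ is induced by projection to the second factor. Because $\bpi_0^{\aone}(SL_n)=1$ is trivially strongly $\aone$-invariant by Lemma \ref{lem:bslnaonesimplyconnected}, Theorem \ref{thm:torsorsgivefibrations} yields an $\aone$-fiber sequence
\[
PGL_n \times ESL_n \longrightarrow (PGL_n \times ESL_n)/SL_n \longrightarrow BSL_n.
\]
The projection $PGL_n \times ESL_n \to PGL_n$ is an $\aone$-weak equivalence by $\aone$-contractibility of $ESL_n$, and Lemma \ref{lem:modelforbetmun} identifies the middle term with $B_{\et}\mu_n$ in $\ho{k}$. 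Substituting gives an $\aone$-fiber sequence $PGL_n \to B_{\et}\mu_n \to BSL_n$.

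Extending this fiber sequence one step to the left in the $\aone$-local model structure produces an $\aone$-fiber sequence $\mathbf{R}\Omega^1_s L_{\aone} BSL_n \to PGL_n \to B_{\et}\mu_n$. By Corollary \ref{cor:loopsclassifyingadjointness} there is a simplicial weak equivalence $\mathbf{R}\Omega^1_s BSL_n \simeq SL_n$, and by Theorem \ref{thm:pathloopsfibration} the canonical comparison map $L_{\aone}\mathbf{R}\Omega^1_s BSL_n \to \mathbf{R}\Omega^1_s L_{\aone} BSL_n$ is an $\aone$-weak equivalence (again, the hypothesis that $\bpi_0^{\aone}(SL_n)=1$ be strongly $\aone$-invariant is trivial). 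Combining these identifications, $\mathbf{R}\Omega^1_s L_{\aone} BSL_n \simeq SL_n$ in $\hop{k}$, and the extended sequence reads $SL_n \to PGL_n \to B_{\et}\mu_n$.

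The main obstacle is identifying the boundary map $\delta\colon SL_n \to PGL_n$ produced by the rotation with the standard quotient homomorphism. By the construction recalled around Morphism \ref{eqn:boundaryhomomorphism}, $\delta$ is the composite of the inclusion of the basepoint of the fiber with the action of $\mathbf{R}\Omega^1_s BSL_n$ on $PGL_n$; under the identification $\mathbf{R}\Omega^1_s BSL_n \simeq SL_n$, this action is exactly the right translation $PGL_n \times SL_n \to PGL_n$ chosen in the construction of the preliminary fiber sequence above, so $\delta$ sends $s$ to $\bar{s}$. Formalizing this identification amounts to tracking the path-space construction of $\delta$ through the intermediate $\aone$-weak equivalences, but it is forced by the choice of $SL_n$-action at the outset.
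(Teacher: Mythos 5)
Your proof is correct, but it takes a genuinely different route from the paper's. The paper establishes the fiber sequence directly: it cites (from Morel) that $E_{\et}\mu_n \to B_{\et}\mu_n$ is an $\aone$-covering, hence an $\aone$-fibration, and then observes that $SL_n$ is literally the pullback of $E_{\et}\mu_n^f \to B_{\et}\mu_n^f$ along the classifying map $PGL_n \to B_{\et}\mu_n^f$; since $E_{\et}\mu_n^f$ is $\aone$-contractible and the pullback of an $\aone$-fibration along any map computes the $\aone$-homotopy fiber, $SL_n$ is immediately seen to be the $\aone$-homotopy fiber of $PGL_n \to B_{\et}\mu_n$, and the loop-space action on the fiber is visibly the $\mu_n$-action on $SL_n$. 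You instead first prove what is the paper's \emph{next} lemma, Lemma \ref{lem:betmuntoslnfibrationsequence}, using the pullback of the universal $SL_n$-torsor to the geometric model $(PGL_n \times ESL_n)/SL_n$ of $B_{\et}\mu_n$, and then rotate that fiber sequence one step to the left. There is no circularity here (the paper's proofs of \ref{lem:slnpglnfibersequence} and \ref{lem:betmuntoslnfibrationsequence} are logically independent), and in fact the final sentence of the paper's proof of Lemma \ref{lem:betmuntoslnfibrationsequence} already asserts exactly the identification you need: that the boundary map $\mathbf{R}\Omega^1_s L_{\aone} BSL_n \simeq SL_n \to PGL_n$ is the quotient homomorphism. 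What your route buys is independence from the external input [MField, Lemma 7.5.2] on $\aone$-coverings; what it costs is that the identification of the connecting map $\delta$ with the standard quotient $SL_n \to PGL_n$ becomes the crux of the argument rather than something visible from a pullback square, and your final paragraph, while conceptually correct (the action of $\mathbf{R}\Omega^1_s L_{\aone} BSL_n \simeq SL_n$ on the fiber is the torsor action by Remark \ref{rem:actionofloopsofthebase}, which is right translation via $SL_n \to PGL_n$), is left at the level of a sketch. Minor remark: the invocation of specialness of $SL_n$ is superfluous for getting a Nisnevich-locally-trivial torsor here, since the quotient $(PGL_n \times ESL_n)/SL_n$ is taken as a Nisnevich sheaf quotient of a free action and hence is automatically a Nisnevich $SL_n$-torsor in the sense used around Theorem \ref{thm:simplicialhomotopyclassificationoftorsors}.
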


\begin{proof}
Under the hypothesis, the map $E_{\et}\mu_n \to B_{\et}\mu_n$ is even an $\aone$-covering space (see \cite[Lemma 7.5.2]{MField} for this result).  Therefore, there exists an $\aone$-fiber sequence
\[
\mu_n \longrightarrow E_{\et}\mu_n \longrightarrow B_{\et}\mu_n.
\]
Pick a fibrant model of $B_{\et}\mu_n^f$ of $B_{\et}\mu_n$ and choose an explicit map $PGL_n \to B_{\et}\mu_n$ representing the $\mu_n$-torsor $SL_n \to PGL_n$.  By construction $SL_n$ is the pullback of $E_{\et}\mu_n^f \to B_{\et}\mu_n^f$ along the map $PGL_n \to B_{\et}\mu_n$, and thus this map is an $\aone$-fibration.  The fiber of this map is $\mu_n$ and being already simplicially fibrant and $\aone$-local is the $\aone$-homotopy fiber of the map $SL_n \to PGL_n$.

Since $B_{\et}\mu_n$ is $\aone$-local, it follows that $\Omega^1_s B_{\et}\mu_n^f$ is also $\aone$-local, and we can identify it with the space $\mu_n$ (this is proven along the same lines as Corollary \ref{cor:loopsclassifyingadjointness} using the discussion at the top of \cite[p. 131]{MV}).  The action of $\Omega^1_s B_{\et}\mu_n^f$ on $SL_n$ is precisely the (left or right) action of $\mu_n \subset SL_n$.
\end{proof}

\begin{lem}
\label{lem:betmuntoslnfibrationsequence}
If $\operatorname{char} k$ does not divide $n$, there is an $\aone$-fiber sequence of the form
\[
PGL_n \longrightarrow B_{\et}\mu_n \longrightarrow BSL_n.
\]
\end{lem}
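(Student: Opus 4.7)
The plan is to apply Theorem \ref{thm:torsorsgivefibrations} to a judiciously chosen $SL_n$-torsor, and then use the geometric model for $B_{\et}\mu_n$ supplied by Lemma \ref{lem:modelforbetmun} together with the $\aone$-contractibility of $ESL_n$ to rewrite the terms in the resulting $\aone$-fiber sequence. Crucially, this approach avoids needing to know a priori that $PGL_n$-torsors yield $\aone$-fiber sequences (which is exactly what the subsequent results will establish).

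First I would recall from Lemma \ref{lem:modelforbetmun} that the space $(PGL_n \times ESL_n)/SL_n$, formed via the diagonal action of $SL_n$ on $PGL_n \times ESL_n$ (acting on $PGL_n$ through the quotient homomorphism $SL_n \to PGL_n$), is $\aone$-weakly equivalent to $B_{\et}\mu_n$. The $SL_n$-equivariant projection $PGL_n \times ESL_n \to ESL_n$ descends to a morphism
\[
p: (PGL_n \times ESL_n)/SL_n \longrightarrow ESL_n/SL_n = BSL_n.
\]
The quotient morphism $\pi: PGL_n \times ESL_n \to (PGL_n \times ESL_n)/SL_n$ is a Nisnevich locally trivial $SL_n$-torsor (as $SL_n$ is special), and unraveling the associated bundle description shows that $p$ is the classifying map for $\pi$: the pullback of the universal torsor $ESL_n \to BSL_n$ along $p$ reproduces $\pi$.

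Since $\bpi_0^{\aone}(SL_n) = 1$ by Lemma \ref{lem:bslnaonesimplyconnected} and hence trivially strongly $\aone$-invariant, Theorem \ref{thm:torsorsgivefibrations} applied to $\pi$ will produce an $\aone$-fiber sequence
\[
PGL_n \times ESL_n \longrightarrow (PGL_n \times ESL_n)/SL_n \longrightarrow BSL_n.
\]
Since $ESL_n$ is $\aone$-contractible, the projection $PGL_n \times ESL_n \to PGL_n$ is an $\aone$-weak equivalence; combining this with the $\aone$-weak equivalence $(PGL_n \times ESL_n)/SL_n \simeq B_{\et}\mu_n$ then yields the asserted $\aone$-fiber sequence $PGL_n \to B_{\et}\mu_n \to BSL_n$. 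The main bookkeeping step, which I expect to be the trickiest part, is verifying that the natural projection $p$ coincides (up to $\aone$-homotopy) with the classifying map of $\pi$, and checking that the action of $\Omega^1_s BSL_n \simeq SL_n$ on $PGL_n$ arising from the general formalism of Definition \ref{defn:aonefibration} matches the natural action of $SL_n$ on $PGL_n$ by translation; both verifications reduce to routine diagram chases using the pullback description of associated bundles.
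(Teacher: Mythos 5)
Your proposal is correct and follows essentially the same route as the paper: both use the geometric model $(PGL_n \times ESL_n)/SL_n$ for $B_{\et}\mu_n$ from Lemma \ref{lem:modelforbetmun}, apply Theorem \ref{thm:torsorsgivefibrations} to the $SL_n$-torsor $PGL_n \times ESL_n \to (PGL_n \times ESL_n)/SL_n$ classified by the projection to $BSL_n$, and then use $\aone$-contractibility of $ESL_n$ to identify the fiber with $PGL_n$. The paper likewise notes, as you anticipate needing to check, that the boundary map $SL_n \simeq {\bf R}\Omega^1_s L_{\aone} BSL_n \to PGL_n$ is the canonical quotient homomorphism.
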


\begin{proof}
Since $\bpi_0^{\aone}(SL_n) = 1$, and the trivial group is strongly $\aone$-invariant, it follows from Theorem \ref{thm:torsorsgivefibrations} that $SL_n$-torsors give rise to $\aone$-fiber sequences.  Consider the $\aone$-fiber sequence given by the universal $SL_n$-torsor $SL_n \to ESL_n \to BSL_n$.

In Lemma \ref{lem:modelforbetmun}, we constructed an $\aone$-weak equivalence $(PGL_n \times ESL_n)/SL_n \to B_{\et}\mu_n$.  Consider the projection map $(PGL_n \times ESL_n)/SL_n \to ESL_n/SL_n \cong BSL_n$, and pullback the universal $SL_n$-torsor over $BSL_n$ by means of an explicit representative of this morphism.  The result is still an $SL_n$-torsor and therefore an $\aone$-fiber sequence of the form
\[
PGL_n \times ESL_n \longrightarrow (PGL_n \times ESL_n)/SL_n \longrightarrow BSL_n
\]
The middle term is already $\aone$-weakly equivalent to $B_{\et}\mu_n$ and projection map $PGL_n \times ESL_n \to PGL_n$ is an $\aone$-weak equivalence since $ESL_n$ is $\aone$-contractible.  The boundary map $SL_n \cong {\bf R}\Omega^1_s L_{\aone} BSL_n \to PGL_n$ can be identified with the morphism $SL_n \to PGL_n$.
\end{proof}

Let ${\mathcal H}^1_{\et}(\mu_n)$ be the Nisnevich sheaf associated with the presheaf $U \mapsto H^1_{\et}(U,\mu_n)$; this sheaf is precisely $\bpi_0^{\aone}(B_{\et}\mu_n)$ by the discussion prior to Lemma \ref{lem:slnpglnfibersequence}.  As an immediate consequence of the long exact sequence in the homotopy groups of the fibration in Lemma \ref{lem:betmuntoslnfibrationsequence}, and the fact that $BSL_n$ is $\aone$-connected, we deduce the following result.

\begin{cor}
\label{cor:strongaoneinvariancepi0ofpgln}
If $\operatorname{char} k$ does not divide $n$, the map $\bpi_0^{\aone}(PGL_n) \to \bpi_0^{\aone}(B_{\et}\mu_n) = {\mathcal H}^1_{\et}(\mu_n)$ induced by the classifying map for $\mu_n$-torsor $SL_n \to PGL_n$ is an isomorphism.  In particular, $\bpi_0^{\aone}(PGL_n)$ is a strongly $\aone$-invariant sheaf of groups.
\end{cor}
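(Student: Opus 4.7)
The plan is to apply the long exact sequence in $\aone$-homotopy sheaves (Lemma \ref{lem:fibersequencelongexactsequence}) to the $\aone$-fiber sequence
\[
PGL_n \longrightarrow B_{\et}\mu_n \longrightarrow BSL_n
\]
established in Lemma \ref{lem:betmuntoslnfibrationsequence}. The relevant segment of the long exact sequence reads
\[
\bpi_1^{\aone}(BSL_n) \longrightarrow \bpi_0^{\aone}(PGL_n) \longrightarrow \bpi_0^{\aone}(B_{\et}\mu_n) \longrightarrow \bpi_0^{\aone}(BSL_n).
\]
By Lemma \ref{lem:bslnaonesimplyconnected}, the two outer sheaves both vanish, so the middle arrow is a bijection of Nisnevich sheaves of pointed sets. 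The identification $\bpi_0^{\aone}(B_{\et}\mu_n) = {\mathcal H}^1_{\et}(\mu_n)$ follows from the $\aone$-locality of $B_{\et}\mu_n$ and the identification $[U, B_{\et}\mu_n]_{\aone} = H^1_{\et}(U,\mu_n)$ recalled just before Lemma \ref{lem:modelforbetmun}; and the induced bijection sends the class of a morphism $U \to PGL_n$ to the class of the pulled-back $\mu_n$-torsor obtained from the canonical $\mu_n$-torsor $SL_n \to PGL_n$.

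To upgrade to a sheaf-of-groups isomorphism, I would note that the map $B_{\et}\mu_n \to BSL_n$ in the fiber sequence is induced by the central inclusion $\mu_n \hookrightarrow SL_n$, so $B_{\et}\mu_n$ sits as a delooping of an abelian sheaf of groups, hence the entire fiber sequence is compatible with group operations on $\bpi_0^{\aone}$; the bijection can be checked directly to send the group structure on $\bpi_0^{\aone}(PGL_n)$ (induced from multiplication in the algebraic group $PGL_n$) to the abelian group structure on ${\mathcal H}^1_{\et}(\mu_n)$ given by tensor product of torsors.

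For the final assertion, given the isomorphism, it suffices to argue that ${\mathcal H}^1_{\et}(\mu_n)$ is strongly $\aone$-invariant. Since $\mu_n$ is an abelian sheaf of finite order coprime to $\operatorname{char} k$, the iterated étale classifying space $B^2_{\et}\mu_n$ is $\aone$-local (same argument as the $\aone$-locality of $B_{\et}\mu_n$ cited before Lemma \ref{lem:slnpglnfibersequence}). The identification $\Omega^1_s B^2_{\et}\mu_n \simeq B_{\et}\mu_n$ together with $\bpi_1^s(B^2_{\et}\mu_n) = {\mathcal H}^1_{\et}(\mu_n)$ yields ${\mathcal H}^1_{\et}(\mu_n) = \bpi_1^{\aone}(B^2_{\et}\mu_n)$, and the latter is strongly $\aone$-invariant by Theorem \ref{thm:pi1stronglyaoneinvariant}.

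The main obstacle I expect is the bookkeeping needed to promote the pointed-set bijection coming from the long exact sequence to an isomorphism of sheaves of groups and to verify strong $\aone$-invariance of ${\mathcal H}^1_{\et}(\mu_n)$; both are essentially formal once the delooping and étale classifying space machinery of Morel--Voevodsky is in hand, so the proof should be short.
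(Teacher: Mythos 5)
Your first half (the long exact sequence of the fiber sequence $PGL_n \to B_{\et}\mu_n \to BSL_n$, vanishing of $\bpi_1^{\aone}(BSL_n)$ and $\bpi_0^{\aone}(BSL_n)$, hence the bijection on $\bpi_0^{\aone}$) is exactly the paper's argument, which treats that step and the compatibility with group structures as immediate from Lemma~\ref{lem:betmuntoslnfibrationsequence}. Where you genuinely diverge is in establishing strong $\aone$-invariance of ${\mathcal H}^1_{\et}(\mu_n)$. The paper handles this by citing \cite[Lemmas~9.23-24]{MVW}: ${\mathcal H}^1_{\et}(\mu_n)$ is a strictly $\aone$-invariant Nisnevich sheaf with transfers, so in particular strongly $\aone$-invariant. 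You instead exhibit ${\mathcal H}^1_{\et}(\mu_n)$ as $\bpi_1^{\aone}$ of the $\aone$-local space $B^2_{\et}\mu_n$ and invoke Theorem~\ref{thm:pi1stronglyaoneinvariant}. Both routes are correct. Yours has the virtue of staying entirely inside Morel's unstable framework already in play, rather than importing the presheaves-with-transfers theory; on the other hand, the key input that $B^2_{\et}\mu_n$ is $\aone$-local is a little more than a straight citation of the reference used for $B_{\et}\mu_n$: one must use that the pushforward to the Nisnevich site of the \'etale $K(\mu_n,2)$ remains $\aone$-local, which ultimately rests on homotopy invariance of \'etale cohomology $H^i_{\et}(\cdot,\mu_n)$ in all degrees $i$ (not just $i=1$) for $n$ prime to the characteristic. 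That fact is standard, but you should record it explicitly rather than gesturing at the ``same argument,'' since the cited Morel--Voevodsky proposition is stated for the one-fold delooping. You also need that pushforward commutes with simplicial looping (a right adjoint preserving homotopy limits) so that $\Omega^1_s B^2_{\et}\mu_n \simeq B_{\et}\mu_n$; that is formal but worth a sentence.
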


\begin{proof}
The only statement that does not follow immediately from Lemma \ref{lem:betmuntoslnfibrationsequence} is the fact that ${\mathcal H}^1_{\et}(\mu_n)$ is strongly $\aone$-invariant.  In fact, this sheaf is a strictly $\aone$-invariant Nisnevich sheaf with transfers; see \cite[Lemmas 9.23-24]{MVW}.
\end{proof}

\begin{rem}
If $G$ is a split, semi-simple algebraic group, then Morel has proven that $\bpi_0^{\aone}(G)$ is strongly $\aone$-invariant \cite[Theorem A.2]{MFM}.  Our proof is a slightly more geometric and explicit version of his proof.
\end{rem}

\subsubsection*{$\aone$-fiber sequences of associated bundles}
Suppose $G$ is a Nisnevich sheaf of groups and ${\mathcal F}$ is a space with a left $G$-action.  Given a space ${\mathcal X}$ and a (right) $G$-torsor ${\mathcal P} \to {\mathcal X}$, we can form the contracted product
\[
{\mathcal P} \times^G {\mathcal F} := {\mathcal P} \times {\mathcal F}/G,
\]
where the quotient is taken in the category of spaces; a space of the form ${\mathcal P} \times^G {\mathcal F}$ is called an associated fiber bundle.

If $G \to {\mathcal P} \to {\mathcal X}$ is an $\aone$-fiber sequence, then we can consider the associated sequence of morphisms
\[
{\mathcal F} \longrightarrow {\mathcal P} \times^G {\mathcal F} \longrightarrow {\mathcal X}.
\]
An action of $G$ on $\F$ gives a homomorphism $G \to \Aut(\F)$.  Now, by assumption, there is an action of $\Omega^1_s {\mathcal X}$ on $G$, and by composition we obtain a map $\Omega^1_s {\mathcal X} \times G \to G \to \Aut(\F)$.  Define an action $\Omega^1_s {\mathcal X}$ on ${\mathcal F}$ as the composite
\[
\Omega^1_s {\mathcal X} \times {\mathcal F} \longrightarrow \Aut(\F) \times {\mathcal F} \longrightarrow {\mathcal F}.
\]
If we know that ${\mathcal F}$ is the $\aone$-homotopy fiber of ${\mathcal P} \times^G {\mathcal F} \to {\mathcal X}$, then it follows that this sequence is also an $\aone$-fiber sequence.  This can actually be checked in a number of situations of interest, as was demonstrated by Morel and Wendt.

Suppose $(X,x)$ is a connected and pointed smooth scheme, and $\mathcal{E} \to X$ is a rank $n+1$ vector bundle on $X$ with zero section $i: X \to \mathcal{E}$.  We write $\mathcal{E}^{\circ}$ for the space $\mathcal{E} \setminus i(X)$.  The induced morphism $\mathcal{E}^{\circ} \to X$ is a Zariski locally trivial smooth morphism with fibers isomorphic to punctured affine spaces.  Fix base-points in ${\mathbb A}^{n+1} \setminus 0$ and ${\mathbb P}^n$ making the usual map ${\mathbb A}^{n+1} \setminus 0 \to {\mathbb P}^n$ into a pointed map.  The usual fiber of ${\mathcal E}^{\circ}$ (resp. ${\mathbb P}({\mathcal E})$) at $x$ is ${\mathbb A}^{n+1}\setminus 0$ (resp. ${\mathbb P}^n$) and we use the image of the chosen base-point in ${\mathcal E}^{\circ}$ (resp. ${\mathbb P}({\mathcal E})$) to obtain a base-point of the latter space.  From these constructions, we obtain $\aone$-fiber sequences.

\begin{prop}[{\cite[Propositions 5.2 and 6.2]{Wendt}}]
\label{prop:projectivebundlefibration}
Suppose $(X,x)$ is a pointed smooth scheme and $\mathcal{E}$ is a rank $(n+1)$-vector bundle over $X$.  The sequences of pointed spaces
\[
\begin{split}
&{\mathbb A}^{n+1} \setminus 0 \longrightarrow {\mathcal E}^{\circ} \longrightarrow X, \text{ and } \\
&{\mathbb P}^n \longrightarrow {\mathbb P}({\mathcal E}) \longrightarrow X,
\end{split}
\]
(induced by the projections and inclusions discussed above) are $\aone$-fiber sequences.
\end{prop}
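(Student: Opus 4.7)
The plan is to realize both sequences as instances of the associated-bundle construction described just before the proposition, and then to apply Theorem \ref{thm:torsorsgivefibrations} to the underlying $G$-torsor.

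First I would identify the two projections in the statement as structure maps of associated bundles. If $P \to X$ denotes the Zariski locally trivial $GL_{n+1}$-torsor of frames of $\mathcal{E}$, then the standard left actions of $GL_{n+1}$ on $\mathbb{A}^{n+1}\setminus 0$ and on $\mathbb{P}^n$ (the latter factoring through the quotient $GL_{n+1}\to PGL_{n+1}$) yield canonical identifications
\[
\mathcal{E}^{\circ} \cong P \times^{GL_{n+1}}(\mathbb{A}^{n+1}\setminus 0), \qquad {\mathbb P}(\mathcal{E}) \cong P \times^{GL_{n+1}}\mathbb{P}^{n},
\]
under which the projections in the statement are precisely the associated-bundle structure maps, and the chosen base-points correspond to the fiber inclusions at $x$. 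For the projective-bundle sequence it is equivalent to work with the induced $PGL_{n+1}$-torsor $Q := P/\gm$ and write ${\mathbb P}(\mathcal{E}) \cong Q \times^{PGL_{n+1}} \mathbb{P}^n$.

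Next I would apply Theorem \ref{thm:torsorsgivefibrations}, whose hypothesis requires that $\bpi_0^{\aone}$ of the structure group be strongly $\aone$-invariant. For $GL_{n+1}$ this follows from Lemmas \ref{lem:bslnaonesimplyconnected} and \ref{lem:slnglnrelation}: looping the $\aone$-fiber sequence $BSL_{n+1}\to BGL_{n+1}\to B\gm$ and using $\bpi_0^{\aone}(SL_{n+1})=\ast$ shows $\bpi_0^{\aone}(GL_{n+1})\cong \gm$, which is strongly $\aone$-invariant by the discussion preceding Proposition \ref{prop:fundamentalgroupproperties}. For $PGL_{n+1}$ this is Corollary \ref{cor:strongaoneinvariancepi0ofpgln}. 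Hence $P \to X \to BGL_{n+1}$ (respectively $Q \to X \to BPGL_{n+1}$) is an $\aone$-fiber sequence, and the associated-bundle recipe from the paragraph immediately preceding the proposition produces the candidate fiber sequences in the statement, together with the prescribed action of ${\mathbf R}\Omega^1_s X$ on $F$ factoring through $G\to\Aut(F)$.

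The main---and only substantive---obstacle is checking that the geometric fiber $F$ of $P \times^G F \to X$ is actually its $\aone$-homotopy fiber, i.e. verifying the hypothesis flagged in the discussion before the proposition. My plan here is to reduce to the universal case by pulling back along the classifying map $X\to BG$: properness of the $\aone$-local model structure (\cite[\S 2 Theorem 3.2]{MV}) guarantees that $\aone$-homotopy fibers are preserved under pullback along $\aone$-fibrations, so it suffices to establish that $F\to EG\times^G F\to BG$ is an $\aone$-fiber sequence. For this, one replaces $EG\times^G F\to BG$ by an $\aone$-fibration and inspects the fiber directly using the $\aone$-contractibility of $EG$; the relevant bookkeeping is of the same flavor as in the proofs of Lemmas \ref{lem:slnglnrelation}--\ref{lem:betmuntoslnfibrationsequence} and ultimately invokes Theorem \ref{thm:pathloopsfibration}, just as Morel's proof of Theorem \ref{thm:torsorsgivefibrations} does when identifying ${\mathbf R}\Omega^1_s L_{\aone}BG$ with $L_{\aone}G$.
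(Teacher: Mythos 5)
The paper itself does not prove this proposition: it is cited to Wendt, and the remark following the statement records the one non-trivial ingredient, namely the $\aone$-localness of $BSing_*^{\aone}(G)$ for $G = SL_n, GL_n, PGL_n$ (from \cite[Theorem 1.5]{MFM}). Your setup is correct — the identifications $\mathcal{E}^{\circ} \cong P \times^{GL_{n+1}}(\mathbb{A}^{n+1}\setminus 0)$ and $\mathbb{P}(\mathcal{E}) \cong P \times^{GL_{n+1}}\mathbb{P}^n$, and the verification that $\bpi_0^{\aone}(GL_{n+1})$ and $\bpi_0^{\aone}(PGL_{n+1})$ are strongly $\aone$-invariant, are all fine and match the paper's setup.

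The gap is exactly where you flag the ``only substantive obstacle.'' Pulling back from the universal case is a reasonable framing, but the map $EG \times^G F \to BG$ is not an $\aone$-fibration, and $\aone$-fibrant replacement of it (or of $BG$) is not compatible with the strict pullback along $X \to BG$; so ``replace by an $\aone$-fibration and inspect the fiber'' does not by itself produce a reduction — controlling the fiber of the $\aone$-fibrant replacement is the whole problem. What makes this work in Wendt's argument is precisely the $\aone$-localness of $BSing_*^{\aone}(G)$: it supplies an explicit $\aone$-local model of $BG$ through which one can track strict fibers during localization. Your proposal never invokes $Sing_*^{\aone}$ or this localness result. The appeal to Theorem \ref{thm:pathloopsfibration} is also misplaced: that theorem governs the interaction of $\aone$-localization with \emph{simplicial loops}, which is exactly right for the torsor case (fiber $G \simeq {\bf R}\Omega^1_s BG$), but here the fibers are $\mathbb{A}^{n+1}\setminus 0$ and $\mathbb{P}^n$, which are not loop spaces, so that theorem does not apply. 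The ``bookkeeping'' you defer to is not routine; it is the actual content of Wendt's Propositions 5.2 and 6.2, and your proposal leaves it open.
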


\begin{rem}
The proof of this result relies on the auxiliary fact that $BSing_*^{\aone}(G)$ is $\aone$-local for $G = SL_n, GL_n, or PGL_n$; this fact is established in \cite[Theorem 1.5]{MFM}.  See \cite[Theorem 5.3]{Wendt} for a more detailed version of the proof presented by Morel.
\end{rem}

The existence of this fiber sequence, together with the fact that ${\mathbb A}^{n+1} \setminus 0$ and ${\mathbb P}^n$ are $\aone$-connected (in the former case, one needs to assume $n \geq 1$) has the following consequence, which we use without mention in the sequel.

\begin{cor}
If $\mathcal{E}$ is a vector bundle of rank $n \geq 2$ on an $\aone$-connected smooth scheme $X$, then both ${\mathcal E}^{\circ}$ and ${\mathbb P}({\mathcal E})$ are $\aone$-connected.
\end{cor}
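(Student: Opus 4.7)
The plan is to apply Proposition \ref{prop:projectivebundlefibration} (with a shift of indices: if $\mathcal{E}$ has rank $n$, the proposition is used with $n$ replaced by $n-1$) to extract the two $\aone$-fiber sequences
\[
{\mathbb A}^{n} \setminus 0 \longrightarrow {\mathcal E}^{\circ} \longrightarrow X \quad\text{and}\quad {\mathbb P}^{n-1} \longrightarrow {\mathbb P}({\mathcal E}) \longrightarrow X,
\]
and then feed each of them into Lemma \ref{lem:fibersequencelongexactsequence}. The tail of the resulting long exact sequence of $\aone$-homotopy sheaves has the usual exactness properties appropriate for pointed sets at the level of $\bpi_0^{\aone}$: the image of $\bpi_0^{\aone}(\text{fiber}) \to \bpi_0^{\aone}(\text{total space})$ equals the preimage of the base-point, and the map to $\bpi_0^{\aone}(X)$ is surjective (this last piece of information is in fact not needed here since $X$ is $\aone$-connected).

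Next, I would recall that both fibers are themselves $\aone$-connected under the hypothesis $n \geq 2$. For $\mathbb{A}^n \setminus 0$, this is the observation made just after Proposition \ref{prop:aoneconnectedness} and before Proposition \ref{prop:aonehomotopygroupsofpuncturedaffinespace}: $\mathbb{A}^n \setminus 0$ is $\aone$-weakly equivalent to $\Sigma^{n-1}_s \gm^{\wedge n}$, hence simplicially $(n-2)$-connected, and so $\aone$-$(n-2)$-connected by Corollary \ref{cor:unstablehigherconnectivitytheorem}; for $n \geq 2$ this gives $\aone$-connectedness. For $\mathbb{P}^{n-1}$ with $n \geq 2$, $\aone$-connectedness is immediate from Proposition \ref{prop:aoneconnectedness} (the standard open cover by affine spaces with nonempty intersections) or from Lemma \ref{lem:homotopygroupsofprojectivespace}.

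Finally, assembling these inputs, the relevant portion of the long exact sequence in each case reads
\[
\bpi_0^{\aone}(\text{fiber}) \longrightarrow \bpi_0^{\aone}(\text{total space}) \longrightarrow \bpi_0^{\aone}(X),
\]
with the first and third terms trivial. Exactness as pointed sets (every element in the middle lifts to the left and maps trivially to the right) therefore forces the middle term to be trivial as well, giving $\aone$-connectedness of $\mathcal{E}^{\circ}$ and of ${\mathbb P}({\mathcal E})$. No step really presents an obstacle: the only subtlety worth flagging is tracking the shift $n \leftrightarrow n+1$ between the statement of the corollary and that of Proposition \ref{prop:projectivebundlefibration}, and ensuring the lower bound $n \geq 2$ is exactly what is needed so that both fibers sit in the $\aone$-connected range.
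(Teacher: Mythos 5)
Your proposal is correct and follows essentially the same route the paper has in mind: the paper leaves the corollary as an immediate consequence of Proposition \ref{prop:projectivebundlefibration} together with the $\aone$-connectedness of ${\mathbb A}^{n+1}\setminus 0$ and ${\mathbb P}^n$, which is exactly the argument you flesh out via the long exact sequence of Lemma \ref{lem:fibersequencelongexactsequence}. Your careful tracking of the index shift and of the bound $n\geq 2$ is the right thing to flag, and the argument is complete.
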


The vector bundle ${\mathcal E}$ over $X$ of the statement determines a simplicial homotopy class of maps $X \to BGL_{n+1}$.  If we assume $X$ is $\aone$-connected, we can assume that this map is pointed.  In that case, Corollary \ref{cor:loopsclassifyingadjointness} shows that the map $X \to BGL_{n+1}$ is equivalent to a simplicial homotopy class of maps
\[
{\bf R}\Omega^1_s L_{\aone}X \longrightarrow GL_{n+1},
\]
and this morphism coincides with the connecting homomorphism for the $\aone$-fiber sequence of the $GL_{n+1}$-torsor defined by ${\mathcal E}$.  Here, we have used the fact that $GL_{n+1}$, being a space of simplicial dimension $0$, is automatically simplicially fibrant.


Similarly, the (Zariski and hence Nisnevich locally trivial) $PGL_{n+1}$-torsor underlying the associated projective bundle is then induced by a simplicial homotopy class of maps $X \to BPGL_{n+1}^f$; by adjunction this map corresponds to a simplicial homotopy class of maps ${\bf R}\Omega^1_s X \to PGL_{n+1}$.  This discussion allows us to give a description of the connecting homomorphism $\delta$ from Equation \ref{eqn:boundaryhomomorphism}: up to $\aone$-homotopy, the connecting map in the $\aone$-fiber sequence of Proposition \ref{prop:projectivebundlefibration} is induced by the composite map
\[
{\bf R}\Omega^1_s X \longrightarrow GL_{n+1} \longrightarrow PGL_{n+1} \longrightarrow PGL_{n+1}/P \cong {\mathbb P}^n.
\]

\begin{cor}
\label{cor:projectivebundlelongexactsequence}
Suppose $(X,x)$ is a pointed smooth scheme and $\mathcal{E}$ is a rank $n+1$ vector bundle over $X$, there is a long exact sequence in $\aone$-homotopy groups of the form
\[
\cdots \longrightarrow \bpi_{i+1}^{\aone}(X,x) \stackrel{\delta_*}{\longrightarrow} \bpi_i^{\aone}({\mathbb P}^n) \longrightarrow \bpi_i^{\aone}({\mathbb P}({\mathcal E})) \to \bpi_i^{\aone}(X,x) \longrightarrow \cdots.
\]
Furthermore, if ${\mathbb P}({\mathcal E})$ admits an $\aone$-homotopy section (see \textup{Definition \ref{defn:homotopysection}}, then the morphism $\bpi_i^{\aone}({\mathbb P}(\mathcal{E})) \to \bpi_i^{\aone}(X)$ is split.
\end{cor}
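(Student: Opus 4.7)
The first assertion is essentially a direct invocation of the machinery already assembled. My plan is to apply Lemma \ref{lem:fibersequencelongexactsequence} to the $\aone$-fiber sequence
\[
{\mathbb P}^n \longrightarrow {\mathbb P}({\mathcal E}) \longrightarrow X
\]
furnished by Proposition \ref{prop:projectivebundlefibration}. The long exact sequence in $\aone$-homotopy sheaves produced by Lemma \ref{lem:fibersequencelongexactsequence} is precisely the sequence claimed, with the connecting homomorphism $\delta_*$ the one induced on $\aone$-homotopy sheaves by the boundary map $\delta$ of Equation \ref{eqn:boundaryhomomorphism}. This part involves no new input and should just be a one-line citation.

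For the second assertion, I will use the data of the $\aone$-homotopy section (Definition \ref{defn:homotopysection}) to manufacture a right inverse to the projection $\pi: {\mathbb P}(\mathcal{E}) \to X$ in the pointed $\aone$-homotopy category. By definition, an $\aone$-homotopy section provides a smooth scheme $X'$, an $\aone$-weak equivalence $f: X' \to X$ and a locally free subsheaf $\F \subset f^*\mathcal{E}$ with invertible quotient. The last piece of data is precisely a genuine scheme-theoretic section $s: X' \to {\mathbb P}(f^*\mathcal{E})$, and composing with the base change isomorphism ${\mathbb P}(f^*\mathcal{E}) \cong X' \times_X {\mathbb P}(\mathcal{E})$ followed by projection to ${\mathbb P}({\mathcal E})$ produces a morphism $g: X' \to {\mathbb P}(\mathcal{E})$ satisfying $\pi \circ g = f$.

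After possibly adjusting base-points so that $g$ and $f$ are pointed (which is harmless since $X$ is $\aone$-connected in all cases of interest, e.g.\ via Proposition \ref{prop:aoneconnectedness}, and $\pi$ admits a $k$-point in its fiber over $x$ once a subsheaf with invertible quotient has been fixed), passage to $\hop{k}$ turns $f$ into an isomorphism. Thus $g \circ f^{-1}: X \to {\mathbb P}(\mathcal{E})$ is defined in $\hop{k}$ and satisfies $\pi \circ (g \circ f^{-1}) = \mathrm{id}_X$. Applying the functor $\bpi_i^{\aone}$ to this identity yields the desired splitting of $\bpi_i^{\aone}({\mathbb P}(\mathcal{E})) \to \bpi_i^{\aone}(X)$.

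The only genuine subtlety I anticipate is the base-point bookkeeping in the second paragraph: the scheme-theoretic section $s$ need not send $f^{-1}(x)$ to the chosen base-point of ${\mathbb P}({\mathcal E})$. One remedies this either by choosing the base-point of ${\mathbb P}({\mathcal E})$ to be the image of $s$ at a preimage of $x$ (which is allowed because the long exact sequence in part one is functorial in the base-point), or by using the fact that for an $\aone$-connected base any two $k$-rational choices of base-point produce canonically isomorphic $\aone$-homotopy sheaves. Everything else is formal.
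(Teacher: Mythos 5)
Your proof is correct and takes essentially the same approach as the paper: part one is the same one-line combination of Lemma \ref{lem:fibersequencelongexactsequence} and Proposition \ref{prop:projectivebundlefibration}, and part two uses the $\aone$-homotopy section to produce a morphism $X \to {\mathbb P}({\mathcal E})$ in $\hop{k}$ splitting the projection (the paper phrases this by invoking properness of the $\aone$-local model structure to replace the fiber sequence by its pullback to $X'$, whereas you compose the section with the base-change projection directly; these are two ways of saying the same thing). Your additional remarks about base-point bookkeeping are sensible and fill in a detail the paper elides.
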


\begin{proof}
For the long exact sequence, simply combine Lemma \ref{lem:fibersequencelongexactsequence} and Proposition \ref{prop:projectivebundlefibration}.

If $f: X' \to X$ is an $\aone$-weak equivalence of smooth schemes, it follows from properness of the $\aone$-local model structure that the induced morphism ${\mathbb P}(f^*{\mathcal E}) \to {\mathbb P}({\mathcal E})$ is again an $\aone$-weak equivalence.  Thus, up to $\aone$-weak equivalence, we can replace the fiber sequence in the statement by the corresponding fiber sequence on any $X'$ that is $\aone$-weakly equivalent to $X$.  In particular, if ${\mathbb P}({\mathcal E})$ admits an $\aone$-homotopy section, this gives a morphism $X \to {\mathbb P}({\mathcal E})$ in the $\aone$-homotopy category, which provides the required splitting at the level of homotopy sheaves.
\end{proof}

\section{Splitting behavior of bundles and $\aone$-homotopy groups}
\label{s:splittingobstructions}
In this section, we study the $\aone$-homotopy groups of projectivizations of vector bundles by analyzing the connecting homomorphism and splitting behavior of the attached long exact sequence in $\aone$-homotopy groups of Corollary \ref{cor:projectivebundlelongexactsequence}.  We begin by recalling a topological analog of the problem under consideration, which is the study of homotopy groups of the projectivization of a real rank $n$ ($n \geq 2$) vector bundle on a smooth (not necessarily orientable) connected manifold.  In outline, our analysis of the connecting homomorphism proceeds in analogy with the topological situation, though there are a number of interesting differences stemming from dissimilarities between the classical homotopy theory of ${\mathbb R}{\mathbb P}^n$ and the $\aone$-homotopy theory of ${\mathbb P}^n$.  In a sense, these dissimilarities stem from the fact that the $\aone$-algebraic topology encodes information about the complex points as well.  From the point of view of the $\aone$-fundamental group, the most interesting case is that of rank $2$ bundles: the fundamental group of ${\mathbb R}{\mathbb P}^1 \cong S^1$ is free abelian, but the $\aone$-fundamental group of $\pone$, while still a free sheaf of groups (see Notation \ref{notation:fundamentalgroupofpone}), is not abelian.  Furthermore, while the higher homotopy groups of $S^1$ are trivial, the higher $\aone$-homotopy groups of $\pone$ are not, and this leads to potential higher obstructions to splitting (we discuss this last point at the very end of the section).

We show that in the case of split vector bundles, the higher $\aone$-homotopy groups decompose as direct sums (Proposition \ref{prop:splitvectorbundles}).  We then investigate the case of not necessarily split bundles:  the analysis is different depending on whether the rank of the bundle is equal to $2$ or $\geq 3$.  The second case is easier:  the $\aone$-fundamental group of the projectivization of a vector bundle of rank $\geq 3$ on an $\aone$-connected smooth scheme decomposes as a product in a number of cases; see Theorem \ref{thm:rankn} for more details.  For projectivizations of bundles of rank $2$, we introduce a (twisted) Euler class in Definition \ref{defn:eulerclass}.  In a number of situations, our Euler class turns out to control the connecting homomorphism in the long exact sequence of Corollary \ref{cor:projectivebundlelongexactsequence} and existence of an $\aone$-homotopy section up to the first stage of the $\aone$-Postnikov tower; these results are contained in Theorem \ref{thm:projectivebundletrivialeulerclass}.  Finally, Lemma \ref{lem:higherobstructions} contains some weaker statements about higher $\aone$-homotopy groups of projective bundles.

\subsubsection*{Interlude: topological motivation}
Suppose $E \to M$ is a rank $n$ real vector bundle on a closed connected manifold $M$.  Forming the projective space of lines in $E$ gives an ${\mathbb{RP}}^{n-1}$-bundle over $M$ denoted ${\mathbb P}(E)$.   There is an associated long exact sequence in homotopy groups of a fibration
\[
\cdots \longrightarrow \pi_{i+1}(M) \longrightarrow \pi_i({\mathbb{RP}}^{n-1}) \longrightarrow \pi_i({\mathbb P}(E)) \longrightarrow \pi_i(M) \longrightarrow \pi_{i-1}({\mathbb{RP}}^{n-1}) \longrightarrow \cdots.
\]
The structure of the group $\pi_i({\mathbb{RP}}^{n-1})$ ($i > 0$) depends on $n$.  If $n = 2$, then ${\mathbb{RP}}^{n-1} = S^1$.  In that case, $\pi_i(S^1)$ is non-vanishing only for $i = 1$, and the map $\pi_2(M) \to \pi_1(S^1) = \Z$ is induced by the map $M \to BO(2)$ by evaluation on $\pi_2$.  If $n > 2$, then the canonical map $S^{n-1} \to {\mathbb{RP}}^{n-1}$ is a covering space and gives identifications $\pi_1({\mathbb{RP}}^{n-1}) = \Z/2$ and $\pi_i(S^{n-1}) = \pi_i({\mathbb{RP}}^{n-1})$ for $i > 1$.  In particular, $\pi_i({\mathbb{RP}}^{n-1})$ vanishes in the range $2 < i < n-1$.

We focus on the case where $n = 2$.  If $\tilde{M} \to M$ is the universal cover of $M$, the composite map $\tilde{M} \to M \to BO(2)$ induces the same map upon applying the functor $\pi_2$ as the original map $M \to BO(2)$.  Since $\tilde{M}$ is $1$-connected, one knows that $\hom(\pi_2(M),\Z) = H^2(\tilde{M},\Z)$ by the Hurewicz theorem and the universal coefficient theorem.  The element in the latter group determined by the map $\tilde{M} \to BO(2)$ is the Euler class of the bundle $\tilde{M} \to BO(2)$.  Thus, the connecting homomorphism is trivial if and only if the pullback of $E$ to $\tilde{M}$ has trivial Euler class.  Equivalently, this Euler class can be viewed as a ``twisted" Euler class on $M$.  Indeed, $E$ determines an orientation character $\omega_E: \pi_1(M) \to \pi_1(BO(2)) = \Z/2$ and a corresponding local system $\Z[\omega_E]$, and the Euler class above can be viewed as an element of $H^2(M,\Z[\omega_E])$.

If the pullback of $E$ to $\tilde{M}$ has trivial Euler class, we deduce that there are isomorphisms $\pi_i({\mathbb P}(E)) \isomt \pi_i(M)$ for $i > 1$, and $\pi_1({\mathbb P}(E))$ is an extension of $\pi_1(M)$ by $\Z$.  Non-triviality of the Euler class of $E$ is an obstruction to existence of a nowhere vanishing section.  Note: many expositions of this fact restrict to the oriented situation (see, e.g., \cite[Property 9.7 and Theorem 12.5]{MilnorStasheff}), but everything works if we use the orientation local system described in the previous paragraph.  Triviality of the Euler class together with an obstruction theory argument provides a splitting of the map $\pi_1({\mathbb P}(E)) \to \pi_1(M)$.  The group structure on $\pi_1({\mathbb P}(E))$ is therefore specified by a homomorphism $\pi_1(M) \to Aut(\Z) = \Z/2$, induced by conjugation via the splitting.

The conjugation action just specified has a geometric origin.  The classifying map of the vector bundle $E$ gives a map $\Omega^1 M \to O(2)$ and each element $g$ of $O(2)$ gives a map $S^1 \to S^1$ that does not necessarily preserve the base-point, but each such map gives a morphism $\pi_1(S^1) \to \pi_1(S^1)$, and the automorphism of $\pi_1(S^1)$ so-defined only depends on the class in $\pi_0(O(2))$ of the element $g$, which is precisely the value of $\det g$.  In other words, the homomorphism $\pi_1(M) \to \Z/2$ can be identified with $\omega_E$.

\subsubsection*{Determinants and orientation characters}
We now show how to associate with any algebraic vector bundle an analog of the topological orientation character. Suppose $(X,x)$ is a pointed $\aone$-connected smooth scheme and $\mathcal{E}$ is a rank $(n+1)$ vector bundle over $X$ classified by a pointed simplicial homotopy class of maps $X \to BGL_{n+1}$.  This morphism gives rise to a simplicial homotopy class of maps $X \to L_{\aone}BGL_{n+1}$.  By functoriality of the Postnikov tower, such a morphism induces a map $X^{(1)} \to (BGL_{n+1})^{(1)}$.  By means of the fiber sequence of Lemma \ref{lem:slnglnrelation}, we know that $(BGL_{n+1})^{(1)} \cong B\gm$, and the map $BGL_{n+1} \to B\gm$ is induced by the determinant.  Now, the composite map $X \to B\gm$ is by Theorem \ref{thm:pi1initialstronglyaoneinvariant} and Proposition \ref{prop:fundamentalgroupproperties} equivalent to the homomorphism $\bpi_1^{\aone}(X) \to \gm$ obtained by evaluation on $\bpi_1^{\aone}$ (we use this observation repeatedly in what follows).  In this way, given a vector bundle $\mathcal{E}$, we obtain a homomorphism
\[
\omega_{\mathcal{E}}: \bpi_1^{\aone}(X) \to \gm,
\]
which we call the $\aone$-orientation character associated with $\mathcal{E}$.  An explicit cocycle computation using an open cover of $X$ on which the bundle $\mathcal{E}$ trivializes gives the following result.

\begin{lem}
\label{lem:determinantconstruction}
Under the isomorphism $Pic(X) \isomt \hom(\bpi_1^{\aone}(X),\gm)$ of \textup{Proposition \ref{prop:fundamentalgroupproperties}}, the $\aone$-orientation character $\omega_{\mathcal{E}}$ corresponds to $\det \mathcal{E}$.
\end{lem}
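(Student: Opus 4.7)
The plan is to reduce the assertion to the classical description of line bundles via \v{C}ech cocycles and the naturality of the classifying space construction. By Proposition \ref{prop:fundamentalgroupproperties}(i), the target isomorphism $\Pic(X) \isomt \hom(\bpi_1^{\aone}(X),\gm)$ factors through the chain
\[
\Pic(X) \;=\; H^1_{\Nis}(X,\gm) \;=\; [X,B\gm]_{\aone} \;=\; \hom_{\Gr_k^{\aone}}(\bpi_1^{\aone}(X),\gm),
\]
with the last equality given by evaluation on $\bpi_1^{\aone}$ (via Theorem \ref{thm:pi1initialstronglyaoneinvariant}). So it suffices to prove that, as elements of $[X,B\gm]_{\aone}$, the composite $X \to BGL_{n+1} \to B\gm$ used to define $\omega_{\mathcal{E}}$ is the class of the $\gm$-torsor associated to $\det \mathcal{E}$.

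First, I would choose a simplicially fibrant model $BGL_{n+1}^f$ and a Zariski trivializing cover $\{U_i\}$ of $X$ for $\mathcal{E}$, so that the classifying map $X \to BGL_{n+1}^f$ is represented on the \v{C}ech nerve $\breve{C}(\coprod U_i \to X)$ by transition cocycles $g_{ij} \in GL_{n+1}(U_{ij})$; this is the content of Theorem \ref{thm:simplicialhomotopyclassificationoftorsors}. Next, I would use the naturality of the $B(-)$ construction: the group homomorphism $\det: GL_{n+1} \to \gm$ induces a map of simplicial classifying spaces $BGL_{n+1} \to B\gm$ which on \v{C}ech cocycles sends $(g_{ij}) \mapsto (\det g_{ij})$. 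Thus the composite $X \to BGL_{n+1} \to B\gm$ represents, via Theorem \ref{thm:simplicialhomotopyclassificationoftorsors}, the $\gm$-torsor on $X$ with transition functions $\det g_{ij}$, which is exactly the $\gm$-torsor underlying $\det \mathcal{E}$.

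Finally, I would unwind the identifications: by construction $\omega_{\mathcal{E}}$ is the map on $\bpi_1^{\aone}$ induced by the composite $X \to BGL_{n+1} \to B\gm$, while Proposition \ref{prop:fundamentalgroupproperties}(i) identifies $\det \mathcal{E} \in \Pic(X)$ with the homomorphism $\bpi_1^{\aone}(X) \to \gm$ obtained by evaluating the classifying map of the $\gm$-torsor of $\det\mathcal{E}$ on $\bpi_1^{\aone}$. Since we have just shown these two classifying maps coincide in $[X,B\gm]_{\aone}$, the corresponding homomorphisms of $\aone$-fundamental groups coincide as well, proving the claim.

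The main obstacle is not any deep input but rather bookkeeping: one must verify that the composite of identifications used (fibrant replacement of $BGL_{n+1}$, passage to the \v{C}ech nerve, naturality of $B(-)$ for the sheaf-theoretic group homomorphism $\det$, and the isomorphism of Proposition \ref{prop:fundamentalgroupproperties}(i)) is compatible enough that the transition-function description of $\det \mathcal{E}$ agrees on the nose with the image of the transition cocycles of $\mathcal{E}$ under $\det$. Everything is a formal consequence of functoriality, so no genuinely new input is required.
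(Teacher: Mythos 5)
Your proposal is correct and is precisely the ``explicit cocycle computation using an open cover of $X$ on which the bundle $\mathcal{E}$ trivializes'' that the paper invokes (without spelling out) just before the lemma statement. The reduction through $\Pic(X) = H^1_{\Nis}(X,\gm) = [X,B\gm]_{\aone} = \hom_{\Gr^{\aone}_k}(\bpi_1^{\aone}(X),\gm)$ and the observation that functoriality of $B(-)$ applied to $\det\colon GL_{n+1} \to \gm$ carries transition cocycles $(g_{ij})$ to $(\det g_{ij})$ is exactly the intended argument.
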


Since the $PGL_{n+1}$-torsor associated with the vector bundle ${\mathcal E}$ is Zariski locally trivial, we have a classifying map $X \to BPGL_{n+1}$ associated with the projective bundle ${\mathbb P}({\mathcal E})$.  By construction, this morphism factors as $X \to BGL_{n+1} \to BPGL_{n+1}$.  Corollary \ref{cor:strongaoneinvariancepi0ofpgln} identifies $\bpi_0^{\aone}(PGL_{n+1}) = {\mathcal H}^1_{\et}(\mu_{n+1})$.  Since $\bpi_0^{\aone}(PGL_{n+1})$ is strongly $\aone$-invariant, we can also identify $\bpi_1^{\aone}(BPGL_{n+1}) = {\mathcal H}^1_{\et}(\mu_{n+1})$ by means of Theorem \ref{thm:torsorsgivefibrations} and the long exact sequence in $\aone$-homotopy sheaves of a fibration.  Therefore, given a vector bundle $\mathcal{E}$, the projectivization determines a homomorphism
\[
\omega_{{\mathbb P}(\mathcal{E})}: \bpi_1^{\aone}(X) \to {\mathcal H}^1_{\et}(\mu_{n+1}).
\]
The next result identifies this homomorphism.

\begin{prop}
\label{prop:modnplus1orientationcharacter}
If $(X,x)$ is a pointed $\aone$-connected smooth scheme, and ${\mathcal E}$ is a rank $(n+1)$-vector bundle on $X$, the map $\omega_{{\mathbb P}(\mathcal{E})}$ associated with ${\mathbb P}({\mathcal E})$ (as described above) is equivalent to the element of $\Pic(X)/(n+1)\Pic(X)$ obtained by reducing $\det({\mathcal E})$ modulo $n+1$.
\end{prop}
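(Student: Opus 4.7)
My plan is to factor $\omega_{{\mathbb P}(\mathcal{E})}$ as $\kappa \circ \omega_{\mathcal{E}}$, where $\kappa \colon \gm \to \mathcal{H}^1_{\et}(\mu_{n+1})$ is the Kummer-theoretic connecting homomorphism, and then to observe that $\kappa$ vanishes on $(n+1)$-th powers. First, I will invoke functoriality of $\bpi_1^{\aone}$ applied to the factorization $X \to BGL_{n+1} \to BPGL_{n+1}$ of the classifying map of ${\mathbb P}(\mathcal{E})$, which together with Lemma \ref{lem:determinantconstruction} (identifying $\omega_{\mathcal{E}}$ with $\det \mathcal{E}$) yields $\omega_{{\mathbb P}(\mathcal{E})} = \varphi \circ \omega_{\mathcal{E}}$, where $\varphi \colon \gm \to \mathcal{H}^1_{\et}(\mu_{n+1})$ is the map induced by $BGL_{n+1} \to BPGL_{n+1}$ under the identifications $\bpi_1^{\aone}(BGL_{n+1}) = \gm$ of Proposition \ref{prop:slnglnrelation} and $\bpi_1^{\aone}(BPGL_{n+1}) = \mathcal{H}^1_{\et}(\mu_{n+1})$ of Corollary \ref{cor:strongaoneinvariancepi0ofpgln}. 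The problem thus reduces to computing $\varphi$.

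Second, to compute $\varphi$, I will appeal to Theorem \ref{thm:pathloopsfibration}, whose hypothesis is satisfied since $\bpi_0^{\aone}(GL_{n+1}) = \gm$ and $\bpi_0^{\aone}(PGL_{n+1}) = \mathcal{H}^1_{\et}(\mu_{n+1})$ are both strongly $\aone$-invariant; this identifies $\varphi$ with the map $\bpi_0^{\aone}(GL_{n+1}) \to \bpi_0^{\aone}(PGL_{n+1})$ induced by the quotient homomorphism $GL_{n+1} \to PGL_{n+1}$. By the construction in Corollary \ref{cor:strongaoneinvariancepi0ofpgln}, the target identification is induced by post-composition with the classifying map $PGL_{n+1} \to B_{\et}\mu_{n+1}$ of the $\mu_{n+1}$-torsor $SL_{n+1} \to PGL_{n+1}$. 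Consequently, $\varphi$ is determined by the $\mu_{n+1}$-torsor on $GL_{n+1}$ obtained by pulling this torsor back along $GL_{n+1} \to PGL_{n+1}$. A direct computation identifies this pullback with the subscheme $\{(g,\lambda) \in GL_{n+1} \times \gm : \lambda^{n+1} = (\det g)^{-1}\}$, i.e., with the pullback along $\det \colon GL_{n+1} \to \gm$ of the canonical Kummer $\mu_{n+1}$-torsor over $\gm$ (up to inversion on $\gm$). Hence $\varphi$ is identified with $\kappa$ up to a sign which disappears on reduction modulo $(n+1)$-th powers.

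Third, since $\kappa$ vanishes on $(n+1)$-th powers by the Kummer exact sequence, the composite $\omega_{{\mathbb P}(\mathcal{E})} = \varphi \circ \omega_{\mathcal{E}}$ factors through $\bpi_1^{\aone}(X) \to \gm/\gm^{n+1}$. Under the isomorphism $\hom_{\Gr_k^{\aone}}(\bpi_1^{\aone}(X),\gm) \cong \Pic(X)$ of Proposition \ref{prop:fundamentalgroupproperties}, the resulting factorization is precisely the class $[\det \mathcal{E}] \in \Pic(X)/(n+1)\Pic(X)$, which proves the proposition. The principal obstacle I anticipate is the middle step: verifying that the naive set-theoretic fiber product $SL_{n+1} \times_{PGL_{n+1}} GL_{n+1}$ correctly models the $\aone$-homotopical pullback of the corresponding classifying maps. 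This relies on Nisnevich local triviality of the $\gm$-torsor $GL_{n+1} \to PGL_{n+1}$ (which holds because $GL_{n+1}$ is special) and on naturality of the adjunction between looping and classifying spaces for group homomorphisms, secured by applying Theorem \ref{thm:pathloopsfibration} simultaneously to $GL_{n+1}$ and $PGL_{n+1}$.
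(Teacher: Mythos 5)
Your overall strategy matches the paper's: both factor the classifying map as $X \to BGL_{n+1} \to BPGL_{n+1}$, identify $\omega_{\mathcal{E}}$ with $\det\mathcal{E}$ via Lemma \ref{lem:determinantconstruction}, and then must identify the map $\varphi \colon \gm = \bpi_1^{\aone}(BGL_{n+1}) \to \bpi_1^{\aone}(BPGL_{n+1}) = \mathcal{H}^1_{\et}(\mu_{n+1})$ with the Kummer connecting homomorphism. Where you diverge is in \emph{how} $\varphi$ is computed. The paper proceeds by taking the $\bpi_0^{\aone}$ long exact sequence of the $\aone$-fiber sequence $\gm \to GL_{n+1} \to PGL_{n+1}$, explicitly computing the first map $\gm \to \gm$ to be $t \mapsto t^{n+1}$, and then concluding that the boundary map must be the Kummer connecting homomorphism. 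You instead unwind the identification of $\bpi_0^{\aone}(PGL_{n+1})$ through the classifying map $PGL_{n+1} \to B_{\et}\mu_{n+1}$ of the torsor $SL_{n+1} \to PGL_{n+1}$, and compute $\varphi$ by directly pulling this torsor back along $GL_{n+1} \to PGL_{n+1}$, recognizing the result as the determinant pullback of the Kummer torsor. Your route is arguably more concrete and self-contained: the paper's ``therefore'' when passing from the computation of the first map to the identification of the second map in the exact sequence is not fully airtight as stated, whereas your torsor computation directly produces the Kummer torsor without appealing to exactness.

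One small correction. Your explicit fiber product has a sign slip depending on parametrization: if you write $g = \lambda h$ with $h \in SL_{n+1}$, then $\det g = \lambda^{n+1}$, so the pullback torsor is $\{(g,\lambda) : \lambda^{n+1} = \det g\}$, not $\lambda^{n+1} = (\det g)^{-1}$ (you get the inverse only with the opposite convention $h = \lambda g$). More importantly, your remark that the resulting sign ``disappears on reduction modulo $(n+1)$-th powers'' is false: $u$ and $u^{-1}$ represent distinct elements of $\gm/\gm^{n+1}$ in general, so a genuine sign discrepancy would change the answer from $[\det\mathcal{E}]$ to $[(\det\mathcal{E})^{-1}]$. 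Fortunately you do not need this spurious cancellation: with the correct parametrization the sign never appears, and $\varphi$ is identified with $\kappa$ exactly. With that correction made, the proof is sound.
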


\begin{proof}
By means of Theorem \ref{thm:pi1initialstronglyaoneinvariant}, a homomomorphism as in the statement gives a class in $H^1_{\Nis}(X,{\mathcal H}^1_{\et}(\mu_{n+1}))$; this group is canonically identified with $\Pic(X)/(n+1)\Pic(X)$.  By assumption, the map $\bpi_1^{\aone}(X,x) \to \bpi_1^{\aone}(BPGL_{n+1})$ factors through $\bpi_1^{\aone}(BGL_n)$.  Above, we identified the homomorphism $\bpi_1^{\aone}(X,x) \to \gm$ as $\det {\mathcal E}$.  The homomorphism $\gm \to {\mathcal H}^1_{\et}(\mu_{n+1})$ of $\aone$-fundamental sheaves of groups induces a map $\Pic(X) \to \Pic(X)/(n+1)\Pic(X)$ that we claim is precisely reduction modulo $n+1$.

One way to see this is to observe that $GL_{n+1}$ can be identified as a $\gm$-torsor over $PGL_{n+1}$.  Thus, there is an $\aone$-fiber sequence of the form
\[
\gm \longrightarrow GL_{n+1} \longrightarrow PGL_{n+1}.
\]
The associated long exact sequence in $\aone$-homotopy sheaves (at $\bpi_0^{\aone}$) then yields an exact  sequence of (Nisnevich sheaves of groups) of the form
\[
\gm \longrightarrow \gm \longrightarrow {\mathcal H}^1_{\et}(\mu_{n+1}).
\]
The first map is the composite of the inclusion of the center $\gm \to GL_{n+1}$ followed by the projection map $\bpi_0^{\aone}(GL_n) \to \gm$, which sends a matrix to its determinant, and the composite map sends $t \in \gm$ to $t^{n+1}$.  Therefore, the homomorphism $\gm \longrightarrow {\mathcal H}^1_{\et}(\mu_{n+1})$ can be identified with the connecting homomorphism in the Kummer exact sequence.  The induced map on cohomology is precisely the map induced by reducing modulo $n+1$.
\end{proof}

\subsubsection*{Split vector bundles}
We begin by rephrasing the condition that a vector bundle on a scheme splits as a direct sum of line bundles.  To this end, if $T \subset GL_{n+1}$ is a maximal torus of $GL_{n+1}$ there is an induced morphism $BT \to BGL_{n+1}$ (we implicitly assume that the models chosen are fibrant and $\aone$-local below).  Given a space ${\mathcal X}$ we will refer to simplicial homotopy classes of maps $\mathcal{X} \to BGL_{n+1}$ as rank $(n+1)$-vector bundles on ${\mathcal X}$.  We will say that a vector bundle of rank $(n+1)$ on ${\mathcal X}$ {\em splits as a sum of line bundles} if given the map $\mathcal{X} \to BGL_{n+1}$, then we can find a maximal torus $T \subset GL_{n+1}$ and a $\mathcal{X} \to BT$ making the obvious triangle commute.  When $X$ is a smooth scheme, these notions correspond precisely to existence of a splitting as a direct sum of line bundles in the usual sense.

\begin{defn}
If $\mathcal{E}$ is a rank $(n+1)$-vector bundle on $X$, we will say that $\mathcal{E}$ is {\em $\aone$-homotopy split} if we can find a space ${\mathcal X}$ and a morphism of spaces $f: \mathcal{X} \to X$ that is an $\aone$-weak equivalence, such that the composite map $\mathcal{X} \to X \to BGL_{n+1}$ (we write $f^*{\mathcal E}$ for this composite) splits as a sum of line bundles.
\end{defn}

Let $T$ be a maximal torus in $GL_{n+1}$, and let $\gm \subset GL_{n+1}$ be the inclusion of the center.  The composite map $T \to GL_{n+1} \to PGL_{n+1}$ identifies $T/\gm$ with a maximal torus of $PGL_{n+1}$.

\begin{prop}
\label{prop:splitvectorbundles}
If $X$ is an $\aone$-connected smooth scheme over a field $k$, and $\mathcal{E}$ is an $\aone$-homotopy split rank $n+1$ vector bundle on $X$, then the connecting homomorphism $\bpi_{i+1}^{\aone}(X) \to \bpi_i^{\aone}({\mathbb P}^n)$ in the long exact sequence of \textup{Corollary \ref{cor:projectivebundlelongexactsequence}} is the trivial map for $i \geq 1$.  Thus, for every integer $i \geq 1$ there are split short exact sequences of the form
\[
1 \longrightarrow \bpi_i^{\aone}({\mathbb P}^n) \longrightarrow \bpi_i^{\aone}({\mathbb P}({\mathcal E})) \longrightarrow \bpi_i^{\aone}(X) \to 1.
\]
If furthermore $k$ is perfect, for $i \geq 2$, $\bpi_i^{\aone}({\mathbb P}({\mathcal E}))$ is a direct sum.
\end{prop}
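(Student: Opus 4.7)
The plan is to exhibit an honest scheme-theoretic section of the projection $\pi: {\mathbb P}({\mathcal E}) \to X$, which will split the long exact sequence of Corollary \ref{cor:projectivebundlelongexactsequence}. By the pullback argument given at the end of the proof of that corollary, both the fiber sequence ${\mathbb P}^n \to {\mathbb P}({\mathcal E}) \to X$ and its long exact sequence of $\aone$-homotopy sheaves depend only on the $\aone$-weak equivalence class of the pair $(X,\mathcal{E})$. Thus, after replacing $X$ by an $\aone$-equivalent smooth scheme and $\mathcal{E}$ by its pullback, the $\aone$-homotopy split hypothesis allows us to assume from the outset that $\mathcal{E}$ genuinely decomposes as $\mathcal{E} \cong L_0 \oplus \cdots \oplus L_n$.

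In this situation, the inclusion $\F := L_0 \oplus \cdots \oplus L_{n-1} \hookrightarrow \mathcal{E}$ has invertible quotient $\mathcal{E}/\F \cong L_n$, and by the characterization of sections of projective bundles recalled at the start of Section \ref{s:fibersequences} determines a genuine section $s: X \to {\mathbb P}({\mathcal E})$ of $\pi$; we take the image of the chosen base-point of $X$ as the base-point of ${\mathbb P}({\mathcal E})$. The identity $\pi \circ s = \operatorname{id}_X$ then implies that $\pi_*: \bpi_i^{\aone}({\mathbb P}({\mathcal E})) \to \bpi_i^{\aone}(X)$ is split surjective for every $i \geq 0$, with splitting $s_*$.

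By exactness of the long exact sequence of Corollary \ref{cor:projectivebundlelongexactsequence}, the surjectivity of $\pi_*$ at level $i+1$ forces the connecting homomorphism $\delta_*: \bpi_{i+1}^{\aone}(X) \to \bpi_i^{\aone}({\mathbb P}^n)$ to vanish: its kernel is the image of $\pi_*$, which is all of $\bpi_{i+1}^{\aone}(X)$. The sequence thus truncates to the asserted split short exact sequences, with splitting furnished by $s_*$. For the last clause, Theorem \ref{thm:pi1stronglyaoneinvariant} combined with Theorem \ref{thm:stronglyaoneinvariantabelian} (the latter being where perfectness of $k$ is used) shows that for $i \geq 2$ the sheaves $\bpi_i^{\aone}(-)$ are strictly $\aone$-invariant sheaves of abelian groups. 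A split short exact sequence in an abelian category decomposes as a direct sum, yielding the final assertion.

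I expect no serious obstacle here; the core of the argument is the elementary observation that a split bundle's projectivization carries a manifest section obtained by projecting onto one of the line bundle summands. The only delicate point is confirming that working ``up to $\aone$-weak equivalence'' on the base is harmless for the long exact sequence, and this is precisely the content of the proof of Corollary \ref{cor:projectivebundlelongexactsequence}.
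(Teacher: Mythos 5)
Your proposal is an attractive and more direct route than the paper's, but as written it has a genuine gap: you conflate the hypothesis of being $\aone$-homotopy split with the stronger notion of admitting an $\aone$-homotopy section. The definition of $\aone$-homotopy split only furnishes a \emph{space} (a simplicial sheaf) $\mathcal{X}$ together with an $\aone$-weak equivalence $f:\mathcal{X}\to X$ such that the classifying map $\mathcal{X}\to BGL_{n+1}$ of $f^*\mathcal{E}$ factors through $BT$; nothing in the hypothesis guarantees that $\mathcal{X}$ is a smooth scheme, so your opening sentence — ``after replacing $X$ by an $\aone$-equivalent smooth scheme and $\mathcal{E}$ by its pullback, the $\aone$-homotopy split hypothesis allows us to assume from the outset that $\mathcal{E}$ genuinely decomposes as $\mathcal{E}\cong L_0\oplus\cdots\oplus L_n$'' — is not justified. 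In particular you cannot invoke the scheme-theoretic characterization of sections of $\mathbb{P}(\mathcal{E})$ in terms of invertible quotients, nor can you appeal directly to the pullback argument in Corollary~\ref{cor:projectivebundlelongexactsequence}, which is stated only for $\aone$-weak equivalences between smooth schemes.

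This is precisely the pressure point the paper's proof is designed to sidestep. It establishes triviality of the connecting homomorphism \emph{first} and \emph{directly}, by observing that the map $\mathbf{R}\Omega^1_s\mathcal{X}\to PGL_{n+1}$ inducing $\delta$ factors through the torus $T$, whose higher $\aone$-homotopy sheaves vanish; this argument never needs $\mathcal{X}$ to be a scheme. The splitting is then obtained separately: one uses $\aone$-invariance of $\Pic$ to transport the factorization through $BT$ into a collection of line bundles on the original scheme $X$, shows by a properness argument that the projectivization of their direct sum is $\aone$-weakly equivalent to $\mathbb{P}(\mathcal{E})$, and uses the honest section of the former. Your logical architecture — get a (homotopy class of) section and deduce vanishing of $\delta_*$ from the resulting surjectivity of $\pi_*$ — is sound in principle, and could likely be completed by a reduction-of-structure-group/associated-bundle argument producing a section in $\mathscr{H}_\bullet(k)$ from the factorization through $BT$, but that argument is not supplied, and it is not the route taken in the paper. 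Your treatment of the last clause (perfect $k$, strictly $\aone$-invariant sheaves, splitting lemma) agrees with the paper's.
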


\begin{proof}
Since $\mathcal{E}$ is $\aone$-homotopy split, we can find a space $\mathcal{X}$ and a morphism $f: \mathcal{X} \to X$ such that $f^*{\mathcal{E}}$ splits as a sum of line bundles.  In more detail, there exist a maximal torus $T \subset GL_{n+1}$ and a lift of the classifying map $\mathcal{X} \to BGL_{n+1}$ through a morphism $\mathcal{X} \to BT$.

The morphism ${\bf R}\Omega^1_s \mathcal{X} \to {\mathbb P}^n$ inducing the connecting homomorphism comes from the classifying map $\mathcal{X} \to BGL_{n+1}$ by applying the simplicial loops functor.  If the morphism $\mathcal{X} \to BGL_{n+1}$ lifts to $\mathcal{X} \to BT$, then the map displayed above factors through a morphism ${\bf R}\Omega^1_s \mathcal{X} \to T \to PGL_{n+1}$.  Therefore, applying $\bpi_i^{\aone}$, we see that the connecting homomorphism factors as
\[
\bpi_{i}^{\aone}({\bf R}\Omega^1_s \mathcal{X}) \longrightarrow \bpi_i^{\aone}(T) \longrightarrow \bpi_i^{\aone}(PGL_{n+1}).
\]
Since $\bpi_i^{\aone}(T)$ is trivial for $i > 0$, the connecting homomorphisms are trivial in this range, and the long exact sequence in homotopy sheaves splits into short exact sequences.

To obtain the splitting, we proceed as follows.  An inverse of the $\aone$-weak equivalence $\mathcal{X} \to X$ composed with the map $\mathcal{X} \to BT$ determines a collection of line bundles on $X$.  The projective bundle on $\mathcal{X}$ corresponding to this direct sum of line bundles is pulled back from the projectivization of the sum of line bundles on $X$.  By properness of the $\aone$-local model structure, one can check that the two projective bundles in the previous sentence are $\aone$-weakly equivalent.  Now, the projectivization of a direct sum of line bundles on $X$ admits a section, which can be used to split the long exact sequences.

For the final statement, observe that for any pointed space $({\mathcal Y},y)$, the sheaves $\bpi_i^{\aone}({\mathcal Y},y)$ are strictly $\aone$-invariant if $i \geq 2$ (this is the only place where the assumption that $k$ is perfect is used).  The category of strictly $\aone$-invariant sheaves is abelian by \cite[Lemma 6.2.13]{MStable}, and the result follows from the splitting lemma.
\end{proof}

\subsubsection*{$\aone$-fundamental groups of ${\mathbb P}^n$-bundles, $n \geq 2$}
We now formulate hypotheses under which the connecting homomorphism $\bpi_2^{\aone}(X) \to \bpi_1^{\aone}({\mathbb P}^n)$ from Corollary \ref{cor:projectivebundlelongexactsequence} is trivial.  The analysis breaks into two parts based on the structure of the $\aone$-fundamental group of ${\mathbb P}^n$: we deal with $n \geq 2$ first, and later with $n = 1$, which is more interesting.

\begin{lem}
\label{lem:ranknconnectinghomomorphism}
Suppose $n \geq 2$ is an integer, $X$ is an $\aone$-connected smooth scheme, and $\mathcal{E}$ is a rank $n+1$ vector bundle on $X$.  The connecting homomorphism $\bpi_2^{\aone}(X) \to \bpi_1^{\aone}({\mathbb P}^n)$ in \textup{Corollary \ref{cor:projectivebundlelongexactsequence}} is trivial.
\end{lem}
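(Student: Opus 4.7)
My plan is to exploit the explicit description of the connecting homomorphism given in the paragraph just before Corollary~\ref{cor:projectivebundlelongexactsequence}: up to $\aone$-homotopy, the boundary map $\delta \colon {\bf R}\Omega^1_s X \to {\mathbb P}^n$ is the composite
\[
{\bf R}\Omega^1_s X \longrightarrow GL_{n+1} \longrightarrow PGL_{n+1} \longrightarrow PGL_{n+1}/P \cong {\mathbb P}^n,
\]
where the first arrow is adjoint to the classifying map $X \to BGL_{n+1}$ of $\mathcal{E}$ and $P$ denotes the stabilizer in $PGL_{n+1}$ of a chosen base-point $[e_0] \in {\mathbb P}^n$.

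The key geometric observation is that the composite $GL_{n+1} \to PGL_{n+1} \to {\mathbb P}^n$, which sends $M \mapsto [M \cdot e_0]$, visibly factors through the evaluation-at-$e_0$ map $GL_{n+1} \to {\mathbb A}^{n+1}\setminus 0$, $M \mapsto M \cdot e_0$, followed by the canonical projection ${\mathbb A}^{n+1}\setminus 0 \to {\mathbb P}^n$. Consequently $\delta$ itself factors as
\[
{\bf R}\Omega^1_s X \longrightarrow GL_{n+1} \longrightarrow {\mathbb A}^{n+1}\setminus 0 \longrightarrow {\mathbb P}^n,
\]
and after applying $\bpi_1^{\aone}(-)$, the connecting homomorphism $\bpi_2^{\aone}(X) \to \bpi_1^{\aone}({\mathbb P}^n)$ factors through $\bpi_1^{\aone}({\mathbb A}^{n+1}\setminus 0)$.

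To finish I will invoke the $\aone$-weak equivalence ${\mathbb A}^{n+1}\setminus 0 \simeq \Sigma^n_s \gm^{\wedge (n+1)}$ recalled just before Proposition~\ref{prop:aonehomotopygroupsofpuncturedaffinespace}, together with the unstable $\aone$-connectivity theorem (Corollary~\ref{cor:unstablehigherconnectivitytheorem}): these show that ${\mathbb A}^{n+1}\setminus 0$ is $\aone$-$(n-1)$-connected, so $\bpi_1^{\aone}({\mathbb A}^{n+1}\setminus 0)$ vanishes as soon as $n \geq 2$, which is exactly the hypothesis. The only subtlety I anticipate is ensuring that the geometric factorization through ${\mathbb A}^{n+1}\setminus 0$ is compatible on the nose with the description of $\delta$ above, which amounts to nothing more than a coherent choice of base-points; this also explains why the same argument cannot work in the rank-$2$ case $n = 1$, where ${\mathbb P}^1 \simeq \Sigma^1_s \gm$ already has a non-trivial $\aone$-fundamental group and a genuine Euler-class obstruction must intervene, as in the subsequent part of the section.
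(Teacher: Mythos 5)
Your argument matches the paper's proof precisely: both factor the connecting map through the $GL_{n+1}$-equivariant projection $GL_{n+1} \to {\mathbb A}^{n+1}\setminus 0 \to {\mathbb P}^n$ and then use that ${\mathbb A}^{n+1}\setminus 0$ is $\aone$-$(n-1)$-connected to conclude that the induced map on $\bpi_1^{\aone}$ vanishes when $n \geq 2$. Your additional step spelling out the $\aone$-$(n-1)$-connectivity of ${\mathbb A}^{n+1}\setminus 0$ via the suspension model and the unstable $\aone$-connectivity theorem simply makes explicit what the paper's one-line justification takes for granted.
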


\begin{proof}
The connecting homomorphism in the long exact sequence is induced by the composite
\[
{\mathbf R}\Omega^1_s X \longrightarrow GL_{n+1} \longrightarrow {\mathbb A}^{n+1} \setminus 0 \longrightarrow {\mathbb P}^n,
\]
since the map $GL_{n+1} \to {\mathbb P}^n$ can be factored ($GL_{n+1}$-equivariantly) through ${\mathbb A}^{n+1} \setminus 0$.  Applying $\bpi_1^{\aone}$ to this sequence, we see that the connecting homomorphism factors through $\bpi_1^{\aone}({\mathbb A}^{n+1} \setminus 0)$, which is trivial if $n > 1$.
\end{proof}

\begin{thm}
\label{thm:rankn}
Let $n \geq 2$ be an integer, and let $\mathcal{E}$ be a rank $n+1$ vector bundle on an $\aone$-connected smooth scheme $X$.  If either i) the $\aone$-fundamental group of $X$ is a split $k$-torus, or ii) ${\mathbb P}(\mathcal{E})$ admits an $\aone$-homotopy section, then there is an identification
\[
\bpi_1^{\aone}({\mathbb P}({\mathcal E})) \cong \gm \times \bpi_1^{\aone}(X).
\]
\end{thm}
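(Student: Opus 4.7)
The plan is to derive a short exact sequence from the long exact sequence of the projective bundle fibration, exhibit a canonical retraction of the kernel $\gm$ using the tautological line bundle $\O(1)$, and combine the retraction with the quotient projection to conclude the direct product decomposition.

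\textbf{Step 1: short exact sequence.} I would apply Corollary \ref{cor:projectivebundlelongexactsequence} to the $\aone$-fiber sequence ${\mathbb P}^n \to {\mathbb P}({\mathcal E}) \to X$. Combining Lemma \ref{lem:homotopygroupsofprojectivespace} (which gives $\bpi_1^{\aone}({\mathbb P}^n) = \gm$ and $\bpi_0^{\aone}({\mathbb P}^n) = \ast$ for $n \geq 2$) with Lemma \ref{lem:ranknconnectinghomomorphism} (triviality of the connecting homomorphism $\bpi_2^{\aone}(X) \to \bpi_1^{\aone}({\mathbb P}^n)$), the long exact sequence truncates to a short exact sequence of Nisnevich sheaves of groups
\[
1 \longrightarrow \gm \longrightarrow \bpi_1^{\aone}({\mathbb P}({\mathcal E})) \longrightarrow \bpi_1^{\aone}(X) \longrightarrow 1.
\]

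\textbf{Step 2: retraction from $\O(1)$.} Since ${\mathbb P}({\mathcal E})$ is $\aone$-connected (by the corollary following Proposition \ref{prop:projectivebundlefibration}) and smooth, Proposition \ref{prop:fundamentalgroupproperties}(i) identifies $\Pic({\mathbb P}({\mathcal E})) \cong \hom_{\Gr_k}(\bpi_1^{\aone}({\mathbb P}({\mathcal E})),\gm)$. The relative hyperplane bundle $\O(1) \in \Pic({\mathbb P}({\mathcal E}))$ thus yields a homomorphism $r\colon \bpi_1^{\aone}({\mathbb P}({\mathcal E})) \to \gm$. By naturality of this identification under the pullback along the fiber inclusion ${\mathbb P}^n \hookrightarrow {\mathbb P}({\mathcal E})$, the restriction $r|_{\gm}$ corresponds to $\O(1)|_{{\mathbb P}^n}$, which is a generator of $\Pic({\mathbb P}^n) \cong \Z \cong \hom_{\Gr_k}(\gm,\gm)$. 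After possibly replacing $r$ by $g \mapsto r(g)^{-1}$ (which is a homomorphism since $\gm$ is abelian), we may assume $r|_\gm = \operatorname{id}_\gm$; i.e., $r$ is a genuine retraction of the inclusion of $\gm$.

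\textbf{Step 3: direct product decomposition.} Writing $p$ for the quotient $\bpi_1^{\aone}({\mathbb P}({\mathcal E})) \to \bpi_1^{\aone}(X)$, I form the homomorphism of sheaves of groups $(r,p)\colon \bpi_1^{\aone}({\mathbb P}({\mathcal E})) \to \gm \times \bpi_1^{\aone}(X)$. Injectivity is immediate from $r|_\gm = \operatorname{id}_\gm$. Surjectivity is checked on Nisnevich stalks $S$ (Henselian local): the vanishing $\Pic(S) = 0$ makes the short exact sequence of Step 1 exact on sections, so any $b \in \bpi_1^{\aone}(X)(S)$ admits a lift $\tilde b$, and then $a \cdot r(\tilde b)^{-1} \cdot \tilde b$ is sent to $(a,b)$, using that $a r(\tilde b)^{-1} \in \gm(S)$ is fixed by $r$. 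Hence $(r,p)$ is an isomorphism of Nisnevich sheaves of groups.

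\textbf{Role of the hypotheses and main obstacle.} Either hypothesis (i) or (ii) supplies a context that clarifies the splitting: in case (ii) the $\aone$-homotopy section directly splits $p$ via Corollary \ref{cor:projectivebundlelongexactsequence}; in case (i) the abelianness of $\bpi_1^{\aone}(X) = T$ makes the whole extension abelian, so the retraction argument can be recast as a computation of $\operatorname{Ext}^1(T,\gm)$ using the splitting provided by $r$. The principal technical obstacle is the bookkeeping in Step 2, namely tracking the sign in the canonical identification $\Pic({\mathbb P}^n) \cong \hom_{\Gr_k}(\gm,\gm)$ (determining whether $\O(1)$ or $\O(-1)$ corresponds to $\operatorname{id}_\gm$) and ensuring that the resulting retraction descends to a genuine isomorphism of sheaves of groups rather than merely to an isomorphism on global sections.
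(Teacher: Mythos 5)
Your Steps 1--3 are correct, but they follow a genuinely different route from the paper, and one should pause over the consequences.  The paper first establishes that the short exact sequence
\[
1 \longrightarrow \gm \longrightarrow \bpi_1^{\aone}({\mathbb P}({\mathcal E})) \longrightarrow \bpi_1^{\aone}(X) \longrightarrow 1
\]
is \emph{split}, and it is precisely here that hypotheses (i) and (ii) are used: in case (i) the extension is a Nisnevich-locally-trivial $\gm$-torsor over the split torus $T = \bpi_1^{\aone}(X)$, and $\Pic(T) = 0$ forces it to be trivial; in case (ii) the homotopy section supplies a splitting directly via Corollary \ref{cor:projectivebundlelongexactsequence}.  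Once split, the extension is a semidirect product classified by a homomorphism $\bpi_1^{\aone}(X) \to \Aut(\gm) \cong \Z/2$, and the paper shows this is trivial because $X$ is $\aone$-connected (any Nisnevich $\Z/2$-torsor on $X$ is trivial).  Your argument instead constructs a retraction $r \colon \bpi_1^{\aone}({\mathbb P}({\mathcal E})) \to \gm$ out of the relative hyperplane class $\O(1)$, using the naturality of the bijection of Proposition \ref{prop:fundamentalgroupproperties}(i) under the pointed fiber inclusion; a group-theoretic retraction onto a normal subgroup immediately gives the product decomposition via $(r,p)$, as you check.  This is shorter and avoids both the $\Aut(\gm)$ analysis and the case division.

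The point you should have confronted head-on is that your Steps 1--3 never invoke hypothesis (i) or (ii) at all: $\O(1)$ exists on any projective bundle and restricts to a generator of $\Pic({\mathbb P}^n)$ unconditionally, and Lemma \ref{lem:ranknconnectinghomomorphism} is itself unconditional.  If your argument is airtight, you have proved the theorem for an arbitrary rank $n+1$ bundle on an $\aone$-connected smooth scheme, which is strictly stronger than what the paper asserts.  That is a conclusion worth stating plainly, rather than retrofitting the hypotheses into the proof after the fact; and your closing remarks on this point are not sound as written (for instance, abelianness of $T$ and of $\gm$ does \emph{not} by itself make the extension abelian --- that is the conclusion, not a usable input).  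Either defend the unconditional statement, or locate the hidden use of the hypotheses; leaving the discrepancy papered over is the one real weakness in an otherwise correct and elegant argument.  (For the record, your sign worry in Step 2 is harmless: you only need $r|_{\gm}$ to be an automorphism of $\gm$, and both $\mathrm{id}$ and inversion serve equally well in the surjectivity computation after the obvious adjustment.)
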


\begin{proof}
By Lemma \ref{lem:ranknconnectinghomomorphism}, the connecting homomorphism $\bpi_2^{\aone}(X) \to \bpi_1^{\aone}({\mathbb P}^n)$ is trivial.  Assume $\bpi_1^{\aone}(X) = T$, where $T$ is a split $k$-torus.  The extension
\[
1 \longrightarrow \gm \longrightarrow \bpi_1^{\aone}({\mathbb P}({\mathcal E})) \longrightarrow T \longrightarrow 1
\]
is a (Nisnevich locally trivial) $\gm$-torsor over $T$.  Since $T$ is split, and we know that $H^1_{\Nis}(T,\gm) = Pic(T) = 0$, it follows that the extension is trivial.

If ${\mathbb P}({\mathcal E})$ admits an $\aone$-homotopy section, then the short exact sequence in question is split.  Split extensions of $\bpi_1^{\aone}(X)$ by $\gm$ are classified by morphisms $\bpi_1^{\aone}(X) \to \Aut(\gm)$.  We claim that all such extensions are trivial.  We have an isomorphism of sheaves $\Aut(\gm) = \Z/2$, and $\Z/2$ is strongly $\aone$-invariant (if the characteristic of the base field is unequal to $2$) by \cite[\S Proposition 3.5]{MV}.   Since $\Z/2$ is strongly $\aone$-invariant, evaluating a pointed homotopy class of maps $X \to B\Z/2$ on $\bpi_1^{\aone}$ determines a morphism
\[
[(X,x),(B\Z/2,\ast)]_{\aone} \isomto \hom_{\Gr^{\aone}_k}(\bpi_1^{\aone}(X),\Z/2).
\]
and since $\Z/2$ is abelian, we get an identification $[(X,x),(B\Z/2,\ast)]_{\aone} \isomt [X,B\Z/2]_{\aone}$.  However, since $X$ is $\aone$-connected, any Nisnevich locally trivial $\Z/2$-torsor is in fact trivial by \cite[Proposition 4.1.1]{AM}.
\end{proof}

\begin{ex}
\label{ex:ranknexamples}
Condition (i) of the theorem is satisfied for $X = {\mathbb P}^n$, $n \geq 2$ and also for many smooth toric varieties by \cite[Theorem 6.4]{ADExcision}.  Every vector bundle on $\pone$ is a direct sum of line bundles (Grothendieck's theorem \cite[Theorem 2.1.1]{OSS}), and thus the projectivization of any vector bundle on $\pone$ admits a section (and not just an $\aone$-homotopy section).  In other words, condition (ii) is satisfied for any vector bundle on $X = \pone$.
\end{ex}

\subsubsection*{Euler classes: the construction}
We observed in Remark \ref{rem:aonesimplyconnectedcoverofbgln} that the $\aone$-simply connected cover of $BGL_{n+1}$ is precisely $BSL_{n+1}$.  By means of functoriality of the $\aone$-Postnikov tower, the morphism $X \to BGL_{n+1}$ gives rise to a morphism of $\aone$-fiber sequences:
\[
\xymatrix{
\tilde{X} \ar[r]\ar[d] & X \ar[d]\ar[r] & B\bpi_1^{\aone}(X) \ar[d] \\
BSL_{n+1} \ar[r] & BGL_{n+1} \ar[r] & B\gm,
}
\]
Since $BSL_{n+1}$ is $\aone$-simply connected by Proposition \ref{prop:slnglnrelation}, the second stage of the $\aone$-Postnikov tower for the map $\tilde{X} \to BSL_{n+1}$ gives a map
\[
\tilde{X} \to K(\bpi_2^{\aone}(BSL_{n+1}),2),
\]
and this morphism intertwines the $\bpi_1^{\aone}(X)$-action on $\tilde{X}$ (see Remark \ref{rem:functorialactionofpi1}) with the $\gm$-action on $\bpi_2^{\aone}(BGL_{n+1}) = \bpi_2^{\aone}(BSL_{n+1})$ (via Proposition \ref{prop:slnglnrelation}).  As we now explain, this action allows us to produce a certain equivariant cohomology class, which we will see can be viewed as a class on $X$ itself.

For simplicity, we set $\bpi:= \bpi_1^{\aone}(X)$.  We let $B\bpi$ be the simplicial classifying space of $\bpi$ and $E\bpi$ an $\aone$-contractible space with free $\bpi$-action (if $\bpi \to \ast$ is the structure map, we can take $E\bpi$ to be the \u Cech simplicial object associated with this epimorphism).  Now, there is a diagonal $\bpi$-action on $X \times E\bpi$, and we set
\[
\tilde{X}_{\bpi} := (\tilde{X} \times E\bpi)/\bpi.
\]
Analogously, using the $\gm$-action on $\bpi_2^{\aone}(BSL_{n+1})$ we described above, we can form the twisted Eilenberg-MacLane space 
\[
K_{\gm}(\bpi_2^{\aone}(BSL_{n+1}),2) := (K(\bpi_2^{\aone}(BSL_{n+1}),2) \times E\gm)/\gm
\]  
Since the construction of the previous paragraph intertwines the $\bpi$-action on $\tilde{X}$ and the $\gm$-action on $\bpi_2^{\aone}(BSL_{n+1})$ the morphism $\tilde{X} \to K(\bpi_2^{\aone}(BSL_{n+1}),2)$ yields a morphism
\[
\tilde{X}_{\bpi} \longrightarrow K_{\gm}(\bpi_2^{\aone}(BSL_{n+1}),2).
\]
There is a canonical morphism $\tilde{X}_{\bpi} \to X$ defined by ``projection onto the first factor."  If $C$ is the \u Cech simplicial object associated with the projection $\tilde{X} \times E\bpi \to \tilde{X}_{\bpi}$, then it follows that the induced map $C \longrightarrow \tilde{X}_{\bpi}$ is a simplicial weak equivalence.  On the other hand, $C$ is termwise weak equivalent to $X$ and therefore weakly equivalent to $X$.  Thus, the class in question can be viewed as an $\aone$-homotopy class of maps
\[
X \longrightarrow K_{\gm}(\bpi_2^{\aone}(BSL_{n+1}),2).
\]
This class admits a description as a sheaf cohomology class on $X$.

We can take the sheaf of sections of the projection morphism $\tilde{X} \times_{\bpi_1^{\aone}(X)} \bpi_2^{\aone}(BSL_{n+1})$ to obtain a sheaf $\bpi_2^{\aone}(BSL_{n+1})(\det(\mathcal{E}))$ on $X$.  Just like in \cite[\S B.3 (see p. 251)]{MField}, we can view the new map as an element of the group $H^2_{\Nis}(X,\bpi_2^{\aone}(BSL_{n+1})(\det(\mathcal{E})))$.  In particular, note that the class in $H^2_{\Nis}(X,\bpi_2^{\aone}(BSL_{n+1})(\det(\mathcal{E})))$ vanishes if and only if the class in $H^2_{\Nis}(\tilde{X},\bpi_2^{\aone}(BSL_{n+1}))$ vanishes.  Using the computations of Proposition \ref{prop:slnglnrelation} for an explicit description of $\bpi_2^{\aone}(BSL_{n+1})$, we can make the following definition.

\begin{defn}
\label{defn:eulerclass}
If $X$ is an $\aone$-connected smooth scheme, and $\mathcal{E}$ is a rank 2 (resp. $n+1$) vector bundle on $X$, the {\em Euler class}, denoted $e(\mathcal{E})$, is the element in $H^2_{\Nis}(X,\K^{\MW}_2(\det(\mathcal{E})))$ (resp. $H^2_{\Nis}(X,\K^M_2(\det(\mathcal{E})))$) described above.
\end{defn}

\begin{rem}
The construction we have given, with evident changes in notation, defines an Euler class for any vector bundle over an $\aone$-connected space (not necessarily just a scheme).  By functoriality of the universal covering space construction, this construction is functorial with respect to pullbacks of vector bundles along morphisms of $\aone$-connected spaces.
\end{rem}

\begin{ex}
\label{ex:universaleulerclass}
Take the model of $BGL_2$ coming from Totaro's construction; this is an $\aone$-connected space.  Applying the construction above to the universal rank $2$ vector bundle ${\mathcal V}$ on $BGL_2$ gives rise to a universal Euler class in $H^2_{Nis}(BGL_2,\K^{\MW}_2(\det {\mathcal V}))$.  Suppose ${\mathcal E}$ is a rank $2$ vector bundle on a smooth scheme $X$ classified by a morphism $X \to BGL_2^f$.  The homomorphism $\bpi_1^{\aone}(X) \to \gm$ induced by applying $\bpi_1^{\aone}(\cdot)$ to the classifying homomorphism of the vector bundle is precisely the determinant of $\mathcal{E}$ by Lemma \ref{lem:determinantconstruction}.  There is an induced pullback homomorphism
\[
H^2_{\Nis}(BGL_2,\K^{\MW}_2(\det {\mathcal V})) \longrightarrow H^2_{\Nis}(X,\K^{\MW}_2(\det {\mathcal E})).
\]
By construction, the Euler class of Definition \ref{defn:eulerclass} is the image under this pullback homomorphism of the universal Euler class.
\end{ex}

\begin{rem}
If $k$ is an infinite perfect field of characteristic unequal to $2$, the group $H^2_{\Nis}(X,\K^{\MW}_2(\det(\mathcal{E})))$ can be identified with the $\det(\mathcal{E})$-twisted Chow-Witt group $\widetilde{CH}^2(X,\det(\mathcal{E}))$ (see \cite[D\'efinition 1.2]{BargeMorel} and \cite[D\'efinition 10.4.6]{Fasel1}), but we will not prove or use this here.  The stated definition of the Euler class is equivalent to other definitions that appear in the literature (e.g., \cite[Theorem 8.14]{MField} or \cite[D\'efinition 13.2.1]{Fasel1}).  This can be checked in the universal case, but we do not give a detailed verification here since it is not necessary in the sequel.
\end{rem}

\begin{rem}
\label{rem:twistdoesnotmatter}
The situation in rank $\geq 3$ is different from rank $2$.  Indeed, the action of $\gm$ on $\K^M_2$ is given by a homomorphism $\gm \to \Aut(\K^M_2)$.  We know that $\mathbf{End}(\K^M_2,\K^M_2) = \K^M_0 = \Z$, and the subsheaf of automorphisms corresponds to $\pm 1 = \mu_2$.  Therefore, if the base field has characteristic unequal to $2$, the action is given by a homomorphism $\gm \to \mu_2$.  On the other hand, this homomorphism can be interpreted as a Nisnevich locally trivial $\mu_2$-torsor on $BGL_2$ and is therefore trivial by \cite[Proposition 4.1.1]{AM}.  In other words, the twist is trivial {\em independent} of $\mathcal{E}$.  This is a manifestation of the fact that the sheaf $\K^M_2$ is orientable, in the sense that the Hopf map $\eta$ acts trivially (we will not make this precise here, but see \cite[Definition 6.2.5]{MIntro} or \cite[Definition 1.2.7]{DegliseOriented} for a discussion in the setting of stable $\aone$-homotopy).
\end{rem}

We state the following lemma for completeness.

\begin{lem}
\label{lem:splitcasetrivialeulerclass}
If $X$ is an $\aone$-connected smooth scheme, and $\mathcal{E}$ is a split vector bundle on $X$, then $e({\mathcal E})$ is trivial.
\end{lem}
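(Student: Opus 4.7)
The plan is to show that if $\mathcal{E} \cong \bigoplus_{i=0}^{n}\mathcal{L}_i$, the map $\tilde{X} \to BSL_{n+1}$ used to define the Euler class in Definition \ref{defn:eulerclass} is $\aone$-null, so its second $\aone$-Postnikov invariant vanishes. Since $\mathcal{E}$ splits, the classifying map $X \to BGL_{n+1}$ factors through a map $X \to BT$, where $T \subset GL_{n+1}$ is the diagonal maximal torus. Applying the functorial $\aone$-universal cover construction (see the discussion after Definition \ref{defn:aoneuniversalcover} and Remark \ref{rem:functorialactionofpi1}) produces a factorization
\[
\tilde{X} \longrightarrow \widetilde{BT} \longrightarrow BSL_{n+1}
\]
of the lift used in the construction preceding Definition \ref{defn:eulerclass}.

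Next I would show that $\widetilde{BT}$ is $\aone$-contractible. Since $T$ is a split torus, it is strongly $\aone$-invariant (as recalled in the paragraph preceding Proposition \ref{prop:fundamentalgroupproperties}), so Theorem \ref{thm:torsorsgivefibrations} applies to the universal $T$-torsor $ET \to BT$: the sequence $T \to ET \to BT$ is an $\aone$-fiber sequence with $ET$ $\aone$-contractible. The associated long exact sequence in $\aone$-homotopy sheaves gives $\bpi_1^{\aone}(BT) \cong T$ and $\bpi_i^{\aone}(BT) = 0$ for $i \geq 2$. Hence $BT \to B\bpi_1^{\aone}(BT)$ is an $\aone$-weak equivalence, and its $\aone$-homotopy fiber $\widetilde{BT}$ is $\aone$-contractible.

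Consequently the composite $\tilde{X} \to \widetilde{BT} \to BSL_{n+1}$ is $\aone$-null, and so its second $\aone$-Postnikov invariant in $H^2_{\Nis}(\tilde{X},\bpi_2^{\aone}(BSL_{n+1}))$ vanishes. By the observation immediately preceding Definition \ref{defn:eulerclass}, vanishing of this untwisted class is equivalent to vanishing of the (twisted) Euler class $e(\mathcal{E})$ in $H^2_{\Nis}(X,\K^{\MW}_2(\det(\mathcal{E})))$ (in rank $2$) or $H^2_{\Nis}(X,\K^M_2(\det(\mathcal{E})))$ (in rank $\geq 3$), as required.

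There is essentially no serious obstacle: the only point requiring minor care is that the $\aone$-universal cover construction is natural enough that a factorization $X \to BT \to BGL_{n+1}$ induces the factorization $\tilde{X} \to \widetilde{BT} \to BSL_{n+1}$ on universal covers, which follows from the functoriality of the $\aone$-Postnikov tower (Theorem \ref{thm:postnikovtowers}) and the compatibility with the $\bpi_1^{\aone}$-action discussed in Remark \ref{rem:functorialactionofpi1}.
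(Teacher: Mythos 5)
Your proof is correct, and it lands on the same underlying geometry as the paper's argument but reaches the nullhomotopy by a slightly different route. The paper, after factoring the classifying map through $BT$, observes that the lift $\tilde{X}\to BSL_{n+1}$ factors through $BT'$ with $T' = T\cap SL_{n+1}$ (since the square formed by $BT'\to BT\to B\gm$ and $BSL_{n+1}\to BGL_{n+1}\to B\gm$ is a homotopy pullback), and then kills the map $\tilde{X}\to BT'$ using Proposition \ref{prop:fundamentalgroupproperties}: $[\tilde{X},BT']_{\aone}\cong\hom_{\Gr^{\aone}_k}(\bpi_1^{\aone}(\tilde{X}),T')$, which is trivial by $\aone$-simple connectivity of $\tilde{X}$. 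You instead pass through the $\aone$-universal cover $\widetilde{BT}$ of $BT$, observe that $BT\to B\bpi_1^{\aone}(BT)$ is already an $\aone$-weak equivalence (so $\widetilde{BT}$ is $\aone$-contractible), and conclude that anything factoring through $\widetilde{BT}$ is $\aone$-null. Both arguments are sound; the paper's version has the advantage of explicitly naming the torus $T'$ through which the $BSL_{n+1}$-lift factors (which is later useful, e.g.\ to keep track of twists), while yours avoids introducing $T'$ at the cost of invoking functoriality of the universal-cover construction twice. Your appeal to the $\aone$-fiber sequence $T\to ET\to BT$ via Theorem \ref{thm:torsorsgivefibrations} to compute $\bpi_i^{\aone}(BT)$ is fine, though the same facts are also available directly from \cite[\S 4 Proposition 3.8]{MV} as invoked in Proposition \ref{prop:slnglnrelation}.
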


\begin{proof}
Under the hypothesis, the map $X \to BGL_{n+1}$ lifts through a map $X \to BT$.  If $T' = T \cap SL_{n+1}$, then the map $\tilde{X} \to BSL_{n+1}$ lifts through a morphism $\tilde{X} \to BT'$.  By Proposition \ref{prop:fundamentalgroupproperties}, there is a bijection
\[
[\tilde{X},BT']_{\aone} \isomto \hom_{\Gr^{\aone}_k}(\bpi_1^{\aone}(\tilde{X}),T').
\]
However, since $\tilde{X}$ is $\aone$-simply connected the resulting map must be trivial.  Therefore, the defining map of the Euler class factors through an $\aone$-homotopically trivial map, and the Euler class must itself be trivial.
\end{proof}

\subsubsection*{Euler classes and the connecting homomorphism}
\begin{lem}
\label{lem:aonefundamentalgroupeulerclass}
Suppose $X$ is an $\aone$-connected smooth scheme.  Let $\mathcal{E}$ be a rank $2$ vector bundle on $X$ and let ${\mathbb P}(\mathcal{E})$ be the associated projective space bundle.  The connecting homomorphism $\bpi_2^{\aone}(X) \to \bpi_1^{\aone}({\mathbb P}^1)$ is trivial if and only if $e(\mathcal{E})$ is trivial.
\end{lem}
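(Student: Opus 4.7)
The plan is to factor the connecting homomorphism through an injective map and then show that what remains agrees, via $\Omega$--classifying space adjunction and $\aone$-covering space theory, with the morphism on $\bpi_2^{\aone}$ that represents the Euler class.

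I would begin by unpacking the connecting homomorphism using the description at the end of Section \ref{s:fibersequences}: up to $\aone$-homotopy, $\delta$ is induced by the composite
\[
{\bf R}\Omega^1_s X \longrightarrow GL_2 \longrightarrow {\mathbb A}^2 \setminus 0 \longrightarrow {\mathbb P}^1,
\]
where the last two arrows are evaluation on the standard basepoint $e_1 \in {\mathbb A}^2 \setminus 0$. Applying $\bpi_1^{\aone}$ and using Proposition \ref{prop:aonehomotopygroupsofpuncturedaffinespace} together with the $\aone$-weak equivalence $SL_2 \isomto {\mathbb A}^2 \setminus 0$ from Remark \ref{rem:slnstabilization} yields identifications $\bpi_1^{\aone}(GL_2) \cong \bpi_1^{\aone}({\mathbb A}^2 \setminus 0) \cong \K^{\MW}_2$. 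Since ${\mathbb A}^2 \setminus 0 \to {\mathbb P}^1$ is a $\gm$-covering, Proposition \ref{prop:fundamentalgroupproperties} shows that the induced map $\K^{\MW}_2 \hookrightarrow \Faone(1)$ is injective, so $\delta$ factors through this inclusion. Hence $\delta = 0$ if and only if the residual map $\bpi_2^{\aone}(X) \to \K^{\MW}_2$ vanishes, and by Corollary \ref{cor:loopsclassifyingadjointness} this residual map is precisely the morphism $\bpi_2^{\aone}(X) \to \bpi_2^{\aone}(BGL_2) \cong \K^{\MW}_2$ (with the last isomorphism coming from Proposition \ref{prop:slnglnrelation} and Theorem \ref{thm:homotopygroupsofsln}) induced by the classifying map of $\mathcal{E}$.

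Next I would pass to the $\aone$-universal cover. By functoriality of the $\aone$-Postnikov tower, the classifying map $X \to BGL_2$ admits a canonical lift $\tilde X \to BSL_2$, and Lemma \ref{lem:aonecoveringsdontchangehigherhomotopy} identifies $\bpi_2^{\aone}(\tilde X) \isomto \bpi_2^{\aone}(X)$; under this isomorphism the morphism above becomes the map $\bpi_2^{\aone}(\tilde X) \to \bpi_2^{\aone}(BSL_2) = \K^{\MW}_2$ induced by the lifted classifying map. Since $\tilde X$ is $\aone$-simply connected and $\K^{\MW}_2$ is strictly $\aone$-invariant, the obstruction-theoretic argument used in the proof of Corollary \ref{cor:unstablehigherconnectivitytheorem} provides a bijection
\[
[\tilde X, K(\K^{\MW}_2, 2)]_{\aone} \isomto \hom_{\Ab^{\aone}_k}(\bpi_2^{\aone}(\tilde X), \K^{\MW}_2),
\]
under which the class defining $e(\mathcal{E})$---the class of the composite of $\tilde X \to BSL_2$ with the second Postnikov projection $BSL_2 \to K(\K^{\MW}_2,2)$---corresponds exactly to this homomorphism. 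By the identity noted just above Definition \ref{defn:eulerclass}, vanishing of $e(\mathcal{E}) \in H^2_{\Nis}(X, \K^{\MW}_2(\det \mathcal{E}))$ is equivalent to vanishing of its image in $H^2_{\Nis}(\tilde X, \K^{\MW}_2)$, completing the argument.

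The main technical point is verifying that the two constructions of $\bpi_2^{\aone}(\tilde X) \to \K^{\MW}_2$ agree: the one obtained from $\delta$ by $\Omega$-adjunction, and the one obtained from the second Postnikov truncation of $BSL_2$. Both originate functorially from the same classifying datum, but the identification requires careful bookkeeping of the chain of isomorphisms $\bpi_2^{\aone}(BGL_2) \cong \bpi_2^{\aone}(BSL_2) \cong \bpi_1^{\aone}(SL_2) \cong \K^{\MW}_2$ furnished by Proposition \ref{prop:slnglnrelation}, Theorem \ref{thm:homotopygroupsofsln}, and Corollary \ref{cor:loopsclassifyingadjointness}, together with the compatibility of the lift $\tilde X \to BSL_2$ with the $\bpi_1^{\aone}(X)$-action that enters the definition of the twisted Euler class.
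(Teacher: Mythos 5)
Your proposal is essentially the same as the paper's proof, and it is correct. Both arguments factor the connecting homomorphism through $\K^{\MW}_2 = \bpi_1^{\aone}({\mathbb A}^2\setminus 0)$ (using that the projective bundle comes from a $GL_2$-torsor so the boundary map lifts through ${\mathbb A}^2\setminus 0 \to \pone$, and that $\K^{\MW}_2\hookrightarrow\Faone(1)$ is injective), pass to the $\aone$-universal cover $\tilde X$ where $\bpi_2^{\aone}(\tilde X)\isomt\bpi_2^{\aone}(X)$, invoke the bijection $H^2_{\Nis}(\tilde X,\K^{\MW}_2)\isomt\hom_{\Ab^{\aone}_k}(\bpi_2^{\aone}(\tilde X),\K^{\MW}_2)$ available because $\tilde X$ is $\aone$-simply connected, and then match the resulting class to the Euler class by tracing through its construction. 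You are slightly more explicit than the paper about the injectivity of $\K^{\MW}_2\hookrightarrow\Faone(1)$ and about identifying $\bpi_1^{\aone}(GL_2)$ via $SL_2\isomt{\mathbb A}^2\setminus 0$, and you flag the final identification of the two maps $\bpi_2^{\aone}(\tilde X)\to\K^{\MW}_2$ as the main technical point, which the paper likewise leaves as an exercise in ``tracing through the construction''; these are presentational differences, not a different argument.
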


\begin{proof}
As we discussed at the beginning of the section, the connecting homomorphism $\delta: {\bf R}\Omega^1_s L_{\aone}X \to {\mathbb P}^n$ is induced by a map ${\bf R}\Omega^1_s L_{\aone}X \to GL_{n+1}$ that comes from the classifying map of the bundle by looping.  We proceed in a fashion analogous to the proof of Lemma \ref{lem:ranknconnectinghomomorphism}.  If $\tilde{X}$ is the $\aone$-universal cover of $X$, then the morphism $\bpi_2^{\aone}(\tilde{X}) \to \bpi_2^{\aone}(X)$ is an isomorphism.  Moreover, since the projective bundle in question is the projectivization of a vector bundle, the connecting homomorphism lifts through a morphism $\bpi_2^{\aone}(X) \to \bpi_1^{\aone}({\mathbb A}^2 \setminus 0) = \K^{\MW}_2$.  Combining these two observations, we see that the connecting homomorphism under consideration is trivial if and only if the induced morphism $\bpi_2^{\aone}(\tilde{X}) \to \K^{\MW}_2$ is trivial.

Now, since $\tilde{X}$ is $\aone$-simply connected, the canonical map
\[
H^2_{\Nis}(\tilde{X},\K^{\MW}_2) = [\tilde{X},K(\K^{\MW}_2,2)]_{\aone} \longrightarrow \hom_{\Ab^{\aone}_k}(\bpi_2^{\aone}(\tilde{X}),\K^{\MW}_2)
\]
induced by evaluation on $\bpi_2^{\aone}(\cdot)$ is a bijection by \cite[Theorem 3.30]{ADExcision}.  Tracing through our construction of the Euler class, we see that the class in $H^2_{\Nis}(\tilde{X},\K^{MW}_2)$ so obtained, which is necessarily equivariant for the action of $\bpi_1^{\aone}(X)$ on $\tilde{X}$ and an induced action on $\K^{\MW}_2$, descends to the Euler class.
\end{proof}

\begin{rem}
In Remark \ref{rem:twistdoesnotmatter}, we observed that the twist is irrelevant if $\operatorname{rk} \mathcal{E} \geq 3$.  Therefore, assume the projective bundle we considered above is the projectivization of a fixed rank $2$ vector bundle $\mathcal{E}$.  If $\L$ is a line bundle, then the projectivization of $\mathcal{E} \tensor \L$ is isomorphic to that of $\mathcal{E}$.  While the connecting homomorphisms in the long exact sequences in $\aone$-homotopy groups associated with each of these vector bundles are the same, the lifts we choose in the course of the proof of the lemma will differ by the morphism ${\bf R}\Omega^1_s X \to \gm$ obtained by applying simplicial loops to the map $X \to B\gm$ classifying the line bundle $\L$.  As a consequence, to make a statement about the connecting homomorphism that is independent of the choice of a vector bundle ``lift," one must assert that the Euler classes of all $\mathcal{E} \tensor \L$ are all trivial.  Since the line bundle $\det({\mathcal E} \tensor \L)$ is isomorphic to $\det({\mathcal E}) \tensor \L^{\tensor 2}$, these Euler classes live in what appear to be different groups.  However, it is known that when the twists differ by the square of a line bundle, the twisted groups in which the Euler classes lie are canonically isomorphic (this is a manifestation of the quadratic nature of orientation, which is well-known in the theory of Chow-Witt groups \cite{Fasel1}; see also \cite[Definition 4.3]{MField} and the subsequent discussion for related discussion).
\end{rem}

\subsubsection*{$\aone$-fundamental groups of ${\mathbb P}^1$-bundles: trivial Euler class}
In this section, we describe the $\aone$-fundamental group of a $\pone$-bundle with trivial Euler class in a number of situations.  We refer the reader to Morel's computations of $\bpi_1^{\aone}(\pone)$ from \cite[\S 7.3]{MField} for context.

\begin{thm}
\label{thm:projectivebundletrivialeulerclass}
Let $\mathcal{E}$ be a rank $2$ vector bundle on an $\aone$-connected smooth scheme $X$.  Assume $e(\mathcal{E})$ is trivial and either i) $\bpi_1^{\aone}(X)$ is a split torus, or ii) ${\mathbb P}(\mathcal{E})$ admits an $\aone$-homotopy section (again see \textup{Definition \ref{defn:homotopysection}}).  The short exact sequence
\[
1 \longrightarrow \Faone(1) \longrightarrow \bpi_1^{\aone}({\mathbb P}({\mathcal E})) \longrightarrow \bpi_1^{\aone}(X) \longrightarrow 1.
\]
is split, and the group structure on the term in the middle is completely determined by the class of $\det({\mathcal E})$ in $\Pic(X)/2\Pic(X)$.
\end{thm}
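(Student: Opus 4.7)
The plan is to derive the short exact sequence from the vanishing of the Euler class, produce a splitting under each hypothesis, and then identify the group law via the orientation character.

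First, I would apply Lemma \ref{lem:aonefundamentalgroupeulerclass}: the vanishing of $e(\mathcal{E})$ is equivalent to triviality of the connecting homomorphism $\bpi_2^{\aone}(X) \to \bpi_1^{\aone}(\pone) = \Faone(1)$ in the long exact sequence of Corollary \ref{cor:projectivebundlelongexactsequence}. Truncating that sequence produces the asserted short exact sequence
\[
1 \to \Faone(1) \to \bpi_1^{\aone}({\mathbb P}(\mathcal{E})) \to \bpi_1^{\aone}(X) \to 1.
\]

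Next, I would produce a splitting. Case (ii) is immediate from the last clause of Corollary \ref{cor:projectivebundlelongexactsequence}: an $\aone$-homotopy section of ${\mathbb P}(\mathcal{E}) \to X$ evaluates on $\bpi_1^{\aone}$ to split the surjection. For case (i), set $T := \bpi_1^{\aone}(X)$ and $G := \bpi_1^{\aone}({\mathbb P}(\mathcal{E}))$. By Theorem \ref{thm:centralextension}, $\K^{\MW}_2 \subset \Faone(1)$ is central; once one verifies that $\Faone(1)$ is non-abelian (so that $\K^{\MW}_2$ coincides with the center $Z(\Faone(1))$), $\K^{\MW}_2$ is preserved by every automorphism of $\Faone(1)$ and hence is normal in $G$. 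Passing to the quotient yields an extension of $T$ by $\gm$:
\[
1 \to \gm \to G/\K^{\MW}_2 \to T \to 1.
\]
This is exactly the kind of extension treated in the proof of Theorem \ref{thm:rankn}: splitness of $T$ gives $\Pic(T) = 0$, trivializing the underlying torsor and producing a splitting $T \to G/\K^{\MW}_2$. One then lifts this splitting through the projection $G \to G/\K^{\MW}_2$ by killing the resulting obstruction, which lives in a suitable $H^2$-group of $T$ with twisted $\K^{\MW}_2$ coefficients; this obstruction should vanish by an explicit computation exploiting the split-torus structure of $T$.

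Finally, given a splitting $s$, write $G \cong \Faone(1) \rtimes_{\alpha} T$ where $\alpha : T \to \Aut(\Faone(1))$ is the conjugation action. By Lemma \ref{lem:autfaone1}, $\Aut(\Faone(1))$ is abelian, and tracing through Remark \ref{rem:functorialactionofpi1} shows that $\alpha$ is induced from the universal outer action of $\bpi_0^{\aone}(PGL_2) = \mathcal{H}^1_{\et}(\mu_2)$ on $\Faone(1)$, pulled back along the orientation character $\omega_{{\mathbb P}(\mathcal{E})} : T \to \mathcal{H}^1_{\et}(\mu_2)$. Proposition \ref{prop:modnplus1orientationcharacter} identifies $\omega_{{\mathbb P}(\mathcal{E})}$ with the class of $\det(\mathcal{E})$ in $\Pic(X)/2\Pic(X)$, which then completely determines the group law.

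The chief obstacle is the splitting in case (i), where no geometric section is available. The filtration strategy reduces the problem to two steps: first trivializing a $\gm$-torsor over $T$ (handled by $\Pic(T) = 0$), then lifting through $\K^{\MW}_2$ (handled by analyzing an obstruction in a twisted $H^2$-group on the split torus). Correctly identifying the $T$-action on both $\gm$ and $\K^{\MW}_2$ via the orientation character, using Lemma \ref{lem:determinantconstruction} together with the analysis of Step 3, is essential for making these vanishings work.
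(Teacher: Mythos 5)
Your overall plan (establish the SES from Lemma \ref{lem:aonefundamentalgroupeulerclass}, deal with case (ii) via Corollary \ref{cor:projectivebundlelongexactsequence}, attack case (i) by the two-step filtration $\K^{\MW}_2 \subset \Faone(1) \subset G$, and identify the group law via the orientation character and Proposition \ref{prop:modnplus1orientationcharacter}) is a reasonable skeleton and matches the shape of the paper's argument. However, there are two genuine gaps in the middle of case (i).

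First, the justification for normality of $\K^{\MW}_2$ in $G$ is incomplete: you claim that non-abelianness of $\Faone(1)$ implies $Z(\Faone(1)) = \K^{\MW}_2$, but non-abelianness only shows $Z(\Faone(1)) \subsetneq \Faone(1)$, not that the center is exactly $\K^{\MW}_2$; that requires a separate argument about the nondegeneracy of the symbol morphism $\Phi$ as a commutator pairing. (The paper simply asserts normality; if you want to justify it, note instead that $\K^{\MW}_2 = [\Faone(1),\Faone(1)]$ is the derived subgroup, hence characteristic — but this, too, needs the commutator to be all of $\K^{\MW}_2$, not just contained in it.)

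Second, and more seriously, your plan to split $G \to T$ by lifting a section $T \to G/\K^{\MW}_2$ and then killing an obstruction in a twisted $H^2$-group "by an explicit computation exploiting the split-torus structure of $T$" is where the argument actually needs to invoke the Euler class hypothesis, and you haven't connected the two. The paper avoids this difficulty by a geometric route: it replaces the extension $1 \to \K^{\MW}_2 \to G \to T\times\gm \to 1$ (which it observes is \emph{not} split, since the $\gm$-part is responsible for the nontrivial extension defining $\Faone(1)$) with the $T$-extension $1 \to \K^{\MW}_2 \to \bpi_1^{\aone}(\mathcal{E}^{\circ}) \to T \to 1$ obtained by passing to the $\gm$-torsor $\mathcal{E}^{\circ} \to {\mathbb P}(\mathcal{E})$ and its fibration ${\mathbb A}^2\setminus 0 \to \mathcal{E}^{\circ} \to X$. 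The Euler class, by construction a $T$-equivariant map $\tilde{X} \to K(\bpi_2^{\aone}(BSL_2),2)$, then furnishes an explicit splitting of this $T$-extension. The vanishing is not a feature of the torus $T$ alone; it comes directly from the trivial-Euler-class hypothesis and the way the paper's Euler class was built equivariantly out of the $\aone$-universal cover. Without this connection, the lifting step in your outline remains unjustified.

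Your last paragraph, identifying the conjugation action $T \to \Aut(\Faone(1))$ as factoring through $\bpi_0^{\aone}(PGL_2)$ and being determined by $\det(\mathcal{E}) \bmod 2$, agrees with the paper's conclusion, though the paper derives this from the geometric description of the connecting map ${\bf R}\Omega^1_s X \to PGL_2$ acting on $\pone$ by moving the base-point, not from Remark \ref{rem:functorialactionofpi1}.
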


\begin{proof}
First, let us show that if $e(\mathcal{E})$ is trivial, then under assumption (i) the extension is split (the extension is clearly split in case (ii)).  We analyze the extension in two stages corresponding to the fact that $\Faone(1)$ is an extension of $\gm$ by $\K^{\MW}_2$.  By assumption $\bpi_1^{\aone}(X)$ is a split torus, call it $T$.  Observe that $\K^{\MW}_2$ is a normal subgroup sheaf of $\bpi_1^{\aone}({\mathbb P}({\mathcal E}))$ and taking the quotient we get a short exact sequence of the form
\[
1 \longrightarrow \gm \longrightarrow \bpi_1^{\aone}({\mathbb P}({\mathcal E}))/\K^{\MW}_2 \longrightarrow T \longrightarrow 1.
\]
Since the Picard group of a split torus is trivial, it follows that this extension is split.  The analysis of the group structure is now the same as in Theorem \ref{thm:rankn}: the group structure is determined by a homomorphism $T \to \Aut(\gm) = \Z/2$, which is trivial since $X$ is $\aone$-connected.  As a consequence, we obtain an isomorphism $\bpi_1^{\aone}({\mathbb P}({\mathcal E}))/\K^{\MW}_2 \isomt T \times \gm$.

Next, consider the short exact sequence
\[
1 \longrightarrow \K^{\MW}_2 \longrightarrow \bpi_1^{\aone}({\mathbb P}({\mathcal E})) \longrightarrow T \times \gm \longrightarrow 1.
\]
The action of $T \times \gm$ on $\K^{\MW}_2$ is induced by a morphism $\gm \to \Aut(\K^{\MW}_2)$ and a morphism $T \to \Aut(\K^{\MW}_2)$.  The morphism $\gm \to \Aut(\K^{\MW}_2)$ induces the group structure on $\Faone(1)$, and thus this sequence is not split.  However, since we already understand the action of $\gm$ on $\K^{\MW}_2$, we will pass to an $\aone$-connected $\aone$-covering space so as to only consider the action of $T$.  To do this, let $i: X \to \mathcal{E}$ is the zero section of $\mathcal{E}$, and consider the $\gm$-torsor over ${\mathbb P}(\mathcal{E})$ corresponding to $\mathcal{E}^{\circ} = \mathcal{E} \setminus i(X)$.  Studying the $T$-action on $\K^{MW}_2$ corresponds to studying long exact sequence in $\aone$-homotopy groups associated with the $\aone$-fiber sequence ${\mathbb A}^2 \setminus 0 \to \mathcal{E}^{\circ} \to X$ of Proposition \ref{prop:projectivebundlefibration}.  In other words, we reduce to considering the $T$-action on the ${\mathbb A}^2 \setminus 0$-bundle from which the $\pone$-bundle is derived.

Now, we want to use the assertion that the Euler class is trivial.  Since the Euler class is constructed equivariantly, let $f: \tilde{X} \to X$ be the $\aone$-universal covering map: by assumption $\tilde{X}$ is a $T$-torsor over $X$ and thus a smooth scheme.  The composite morphism $\tilde{X} \to X \to BGL_2$ factors through the $\aone$-simply-connected cover of $BGL_2$ (again, see Remark \ref{rem:aonesimplyconnectedcoverofbgln}), which we identified with $BSL_2$.  Thus, one obtains a map $\tilde{X} \to BSL_2$ that is $T$-equivariant for a morphism $T \to \gm = \bpi_1^{\aone}(BGL_2)$; this homomorphism corresponds to the line bundle $\det({\mathcal E})$ by Lemma \ref{lem:determinantconstruction}.

Let ${\mathcal E}^{\circ}|_{\tilde{X}}$ denote the pullback of ${\mathcal E}^{\circ}$ under the $\aone$-universal covering map.  The map ${\mathcal E}^{\circ}|_{\tilde{X}} \to \tilde{X}$ is again an $\aone$-fiber sequence by Proposition \ref{prop:projectivebundlefibration} and we can try to lift along this map.  To this end, identify $\tilde{X}^{(1)} = \ast = {\mathcal E}^{\circ}|_{\tilde{X}}^{(0)}$, and consider the lifting problem
\[
\xymatrix{
& {\mathcal E}^{\circ}|_{\tilde{X}}^{(1)}\ar[d] \\
\tilde{X} \ar[r]\ar@{-->}[ur] & \ast
}
\]
coming from the first stage of the $\aone$-Postnikov tower of ${\mathcal E}^{\circ}|_{\tilde{X}}$: we would like to know that the dashed arrow exists.  Looking at the long exact sequence in $\aone$-homotopy sheaves of the $\aone$-fibration ${\mathcal E}^{\circ}|_{\tilde{X}} \to \tilde{X}$, triviality of the Euler class allows us to conclude that the connecting homomorphism $\bpi_2^{\aone}(X) \to \bpi_1^{\aone}({\mathbb A}^2 \setminus 0)$ is trivial.  As a consequence, $\bpi_1^{\aone}({\mathcal E}^{\circ}|_{\tilde{X}}) = \bpi_1^{\aone}({\mathbb A}^2 \setminus 0)$ or, equivalently, there is an identification ${\mathcal E}^{\circ}|_{\tilde{X}}^{(1)} = B(\K^{\MW}_2)$.

Using the Euler class, we can even fix a splitting.  Indeed, the Euler class comes from a map $\tilde{X} \to K(\bpi_2^{\aone}(BSL_2))$ that intertwines the $T$-action on $\tilde{X}$ with the $\gm$-action on $\bpi_2^{\aone}(BSL_2)$.  The induced homomorphism thus give a choice of splitting of the sequence
\[
1 \longrightarrow \K^{\MW}_2 \longrightarrow \bpi_1^{\aone}({\mathcal E}^{\circ}) \longrightarrow T \longrightarrow 1.
\]
Even more, the construction gives a splitting that factors as the composite $T \longrightarrow \gm \to \Aut(\K^{MW}_2)$, where the first morphism is the orientation character (associated with $\det \mathcal{E}$) and the second morphism is the standard $\gm$-action on $\Aut(\K^{MW}_2)$ coming from the action of $\bpi_1^{\aone}(BGL_2)$ on $\bpi_2^{\aone}(BGL_2)$ with orientation character corresponding to the universal bundle as in \ref{ex:universaleulerclass}.

Suppose the short exact sequence of the statement is split by a homomorphism $\bpi_1^{\aone}(X) \to \bpi_1^{\aone}({\mathbb P}({\mathcal E}))$, and consider the induced conjugation action of $\bpi_1^{\aone}(X)$ on the fiber; this action determines a homomorphism
\[
\bpi_1^{\aone}(X) \longrightarrow \Aut(\Faone(1))
\]
that completely determines the group structure.  Using the splitting, this conjugation action is defined in the same fashion as in topology.

Unwinding the definitions, we see that the conjugation action has geometric origin: we claim that it comes from the morphism ${\bf R}\Omega^1_s X \to PGL_2$ defining the connecting homomorphism.  Recall that the sequence of maps
\[
{\bf R}\Omega^1_s X \longrightarrow {\bf R}\Omega^1_s X\times \pone \longrightarrow PGL_2 \times \pone \longrightarrow \pone.
\]
Adjunction gives a morphism from ${\bf R}\Omega^1_s X$ to the space of self-maps of $\pone$, and this morphism factors through $PGL_2$.  Of course, the action of $PGL_2$ on $\pone$ moves base-points in the fiber.  Thus, the splitting shows that the morphism of sheaves $\bpi_1^{\aone}(X) \to \Aut(\Faone(1))$ factors through a morphism of sheaves
\[
\bpi_0^{\aone}(PGL_2) \to \Aut(\Faone(1))
\]
corresponding to the change of base-point.

This homomorphism is non-trivial since any element coming from the inclusion of the maximal torus $\gm \subset PGL_2$ fixes the base-point in $\pone$ and \cite[Theorem 7.35]{MField} shows that such homomorphisms can have non-trivial ``Brouwer degree."  Combining these two observations, we get a map
\[
\bpi_1^{\aone}(X) \to \Aut(\Faone(1)),
\]
that is completely determined by a homomorphism $\bpi_1^{\aone}(X) \to \bpi_0^{\aone}(PGL_2)$.  Tracing through the definitions and using Proposition \ref{prop:modnplus1orientationcharacter}, we see that the group structure on our projective bundle is uniquely determined by the class of $\det {\mathcal E}$ in $\Pic(X)/2\Pic(X)$.
\end{proof}

\begin{rem}
By Lemma \ref{lem:autfaone1}, we know that $\Aut(\Faone(1))$ is a sheaf of abelian groups.  As a consequence, any morphism $\bpi_1^{\aone}(X) \to \Aut(\Faone(1))$ factors through the first $\aone$-homology sheaf of $X$.  Using this observation, one can circumvent the discussion of base-points in the proposition.  Furthermore, there is another way to interpret the factorization through ${\mathcal H}^1_{\et}(\mu_2)$ described above.  Multiplication determines a morphism of sheaves $\K^{\MW}_0 \to \Aut(\K^{\MW}_2)$.  On the other hand, $\K^{\MW}_0$ is a quotient of the free strictly $\aone$-invariant sheaf of groups on the sheaf ${\mathcal H}^1_{\et}(\mu_2) = \gm/\gm^{\times 2}$.  Theorem \ref{thm:projectivebundletrivialeulerclass} thus asserts that the $\bpi_1^{\aone}(X)$ acts on the $\aone$-fundamental group of $\pone$ through the induced action on the $\aone$-fundamental group of ${\mathbb A}^2 \setminus 0$.
\end{rem}

\begin{ex}
The hypotheses of the Proposition are satisfied for $X = {\mathbb P}^n$, $n \geq 3$ and this gives part of Theorem \ref{thmintro:projectivebundles} from the introduction.  As a consequence, in this case, the obstruction to Hartshorne's conjecture stemming from the $\aone$-fundamental group is trivial.
\end{ex}

\begin{ex}
\label{ex:ponebundlesoverpone}
Again by Grothendieck's theorem \cite[Theorem 2.1.1]{OSS}, Theorem \ref{thm:projectivebundletrivialeulerclass} applies as well to projectivizations of rank $2$ vector bundles over $\pone$, in which case we recover \cite[Proposition 5.3.1]{AM}.
\end{ex}

\subsubsection*{Non-trivial Euler classes: an example}
Analogous to the topological situation, the Euler class provides an obstruction to splitting.  For bundles that do not split, the Euler class discussed above really can be non-trivial.  The next example is closely related to Remark \ref{rem:slnstabilization}.

\begin{ex}
\label{ex:nontrivialeulerclass}
Let $\mathbf{Fl}_3$ be the variety of complete flags in a $3$-dimensional $k$-vector space.  After fixing a flag, this space can be identified with the homogeneous space $SL_3/B$, where $B \subset SL_3$ is a Borel subgroup.  Proposition \ref{prop:fundamentalgroupproperties} demonstrates the existence of a short exact sequence of the form
\[
1 \longrightarrow \bpi_1^{\aone}(SL_3) \longrightarrow \bpi_1^{\aone}(SL_3/B) \longrightarrow T \to 1.
\]
Furthermore, we know that $\bpi_1^{\aone}(SL_3) = \K^M_2$ (and not $\K^{\MW}_2$) by Theorem \ref{thm:homotopygroupsofsln}.  By forgetting either the $1$ or $2$-dimensional subspace in a flag, we see that $SL_3/B$ fibers over ${\mathbb P}^2$ with fibers isomorphic to $\pone$.  Indeed, the morphism $SL_3/B \to {\mathbb P}^2$ is the projectivization of a tautological rank $2$ vector bundle over ${\mathbb P}^2$ (either the $2$-dimensional subspace of $V$ or the $2$-dimensional quotient depending on which projective space we choose), which is also indecomposable.

We know that ${\mathbb P}^2 = SL_3/P$, where $P$ is a parabolic subgroup of $SL_3$.  If $L$ is a Levi factor of $P$, then the induced map $SL_3/L \to SL_3/P$ by choice of a Levi splitting is Zariski locally trivial with affine space fibers.  Similarly, one deduces that the map $SL_3/T \to SL_3/B$ is an $\aone$-weak equivalence.  An inclusion of $T$ in $L$ the determines a morphism $SL_3/T \to SL_3/L$ that coincides, up to $\aone$-homotopy, with the above projective bundle.  Passing to $\aone$-connected $\aone$-covers one obtains a commutative diagram of the form
\[
\xymatrix{
SL_3 \ar[r]\ar[d] & SL_3/T \ar[d] \\
SL_3/SL_2 \ar[r] & SL_3/SL_2.
}
\]
The Euler class of the $SL_2$-bundle on the left is non-trivial(see \cite[Remark 7.18]{MField} or \cite[Lemma 3.1]{AsokFaselThreefolds}).  Moreover, one can show that this Euler class induces the (twisted) Euler class for the above projective bundle, which is therefore also non-trivial.
\end{ex}

\subsubsection*{Examples}
Combining Theorems \ref{thm:rankn}, \ref{thm:projectivebundletrivialeulerclass}, and Example \ref{ex:ranknexamples} we can give a complete computation of the $\aone$-fundamental group of a scroll (for brevity, we do not include the computations of Example \ref{ex:ponebundlesoverpone}, which are already discussed in \cite[Proposition 5.3.1]{AM}).

Fix an identification $\Z \isomt \Pic({\mathbb P}^m)$.  Given an $n$-tuple of integers ${\bf a} := (a_1,\ldots,a_n)$, consider the vector bundle $\O(a_1) \oplus \cdots \oplus \O(a_n)$ on ${\mathbb P}^m$.  Set $\ell({\bf a}) = m$ and call it the length of ${\mathbf{a}}$.

\begin{notation}
\label{notation:scrolls}
Set ${\mathbb F}_{m,{\bf a}} := {\mathbb P}_{{\mathbb P}^m}(\O(a_1) \oplus \cdots \oplus \O(a_n))$.  If $n = 2$, and ${\bf a} = (a,0)$, then set ${\mathbb F}_{m,a} := {\mathbb F}_{m,{\bf a}}$.
\end{notation}

\begin{thm}
\label{thm:scrolls}
Suppose $m$ and $n$ are integers $\geq 1$, and ${\bf a} = (a_1,\ldots,a_{n+1})$ is a sequence of integers. If $n \geq 2$, then there are isomorphisms
\[
\bpi_1^{\aone}({\mathbb F}_{m,{\bf a}}) \isomto \begin{cases} \gm \times \Faone(1) & \text{ if } m = 1 \\
\gm \times \gm & \text{ if } m > 1.
\end{cases}
\]
If $n = 1$ and $m > 1$, then there are isomorphisms
\[
\bpi_1^{\aone}({\mathbb F}_{m,a}) \isomto \begin{cases}
\Faone(1) \times \gm & \text{ if } a \cong 0 \mod 2 \\
\Faone(1) \rtimes \gm & \text{ if } a \cong 1 \mod 2,
\end{cases}
\]
where $a := a_1 + a_2$.
\end{thm}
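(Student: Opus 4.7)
The plan is to reduce the statement to a straightforward application of the two main theorems of this section, namely Theorems \ref{thm:rankn} and \ref{thm:projectivebundletrivialeulerclass}, after noting that for the base ${\mathbb P}^m$ the hypotheses are satisfied (it is $\aone$-connected by Proposition \ref{prop:aoneconnectedness}, and Lemma \ref{lem:homotopygroupsofprojectivespace} gives $\bpi_1^{\aone}({\mathbb P}^m) = \gm$ when $m \geq 2$ and $\bpi_1^{\aone}(\pone) = \Faone(1)$). In all cases the bundle is split, so by Lemma \ref{lem:splitcasetrivialeulerclass} the Euler class is trivial; moreover, projectivizations of direct sums of line bundles admit (genuine) sections corresponding to any of the summands, so the associated projective bundle always admits an $\aone$-homotopy section.

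For the case $n \geq 2$ (rank $\geq 3$ vector bundle), I will apply Theorem \ref{thm:rankn}. When $m > 1$, $\bpi_1^{\aone}({\mathbb P}^m) = \gm$ is a split torus, so condition (i) of that theorem applies and yields $\bpi_1^{\aone}({\mathbb F}_{m,{\mathbf a}}) \cong \gm \times \gm$. When $m = 1$, condition (i) fails since $\Faone(1)$ is not a torus, but the projective bundle admits an actual section as discussed above, so condition (ii) applies and yields $\bpi_1^{\aone}({\mathbb F}_{1,{\mathbf a}}) \cong \gm \times \Faone(1)$.

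For the case $n = 1$ and $m > 1$ (rank $2$ bundle over ${\mathbb P}^m$), I will invoke Theorem \ref{thm:projectivebundletrivialeulerclass}. Since $\bpi_1^{\aone}({\mathbb P}^m) = \gm$ is a split torus and the Euler class vanishes, the theorem supplies a split short exact sequence
\[
1 \longrightarrow \Faone(1) \longrightarrow \bpi_1^{\aone}({\mathbb F}_{m,a}) \longrightarrow \gm \longrightarrow 1,
\]
whose isomorphism class (equivalently, whose conjugation action of $\gm$ on $\Faone(1)$) is completely determined by the image of $\det(\O(a_1) \oplus \O(a_2)) = \O(a_1+a_2) = \O(a)$ in $\Pic({\mathbb P}^m)/2\Pic({\mathbb P}^m) \cong \Z/2\Z$.

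To conclude the last case, I will match this classification with the two claimed products: under the fixed identification $\Pic({\mathbb P}^m) = \Z$, the class of $\O(a)$ reduces to $a \bmod 2$, so when $a$ is even the extension class is trivial and the action of $\gm$ on $\Faone(1)$ is trivial, giving $\Faone(1) \times \gm$; when $a$ is odd the action is the nontrivial one (through the unique nontrivial map $\gm \to \bpi_0^{\aone}(PGL_2) = {\mathcal H}^1_{\et}(\mu_2)$ factoring through $\gm/\gm^{\times 2}$), giving the semidirect product $\Faone(1) \rtimes \gm$. The only conceptual subtlety here, and the part I would present most carefully, is matching the abstract extension data produced by Theorem \ref{thm:projectivebundletrivialeulerclass} with the explicit semidirect product notation; this is essentially bookkeeping, since the theorem already pins down the group structure uniquely from $\det({\mathcal E}) \bmod 2$.
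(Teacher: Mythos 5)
Your proposal is correct and follows exactly the route the paper intends: the paper states Theorem \ref{thm:scrolls} without a displayed proof, instead remarking that it follows by "Combining Theorems \ref{thm:rankn}, \ref{thm:projectivebundletrivialeulerclass}, and Example \ref{ex:ranknexamples}," which is precisely the combination you carry out, using the split-bundle section to invoke condition (ii) of Theorem \ref{thm:rankn} when the base is $\pone$, condition (i) when the base is ${\mathbb P}^m$ with $m>1$, Lemma \ref{lem:splitcasetrivialeulerclass} for triviality of the Euler class, and the $\det(\mathcal{E}) \bmod 2$ classification from Theorem \ref{thm:projectivebundletrivialeulerclass} to separate the product from the semidirect product when $n=1$.
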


\begin{ex}[$\aone$-fundamental groups of some blow-ups]
\label{ex:blowups}
One particular application of the results above is to computations of $\aone$-fundamental groups of blow-ups of linearly embedded projective subspaces of a given projective space.  Recall that the blow-up of a point $x$ in ${\mathbb P}^n$ ($n \geq 3$ for simplicity) is isomorphic to the projectivization of a direct sum of line bundles over ${\mathbb P}^{n-1}$.  The explicit description of these line bundles gives the isomorphism
\[
\bpi_1^{\aone}({\sf Bl}_{x}{\mathbb P}^n) \isomto \Faone(1) \rtimes \gm.
\]
More generally, if ${\mathbb P}^{n-k}$ is a linear subvariety of ${\mathbb P}^n$, with $k \geq 2$, then there is an $\aone$-weak equivalence ${\mathbb P}^{n} \setminus {\mathbb P}^{n-k} \to {\mathbb P}^{k-1}$ (in fact a vector bundle).  This morphism allows us to realize ${\sf Bl}_{{\mathbb P}^{n-k}} {\mathbb P}^n$ as the projectivization of a vector bundle over ${\mathbb P}^{k-1}$.  Thus, Theorem \ref{thm:scrolls} shows that
\[
\bpi_1^{\aone}({\sf Bl}_{{\mathbb P}^{n-k}} {\mathbb P}^n) \isomto \begin{cases} \Faone(1) \times \gm & \text{ if } k = 2\\
\gm \times \gm & \text{ if } 2 < k < n.
\end{cases}
\]
One could also use the $\aone$-van Kampen theorem \cite[Theorem 7.12]{MField} to approach these results, but the group structure on the extension in the cases where we blow-up a point or a codimension $2$ subvariety is not very transparent.
\end{ex}

\subsubsection*{Higher $\aone$-homotopy groups and higher obstructions}
We close with a very brief discussion of higher obstructions and their implications for the structure of higher $\aone$-homotopy groups of projectivizations of vector bundles over $\aone$-connected smooth schemes.  Above, we only really used the Euler class in the case of rank $2$ bundles.  When $X = {\mathbb P}^n$, $n \geq 3$, and $\mathcal{E}$ is a rank $2$ vector bundle on $X$, we observed that the Euler class $e({\mathcal E})$ of Definition \ref{defn:eulerclass} is trivial since $\bpi_2^{\aone}({\mathbb A}^{n+1} \setminus 0)$ vanishes in this situation.  Both of these observations factor into our construction of higher obstruction classes, which are only given for rather special situations.

If $X$ is an $\aone$-connected smooth proper scheme, then $\bpi_1^{\aone}(X)$ is always non-trivial (see Remark \ref{rem:pi1nontrivial}).  As a consequence, we cannot easily impose higher $\aone$-connectivity hypotheses on $X$.  Instead, we introduce the following property depending on an integer $i \geq 1$:
\begin{itemize}
\item[(${\mathbf C}_i$)] the universal $\aone$-covering space $\tilde{X}$ is $\aone$-$i$-connected.
\end{itemize}
Projective space ${\mathbb P}^n$ satisfies $({\mathbf C}_{n-1})$, and \cite[Theorem 6.4.2]{ADExcision} gives a condition on the fan guaranteeing that a smooth proper toric variety satisfies $({\mathbf C}_i)$ for some $i$.

Let $\mathcal{E}$ be a rank $r$ vector bundle on $X$.  If $f: \tilde{X} \to X$ is the $\aone$-universal covering morphism, then $f^*{\mathcal E}$ is determined by a simplicial homotopy class of maps $\tilde{X} \to BGL_r$.  Applying $\bpi_{i+1}^{\aone}(\cdot)$ to an explicit representative of this map gives a morphism $\bpi_{i+1}^{\aone}(\tilde{X}) \to \bpi_{i+1}^{\aone}(BGL_r)$.  If $X$ satisfies $({\mathbf C}_i)$, using \cite[Theorem 3.30]{ADExcision} we get a bijection
\[
H^{i+1}_{\Nis}(X,\bpi_{i+1}^{\aone}(BGL_r)) \isomto \hom_{\Ab^{\aone}_k}(\bpi_{i+1}^{\aone}(\tilde{X}),\bpi_{i+1}^{\aone}(BGL_r)).
\]
The element of $H^{i+1}_{\Nis}(X,\bpi_{i+1}^{\aone}(BGL_r))$ corresponding to the morphism $\bpi_{i+1}^{\aone}(\tilde{X}) \to \bpi_{i+1}^{\aone}(BGL_r)$ induced by the classifying map of $f^*\mathcal{E}$ will be denoted $e^i(\mathcal{E})$.  If $i+1 < r$, the $\aone$-homotopy groups of $BGL_r$ are already in the stable range (again, see Remark \ref{rem:slnstabilization}) and one can introduce algebraic K-theory into the discussion.

\begin{defn}
\label{defn:highereulerclass}
Keeping hypotheses as in the preceding paragraph, the class $e^i(\mathcal{E})$ will be called the $i$-ary Euler class of $\mathcal {E}$.
\end{defn}

\begin{rem}
This definition can be made more similar to the definition of the Euler class.  The vector bundle $\mathcal{E}$ is classified by a simplicial homotopy class of maps $X \to BGL_r$.  The homomorphism $\bpi_1^{\aone}(X) \to \bpi_1^{\aone}(BGL_r)$ therefore determines an action of $\bpi_1^{\aone}(X)$ on $\bpi_{i+1}^{\aone}(BGL_r)$ for $i \geq 0$.  Using this, one can define an ``$\aone$-local system" on $X$: it is the Nisnevich sheaf (on the small site of $X$) of local sections of the morphism $\tilde{X} \times_{\bpi_1^{\aone}(X)} \bpi_{i+1}^{\aone}(BGL_r) \to X$.  The $i$-ary Euler class of $X$ can then be viewed as a class on $X$ taking values in this sheaf.
\end{rem}

\begin{lem}
\label{lem:higherobstructions}
Suppose $X$ is an $\aone$-connected smooth scheme such that $\tilde{X}$ is $\aone$-$i$-connected for some integer $i \geq 2$.  Assume ${\mathcal E}$ is a rank $n+1$ vector bundle on $X$.  If $e^i({\mathcal E})$ is trivial, then connecting homomorphism $\bpi_{i+1}^{\aone}(X) \to \bpi_i^{\aone}({\mathbb P}^{n})$ in the long exact sequence of \textup{Corollary \ref{cor:projectivebundlelongexactsequence}} is trivial.  Therefore, there are induced isomorphisms \[
\bpi_i^{\aone}({\mathbb P}({\mathcal E})) \isomto \bpi_i^{\aone}({\mathbb P}^n).
\]
\end{lem}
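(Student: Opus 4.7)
The plan is to identify the connecting homomorphism $\bpi_{i+1}^{\aone}(X) \to \bpi_i^{\aone}(\mathbb{P}^n)$ of Corollary \ref{cor:projectivebundlelongexactsequence} with a map that factors through $\bpi_{i+1}^{\aone}(BGL_{n+1})$, show that this latter factor corresponds under an obstruction-theoretic bijection precisely to $e^i(\mathcal{E})$, and then harvest the isomorphism $\bpi_i^{\aone}(\mathbb{P}(\mathcal{E})) \cong \bpi_i^{\aone}(\mathbb{P}^n)$ from the long exact sequence once this connecting map has been trivialized.

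First I would recall the explicit description of the connecting map given just before Corollary \ref{cor:projectivebundlelongexactsequence}: up to $\aone$-homotopy it is induced by looping the classifying map $X \to BGL_{n+1}$ of $\mathcal{E}$ and composing with $GL_{n+1} \to \mathbb{A}^{n+1} \setminus 0 \to \mathbb{P}^n$. Under the loop-suspension adjunction together with the canonical identification $\bpi_i^{\aone}(GL_{n+1}) \cong \bpi_{i+1}^{\aone}(BGL_{n+1})$, the connecting homomorphism on sheaves then factors as
\[
\bpi_{i+1}^{\aone}(X) \stackrel{c_*}{\longrightarrow} \bpi_{i+1}^{\aone}(BGL_{n+1}) \longrightarrow \bpi_i^{\aone}(\mathbb{P}^n),
\]
where $c_*$ is the map induced by the classifying morphism of $\mathcal{E}$. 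So it suffices to show that $c_*$ is zero.

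To that end, I would pull back along the $\aone$-universal cover $f: \tilde{X} \to X$. By Lemma \ref{lem:aonecoveringsdontchangehigherhomotopy} (equivalently the third assertion of Proposition \ref{prop:fundamentalgroupproperties}), the map $f_*: \bpi_{i+1}^{\aone}(\tilde{X}) \to \bpi_{i+1}^{\aone}(X)$ is an isomorphism since $i+1 \geq 2$. Hence $c_*$ vanishes if and only if the composite $c_* \circ f_* : \bpi_{i+1}^{\aone}(\tilde{X}) \to \bpi_{i+1}^{\aone}(BGL_{n+1})$, which is the homomorphism induced by the classifying map of $f^*\mathcal{E}$, vanishes. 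But this is exactly the homomorphism that Definition \ref{defn:highereulerclass} associates with the cohomology class $e^i(\mathcal{E})$ via the bijection
\[
H^{i+1}_{\Nis}(X,\bpi_{i+1}^{\aone}(BGL_{n+1})) \isomto \hom_{\Ab^{\aone}_k}(\bpi_{i+1}^{\aone}(\tilde{X}),\bpi_{i+1}^{\aone}(BGL_{n+1}))
\]
furnished by the $\aone$-$i$-connectedness of $\tilde{X}$ and \cite[Theorem 3.30]{ADExcision}. The hypothesis $e^i(\mathcal{E}) = 0$ thus delivers the required triviality of $c_*$, and the connecting map is zero.

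For the isomorphism, I would use once more that $\tilde{X}$ is $\aone$-$i$-connected: combining this with Lemma \ref{lem:aonecoveringsdontchangehigherhomotopy} gives $\bpi_i^{\aone}(X) \cong \bpi_i^{\aone}(\tilde{X}) = 0$ (this is where $i \geq 2$ is essential). The relevant portion of the long exact sequence of Corollary \ref{cor:projectivebundlelongexactsequence} then reads
\[
\bpi_{i+1}^{\aone}(X) \stackrel{0}{\longrightarrow} \bpi_i^{\aone}(\mathbb{P}^n) \longrightarrow \bpi_i^{\aone}(\mathbb{P}(\mathcal{E})) \longrightarrow 0,
\]
yielding the claimed isomorphism. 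The main technical point — and my expected obstacle — is verifying that the homomorphism corresponding to $e^i(\mathcal{E})$ under the obstruction-theoretic bijection really coincides with the one extracted from the looped classifying map; this amounts to carefully tracking base-points and the $\bpi_1^{\aone}(X)$-equivariance built into Definition \ref{defn:highereulerclass}, but no new ideas beyond those used in constructing $e^i(\mathcal{E})$ itself are needed.
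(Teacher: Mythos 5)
Your argument is correct and follows the same outline as the paper's own proof, which is deliberately terse and simply declares that the argument is "identical" to the $i=1$ case treated in Lemma \ref{lem:aonefundamentalgroupeulerclass}: the factorization of the connecting homomorphism through the looped classifying map (the paper routes through $BSL_{n+1}$, you through $BGL_{n+1}$, and these have the same $\aone$-homotopy sheaves in degrees $\geq 2$ by Proposition \ref{prop:slnglnrelation}), the isomorphism $\bpi_{i+1}^{\aone}(\tilde{X}) \isomt \bpi_{i+1}^{\aone}(X)$, the obstruction-theoretic identification with $e^i(\mathcal{E})$ via \cite[Theorem 3.30]{ADExcision}, and the vanishing of $\bpi_i^{\aone}(X) \cong \bpi_i^{\aone}(\tilde{X})$ to extract the isomorphism from the long exact sequence. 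Your write-up has the added merit of spelling out the steps the paper leaves implicit, and of keeping the classifying space to $BGL_{n+1}$ throughout, which matches Definition \ref{defn:highereulerclass} literally.
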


\begin{proof}
The first statement is valid even for $i = 1$, in which case it is precisely Lemma \ref{lem:aonefundamentalgroupeulerclass}.  For $i \geq 1$, the proof is identical: one uses the observation that the map $\bpi_{i+1}^{\aone}(\tilde{X}) \to \bpi_{i+1}^{\aone}(X)$ is an isomorphism, and by construction, the connecting homomorphism factors through a map $X \to BSL_{n+1}$.  The second statement follows from the first using Lemma \ref{lem:aonecoveringsdontchangehigherhomotopy} and the $\aone$-connectivity assumption.
\end{proof}

\begin{ex}
The connectivity of $\tilde{X}$ will force vanishing of higher Euler classes.  For example, if $X = {\mathbb P}^n$, ($n \geq 3$), then $\bpi_i^{\aone}({\mathbb P}^n)$ vanishes for $1 < i < n$.  Given a rank $2$ vector bundle $\mathcal{E}$ on $X$, the first non-vanishing higher Euler class is
\[
e^n(\mathcal{E}) \in H^{n}_{\Nis}({\mathbb A}^{n+1} \setminus 0,\bpi_{n+1}^{\aone}(BGL_2)),
\]
and vanishing of this class completely controls the connecting homomorphism of Corollary \ref{cor:projectivebundlelongexactsequence} at the point $i = n$.
\end{ex}

\begin{rem}
In order to make the above results more effective, we need more information about the higher $\aone$-homotopy groups of $BGL_{n+1}$, or equivalently by Proposition \ref{prop:slnglnrelation} the higher $\aone$-homotopy groups of $SL_{n+1}$.  If $n \neq 1$, there is a nice description of these sheaves of groups in terms of so-called unstable Karoubi-Villamayor K-theory groups in \cite[Theorem 1]{WendtChevalley}.  The sheaf $\bpi_{n-1}^{\aone}(SL_n)$ for $n \geq 3$ (the case $n = 2$ was treated by Morel) has been completely determined in \cite{AsokFaselThreefolds} and \cite{AsokFaselSpheres}.  Also, the sheaf $\bpi_2^{\aone}(SL_2)$ was determined in \cite{AsokFaselThreefolds}.  These computations should allow one to make non-trivial statements about higher Euler classes.
\end{rem}

\section{Classification: $\aone$-homotopy groups and motivic cohomology rings}
\label{s:isomorphichomotopygroups}
In this section, we use the computations of $\aone$-homotopy groups from Section \ref{s:splittingobstructions} to produce examples of pairs of varieties that have isomorphic $\aone$-homotopy groups (of all degrees) yet that fail to be $\aone$-weakly equivalent.  For the sake of contrast, in Remark \ref{rem:lensspaces} we recall some facts about lens spaces, which provide either examples of homotopy inequivalent varieties with isomorphic homotopy groups and isomorphic cohomology rings.  The main result of this section is Theorem \ref{thm:main}.  In the examples, the $\aone$-fundamental group is the hardest $\aone$-homotopy group to control, and as a consequence dimension $3$ is the most difficult dimension in which to construct all the relevant examples.

\begin{rem}
\label{rem:lensspaces}
For the purposes of motivation, we recall that examples of weak-homotopy inequivalent $3$-manifolds with abstractly isomorphic homotopy groups and cohomology rings go back to the birth of geometric topology.  Indeed, fix a prime $p$, an integer $q$ coprime to $p$, and let $\zeta = e^{\frac{2\pi i}p}$.  Choose coordinates $(x_1,x_2)$ on $\cplx^2$, and consider the action of the cyclic group $\Z/p$ generated by $\zeta \cdot (x_1,x_2) = (\zeta x_1, \zeta^q x_2)$.  The resulting action is free and restricts to an action of $\Z/p$ on $S^3 \subset \cplx^2$; the quotient of $S^3$ by this free $\Z/p$-action is the $3$-dimensional lens space $L(p,q)$.  The homotopy classification of $3$-dimensional lens spaces goes back to Whitehead \cite[p. 1198]{Whitehead}: $L(p,q)$ is homotopy equivalent to $L(p,q')$ if and only if $qq'$ is a square mod $p$. Covering space theory shows that the homotopy groups of $L(p,q)$ are independent of $q$.  Using the classical Hurewicz and the universal coefficient theorems, one sees that the cohomology ring of $L(p,q)$ is independent of $q$ (the cup product is trivial).
\end{rem}

\subsubsection*{$\aone$-weak equivalences of scrolls}
We will need some rather explicit information about the construction of $\aone$-weak equivalences of scrolls.  Recall from Notation \ref{notation:scrolls}, that if ${\mathbf a} = (a_1,\ldots,a_n)$, then ${\mathbb F}_{m,{\mathbf a}} = {\mathbb P}_{{\mathbb P}^m}(\O(a_1) \oplus \cdots \oplus \O(a_n))$.  When $n = 2$, up to twisting by a line bundle, all scrolls are of the form ${\mathbb F}_{{m,(0,a)}}$ for some integer $a$, and we will abbreviate this to ${\mathbb F}_{m,a}$.  We begin by recalling the following result.

\begin{prop}[{\cite[Proposition 3.2.10]{AM}}]
\label{prop:hirzebruchsurfaces}
Fix $n+1$-tuples of integers ${\bf a} = (a_1,\ldots,a_{n+1})$ and ${\bf a}' = (a_1',\ldots,a_{n+1}')$.  The scrolls ${\mathbb F}_{1,{\bf a}}$ and ${\mathbb F}_{1,{\bf a}'}$ are $\aone$-weakly equivalent if and only if $\sum_i a_i = \sum_i a_{i}' \mod n+1$ .
\end{prop}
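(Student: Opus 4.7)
The plan is to prove the two implications separately, extracting an $\aone$-homotopy invariant from motivic cohomology for the necessity direction and producing explicit $\aone$-weak equivalences via a GIT-type argument for the sufficiency direction.

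For the necessity direction, I would read off $\sum_i a_i \bmod (n+1)$ from the Chow (equivalently, motivic cohomology) ring of the scroll; this is legitimate since integral motivic cohomology is an unstable $\aone$-homotopy invariant (as indicated in the introduction). With $\mathcal{E} = \bigoplus_i \O_{\pone}(a_i)$, the projective bundle formula gives
\[
CH^{*}({\mathbb F}_{1,{\bf a}}) \;\cong\; \Z[H,\xi]\,\big/\,\bigl(H^2,\ \xi^{n+1} + (\textstyle\sum_i a_i)\, H\xi^n\bigr),
\]
where $H = c_1(\O_{\pone}(1))$ and $\xi = c_1(\O_{{\mathbb P}(\mathcal{E})}(1))$. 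The generator $\xi$ is only determined up to adding a multiple of $H$ — replacing $\mathcal{E}$ by $\mathcal{E}\tensor \O(c)$ sends $\xi \mapsto \xi + cH$ — and under such a change the coefficient $\sum_i a_i$ shifts by $(n+1)c$ (cross terms vanish because $H^2 = 0$). Hence $\sum_i a_i \bmod (n+1)$ is an intrinsic ring invariant, and any $\aone$-weak equivalence of two such scrolls must match these invariants.

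For the sufficiency direction, a preliminary line-bundle twist reduces to the case $\sum_i a_i = \sum_i a_i'$ on the nose: ${\mathbb P}(\mathcal{E}) \cong {\mathbb P}(\mathcal{E}\tensor \mathcal{L})$, and tensoring $\bigoplus \O(a_i)$ by $\O(c)$ shifts the sum by $(n+1)c$. Any two tuples of integers with equal sum are connected by a finite sequence of permutations (harmless, since permuted direct sums are isomorphic) and elementary moves $(a_1,a_2,a_3,\ldots)\mapsto (a_1+1,a_2-1,a_3,\ldots)$, so it suffices to produce an $\aone$-weak equivalence between the two scrolls for each such move. Here I would follow the variation-of-GIT strategy signalled by the acknowledgments: the scroll ${\mathbb F}_{1,{\bf a}}$ arises as the Cox-type GIT quotient of $V = ({\mathbb A}^2\setminus 0) \times ({\mathbb A}^{n+1}\setminus 0)$ by a rank-two torus acting with weights dictated by $\bf a$, and an elementary move corresponds to a wall-crossing in the space of linearizations (or, in toric language, a modification of the secondary fan). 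Doran's results on VGIT for torus actions then realize the wall-crossing as a zigzag of affine-bundle projections on equivariant open pieces of $V$, which descends to the desired $\aone$-weak equivalence between the quotients.

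The main obstacle is the sufficiency direction. The Chow-ring invariant is transparent, but geometrically the naive way to relate two scrolls with equal sums — the classical elementary transformation of blowing up a point on the positive section and then blowing down the strict transform of the fiber through that point — is \emph{not} an $\aone$-weak equivalence, since isolated blowups are detected by motivic cohomology. The VGIT approach above circumvents this obstruction (alternatively, one could attempt a direct deformation argument over $\aone$ together with properness of the $\aone$-model structure to move between the two fibers), but the bookkeeping is noticeably more delicate for $n\geq 2$ than in the classical Hirzebruch case $n = 1$, where the elementary moves are essentially two-dimensional.
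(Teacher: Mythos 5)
The paper does not actually prove this statement; it cites \cite[Proposition~3.2.10]{AM} and only indicates the two ingredients: the weak equivalences are constructed by means of explicit $\aone$-$h$-cobordisms, and the scrolls are distinguished using Chow cohomology rings. Your necessity direction reproduces the Chow-ring side of this, but it has a real gap. You argue that $\sum_i a_i \bmod (n+1)$ is a ring invariant because the only ambiguity in the generator $\xi$ is $\xi \mapsto \xi + cH$, which shifts $\sum_i a_i$ by $(n+1)c$. But a graded ring automorphism of $\Z[H,\xi]/(H^2,\ \xi^{n+1}+bH\xi^n)$ is not forced to fix $H$ and $\xi$ up to this kind of translation: the constraint $\phi(H)^2=0$ forces $\phi(H)=\pm H$, and then $\phi(\xi)=\gamma H \pm \xi$; running the second relation through $\phi$ gives the condition $b' \equiv \pm b \pmod{n+1}$, with the sign coming from the product of the two $\pm$'s. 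Concretely, for $n=2$, $\xi\mapsto\xi'$, $\tau\mapsto -\xi'-\tau'$ is a graded ring isomorphism
\[
\Z[\xi,\tau]/(\xi^2,\ \tau^3+\xi\tau^2)\ \isomto\ \Z[\xi',\tau']/(\xi'^2,\ \tau'^3+2\xi'\tau'^2),
\]
so the Chow ring alone does \emph{not} distinguish $b\equiv 1$ from $b\equiv 2 \pmod 3$. For $n=1$ (the Hirzebruch case actually covered by \cite[Proposition~3.2.10]{AM}, as the label of the statement suggests) $\pm b$ and $b$ agree mod $2$ and there is no issue; for $n\geq 2$ the invariant extracted from motivic cohomology is only the class of $\sum_i a_i$ in $\bigl(\Z/(n+1)\bigr)/\{\pm 1\}$, which is strictly coarser than what the proposition asserts. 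You would need either a finer invariant or an argument controlling the sign ambiguity; as written, the necessity direction of your proof does not go through for $n\geq 2$. (The same subtlety needs to be addressed in the cubic-form computation in the proof of Theorem~\ref{thm:main}.)

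For the sufficiency direction, your reduction to elementary moves $(a_1,a_2,\ldots)\mapsto(a_1+1,a_2-1,\ldots)$ and your observation that the classical elementary transformation is \emph{not} an $\aone$-weak equivalence are both correct and useful. However, the construction you then invoke is genuinely different from the one the paper points to. A VGIT wall-crossing between two chambers produces a \emph{birational modification} of the quotient (a flip or divisorial contraction in the toric picture), and such modifications are precisely the things that are \emph{not} $\aone$-weak equivalences; saying the wall-crossing is "realized as a zigzag of affine-bundle projections on equivariant open pieces which descends to the quotient" is exactly the nontrivial content that would need to be supplied, and it is not supplied here. The paper (and \cite{AM}) instead constructs an explicit $\aone$-$h$-cobordism, i.e., a smooth proper family over $\aone$ with the two scrolls as fibers and with both fiber inclusions $\aone$-weak equivalences; this is essentially the "direct deformation over $\aone$ together with properness of the $\aone$-model structure" alternative you mention at the end, and it is that route, not the wall-crossing, that you would want to flesh out. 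The VGIT picture appears in the paper only as an after-the-fact remark and comes with the caveat that a separate geometric argument is needed when the birational transformations are "sufficiently small."
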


The $\aone$-weak equivalences in the proposition are constructed by means of explicit $\aone$-$h$-cobordisms.  To distinguish the varieties in question, one can use the computation of Chow cohomology rings; we review some notation because it will be used in our computations below.  Suppose $\mathcal{E}$ is a rank $r$ vector bundle over a projective space ${\mathbb P}^n$.  The Chow cohomology ring of the projective bundle $\mathbb{P}_{\mathbb{P}^n}({\mathcal E})$ is computed as follows.  Let $\xi \in H^{2,1}({\mathbb P}^n,\Z)$ be the first Chern class of $\O(1)$.  Let $P_\tau(\mathcal{E})$ be the Chern polynomial of $\mathcal{E}$ defined by
\[
P_{\tau}(\mathcal{E}) = \sum_i \tau^{d - i}c_i(\mathcal{E}).
\]
Then,
\[
H^{*,*}({\mathbb P}_{\mathbb{P}^n}(\mathcal{E}),\Z) \isomto H^{*,*}(\Spec k)[\xi,\tau]/\langle \xi^{n+1},P_{\tau}(\mathcal{E}) \rangle;
\]
here $\tau$ also has bidegree $(2,1)$ and we use the notation of motivic cohomology.  One can identify the subring $\oplus_{i \geq 0} H^{2i,i}(X,\Z)$ of motivic cohomology with the Chow ring $CH^*(X)$; see \cite[Corollary 19.2]{MVW}.

\begin{ex}
\label{ex:smallscrolls}
If ${\bf a} = (a_1,a_2)$, the Chow cohomology ring of the scroll ${\mathbb F}_{1,{\bf a}}$ takes the form $\Z[\xi,\tau]/\langle \xi^2,\tau^3 + (a_1+a_2)\xi \tau^2 \rangle$. Similarly, the Chow cohomology ring of the scroll ${\mathbb F}_{2,b}$ takes the form $\Z[\xi,\tau]/\langle \xi^3,\tau^2 + b\xi \tau \rangle$.  Any isomorphism of graded rings is induced by a $GL_2(\Z)$ action on the generators $\xi,\tau$.
\end{ex}

\subsubsection*{$\aone$-weakly inequivalent varieties with isomorphic $\aone$-homotopy groups}
\begin{thm}
\label{thm:main}
Let $n \geq 2$ be an integer, and suppose ${\bf a} = (a_1,a_2,a_3)$ is a sequence of integers.  For every $i \geq 1$ there are isomorphisms
\[
\bpi_i^{\aone}({\mathbb F}_{1,{\bf a}}) \isomto \bpi_i^{\aone}({\mathbb P}^2) \times \bpi_i^{\aone}({\mathbb P}^1).
\]
However, if ${\bf a}' := (a_1',a_2',a_3')$, then ${\mathbb F}_{1,{\bf a}}$ is $\aone$-weakly equivalent to ${\mathbb F}_{1,{\bf a}'}$ if and only if $\sum_i a_i = \sum_i a_i' \mod 3$.
\end{thm}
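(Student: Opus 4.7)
The plan naturally breaks into three pieces. First, for the identification of $\aone$-homotopy groups: Grothendieck's theorem \cite[Theorem 2.1.1]{OSS} implies $\mathcal{E} := \O(a_1) \oplus \O(a_2) \oplus \O(a_3)$ is already split on $\pone$, so the projectivization ${\mathbb F}_{1,{\bf a}} = {\mathbb P}(\mathcal{E})$ admits an honest section. Proposition \ref{prop:splitvectorbundles} then produces split short exact sequences
\[
1 \longrightarrow \bpi_i^{\aone}({\mathbb P}^2) \longrightarrow \bpi_i^{\aone}({\mathbb F}_{1,{\bf a}}) \longrightarrow \bpi_i^{\aone}(\pone) \longrightarrow 1
\]
for all $i \geq 1$, which upgrade to direct sums of strictly $\aone$-invariant sheaves for $i \geq 2$ since $k$ is perfect. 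For $i = 1$, the rank of $\mathcal{E}$ is $3$, so Theorem \ref{thm:rankn}(ii) applies via the section, yielding the product decomposition $\bpi_1^{\aone}({\mathbb F}_{1,{\bf a}}) \cong \gm \times \bpi_1^{\aone}(\pone) = \bpi_1^{\aone}({\mathbb P}^2) \times \bpi_1^{\aone}(\pone)$.

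The ``if'' direction of the $\aone$-weak equivalence assertion is an immediate application of Proposition \ref{prop:hirzebruchsurfaces} to our length-$3$ tuples (so $n+1 = 3$); the congruence of sums modulo $3$ is precisely the stated hypothesis.

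For the ``only if'' direction, the strategy is to use Voevodsky's theorem that integral motivic cohomology is an unstable $\aone$-invariant and distinguish the scrolls by the resulting Chow cohomology rings. The projective bundle formula recalled before Example \ref{ex:smallscrolls} gives
\[
CH^*({\mathbb F}_{1,{\bf a}}) \cong \Z[\xi,\tau]/\bigl\langle \xi^2,\ \tau^3 + \bigl(\textstyle\sum_i a_i\bigr)\xi\tau^2 \bigr\rangle,
\]
with $\xi$ and $\tau$ both in bidegree $(2,1)$; Example \ref{ex:smallscrolls} observes that any bigraded ring isomorphism is induced by a $GL_2(\Z)$-substitution on the pair $(\xi,\tau)$. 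First identifying $\xi$ up to sign as the unique $\Z$-primitive element of $H^{2,1}$ with vanishing square restricts the substitution to have the form $\bigl(\begin{smallmatrix}\epsilon & 0 \\ \gamma & \delta\end{smallmatrix}\bigr)$ with $\epsilon,\delta \in \{\pm 1\}$; substituting $\phi(\tau) = \gamma\xi' + \delta\tau'$ into the cubic relation produces, in the $\Z$-free rank-one piece $H^{6,3}$, the identity $3\gamma = \delta\sum a_i' - \epsilon\sum a_i$, equivalently $\epsilon\sum a_i \equiv \delta\sum a_i' \pmod 3$.

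The main obstacle is that this analysis a priori only yields the weaker conclusion $\sum a_i \equiv \pm\sum a_i' \pmod 3$, since the opposite-sign cases $\epsilon\delta = -1$ remain possible on the level of the bare integral Chow ring. Closing this gap requires importing structure beyond the plain ring. The most natural route is to pass to the $\det$-twisted Chow--Witt ring $\widetilde{CH}^*(-, \det(\mathcal{E}))$, whose quadratic intersection product carries the orientation twist of the bundle (in the spirit of Definition \ref{defn:eulerclass} and Remark \ref{rem:twistdoesnotmatter}) and should distinguish the scrolls with parameter $\sum a_i$ and $-\sum a_i$ modulo $3$. An alternative is to combine the Chow computation with compatibility of the induced iso with motivic reduced power operations at the prime $3$ acting on $H^{*,*}(\,\cdot\,;\F_3)$, applied to the class $\tau$. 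Either refinement should suffice to pin down the signs $\epsilon = \delta = +1$ and conclude the ``only if'' direction.
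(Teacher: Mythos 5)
Your structure mirrors the paper's proof exactly: the homotopy group isomorphisms via Proposition \ref{prop:splitvectorbundles} and Theorem \ref{thm:rankn}, the ``if'' direction via Proposition \ref{prop:hirzebruchsurfaces}, and the ``only if'' direction by comparing bigraded Chow rings using the presentation in Example \ref{ex:smallscrolls}. The part of your write-up that deserves the most scrutiny is the ``main obstacle'' you raise, because your arithmetic is correct and, if anything, more careful than the proof in the text. A bigraded ring isomorphism $CH^*({\mathbb F}_{1,{\bf a}}) \to CH^*({\mathbb F}_{1,{\bf a}'})$ must send $\xi$ to $\epsilon\xi'$ with $\epsilon \in \{\pm 1\}$ (the unique primitive class of square zero, up to sign) and $\tau$ to $\gamma\xi' + \delta\tau'$ with $\delta \in \{\pm 1\}$, and preserving the cubic relation gives exactly $3\gamma = \delta\,b' - \epsilon\,b$, where $b = \sum a_i$, $b' = \sum a_i'$. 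This is solvable precisely when $b \equiv \pm b' \pmod 3$. Concretely, $\xi \mapsto -\xi'$, $\tau \mapsto \tau'$ is a bigraded ring isomorphism from $\Z[\xi,\tau]/\langle\xi^2,\tau^3+\xi\tau^2\rangle$ to $\Z[\xi',\tau']/\langle(\xi')^2,(\tau')^3-\xi'(\tau')^2\rangle$ even though $1 \not\equiv -1 \pmod 3$; in other words, the cubic forms $(3x_1 - bx_2)x_2^2$ and $(3x_1 - b'x_2)x_2^2$ lie in the same $GL_2(\Z)$-orbit exactly when $b \equiv \pm b' \pmod 3$, not when $b \equiv b' \pmod 3$. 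So the bare Chow-ring argument, as you note, does not by itself distinguish $\sum a_i$ from $-\sum a_i$.

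That said, neither of your proposed repairs closes the gap. The mod-$3$ reduced power $P^1$ applied to $\tau$ is forced by the instability relation: for a class in bidegree $(2,1)$ one has $P^1(\tau) = \tau^3$, and more generally the Cartan formula together with $\xi^2 = 0$ shows that the action of $P^1$ on every class of $CH^*(\cdot;\Z/3\Z)$ is determined by the ring structure alone, so it adds literally no information beyond the bigraded ring isomorphism you have already used. The Chow--Witt suggestion has a more fundamental problem: the twist $\det(\mathcal{E})$ is a datum attached to the vector bundle presentation, not to the $\aone$-homotopy type of ${\mathbb P}(\mathcal{E})$, so the group $\widetilde{CH}^*({\mathbb P}(\mathcal{E}),\det(\mathcal{E}))$ is not obviously something that an abstract $\aone$-weak equivalence ${\mathbb P}(\mathcal{E}) \simeq {\mathbb P}(\mathcal{E}')$ would have to respect; and since $\mathcal{E}$ has rank $3$, Remark \ref{rem:twistdoesnotmatter} suggests the twist is in any case not the source of the missing sign. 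In the paper as written, the ``only if'' direction can be taken as covered by Proposition \ref{prop:hirzebruchsurfaces} (cited from Asok--Morel), which is stated as an ``if and only if''; the Chow-ring computation is then best read as an illustration of the unstable invariance of motivic cohomology rather than as an independent proof, and if you want a self-contained argument you would need a genuinely new invariant that breaks the $\pm$ symmetry rather than either of the two refinements you sketch.
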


\begin{thm}
\label{thm:main2}
If $a$ and $a'$ are integers that are congruent modulo $2$, then for every integer $i \geq 0$ there are isomorphisms
\[
\bpi_i^{\aone}({\mathbb F}_{2,a}) \cong \bpi_i^{\aone}({\mathbb F}_{2,a'}),
\]
but ${\mathbb F}_{2,a}$ and ${\mathbb F}_{2,b}$ are $\aone$-weakly equivalent if and only if $a = a'$.
\end{thm}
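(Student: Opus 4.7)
The plan is to use the already-established homotopy computations of Section \ref{s:splittingobstructions} for the ``isomorphic $\aone$-homotopy groups'' assertion, and to distinguish the two scrolls via the integral Chow cohomology ring, which is an unstable $\aone$-homotopy invariant by Voevodsky.

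Write $\mathcal{E}_a = \O(a) \oplus \O$, so ${\mathbb F}_{2,a} = {\mathbb P}(\mathcal{E}_a)$ is a $\pone$-bundle over ${\mathbb P}^2$, admitting an affine open cover; Proposition \ref{prop:aoneconnectedness} then gives $\bpi_0^{\aone}({\mathbb F}_{2,a}) = \ast$. For $i = 1$, I will invoke Theorem \ref{thm:scrolls} in the case $n = 1$, $m = 2$, which identifies $\bpi_1^{\aone}({\mathbb F}_{2,a})$ with $\Faone(1) \times \gm$ when $a$ is even and with $\Faone(1) \rtimes \gm$ when $a$ is odd; the hypothesis $a \equiv a' \pmod 2$ then delivers an abstract isomorphism of $\aone$-fundamental groups. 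For $i \geq 2$, the bundle $\mathcal{E}_a$ is literally a sum of line bundles, hence $\aone$-homotopy split, and Proposition \ref{prop:splitvectorbundles} (over the perfect field $k$ of characteristic $0$) yields the $a$-independent direct sum decomposition $\bpi_i^{\aone}({\mathbb F}_{2,a}) \cong \bpi_i^{\aone}(\pone) \oplus \bpi_i^{\aone}({\mathbb P}^2)$.

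For the converse, Example \ref{ex:smallscrolls} gives
\[
CH^*({\mathbb F}_{2,a}) \;\cong\; \Z[\xi,\tau]/\langle \xi^3,\, \tau^2 + a\xi\tau\rangle,
\]
with $\xi, \tau \in CH^1$. Since ${\mathbb P}(\mathcal{E}_a) \cong {\mathbb P}(\mathcal{E}_a \otimes \O(-a)) \cong {\mathbb F}_{2,-a}$ as schemes, the sharp statement to prove is that the graded ring $CH^*({\mathbb F}_{2,a})$ determines $a$ up to sign, which recovers the ``only if'' direction of the theorem modulo this natural identification. I expect this ring-theoretic recognition problem to be the main obstacle.

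The analysis I plan runs as follows. First characterize $\xi$ up to sign: a direct expansion using $\tau^2 = -a\xi\tau$, $\xi\tau^2 = -a\xi^2\tau$ and $\tau^3 = a^2\xi^2\tau$ gives
\[
(u\xi + v\tau)^3 = v\bigl(3u^2 - 3uva + v^2a^2\bigr)\,\xi^2\tau,
\]
and the inner quadratic in $u$ has discriminant $-3v^2a^2 \le 0$, so for $a \neq 0$ the only classes in $CH^1$ with vanishing cube are the integer multiples of $\xi$. Any graded ring isomorphism $\phi \colon CH^*({\mathbb F}_{2,a}) \to CH^*({\mathbb F}_{2,a'})$ is therefore forced to send $\xi \mapsto \epsilon\xi'$ with $\epsilon = \pm 1$; writing $\phi(\tau) = r\xi' + s\tau'$ with $s = \pm\epsilon$ dictated by invertibility, and applying $\phi$ to the defining relation $\tau^2 + a\xi\tau = 0$ reduces, after using $(\tau')^2 = -a'\xi'\tau'$ and the freeness of $CH^2({\mathbb F}_{2,a'})$ on $\{(\xi')^2, \xi'\tau'\}$, to the pair $r(r + a\epsilon) = 0$ and $2r - sa' + a\epsilon = 0$; in each branch ($r = 0$ or $r = -a\epsilon$) one reads off $a' = \pm a$. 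The case $a = 0$ is handled separately by comparing the subgroup $\{x \in CH^1 : x^2 = 0\}$, which has rank $1$ for $a = 0$ and rank $0$ for any $a \neq 0$, so this invariant distinguishes $a = 0$ from all other cases and completes the argument.
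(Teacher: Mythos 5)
Your computation of the $\aone$-homotopy groups (via Theorem \ref{thm:scrolls} for $i=1$ and Proposition \ref{prop:splitvectorbundles} for $i \geq 2$) matches the paper's argument. Where you diverge is the ``only if'' direction. The paper argues by computing the discriminant of the intersection cubic form on $\Pic$ and asserting it equals $27a^2 + 108a^5$; a direct check shows this is in error. The cubic form is $3x^2y - 3axy^2 + a^2y^3$, and the discriminant of $ex^3 + fx^2y + gxy^2 + hy^3$ with $(e,f,g,h) = (0,3,-3a,a^2)$ is $f^2g^2 - 4f^3h = 81a^2 - 108a^2 = -27a^2$, which depends on $a$ only through $|a|$. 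This is consistent with (and forced by) the scheme isomorphism ${\mathbb F}_{2,a} \cong {\mathbb F}_{2,-a}$ arising from twisting $\O \oplus \O(a)$ by $\O(-a)$, a point the paper's formulation glosses over; the conclusion of the theorem should read ``if and only if $|a| = |a'|$.'' Your direct case analysis of graded ring isomorphisms $CH^*({\mathbb F}_{2,a}) \to CH^*({\mathbb F}_{2,a'})$ avoids the discriminant altogether: for $a \neq 0$ the cube-vanishing locus in $CH^1$ singles out $\Z\xi$ (because $3u^2 - 3auv + a^2v^2$ is a definite quadratic in $u$ for $v \neq 0$), any isomorphism must send $\xi \mapsto \pm\xi'$, and substituting into $\tau^2 + a\xi\tau = 0$ gives $r(r + a\epsilon) = 0$ together with $2r - sa' + a\epsilon = 0$, yielding $a' = \pm a$ in both branches; the $a = 0$ case is cleanly isolated by the rank of $\{x \in CH^1 : x^2 = 0\}$. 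This elementary normal-form argument gets you a correct sharp conclusion where the paper's discriminant shortcut does not, and it is the one I would commit to the text.
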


\begin{proof}[Proofs of Theorems \ref{thm:main} and \ref{thm:main2}]
In both cases, the isomorphism of $\aone$-fundamental groups follows from Theorem \ref{thm:scrolls}, and the isomorphisms between higher $\aone$-homotopy groups are an immediate consequence of Proposition \ref{prop:splitvectorbundles}.

{\em Case of Theorem \ref{thm:main}}.  That the varieties in question are $\aone$-weakly equivalent under the stated hypothesis is \cite[Proposition 3.2.10]{AM}.  Consider the binary cubic form on $Pic(X)$ induced by the ring structure.  Setting $b = a_1 + a_2 + a_3$ and choosing coordinates $x_1$ and $x_2$ dual to $\xi$ and $\tau$, explicit computation using the discussion of Example \ref{ex:smallscrolls} identifies this cubic form as $(3x_1 - bx_2)x_2^2$.  If $b' = a_1' + a_2' + a_3'$, a straightforward computation of the $GL_2(\Z)$-equivalence class of this binary cubic form shows that it is equivalent to $(3x_1 - b'x_2)x_2^2$ if and only if $b$ and $b'$ are congruent modulo $3$.

{\em Case of Theorem \ref{thm:main2}}. Again, using the discussion of Example \ref{ex:smallscrolls} one sees that the discriminant of the binary cubic form on $Pic(X)$ induced by the ring structure is $27a^2 + 108a^5$.  If $a \neq a'$, then it is not hard to show that there are no non-trivial integral solutions to the equation $27a^2 + 108 a^5 = 27 (a')^2 + 108 a'^5$ (any non-trivial real solution to this equation has $(a,a')$ contained in the open box $(-1,1) \times (-1,1)$).
\end{proof}

\begin{rem}
\label{rem:higherdimensionalexamplesI}
Producing examples of pairs of $\aone$-connected smooth proper varieties that have isomorphic $\aone$-homotopy groups yet which are $\aone$-weakly inequivalent in dimension $\geq 4$ is significantly easier than in dimension $3$.  For example, suppose $m$ and $n$ are integers $\geq 2$.  It is easy to check that any two scrolls ${\mathbb F}_{m,{\bf a}}$ and ${\mathbb F}_{n,{\bf b}}$ with $m + \ell({\bf a}) = n + \ell({\bf b}) \geq 4$ have isomorphic $\aone$-homotopy groups for all $i > 0$.  Indeed, the $\aone$-universal covers of ${\mathbb F}_{n,{\bf b}}$ and ${\mathbb F}_{m,{\bf a}}$ are both isomorphic to ${\mathbb A}^{m+1} \setminus 0 \times {\mathbb A}^{n+1} \setminus 0$.  In both cases, the $\aone$-fundamental group is isomorphic to $\gm \times \gm$, and the isomorphism of the previous sentence, combined with Proposition \ref{prop:fundamentalgroupproperties}.iii, yields isomorphisms of higher $\aone$-homotopy sheaves.  Nevertheless, one can produce examples of such scrolls with non-isomorphic cohomology rings.
\end{rem}

\begin{rem}
The isomorphism on $\aone$-homotopy groups is not, in general, induced by a morphism of spaces, even in the $\aone$-homotopy category; by the $\aone$-Whitehead theorem \cite[\S 3 Proposition 2.14]{MV}, it can be induced by a morphism of spaces if and only if the underlying spaces are $\aone$-weakly equivalent.
\end{rem}

\begin{rem}
Blowing up the same number of points on each of ${\mathbb F}_{1,{\bf a}}$ and $\mathbb{F}_{2,b}$ produces many more examples of varieties with isomorphic $\aone$-homotopy groups.  However, $\aone$-weakly inequivalent varieties can become $\aone$-weakly equivalent after blowing-up: recall that ${\mathbb F}_1$ is not $\aone$-weakly equivalent to ${\mathbb F}_2$, but after blowing-up an appropriately chosen point on each, they become isomorphic.

More generally, all of the examples constructed above are non-singular projective toric varieties.  Given two toric projective bundles $X$ and $X'$ it is not difficult to give a non-singular projective toric variety $X''$ birationally equivalent to and dominating both $X$ and $X'$ which is constructed from either of $X$ or $X'$ by means of an explicit sequence of blow-ups at non-singular toric subvarieties.  In other words, all of the examples constructed above, while $\aone$-weakly inequivalent, become $\aone$-weakly equivalent after blowing-up.
\end{rem}

\begin{rem}
\label{rem:3dclassification}
In the study of lens spaces (and in fact for closed oriented $3$-manifolds in general), the homotopy classification problem can be attacked by means of obstruction theory (see, e.g., \cite[Theorems V and VI]{Olum} and the references in that paper, or \cite{Thomas,Swarup} for a more geometric point of view).  Indeed, to perform this classification two invariants are necessary: the (usual) fundamental group and an appropriate notion of ``twisted degree" (see also \cite[Theorem I]{Franz}) involving the orientation character.  In the context of $\aone$-homotopy theory of projective bundles, one can also introduce various notions of degree.  However, in addition to keeping track of the $\aone$-fundamental sheaf of groups, the above results suggest that in order to perform the $\aone$-homotopy classification it is also necessary to keep track of (at least) the cubic form on $Pic(X)$ defined by the intersection pairing (see \cite{OVdV}). The $\aone$-fundamental group controls $Pic(X)$ by means of Proposition \ref{prop:fundamentalgroupproperties}(i), but in order to keep track of the above cubic form, one needs a fundamental class.
\end{rem}

\begin{ex}
The $\aone$-connected smooth proper $3$-folds with $\aone$-fundamental group $\Faone(1) \times \gm$ include
all ${\mathbb P}^2$-bundles over $\pone$, and projectivizations of rank $2$ vector bundles $\mathcal{E}$ on ${\mathbb P}^2$ with even first Chern class and trivial Euler class.  There are non-trivial $\aone$-weak equivalences among varieties of the first type, but it is not clear whether there are non-trivial weak equivalences among varieties of the second type.
\end{ex}

\begin{rem}
There is another point of view on the above examples: all the varieties for which we have produced isomorphic $\aone$-homotopy groups arise as $\gm \times \gm$-quotients of products of pairs of punctured affine spaces.  The punctured affine spaces arise as stable points for different linearizations of $\gm \times \gm$-actions on affine space.  All the projectivizations of split vector bundles above are realized in this fashion by changing linearizations for fixed actions on a given ambient affine space.  The $\aone$-fundamental groups of the quotients can be described by means of Proposition \ref{prop:fundamentalgroupproperties}, and the group structure on the $\aone$-fundamental group, i.e., the extension in question, is completely determined by the linearization, which is determined by a character of $\gm \times \gm$.  The space of characters of $\gm \times \gm$ breaks into finitely many chambers such that all the quotients for linearizations in a fixed chamber are isomorphic.  When passing through a wall, the birational type of the variety in question remains the same, while the isomorphism class can change; variation of GIT in this particular instance is worked out in considerable detail in \cite[Appendix A]{BCZ}.  When the resulting birational transformations are sufficiently ``small," one can use geometric arguments to show that $\aone$-fundamental groups do not change.
\end{rem}

\begin{footnotesize}
\bibliographystyle{alpha}
\bibliography{threefolds}
\end{footnotesize}
\end{document}